  \def\scalebox#1#2#3{#3}% 让书签完全忽略 scalebox
  \def\\{}% 让书签忽略换行
\numberwithin{equation}{section}
\theoremstyle{plain}
\newtheorem{theorem}{Theorem}[section]
\newtheorem{proposition}{Proposition}[section]
\newtheorem{corollary}{Corollary}[section]
\newtheorem{lemma}{Lemma}[section]
\theoremstyle{definition}
\newtheorem{definition}{Definition}[section]
\theoremstyle{remark}
\newtheorem{rem}{Remark}[section]
\setlist[itemize]{label=\textbullet}
  \newcommand{\vol}{\mathop{\mathrm{vol}}}
\newcommand{\w}{ \widehat}
\newcommand{\ww}{\widetilde}
\newcommand{\MA}{Monge-Ampère}
\newcommand{\und}{\underline{\vol}}   
\newcommand{\ove}{\overline{\vol}}
\newcommand{\psh}{\text{PSH}}
\begin{document}
 \begin{sloppypar}

\title[Monge-Amp\`ere type  equations]{%
  {\fontsize{13.5pt}{16pt}\selectfont\rmfamily % 这里的 13.5pt 可以精确到小数点
    \scalebox{0.92}[1.0]{ MONGE-AMP\`ERE TYPE  EQUATIONS ON COMPACT HERMITIAN} \\ 
    MANIFOLDS WITH BOUNDED MASS PROPERTY
  }
}

\author{\textsc{Xuan Li}   }
  \email{ \textit{lixuanfd@gmail.com}}
%\address{}
  
%\author{}
%\email{}
%\address{}

%\footnotetext{Classification: 32U05, 32w20}

\classification[color=black]{32U05, 32U15, 32W20}

%\keywords{mor}

\begin{abstract}   
    In this paper, we study    possibly non-closed big  $(1,1)$-forms   on  a compact Hermitian  manifold  satisfying the bounded mass property.   We   propose several \mbox{criteria} for  the existence of  rooftop  envelopes.  As applications, we  establish the existence of  solutions to   complex Monge-Amp\`ere type equations with prescribed singularities, \mbox{allowing} for  non-pluripolar measures on the right-hand side. We also  obtain     stability results when  \mbox{singularity} types vary, by extending the Darvas-Di Nezza-Lu distance to   the \mbox{Hermitian} context.
\end{abstract}

\maketitle
 
%\tableofcontents

\section{Introduction }
\label{sec:introduction}
\hspace*{1.3em}  
Thanks to the resolution of the Calabi conjecture by Yau \cite{YAU78}, along with Bedford-Taylor's work on the Dirichlet problem \cite{bt76,BT82} and Ko{\l}odziej's $L^\infty$-estimates \cite{Kol98}, the complex Monge-Amp\`ere equation has since become a central theme in  complex analysis, complex geometry  and differential geometry.

  While   classical Bedford-Taylor pluripotential theory primarily focused on bounded plurisubharmonic  functions, its extension to compact Kähler manifolds was systematically  developed  in \cite{GZ07,GZ05}, allowing the complex Monge-Ampère operator to be defined on a much broader, possibly unbounded, class of potentials. These collective foundations then enabled Eyssidieux-Guedj-Zeriahi \cite{EGZ09} to extend the study of the  Monge-Amp\`ere  equations  to certain singular spaces.  %showing that the canonical model of a smooth projective variety of general type admits a Kähler-Einstein metric of negative Ricci curvature.
Over recent decades, 
a series of  further contributions, including \cite{BEGZ10, BBGZ_varma,BBEGZ_flow,BDL17,WNmono,Darvas_geom,DX21,darvas2020relative} and many others, have revealed the rich metric geometry of various potential spaces, which provide a natural framework for the variational approach to complex Monge-Ampère equations and  have significantly expanded the scope and applicability of pluripotential \mbox{methods} in complex geometry.

Let $(X, \omega_X)$ be a compact Hermitian manifold of dimension $n$. Fix another hermitian form $\omega$ on $X$, and let $\mu_X$ be a smooth volume form. The complex \MA\ equation   
     $$(\omega+dd^c\varphi)^n=c\mu_X$$
where $c>0$ is a constant and    $\varphi$ is a smooth $\omega$-psh function,    was first studied by  Cherri  \cite{Che87} and Hanani \cite{HANANI96}  under special curvature assumptions.  % Motivated by the  work of   Fu-Li-Yau   \cite{FLY12}, constructing special hermitian metrics on non-K\"ahler manifolds has \mbox{attracted} \mbox{considerable} interest. 
Following the    progress made by Guan-Li \cite{GL10}, %and Zhang \cite{zh10},     
Tosatti-Weinkove \cite{TW10} succeeded in fully extending Yau's theorem to    \mbox{general} Hermitian manifolds.   
It is  also natural to consider  Hermitian Monge-Amp\`ere equations with continuous weak solutions   for more general densities on the right-hand side. Such degenerate equations have been extensively investigated in \cite{blo_L2,DK12,KN15,Ngu16,KN19,KN_22domin,LPT,GP22}.  

If we merely assume that  the reference form $\omega$ is semipositive (and either big or of positive lower volume),  and  that  the right-hand side measure has  density in $L^p$  for some $p>1$,   Guedj-Lu  \cite{GL1}  developed a new pluripotential approach   to obtain  $L^{\infty}$ a priori  estimates. Higher regularity results  were also obtained in their work under suitable assumptions (see also \cite{Dang_herm} for  the nef and big case). In this context, one may further seek   solutions  with prescribed singularities, in which case  the equation is understood to hold via the non-pluripolar product as defined in \cite{BEGZ10,BGL25}. This direction was recently pursued by Alehyane-Lu-Salouf \cite{ALS25}, who obtained the existence of solutions with model singularities for the semipositive and big case.

 In the more general setting where the reference form is merely assumed to be big, solutions with minimal singularities, which are locally bounded on a Zariski open set, are established by Boucksom-Guedj-Lu \cite{BGL25} through an extension of the uniform $L^\infty$ a priori estimates.
 
 %for  \(L^p\) ($p>1$) right-hand sides.
 The major difficulty in this problem is that the Monge–Ampère volume can vary among potentials with the same singularity type; consequently, the comparison principle cannot be established as in the Kähler case. Let $\theta$ be a smooth real $(1,1)$-form and $\varphi$ a $\theta$-psh function.  In \cite{GL2,BGL25}, the authors defined the upper volume and the lower volume as 
 \[
\ove(\theta, \varphi) \;:=\; \sup_{\psi } \int_X (\theta+dd^c\psi)^n,
\qquad
\und (\theta,\varphi) \;:=\; \inf_{\psi } \int_X (\theta+dd^c\psi)^n,
\]
where the supremum and infimum are taken over all \(\theta\)-psh functions \(\psi\) having same singularity type as \(\varphi\) (i.e., \(\psi = \varphi + O(1)\)).  For $\theta=\omega_X$ and bounded $\varphi $,  we  simply write     $\ove(\omega_X)$ and  $ \und(\omega_X)$.   

We say that $X$ has the bounded
 mass property if $\ove(\omega_X)<+\infty$   and that $X$ has the positive
 volume property if $\und(\omega_X)>0$. 
   Both properties are  independent of the choice of the hermitian metric  and are  bimeromorphically invariant.    A natural and important question is whether   these volume conditions   hold. 
 Many examples have been provided in \cite{GL2,AGL23,BGL25}, \mbox{including} \mbox{Fujiki} manifolds, compact complex manifolds admitting a Guan-Li metric (i.e., satisfying $dd^c\omega_X=0$ and $d\omega_X\wedge d^c\omega_X=0$), and three-dimensional compact complex manifolds admitting a pluriclosed metric (i.e., $dd^c\omega_X=0$). However,  
    there is no known counterexample so far.    When these volume conditions are satisfied, the Demailly-P{\u{a}}un conjecture \cite[Conjecture 0.8]{DP04} can also be established in the hermitian setting  (see \cite{GL2,BGL25}): a nef form $\theta$ is big if and only if $\und(\{\theta\})>0$, where $\und(\{\theta\})\coloneqq \lim_{\varepsilon\to0} \und(\theta+\varepsilon\omega_X)$.
 
In what follows,  we will always assume that  the bounded mass property
as well as the positive volume property hold on $X$.  Let $\theta$ be a big form, i.e.,  there exists a $\theta$-psh function $\rho$ with analytic singularities   such that $\theta + dd^c   \rho  \geq \delta \omega_X$ for some  $\delta>0$.     Let  \(\phi \in \psh(X, \theta)\) be a model potential, meaning that   \(\phi = P_\theta[\phi]\) and $\und(\theta,\phi)>0$. 
   Associated with $\phi$ is the relative full mass class $\mathcal{E}(X, \theta, \phi)$, consisting of $\theta$-psh functions less singular than $\phi$ such that $\overline{\vol}(\theta, u) = \overline{\vol}(\theta, \phi)$.  This notion was originally introduced for closed forms in relative pluripotential theory in view of its central role in the variational approach to complex Monge-Ampère equations (see, e.g., \cite{DDNL18mono}). We now present our first main result.
  
\begin{theorem}[(Theorem \ref{MA_prescribed} and Theorem \ref{MA_pre_0})]\label{thm1.1}   
Assume   $\ove(\omega_X)<+\infty$ and $\und(\omega_X)>0$. Let $\theta$ be a big form and  $\phi$  a $\theta$-psh model potential. 
Let $\mu$ be a positive Radon measure which does not charge pluripolar sets. 
Then,   

\noindent{(i)} there exists a function \(\varphi \in \mathcal{E}(X, \theta,\phi)\) and a unique constant \(c>0\) such that  
\begin{align*}
(\theta+dd^c\varphi)^n = c\mu;
\end{align*}

\noindent{(ii)} for any \(\lambda > 0\), there exists a unique \(\varphi \in \mathcal{E}(X, \theta,\phi)\) such that
\begin{align*}
(\theta+dd^c\varphi)^n = e^{\lambda \varphi} \mu.
\end{align*}
\end{theorem}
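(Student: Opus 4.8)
The plan is to solve both equations by the variational method of Berman--Boucksom--Guedj--Zeriahi \cite{BBGZ_varma}, transplanted to the relative Hermitian setting through the rooftop envelopes and the relative finite-energy calculus developed in the previous sections (in the spirit of \cite{ALS25,BGL25}). Throughout I fix $v_\phi:=\int_X(\theta+dd^c\phi)^n$; this is positive because $\und(\theta,\phi)>0$, and, $\phi$ being a model potential, it equals the common Monge--Amp\`ere mass --- and the common mixed mass against $\theta+dd^c\phi$ --- of every potential in $\mathcal{E}(X,\theta,\phi)$; this is exactly where the bounded-mass and positive-volume hypotheses enter. Note that $\mu(X)<+\infty$ since $X$ is compact. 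On $\mathcal{E}^1(X,\theta,\phi)$ I use the relative Monge--Amp\`ere energy $E_\phi$ normalized by $E_\phi(\phi)=0$, which is concave, nondecreasing, satisfies $E_\phi(u+c)=E_\phi(u)+c\,v_\phi$, and has Gateaux derivative $v\mapsto\int_X v\,(\theta+dd^c u)^n$ at $u$.

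\textbf{Part (i).} Integrating $(\theta+dd^c\varphi)^n=c\mu$ and using $\int_X(\theta+dd^c\varphi)^n=v_\phi$ for $\varphi\in\mathcal{E}(X,\theta,\phi)$ forces $c=v_\phi/\mu(X)$, which gives uniqueness of the constant; with $\nu:=(v_\phi/\mu(X))\,\mu$, a measure of total mass $v_\phi$ not charging pluripolar sets, it remains to solve $(\theta+dd^c\varphi)^n=\nu$. First I would treat measures $\nu$ with density dominated by the Monge--Amp\`ere capacity, by maximizing the concave, translation-invariant functional $F(u):=E_\phi(u)-\int_X(u-\phi)\,d\nu$ over $\mathcal{E}^1(X,\theta,\phi)$: one checks that $F$ is upper semicontinuous and that, along any maximizing sequence normalized by $\sup_X(u-\phi)=0$, the relative energy stays bounded --- here the key input is that $\nu$ charges no pluripolar set, which controls $\int_X|u-\phi|\,d\nu$ uniformly on finite-energy potentials, as in \cite{BBGZ_varma,ALS25}; compactness of normalized energy sublevel sets then yields a maximizer $\varphi$. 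The Euler--Lagrange equation $(\theta+dd^c\varphi)^n=\nu$ is extracted from the differentiability of the functional $w\mapsto E_\phi\bigl(P_\theta[\phi](w)\bigr)$ --- where $P_\theta[\phi](w)$ is the largest $\theta$-psh minorant of $w$ that is less singular than $\phi$ --- together with the orthogonality relation $\int_X\bigl(P_\theta[\phi](w)-w\bigr)\,(\theta+dd^c P_\theta[\phi](w))^n=0$; the rooftop-envelope results proved earlier are exactly what make this projection differentiable with the expected derivative. For a general $\nu$ not charging pluripolar sets, I would use a Cegrell-type decomposition $\nu=f\,(\theta+dd^c\psi)^n$ with $\psi$ bounded and $f\ge 0$ integrable, truncate $f$ to reduce to the previous case, solve $(\theta+dd^c\varphi_j)^n=\nu_j$ for the increasing approximations, and pass to the limit; the non-pluripolar product being continuous along the resulting monotone sequences in the full-mass class, the limit $\varphi$ lies in $\mathcal{E}(X,\theta,\phi)$.

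\textbf{Part (ii).} Here the factor $e^{\lambda\varphi}$ removes the free constant and, being increasing, provides uniqueness. For existence I would maximize the strictly concave functional $F_\lambda(u):=E_\phi(u)-\tfrac{1}{\lambda}\int_X e^{\lambda u}\,d\mu$ over $\mathcal{E}^1(X,\theta,\phi)$; since every $\theta$-psh function is bounded above, $\int_X e^{\lambda u}\,d\mu\le e^{\lambda\sup_X u}\,\mu(X)<+\infty$, the functional is finite and well behaved, and $F_\lambda(u+c)=E_\phi(u)+c\,v_\phi-\tfrac{1}{\lambda}e^{\lambda c}\int_X e^{\lambda u}\,d\mu\to-\infty$ as $c\to\pm\infty$, which supplies the coercivity in the constant direction missing in (i). The same three steps --- upper semicontinuity, extraction of a maximizer of bounded energy, Euler--Lagrange equation via the envelope differentiability --- produce $\varphi\in\mathcal{E}^1(X,\theta,\phi)\subseteq\mathcal{E}(X,\theta,\phi)$ solving $(\theta+dd^c\varphi)^n=e^{\lambda\varphi}\mu$. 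For uniqueness, if $\varphi_1,\varphi_2$ both solve, the comparison principle for non-pluripolar products of potentials in $\mathcal{E}(X,\theta,\phi)$ (available here because they share the singularity type of $\phi$) gives $\int_{\{\varphi_1<\varphi_2\}}e^{\lambda\varphi_2}\,d\mu\le\int_{\{\varphi_1<\varphi_2\}}e^{\lambda\varphi_1}\,d\mu$, and since $e^{\lambda\varphi_1}<e^{\lambda\varphi_2}$ on $\{\varphi_1<\varphi_2\}$ this forces $\mu(\{\varphi_1<\varphi_2\})=0$; then $(\theta+dd^c\varphi_2)^n$ vanishes there and the domination principle gives $\varphi_1\ge\varphi_2$, whence $\varphi_1=\varphi_2$ by symmetry.

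\textbf{Main obstacle.} The real difficulty, absent in the K\"ahler case, is that the Monge--Amp\`ere mass is not cohomological and the naive comparison principle fails, so the entire relative apparatus --- constancy of mass on $\mathcal{E}(X,\theta,\phi)$, comparison and domination principles, compactness of energy sublevel sets --- must first be rebuilt from the bounded-mass and positive-volume hypotheses together with the rooftop envelopes. Within the argument above the two delicate points are: the differentiability, with derivative $v\mapsto\int_X v\,(\theta+dd^c\varphi)^n$, of the relative energy composed with the envelope projection, which is the engine behind the \emph{exact} Euler--Lagrange equation (a priori one only obtains an inequality over the cone of admissible variations); and ensuring that the maximizer exists and lands in the full-mass class $\mathcal{E}(X,\theta,\phi)$ rather than merely in $\mathcal{E}^1$. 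Both are genuine consequences of the rooftop-envelope criteria established earlier, not formalities of the variational scheme.
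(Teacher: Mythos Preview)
Your proposal has a genuine gap at its foundation: the variational machinery you invoke is not available in the Hermitian setting, even under the bounded-mass and positive-volume hypotheses. You assert that $v_\phi=\int_X\theta_\phi^n$ ``equals the common Monge--Amp\`ere mass\dots of every potential in $\mathcal{E}(X,\theta,\phi)$,'' but this is false: the mass $\int_X\theta_\varphi^n$ genuinely varies over $\mathcal{E}(X,\theta,\phi)$, lying only in the interval $[\und(\theta,\phi),\ove(\theta,\phi)]$ --- this is precisely why two distinct volumes are defined. Consequently your determination $c=v_\phi/\mu(X)$ in part~(i) fails; the paper obtains uniqueness of $c$ from the domination principle, not from a mass balance. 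More fundamentally, when $\theta$ is not closed the integration-by-parts identities underlying the relative Monge--Amp\`ere energy $E_\phi$ break down, so there is no concave functional with G\^ateaux derivative $v\mapsto\int_X v\,\theta_u^n$ and no reason for $E_\phi(u+c)=E_\phi(u)+c\,v_\phi$ to hold. The comparison principle you use for uniqueness in~(ii) is likewise unavailable here; only the weaker domination principle survives.

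The paper therefore takes a completely different route, avoiding energy functionals entirely. For $\phi=V_\theta$ it approximates $\mu$ by local convolution and controls the sequence via the ad hoc $(1+\varepsilon)$-energy estimate of Proposition~\ref{p-energy} (which does not require an actual energy), together with the convergence-in-capacity criterion of Theorem~\ref{1-ener_thm} and the stability Theorem~\ref{als_lem}. For general $\phi$ it does not perturb the measure but instead uses a supersolution technique: solve first in $\mathcal{E}(X,\theta)$, project to a supersolution via $P_\theta[\phi]$, and run a continuity argument whose openness step is the subsolution Theorem~\ref{thm_1.3} (Proposition~\ref{additive_sol}). Part~(i) is then deduced from part~(ii) by letting $\lambda\to 0$. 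Your ``main obstacle'' paragraph correctly identifies where the difficulty lies, but the resolution is not to rebuild the variational apparatus --- that cannot be done --- but to replace it.
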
 
 \newpage
 In the case when \(X\) is K\"ahler and \(\theta\) is closed, 
Theorem~\ref{thm1.1} was established in \cite{GZ05} for K\"ahler classes (with \(\phi=0\)), 
extended in \cite{BEGZ10} to big classes (with \(\phi=V_{\theta}\)), 
and further generalized in \cite{DDNL18mono,DDNL21LOG} to the case of an arbitrary 
\(\theta\)-psh model potential.  

The uniqueness of solutions to the first equation is a subtle issue in the hermitian setting. A positive result is currently available   in the special case where \(\theta\) is a hermitian form, \(\phi=0\), and  \(\mu\) is a measure well dominated by capacity, absolutely continuous with respect to the volume form, whose \(L^1\)-density is strictly positive   
 (see \cite{KN19,KN_22domin}).   

To prove the theorem above, we begin with the case where $\phi$ has  minimal singularities and the measure  $\mu$ satisfies $\mu\leq A(\omega_X+dd^c\psi)^n$ for a constant $A>0$ and some  bounded $\omega_X$-psh function $\psi$. Following an idea of Guedj and Zeriahi \cite{GZ05} (which can be traced back to \cite{Ceg98} in the local setting), we approximate  
$\mu$  via local convolution.  One of the main ingredients  in the proof is to derive a uniform $(1+\varepsilon)$-energy estimate. In particular, one can characterize the range of the complex \MA\ operator on the finite energy space $\mathcal{E}^{p}(X,\theta,\phi)$ ($p \ge 1$), which is defined as the subset of all $\varphi \in \mathcal{E}(X,\theta,\phi)$ such that $(\varphi-\phi) \in L^p(X,\theta_\varphi^n)$ (see, e.g., \cite{darvas2020relative}).

\begin{theorem}  [(Theorem \ref{infinity_energy})] \label{thm_infnity} Assume   $\ove(\omega_X)<+\infty$ and $\und(\omega_X)>0$. Let $\theta$ be a big form and $\phi$  a $\theta$-psh model potential.    Let \(\mu\) be a positive Radon measure satisfying  $$
\mu(E) \le A [\operatorname{Cap}_{\omega_X}(E)]^a $$
for some constants \(a, A > 0\) and for every Borel set \(E\subset X  \). Let  $p\geq1$. If $a >\frac{np}{n+p}$, 
 then   there exists    a function   $\varphi\in \mathcal{E}^{p}(X,\theta,\phi)$ and a unique constant $c>0$ such that      $\theta_{\varphi}^n= c\mu.$      
 
 In particular, if $a\geq n$,   the   equation    admits a solution  $\varphi\in \bigcap_{p\geq1}\mathcal{E}^{p}(X,\theta,\phi)$. 
\end{theorem}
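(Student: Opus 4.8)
\textit{Proof plan.} Everything will be deduced from Theorem~\ref{thm1.1}(i) together with an a priori energy bound that applies to \emph{any} solution; in particular no uniqueness of the potential is needed, and the last assertion will follow at once by applying the first part, for each $p\ge1$, to one fixed solution (note $a\ge n$ forces $a>\tfrac{np}{n+p}$ for every $p\ge1$). To start, since $\operatorname{Cap}_{\omega_X}(X)\le\ove(\omega_X)<+\infty$, the hypothesis gives $\mu(X)<+\infty$ and $\mu$ puts no mass on pluripolar sets (they have zero $\operatorname{Cap}_{\omega_X}$); we may assume $\mu\neq0$. Hence Theorem~\ref{thm1.1}(i) produces $\varphi\in\mathcal{E}(X,\theta,\phi)$ and $c>0$ with $\theta_\varphi^n=c\mu$, and $c$ is forced by $c\,\mu(X)=\int_X\theta_\varphi^n=\ove(\theta,\phi)\in(0,+\infty)$ (the value of $\int_X\theta_\varphi^n$ for $\varphi$ in the relative full mass class). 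It remains to show that every such $\varphi$ lies in $\mathcal{E}^p(X,\theta,\phi)$ whenever $a>\tfrac{np}{n+p}$.

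Fix such a $\varphi$, normalized by $\sup_X(\varphi-\phi)=0$, and set $g(s):=\operatorname{Cap}_{\omega_X}(\{\varphi<\phi-s\})$ for $s\ge0$. Then $g$ is non-increasing, $g(0)\le\ove(\omega_X)<+\infty$, and $g(s)\to0$ as $s\to\infty$ because $\varphi$ lies in the relative full mass class. The crux is the capacity estimate: using the Hermitian partial comparison principle (cf.\ \cite{GL1,BGL25}), together with the bigness of $\theta$ along $\rho$ to feed $\omega_X$-psh test functions into $\psh(X,\theta)$, one obtains a constant $C\ge1$ depending only on $(X,\omega_X,\theta,\phi)$ and on $\ove(\omega_X)$ such that, for all $s\ge0$ and $t\in(0,1]$,
\[
t^n\,g(s+t)\ \le\ C\int_{\{\varphi<\phi-s\}}\theta_\varphi^n\ =\ Cc\,\mu(\{\varphi<\phi-s\})\ \le\ CcA\,g(s)^a .
\]
I expect this to be the main obstacle: the usual comparison principle is unavailable in the Hermitian setting, so one must run the partial comparison — presumably through the canonical approximants $\max(\varphi,\phi-s)$ and an absorption argument — so that the non-closedness of $\omega_X$ and $\theta$ contributes only errors that are swallowed by $C$ thanks to the bounded mass property, all while carrying the model potential $\phi$ through. (Alternatively one may, as in the proof of Theorem~\ref{thm1.1}, run the local-convolution approximation of $\mu$ and prove the displayed estimate uniformly along the approximants, where $\varphi$ is bounded, at the cost of one further limiting step.)

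Granting this, a Guedj--Zeriahi type iteration lemma (cf.\ \cite{GZ05}) — fed by the recursion above, the elementary decay $\operatorname{vol}(\{\varphi<\phi-s\})=O(1/s)$ coming from integrability of quasi-plurisubharmonic functions, and $g(s)\to0$ — yields a polynomial decay $g(s)\le C'(1+s)^{-\gamma}$ whose exponent $\gamma=\gamma(n,a)>0$ satisfies $a\gamma>p\iff a>\tfrac{np}{n+p}$ (one may take $\gamma=\tfrac{n}{\,n-a\,}$ when $a<n$, with faster-than-polynomial decay once $a\ge n$, and $g\equiv0$ for large $s$ when $a>1$). Then
\[
\int_X|\varphi-\phi|^p\,\theta_\varphi^n\ \le\ C''\Big(1+\int_0^\infty s^{p-1}\mu(\{\varphi<\phi-s\})\,ds\Big)\ \le\ C''\Big(1+A\int_0^\infty s^{p-1}g(s)^a\,ds\Big),
\]
where $C''$ absorbs the contribution of the bounded part of $|\varphi-\phi|$, using $\mu(X),\ove(\theta,\phi)<+\infty$. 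The last integral is finite exactly when $a\gamma>p$, i.e.\ when $a>\tfrac{np}{n+p}$; hence $\varphi\in\mathcal{E}^p(X,\theta,\phi)$, which is the first assertion. Applying this for every $p\ge1$ to the fixed solution $\varphi$ when $a\ge n$ gives $\varphi\in\bigcap_{p\ge1}\mathcal{E}^p(X,\theta,\phi)$.
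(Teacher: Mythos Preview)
Your proposal has a genuine gap at precisely the step you flag as ``the main obstacle.'' The relative capacity comparison
\[
t^n\,\operatorname{Cap}_{\omega_X}\bigl(\{\varphi<\phi-s-t\}\bigr)\ \le\ C\int_{\{\varphi<\phi-s\}}\theta_\varphi^n
\]
is not established in the Hermitian setting with a big (possibly non-closed) form $\theta$ and an unbounded model potential $\phi$, and the references you cite do not supply it in this generality. The K\"ahler route runs through the comparison principle $\int_{\{u<v\}}\theta_v^n\le\int_{\{u<v\}}\theta_u^n$; when $d\theta\ne0$ the integration by parts produces global terms in $d\theta,\,d^c\theta,\,dd^c\theta$ that do not localize to $\{u<v\}$, and in the relative picture (where $\varphi-\phi$ is a difference of two unbounded quasi-psh functions, not itself quasi-psh) there is no evident way to absorb them into a single constant $C$. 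Your parenthetical alternative (regularize $\mu$ and argue on bounded approximants) does not escape this: the approximants satisfy $\varphi_j\simeq\phi$, so $\phi$ is still unbounded and the same comparison is needed. Two smaller points confirm the fragility of the approach: (i) the standard recursion $g(2s)\le Cs^{-n}g(s)^a$ yields $\gamma=n/(1-a)$ for $a<1$, not the $\gamma=n/(n-a)$ you state, so either the algebra or the intended inequality is off; (ii) your formula $c\,\mu(X)=\int_X\theta_\varphi^n=\ove(\theta,\phi)$ is false in the Hermitian case, since Monge--Amp\`ere mass is not constant on $\mathcal{E}(X,\theta,\phi)$; uniqueness of $c$ comes from the domination principle, not a mass identity.

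The paper's proof avoids comparison altogether. It regularizes $\mu$ by local convolution to smooth $\mu_j$ (which inherit the same capacity domination), solves $\theta_{\varphi_j}^n=c_j\mu_j$ with $\varphi_j\simeq\phi$ via Corollary~\ref{cor_6.1}, and proves \emph{uniform energy bounds} on the $\varphi_j$ by the Guedj--Lu envelope technique of Proposition~\ref{p-energy}: one builds a concave weight $\chi$ from the distribution function of $\varphi_j-\phi$ with respect to $\theta_{\varphi_j}^n$, estimates $P_\theta\bigl(\chi(\varphi_j-\phi)+\phi\bigr)$ using only Proposition~\ref{contact_0} and the minimum principle (Corollary~\ref{min_p}), and iterates to push the energy exponent from $\min(a',1)$ up to any value below $na/(n-a)$. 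Convergence in capacity and membership of the limit in $\mathcal{E}^p(X,\theta,\phi)$ then follow from the criteria of Section~\ref{general_rel} and lower semicontinuity. The point is that this route replaces the unavailable comparison principle by envelopes and the minimum principle, which do survive in the Hermitian setting.
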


When \(\theta\) is Kähler, Guedj-Zeriahi \cite{GZ05} proves, via the comparison principle, that the solution \(\varphi \in \mathcal{E}^{p}(X,\theta)\) exists provided \(a > \frac{1}{1+p}\).   Moreover,  when \(\theta\) is a hermitian form and     \(a > 1\), it was shown in \cite{KN_22domin} (see   \cite{Kol98,KOL03,EGZ09} for the K\"ahler case) that the Monge-Amp\`ere equation admits a continuous solution.

Another difficulty lies in  deriving relative $L^{\infty}$-type estimates from   energy estimates. 
Let $\{\varphi_j\}_j$ be a sequence of relative full‑mass potentials that is uniformly bounded from above.
From the viewpoint of pluripotential theory,  showing that the rooftop envelope \(P_{\theta}(\inf_{j} \varphi_j)\) belongs to 
\(\mathcal{E}(X,\theta,\phi)\) provides a precise way to characterize the relative \(L^{\infty}\)-estimates for  unbounded potentials. Consequently, we obtain the following   result.

\begin{theorem}[(Theorem \ref{1-ener_thm})] \label{thm_1.2} 
    Assume   $\ove(\omega_X)<+\infty$   $\und(\omega_X)>0$. Let $\theta$ be a big form and $\phi$  a $\theta$-psh model potential.   Let  $\varphi_j,\varphi\in \mathcal{E}(X,\theta,\phi)$ be such that  $\varphi_j$ converges to $\varphi  $ in $L^1(X)$. 
 Assume that  $$ \lim_{j\to+\infty}  \int_X |\varphi_j-\varphi|\,\theta_{\varphi_j}^n =0.$$ 
Then, by passing to a subsequence,    $P_{\theta}(\inf_{l\geq j} \varphi_l )\in  \mathcal{E}(X,\theta,\phi)$. 
 In particular, $\varphi_{j}$ converges to $\varphi$ in capacity.  
\end{theorem}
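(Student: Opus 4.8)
The plan is to pass to a well‑chosen subsequence, replace each $\varphi_j$ by its rooftop envelope with $\varphi$ so as to arrange $\varphi_j\le\varphi$, then study the increasing envelopes $\psi_j:=P_\theta(\inf_{l\ge j}\varphi_l)$ and show that no Monge--Amp\`ere mass is lost in the limit; the convergence in capacity is then extracted by a squeezing argument.

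\emph{Step 1 (normalisations).} Since $\{\varphi_j\}$ is bounded above we may assume $\varphi_j\le 0$, hence $\varphi\le 0$. Passing to a subsequence, assume $\varphi_j\to\varphi$ pointwise a.e.\ on $X$, $\sum_j\|\varphi_j-\varphi\|_{L^1(X)}<+\infty$, and $\int_X|\varphi_j-\varphi|\,\theta_{\varphi_j}^n\le 2^{-j}$. Put $\widetilde\varphi_j:=P_\theta(\min(\varphi_j,\varphi))$. By the stability of the relative full mass class under rooftop envelopes of its elements, $\widetilde\varphi_j\in\mathcal E(X,\theta,\phi)$; from $\varphi_j\le\widetilde\varphi_j\le\varphi$ we get $\widetilde\varphi_j\to\varphi$ in $L^1$ and a.e.; and by the locality of the non‑pluripolar product on contact sets, $\int_X|\widetilde\varphi_j-\varphi|\,\theta_{\widetilde\varphi_j}^n\le\int_X|\varphi_j-\varphi|\,\theta_{\varphi_j}^n$. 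So, relabelling, we may assume $\varphi_j\le\varphi$ for all $j$. Then $\inf_{l\ge j}\varphi_l$ is non‑decreasing in $j$ and, using $\inf_{l\ge j}\varphi_l\ge\varphi-\sum_{l\ge j}(\varphi-\varphi_l)$ together with a.e.\ convergence, it increases a.e.\ to $\varphi$; hence $\psi_j:=P_\theta(\inf_{l\ge j}\varphi_l)\in\psh(X,\theta)$ is non‑decreasing, $\psi_j\le\varphi_l$ for all $l\ge j$, and we set $\psi:=(\lim_j\psi_j)^{*}\le\varphi$.

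\emph{Step 2 (mass recovery — the crux).} By the negligibility of the non‑contact set, $\theta_{\psi_j}^n$ is carried by $\{\psi_j=\inf_{l\ge j}\varphi_l\}$; decomposing this set according to which $\varphi_l$ ($l\ge j$) realises the infimum and using the locality of the non‑pluripolar product on these contact sets yields
\[
\int_X(\varphi-\psi_j)\,\theta_{\psi_j}^n\;\le\;\sum_{l\ge j}\int_X(\varphi-\varphi_l)\,\theta_{\varphi_l}^n\;\le\;\sum_{l\ge j}2^{-l}\;\xrightarrow[\,j\to\infty\,]{}\;0 .
\]
Feeding this into the quantitative energy / relative $L^\infty$ estimates developed above (Theorem~\ref{infinity_energy} and the $(1+\varepsilon)$‑energy estimate), one obtains a lower bound $\int_X\theta_{\psi_j}^n\ge\ove(\theta,\phi)-\varepsilon_j$ with $\varepsilon_j\to 0$, and in particular $\psi_j\not\equiv-\infty$ and $\psi_j$ is less singular than $\phi$; combined with monotone convergence of the non‑pluripolar product along the increasing sequence $\psi_j\nearrow\psi$, this gives $\int_X\theta_\psi^n\ge\ove(\theta,\phi)$, whence (using $\psi\le\varphi$ with $\varphi\in\mathcal E(X,\theta,\phi)$) $\psi\in\mathcal E(X,\theta,\phi)$, and the same argument applied to each tail shows $\psi_j\in\mathcal E(X,\theta,\phi)$. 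This is the main obstacle: in the Hermitian setting the comparison principle is unavailable and the Monge--Amp\`ere mass is not constant within a singularity type, so the lower bound on $\int_X\theta_{\psi_j}^n$ cannot come from a naive integration by parts but must be routed through the $L^p$‑type a priori estimates established earlier.

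\emph{Step 3 (convergence in capacity).} Letting $j\to\infty$ in the displayed inequality, a Fatou‑type argument for monotone sequences of potentials gives $\int_X(\varphi-\psi)\,\theta_\psi^n=0$; since $\varphi-\psi\ge 0$ and $\psi\in\mathcal E(X,\theta,\phi)$, the domination principle forces $\psi=\varphi$, i.e.\ $\psi_j\nearrow\varphi$. Now $\psi_j\le\varphi_j\le(\sup_{l\ge j}\varphi_l)^{*}$, where the left‑hand side increases a.e.\ to $\varphi$ and the right‑hand side decreases to $(\limsup_l\varphi_l)^{*}=\varphi$; since monotone sequences of $\theta$‑psh functions converge in capacity to their limit, both bounds converge to $\varphi$ in capacity, and squeezing gives $\varphi_j\to\varphi$ in capacity. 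Undoing the reduction of Step~1 (which only shrank $|\varphi_j-\varphi|$) preserves this conclusion.
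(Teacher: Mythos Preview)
Your overall architecture---energy bound on the rooftop envelopes via the minimum principle, then full-mass membership, then domination plus squeezing---is the paper's, but Step~2 has a genuine gap. (Also, in Step~1 the inequality should read $\widetilde\varphi_j\le\varphi_j$, not the reverse, and $\widetilde\varphi_j\to\varphi$ in $L^1$ does not follow since the envelope $P_\theta(\min(\varphi_j,\varphi))$ may be much smaller than $\min(\varphi_j,\varphi)$; this reduction is in any case unnecessary.)

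The problem in Step~2 is twofold. You never establish that $\psi_j=P_\theta(\inf_{l\ge j}\varphi_l)\not\equiv-\infty$: that $\inf_{l\ge j}\varphi_l$ is finite a.e.\ says nothing about the existence of a $\theta$-psh minorant, so the envelope may well collapse. Second, your path from the energy bound to $\psi_j\in\mathcal{E}(X,\theta,\phi)$ is not valid. Theorem~\ref{infinity_energy} concerns solving Monge--Amp\`ere equations with capacity-dominated data and is proved in Section~\ref{sec_6} \emph{using} the present result, so invoking it here is circular; and a lower bound on $\int_X\theta_{\psi_j}^n$ does not yield membership in $\mathcal{E}(X,\theta,\phi)$, which in the Hermitian setting is defined via $\ove(\theta,\cdot)$---the mass of a single potential can sit anywhere between $\und(\theta,\phi)$ and $\ove(\theta,\phi)$.

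The paper resolves both issues at once by working with the \emph{finite} envelopes $w_j^k:=P_\theta(\varphi_j,\dots,\varphi_{j+k})$, which lie in $\mathcal{E}(X,\theta,\phi)$ by Lemma~\ref{diam_lem}. The minimum principle gives $\int_X|w_j^k-\varphi|\,\theta_{w_j^k}^n\le 2^{1-j}$ uniformly in $k$, and Proposition~\ref{1-ener_pro} (this is the correct input, not Theorem~\ref{infinity_energy} or the $(1+\varepsilon)$-energy estimate) then shows that the decreasing limit $w_j=\lim_k w_j^k$ is not identically $-\infty$ and belongs to $\mathcal{E}(X,\theta,\phi)$. Once that is in place, your Step~3 goes through essentially as written.
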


When $X$ is Kähler and $\theta$ is closed, it is well known that the $d_1$-convergence of   finite-energy potentials implies     convergence in capacity. (see, e.g.,  \cite{BBGZ_varma,Dar_finite,BDL17,GTru_quasimo}). In the hermitian setting, the second statement of Theorem \ref{thm_1.2} was obtained in  \cite[Proposition 2.5]{KN22_weak}, assuming that $\varphi_j$ is uniformly bounded.

  We also require the following generalization of the Cegrell-Kołodziej-Xing stability theorem (see \cite{CK06,Xing,DH_con}), which can also be viewed as a variant of the domination principle.
\begin{proposition}[(Proposition \ref{general_asl})] \label{pro_1.1}
Assume   $\ove(\omega_X)<+\infty$ and $\und(\omega_X)>0$. Let $\theta$ be a big form and $\phi$  a $\theta$-psh model potential.
Let $\lambda>0$.
     Let \( u_j,\varphi \in \mathcal{E}(X, \theta, \phi) \) be such that \( \boldsymbol{1}_{D_j} \theta_{u_j}^n \leq \mu \), where  $D_j\coloneqq \{u_j\leq \varphi\}$ and  \( \mu \) is a   non-pluripolar Radon measure. Assume that  $u_j$ converges    to a \(\theta\)-psh function $ u$ in \( L^1(X) \). 
     
     Then,  by passing to a subsequence,   both $u$ and  $ P_{\theta}(\inf_{l\geq j} u_l) $ belong to $\mathcal{E}(X, \theta, \phi)$.
\end{proposition}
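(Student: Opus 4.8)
The plan is to reduce the statement to a \emph{uniform} control of the $u_j$ inside $\mathcal{E}(X,\theta,\phi)$, and then to extract that control from the hypothesis by means of a barrier built from Theorem \ref{thm1.1}(ii). Since $u_j\to u$ in $L^1(X)$, Hartogs' lemma gives $\sup_X u_j\le C_0$ and $(\limsup_j u_j)^*=u$, so the functions $v_j:=P_\theta(\inf_{l\geq j}u_l)$ are $\theta$-psh, increase in $j$, and satisfy $v_j\le u$; write $v:=\lim_j v_j\le u$. The hypothesis is brought in by passing the $u_j$ through $\varphi$: set $\tilde g_j:=P_\theta\!\bigl(\min(\inf_{l\geq j}u_l,\varphi)\bigr)$, an increasing sequence with $\tilde g_j\le v_j$ and $\tilde g_j\le\varphi$. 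One checks that $\theta_{\tilde g_j}^n$ is carried by the contact set of the envelope, where it equals $\theta_{u_l}^n$ on the part of $\{u_l\le\varphi\}\subset D_l$ realizing the infimum, and $\theta_\varphi^n$ elsewhere; hence
\[
\theta_{\tilde g_j}^n\ \le\ \mu+\theta_\varphi^n\ =:\ \nu,
\]
a fixed Radon measure not charging pluripolar sets. Granting for the moment the uniform control discussed below — in particular, that $\tilde g_j$, $v_j$, $v$ and $u$ are less singular than $\phi$ — monotonicity of $\ove$ in the singularity type gives at once $\ove(\theta,\phi)\le\ove(\theta,\tilde g_j)\le\ove(\theta,\varphi)=\ove(\theta,\phi)$, so $\tilde g_j\in\mathcal{E}(X,\theta,\phi)$, and likewise $\tilde g_\infty:=\lim_j\tilde g_j\in\mathcal{E}(X,\theta,\phi)$ since $\phi-C\le\tilde g_\infty\le\varphi$. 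Then $\tilde g_\infty\le v\le u$ together with $v_j\le u_l$ for $l\ge j$ (so $\ove(\theta,v_j)\le\ove(\theta,u_l)=\ove(\theta,\phi)$, and, by the monotone-continuity properties of the upper volume along the increasing sequence $v_j$, $\ove(\theta,v)\le\ove(\theta,\phi)$) forces $\ove(\theta,\phi)=\ove(\theta,\tilde g_\infty)\le\ove(\theta,v)\le\ove(\theta,\phi)$, so $v\in\mathcal{E}(X,\theta,\phi)$ and hence each $v_j\in\mathcal{E}(X,\theta,\phi)$; the membership $u\in\mathcal{E}(X,\theta,\phi)$ then amounts to the single remaining inequality $\ove(\theta,u)\le\ove(\theta,\phi)$, i.e.\ to the assertion that the singularity types of the $u_j$ do not jump up in the limit.

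This is the point where Theorem \ref{thm1.1}(ii) — and the standing hypothesis $\lambda>0$ — enters. Let $\rho\in\mathcal{E}(X,\theta,\phi)$ solve $\theta_\rho^n=e^{\lambda\rho}\mu$, so that $\mathbf{1}_{D_j}\theta_{u_j}^n\le\mu=e^{-\lambda\rho}\theta_\rho^n$. Theorem \ref{thm_infnity} is not available here, since $\nu$ satisfies no capacity estimate a priori; instead I would compare $u_j$, resp.\ $\tilde g_j$, with $\min(\rho,\varphi)$ \emph{inside} $\mathcal{E}(X,\theta,\phi)$, using the partial comparison and domination estimates established earlier in the paper — which substitute for the Hermitian comparison principle, unavailable because $\theta$ need not be closed — together with the bounded mass and positive volume properties. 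This should bound the sublevel sets $\{\tilde g_j<\min(\rho,\varphi)-\varepsilon\}$, and symmetrically $\{u_j<\min(\rho,\varphi)-\varepsilon\}$, in terms of $\operatorname{Cap}_{\omega_X}$ uniformly in $j$, hence keep the singularity types of the $u_j$ uniformly non-degenerate and bounded. Feeding this back closes the gap above and yields $u\in\mathcal{E}(X,\theta,\phi)$; note also that $\int_X(\tilde g_\infty-\tilde g_j)\,d\nu\to0$ by monotone convergence gives $\int_X|\tilde g_j-\tilde g_\infty|\,\theta_{\tilde g_j}^n\to0$, which makes Theorem \ref{thm_1.2} applicable should the convergence-in-capacity formulation be preferred. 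Unwinding the truncation by $\varphi$ uses only $\max(u_j,\varphi)\ge\varphi\in\mathcal{E}(X,\theta,\phi)$ and the locality of the non-pluripolar product on $\{u_j>\varphi\}$.

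I expect the main obstacle to be exactly this comparison step. In the Hermitian setting there is no comparison principle, so one cannot directly preclude either a loss of Monge--Amp\`ere mass to a pluripolar set or an upward jump of the singularity type in the limit; both must be ruled out by combining the facts that $\mu$ (hence $\nu$) does not charge pluripolar sets and that $X$ enjoys the bounded mass (and positive volume) property, routed through the substitute comparison/energy estimates of the paper and the auxiliary equation $\theta_\rho^n=e^{\lambda\rho}\mu$. A secondary difficulty — that the sequence $u_j$ is neither monotone nor uniformly bounded below, so that off $D_j$ only locality of the non-pluripolar product is available — is what makes the detour through the truncated envelopes $\tilde g_j$ unavoidable.
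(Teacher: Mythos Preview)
Your approach has a circularity problem and a gap that together make it unworkable.

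\textbf{Circularity.} You invoke Theorem \ref{thm1.1}(ii) to produce a barrier $\rho\in\mathcal{E}(X,\theta,\phi)$ with $\theta_\rho^n=e^{\lambda\rho}\mu$. But in the paper's logical order, Theorem \ref{thm1.1}(ii) (Theorem \ref{MA_prescribed}) is proved via Proposition \ref{additive_sol}, whose Step~4 explicitly applies Proposition \ref{general_asl} --- the very statement you are proving. The constant $\lambda>0$ in the displayed statement is a leftover: it appears in no hypothesis or conclusion, and the version actually proved in Section \ref{general_rel} does not contain it. Building the strategy around it leads directly into this loop.

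\textbf{The gap.} Even setting circularity aside, you ``grant for the moment'' that $\tilde g_j=P_\theta(\min(\inf_{l\ge j}u_l,\varphi))$ and $v_j=P_\theta(\inf_{l\ge j}u_l)$ are $\theta$-psh and less singular than $\phi$. This is the content of the proposition, not a side condition: without it you cannot even form $\theta_{\tilde g_j}^n$, let alone bound it by $\mu+\theta_\varphi^n$. The minimum principle (Corollary \ref{min_p}) applies to envelopes of finitely many potentials; passing to the infinite envelope requires precisely the convergence you have not established. Your later sketch (``this should bound the sublevel sets\ldots'') does not close this: you would need a comparison for the \emph{finite} envelopes $P_\theta(u_j,\ldots,u_{j+k},\varphi)$ that is uniform in $k$, and you have not indicated how the barrier $\rho$ yields one in the absence of a comparison principle.

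The paper proceeds differently and self-containedly. It works with the finite envelopes $w_j^k=P_\theta(u_j,\ldots,u_{j+k})\in\mathcal{E}(X,\theta,\phi)$ (Lemma \ref{diam_lem}) and amplifies via $\hat w_j^k:=P_\theta(bw_j^k-(b-1)\varphi)$ for $b>1$. On the contact set intersected with $\{\hat w_j^k\le\varphi\}$ one has $w_j^k\le\varphi$, whence $\mathbf{1}_{\{\hat w_j^k\le\varphi\}}\theta_{\hat w_j^k}^n\le b^n\mu$. Combining this with the $L^1(\mu)$-convergence of $e^{u_j}$ (\cite[Lemma 11.5]{GZ}) and the maximum principle, one argues by contradiction: if $\sup_X\hat w_j^k\to-\infty$ in $k$, the resulting estimate forces $\und(\theta,\phi)\le\ove(\theta,\phi)-\int_X\theta_{\varphi'}^n$ for every $\varphi'\simeq\varphi$, contradicting $\und(\theta,\phi)>0$. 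No auxiliary Monge--Amp\`ere equation is solved.
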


 The case $D_j=X$ corresponds to the stability theorem in the hermitian setting, which was addressed in \cite{ALS24}. For Kähler manifolds with $\theta$ being closed, quantitative estimates of related setting were also investigated  in \cite[Theorem 4.4]{DV_qua1} and \cite[Theorem 1.5]{DV_qua2}. %Notably, compared to \cite{ALS24}, the bounded mass property is required in the proof of this generalized domination principle. 
 
To establish Theorem \ref{thm1.1} for a general model potential $\phi$, we adopt a supersolution technique employed in \cite{ALS25} by first working with the   equations twisted by an exponential. This      avoids the mass comparison arguments traditionally required in the K\"{a}hler setting (see \cite{GLZ17, LN19, DDNL21LOG}). The strategy consists of building a supersolution from the solution in the full mass class and then obtaining the desired solution via the continuity method. The main technical step lies in the openness part, which reduces to the following subsolution theorem.

\begin{theorem}[(Theorem \ref{thm_sum})] \label{thm_1.3} Assume   $\ove(\omega_X)<+\infty$ and $\und(\omega_X)>0$. Let $\theta$ be a big form and $\phi$  a $\theta$-psh model potential.  
Assume $u,v\in\mathcal{E}(X,\theta,\phi)$. Fix $\lambda>0$ and  set  $\mu \coloneqq e^{-\lambda u}\theta_{u}^n+e^{-\lambda v}\theta_{v}^n$. Then there exists a unique $\varphi\in\mathcal{E}(X,\theta,\phi)$ such that $$(\theta+dd^c\varphi)^n=e^{\lambda\varphi}\mu.$$
\end{theorem}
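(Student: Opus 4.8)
The plan is to set this up as a variational problem anchored at a supersolution: the specific form of $\mu$ is exactly what supplies a supersolution that already lives in $\mathcal{E}(X,\theta,\phi)$, and once that is in hand one maximizes a relative Ding-type functional below this supersolution and shows the maximizer solves the equation. Uniqueness is the soft part, so I would settle it first. If $\varphi_1,\varphi_2\in\mathcal{E}(X,\theta,\phi)$ both satisfy $\theta_\varphi^n=e^{\lambda\varphi}\mu$, then on $D_\varepsilon:=\{\varphi_1<\varphi_2-\varepsilon\}$ one has $\boldsymbol{1}_{D_\varepsilon}\theta_{\varphi_1}^n=\boldsymbol{1}_{D_\varepsilon}e^{\lambda\varphi_1}\mu\le e^{-\lambda\varepsilon}\boldsymbol{1}_{D_\varepsilon}\theta_{\varphi_2}^n$; since $\varphi_1,\varphi_2$ are relatively full mass, the domination-principle mechanism behind Proposition~\ref{general_asl} forces $\mu(D_\varepsilon)=0$, hence (letting $\varepsilon\downarrow0$) $\theta_{\varphi_2}^n(\{\varphi_1<\varphi_2\})=0$, so $\varphi_1\ge\varphi_2$ by the domination principle, and symmetrically $\varphi_1=\varphi_2$.

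For existence, observe that $\mu\ge e^{-\lambda u}\theta_u^n$ and $\mu\ge e^{-\lambda v}\theta_v^n$, i.e.\ $\theta_u^n\le e^{\lambda u}\mu$ and $\theta_v^n\le e^{\lambda v}\mu$, so $u$ and $v$ are supersolutions. Put $h:=P_\theta\bigl(\min(u,v)\bigr)$. By the rooftop-envelope criterion (Theorem~\ref{1-ener_thm}), $h\in\mathcal{E}(X,\theta,\phi)$, and $\theta_h^n$ is carried by the contact set $\{h=u\}\cup\{h=v\}$, where it is dominated by $\theta_u^n$, resp.\ $\theta_v^n$; hence $\theta_h^n\le e^{\lambda h}\mu$. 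Thus $h$ is a supersolution already sitting in the relative full mass class. This is the sole place where the hypothesis $\mu=e^{-\lambda u}\theta_u^n+e^{-\lambda v}\theta_v^n$ enters: for a generic non-pluripolar measure there need be no supersolution with the prescribed singularity type.

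Next I would maximize $F(\varphi):=E_\phi(\varphi)-\tfrac1\lambda\int_Xe^{\lambda\varphi}\,d\mu$, with $E_\phi$ the relative Monge--Amp\`ere energy, over $\mathcal K:=\{\varphi\in\mathcal E^1(X,\theta,\phi):\varphi\le h\}$ (which is nonempty: it contains any finite-energy subsolution normalized below $h$, and $h$ itself after replacing $h$ by $\max(h,\phi-C)$ if necessary). For $\varphi\le h\le\min(u,v)$ one has $\int_Xe^{\lambda\varphi}\,d\mu=\int_Xe^{\lambda(\varphi-u)}\theta_u^n+\int_Xe^{\lambda(\varphi-v)}\theta_v^n\le2\,\ove(\theta,\phi)<\infty$ even when $\mu$ has infinite mass, so $F$ is finite and upper semicontinuous on $\mathcal K$. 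Sliding down is strictly penalized, since $F(\varphi+c)=E_\phi(\varphi)+c\,\ove(\theta,\phi)-\lambda^{-1}e^{\lambda c}\int_Xe^{\lambda\varphi}\,d\mu\to-\infty$ as $c\to-\infty$; combining this with the uniform $(1+\varepsilon)$-energy estimates underlying Theorem~\ref{infinity_energy} and the compactness of normalized finite-energy potentials yields a maximizer $\varphi_0\in\mathcal K$. One then checks that the constraint is inactive, i.e.\ $\theta_{\varphi_0}^n(\{\varphi_0=h\})=0$, using that $h$ is a \emph{strict} supersolution off its singular locus together with Proposition~\ref{general_asl}; then $\varphi_0$ is an unconstrained critical point of $F$, whose Euler--Lagrange equation is exactly $\theta_{\varphi_0}^n=e^{\lambda\varphi_0}\mu$, and $\varphi_0\in\mathcal E^1(X,\theta,\phi)\subset\mathcal E(X,\theta,\phi)$.

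The main obstacle is everything in that last step. In the Hermitian setting the Monge--Amp\`ere volume is not constant along a fixed singularity type and $E_\phi$ need not be monotone, so the upper semicontinuity, the coercivity/compactness argument, and above all the differentiation of $F$ at its maximizer must be pushed through the uniform $(1+\varepsilon)$-energy bounds rather than via a comparison principle; likewise, proving that the ceiling $h$ does not bind---that no Monge--Amp\`ere mass is shed onto $\{\varphi_0=h\}$---is precisely the relative $L^\infty$-type control that Theorem~\ref{1-ener_thm} and Proposition~\ref{general_asl} are designed to provide. It is for exactly these steps that the standing assumptions $\ove(\omega_X)<+\infty$ and $\und(\omega_X)>0$ are indispensable.
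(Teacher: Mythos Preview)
Your uniqueness argument is fine and matches the paper (it is just Corollary~\ref{cor_domi}). The supersolution $h=P_\theta(u,v)\in\mathcal{E}(X,\theta,\phi)$ with $\theta_h^n\le e^{\lambda h}\mu$ is also correct and is a natural starting point.

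The genuine gap is the variational scheme. In the Hermitian setting there is no workable relative Monge--Amp\`ere energy $E_\phi$: the identity $\frac{d}{dt}E_\phi(\varphi_t)=\int_X\dot\varphi_t\,\theta_{\varphi_t}^n$ rests on integration by parts, which requires $d\theta=0$. Without that, $E_\phi$ is not concave, not monotone, and its Euler--Lagrange equation need not be $\theta_\varphi^n=e^{\lambda\varphi}\mu$. You note yourself that ``$E_\phi$ need not be monotone'' and that the mass is not constant along a singularity type, but these are not technical inconveniences to be ``pushed through'' with energy bounds---they are precisely why the variational method is unavailable here and why the paper never defines or uses $E_\phi$. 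Your appeals to Theorem~\ref{infinity_energy} and Proposition~\ref{general_asl} to supply compactness and to show the ceiling $h$ is inactive are also off-target: Theorem~\ref{infinity_energy} concerns measures dominated by capacity, not the present $\mu$, and Proposition~\ref{general_asl} gives no information about mass on a contact set $\{\varphi_0=h\}$.

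The paper's route avoids all of this. Since Theorem~\ref{MA_prescribed} has already been proved for finite non-pluripolar measures, one truncates $\mu$ to $\mu_j:=e^{-u_j}\theta_u^n+e^{-v_j}\theta_v^n$ with $u_j=\max(u,-j)$, $v_j=\max(v,-j)$, obtains $\varphi_j\in\mathcal{E}(X,\theta,\phi)$ solving $\theta_{\varphi_j}^n=e^{\varphi_j}\mu_j$, and shows $\varphi_j$ decreases (domination principle) to some $\varphi\in\mathcal{E}(X,\theta,\phi)$ using the $1$-energy criterion Proposition~\ref{1-ener_pro}. One then proves $\varphi\le\min(u,v)$ directly and checks $\theta_\varphi^n=e^\varphi\mu$ by a plurifine locality argument. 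This approximation-and-domination strategy replaces the missing comparison principle and is the substantive content of the proof.
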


  The special case where $u$ and $v$ have model singularities was studied in \cite[Section~2.5]{DDNLFULL} and  \cite[Theorem 5.2]{ALS25}. To establish this result in our  setting, we first address the case where $\theta_v^n \leq A(\omega_X + dd^c \psi)^n$ for a constant $A > 0$ and a bounded $\omega_X$-psh function $\psi$ by repeatedly employing the criteria proposed in  Section~\ref{general_rel}.  This is sufficient to derive Theorem~\ref{thm1.1} and thereby allows us to prove Theorem~\ref{thm_1.3} in full generality.

Note also that the \(L^{\infty}\) a priori estimates for measures with \(L^p\)-densities, as established in \cite[Theorem~2.3]{GL3}, remain valid in the relative setting. Combined with Theorem~\ref{thm1.1}, a further argument leads to the following corollary:
\begin{corollary}[(Corollary \ref{cor_6.1})]\label{cor_1.1} Assume   $\ove(\omega_X)<+\infty$ and $\und(\omega_X)>0$. Let $\theta$ be a big form and $\phi$  a $\theta$-psh model potential.
Let \(0\leq f \in L^{p}(X,\omega_X^n)\) with \(p > 1\)  and \(\int_X f\,\omega_X^n > 0\). Then

\noindent\textup{(i)} there exists a function \(\varphi \in \psh(X,\theta)\)    with  the same singularity type as $\phi$ and a unique constant \(c > 0\) such that   
 $$ 
(\theta+dd^c\varphi)^n = c f \omega_X^n; $$

\noindent\textup{(ii)} for any \(\lambda > 0\), there exists a unique \(\varphi \in \psh(X,\theta)\)   with the same singularity type as $\phi$  such that
$$ (\theta+dd^c\varphi)^n = e^{\lambda\varphi} f \omega_X^n.
 $$  
\end{corollary}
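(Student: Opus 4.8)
The plan is to derive the corollary from Theorem~\ref{thm1.1}, applied to the measure $\mu:=f\,\omega_X^n$, together with the relative form of the Ko{\l}odziej-type $L^\infty$ a priori estimate of \cite[Theorem~2.3]{GL3}. First I would note that, $X$ being compact and $0\le f\in L^p(X,\omega_X^n)\subset L^1(X,\omega_X^n)$ with $\int_X f\,\omega_X^n>0$, the measure $\mu$ is a nonzero finite positive Radon measure, absolutely continuous with respect to $\omega_X^n$, and hence does not charge pluripolar sets. Theorem~\ref{thm1.1} therefore applies directly: in case (i) it furnishes $\varphi\in\mathcal{E}(X,\theta,\phi)$ and the unique $c>0$ with $(\theta+dd^c\varphi)^n=c f\,\omega_X^n$, and in case (ii), for each $\lambda>0$, the unique $\varphi\in\mathcal{E}(X,\theta,\phi)$ with $(\theta+dd^c\varphi)^n=e^{\lambda\varphi}f\,\omega_X^n$. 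The whole problem then reduces to showing that such a $\varphi$ has the \emph{same} singularity type as $\phi$, i.e. that $\varphi-\phi\in L^\infty(X)$.

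Since $\varphi\in\mathcal{E}(X,\theta,\phi)$ is, by definition of the class, comparable to $\phi$ in one direction, one of the two required bounds on $\varphi-\phi$ comes for free, and the other one is exactly what the $L^\infty$ estimate provides, once the density on the right-hand side is seen to lie in $L^p$. In case (i) that density is $cf\in L^p(X,\omega_X^n)$, $p>1$, and nothing else is needed. In case (ii), I would first observe that $\varphi$, being $\theta$-psh on the compact manifold $X$, is bounded above, say $\varphi\le M$; hence the density satisfies $e^{\lambda\varphi}f\le e^{\lambda M}f\in L^p(X,\omega_X^n)$, which suffices. In either situation one then invokes the relative $L^\infty$ estimate --- the extension of \cite[Theorem~2.3]{GL3} in which the model potential $\phi=P_\theta[\phi]$ plays the role of the potential with minimal singularities --- to conclude that any $\varphi\in\mathcal{E}(X,\theta,\phi)$ solving $(\theta+dd^c\varphi)^n=g\,\omega_X^n$ with $0\le g\in L^p$, $p>1$, has the same singularity type as $\phi$, with $\|\varphi-\phi\|_{L^\infty}$ bounded in terms of $n$, $X$, $\omega_X$, $\theta$, $\phi$, $p$, and $\|g\|_{L^p}$ only. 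This yields $\varphi-\phi\in L^\infty(X)$, as wanted.

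Uniqueness in the narrower class is then inherited from Theorem~\ref{thm1.1}: if $u\in\psh(X,\theta)$ has the same singularity type as $\phi$, then $u$ is comparable to $\phi$ in the sense required by the definition of $\mathcal{E}(X,\theta,\phi)$ and satisfies $\ove(\theta,u)=\ove(\theta,\phi)$ (the upper volume depending only on the singularity type), so $u\in\mathcal{E}(X,\theta,\phi)$; consequently the constant attached to any solution of the first equation inside the narrower class must coincide with the unique $c$ of Theorem~\ref{thm1.1}(i), and any solution of the second equation inside the narrower class must coincide with the unique solution given by Theorem~\ref{thm1.1}(ii). The hard part of the argument is the relative $L^\infty$ estimate itself --- checking that the capacity comparisons behind \cite[Theorem~2.3]{GL3}, originally run with the reference potential $V_\theta=P_\theta(0)$, still go through for the model potential $\phi$, using $\phi=P_\theta[\phi]$, $\und(\theta,\phi)>0$, and the bounded mass and positive volume properties of $X$ to keep the relevant \MA\ masses and capacities under control. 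Granted that estimate, the remaining points (the verification that $f\,\omega_X^n$ meets the hypotheses of Theorem~\ref{thm1.1}, and the crude bound $\varphi\le M$ in case (ii), which is harmless since the final constant may depend on $M$ through $\|e^{\lambda\varphi}f\|_{L^p}$) are routine.
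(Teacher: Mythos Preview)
Your approach has a genuine gap at the point where you invoke the relative $L^\infty$ estimate. The version stated in the paper (Theorem~\ref{rel_infity}, the adaptation of \cite[Theorem~2.3]{GL3}) takes $u\simeq\phi$ as a \emph{hypothesis}, not as a conclusion: one assumes $u$ already has the same singularity type as $\phi$ and then bounds $\phi-u$ uniformly in terms of the data. The proof of that estimate (and likewise of Proposition~\ref{p-energy}) genuinely uses $T_{\max}:=\sup\{t:\theta_u^n(u<\phi-t)>0\}<+\infty$ to build the auxiliary weight, and this finiteness is exactly $u\simeq\phi$. So you cannot feed in an arbitrary $\varphi\in\mathcal{E}(X,\theta,\phi)$ and read off $\varphi\simeq\phi$; the argument is circular as written.

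The paper's proof supplies the missing idea: a bootstrapping through Skoda's integrability theorem. One picks a small $\varepsilon>0$ so that $e^{-\varepsilon\varphi}f\in L^q$ for some $q>1$, and solves the perturbed equations $\theta_{\varphi_j}^n=\mathbf{1}_{\{\varphi>\phi-j\}}e^{(\lambda+\varepsilon)\varphi_j-\varepsilon\varphi}\mu$. Rewriting the right side as $e^{(\lambda+\varepsilon)(\varphi_j-\max(\varphi,\phi-j))}\theta^n_{\max(\varphi,\phi-j)}$, the domination principle forces $\varphi_j\ge\max(\varphi,\phi-j)$, so each $\varphi_j$ \emph{does} satisfy $\varphi_j\simeq\phi$. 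Now Theorem~\ref{rel_infity} legitimately applies and gives a uniform bound $\varphi_j\ge\phi-T$; since the $\varphi_j$ decrease to a solution of the original equation, which by uniqueness is $\varphi$, one concludes $\varphi\ge\phi-T$. The Skoda step and the construction of these auxiliary $\varphi_j$ are the substance you are missing.
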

Given that the proof of Theorem~\ref{thm1.1} relies on the bounded mass property, a natural question that arises is whether this condition can be omitted in   Corollary~\ref{cor_1.1}, as considered  in \cite{GL3,BGL25,ALS25}.

 Beyond this,  to investigate the stability of solutions to \MA\ equations  when the prescribed singularity type varies, it is natural to introduce the following
distance-like   function:
\begin{align*}
d_{\theta}(u, v) := 2 \ove( \theta,\max(u,v)) -   \ove( \theta, u ) - \ove( \theta, v ),
\end{align*}
where   $u$  and $v$ are  $\theta$-psh functions.    
We denote by $\mathcal{S}(X,\theta)$    the space of singularity types of $\theta$-psh functions. Let $\delta>0$ be a constant. Set
$$\mathcal{S}_{\delta}(X,\theta):=\{u\in \mathcal{S}(X,\theta):  \, \und(\theta,u)>\delta\}.$$

  The   function $d_{\theta}$ is non-degenerate on the space of model potentials.  When $X$ is   Kähler and  $\theta$ is closed,  its restriction to $\mathcal{S}_{\delta}(X,\theta)$  is comparable to the one  introduced in \cite{DDNL21}.  Despite the lack of a (quasi-)triangle inequality in the hermitian setting, we can still establish:  

\begin{theorem}[(Theorem \ref{cauchy})]\label{thm_1.4} Assume   $\ove(\omega_X) < +\infty$ and $\und(\omega_X) > 0$. Let $\theta$ be a big form. Then for every $\delta > 0$, the space $\mathcal{S}_{\delta}(X, \theta)$ is complete. That is, if $\{u_j\}_j$ is a  $d_{\theta}$-Cauchy sequence in $\mathcal{S}_{\delta}(X, \theta)$  $($i.e., for every $\varepsilon > 0$ there exists $N\in\mathbb{N}$ such that $d_{\theta}(u_j, u_k) < \varepsilon$ for all $j, k > N$$)$, then there exists $u \in \mathcal{S}_{\delta}(X, \theta)$ such that $d_{\theta}(u_j, u) \to 0$.
\end{theorem}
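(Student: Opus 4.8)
The plan is to follow the Kähler blueprint of Darvas--Di Nezza--Lu \cite{DDNL21}, producing the limit as the increasing limit of the rooftop envelopes of the tails of the sequence, while replacing the (quasi-)triangle inequality for $d_\theta$ — which is unavailable in the Hermitian setting — by a concavity-type inequality for the upper volume, together with semicontinuity of $\ove(\theta,\cdot)$ and $\und(\theta,\cdot)$ along monotone sequences and the stability statements of Theorem~\ref{thm_1.2} and Proposition~\ref{pro_1.1}. Since $d_\theta$ depends only on singularity types, I would fix representatives $u_j$ normalized by $\sup_X u_j = 0$; the normalized family is then relatively compact in $L^1(X)$, and the uniform bound $\und(\theta, u_j) > \delta$ prevents escape to $-\infty$. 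After passing to a subsequence I may assume $d_\theta(u_j, u_{j+1}) \le 2^{-j}$, and, since monotonicity of the upper volume gives $|\ove(\theta, u_j) - \ove(\theta, u_k)| \le d_\theta(u_j, u_k)$, the limit $m := \lim_j \ove(\theta, u_j)$ exists and $m \ge \delta$.

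Next I would set $\underline u_j := P_\theta(\inf_{l \ge j} u_l)$, an increasing sequence of $\theta$-psh functions, not identically $-\infty$ by the above, with $\underline u_k \le u_j$ whenever $k \le j$; put $u := (\sup_j \underline u_j)^{*}$, so that $\underline u_j \le u$ and $\underline u_j \le u_j$ for every $j$. The heart of the argument is to show $\ove(\theta, \underline u_j) \to m$, i.e.\ $d_\theta(u_j, \underline u_j) = \ove(\theta, u_j) - \ove(\theta, \underline u_j) \to 0$. Writing $\underline u_j$ as the decreasing limit of $w_{j,k} := P_\theta(\min(u_j, \dots, u_{j+k}))$ and using continuity of $\ove(\theta,\cdot)$ along decreasing sequences, I would telescope
\[
\ove(\theta, u_j) - \ove(\theta, \underline u_j) = \sum_{k \ge 0}\big(\ove(\theta, w_{j,k}) - \ove(\theta, w_{j,k+1})\big);
\]
since $w_{j,k+1} = P_\theta(\min(w_{j,k}, u_{j+k+1}))$ and $w_{j,k} \le u_{j+k}$, the concavity inequality $\ove(\theta, P_\theta(\min(a,b))) + \ove(\theta, \max(a,b)) \ge \ove(\theta, a) + \ove(\theta, b)$ combined with monotonicity of $\ove$ gives $\ove(\theta, w_{j,k}) - \ove(\theta, w_{j,k+1}) \le \ove(\theta, \max(w_{j,k}, u_{j+k+1})) - \ove(\theta, u_{j+k+1}) \le d_\theta(u_{j+k}, u_{j+k+1}) \le 2^{-(j+k)}$, so the sum is $\le 2^{1-j} \to 0$.

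With this in hand: since $\underline u_j \uparrow u$, continuity of $\ove$ along increasing sequences gives $\ove(\theta, u) = m$, and invoking Theorem~\ref{thm_1.2} and Proposition~\ref{pro_1.1} (applied to the tail envelopes) shows that $u$ belongs to a relative full-mass class and, together with monotonicity and lower semicontinuity of $\und(\theta,\cdot)$, that $\und(\theta, u) > \delta$; thus $[u] \in \mathcal{S}_\delta(X,\theta)$. To finish I would bound $d_\theta(u_j, u) = 2\ove(\theta, \max(u_j,u)) - \ove(\theta, u_j) - \ove(\theta, u)$ from above: since the last two terms tend to $m$, it suffices to prove $\ove(\theta, \max(u_j, u)) \to m$, and because $\max(u_j, u) \le (\sup_{l \ge j} u_l)^{*}$ this reduces to the companion estimate $\ove(\theta, (\sup_{l\ge j}u_l)^{*}) \to m$, obtained by an argument symmetric to the telescoping above applied to the increasing maxima $\max(u_j, \dots, u_{j+k})$. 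As $d_\theta \ge 0$ always, this gives $d_\theta(u_j, u) \to 0$; convergence of the potentials in capacity, if desired, is then Theorem~\ref{thm_1.2}.

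The main obstacle is entirely in the Hermitian pluripotential input. The concavity inequality for $\ove$ driving the telescoping (and its symmetric version for the sup-envelope) has no comparison-principle proof here and must be extracted from the rooftop criteria of Section~\ref{general_rel}, the subsolution Theorem~\ref{thm_1.3}, and the Cegrell--Kołodziej--Xing-type stability in Proposition~\ref{pro_1.1} — all of which hinge on the bounded-mass and positive-volume hypotheses; likewise the monotone-sequence (semi)continuity of $\ove$ and $\und$, which is the Hermitian substitute for the continuity of non-pluripolar masses used in the Kähler case, and the verification that the limit still satisfies the \emph{strict} bound $\und(\theta, u) > \delta$, are where these hypotheses and the full-mass machinery developed earlier are genuinely needed.
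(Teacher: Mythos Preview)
Your telescoping argument hinges on the concavity inequality
\[
\ove(\theta, P_\theta(\min(a,b))) + \ove(\theta, \max(a,b)) \ge \ove(\theta, a) + \ove(\theta, b),
\]
and this is precisely where the Hermitian setting bites. The volume diamond inequality that the paper proves (Proposition~\ref{diam_ineq}) carries an unavoidable error term $\Delta(\theta, w) = \ove(\theta, w) - \und(\theta, w)$:
\[
\ove(\theta,a) + \ove(\theta,b) \le \ove(\theta,\max(a,b)) + \Delta(\theta,\max(a,b)) + \ove(\theta, P_\theta(a,b)).
\]
In your telescope this produces, at each step, an extra $\Delta(\theta,\max(w_{j,k},u_{j+k+1}))$, and there is no reason for these to be summable: the gap $\ove-\und$ need not be small along the sequence (it is bounded only by $\ove(\theta)-\delta$). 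The results you propose to extract the inequality from---Theorem~\ref{thm_1.3}, Proposition~\ref{pro_1.1}, the rooftop criteria---do not eliminate this term; indeed, the paper explicitly notes (before Proposition~\ref{d_s_low}) that removing $\Delta(\theta,w)$ is only achieved ``in the approximating sense'' and requires a fixed reference point, which a Cauchy sequence does not yet have.

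The paper sidesteps this entirely by building the limit from the \emph{sup}-side first. Using Lemma~\ref{lemma ddnl} (the Hermitian analogue of the DDNL lemma) on $d_\theta(u_l,u_{l+1})\le 2^{-(l+1)n}$ yields $h_l\in\psh(X,\theta)$ with $(1-2^{-l})\max(u_l,u_{l+1}) + 2^{-l}h_l \le u_l$; this convex-combination bound gives directly $\ove(\theta,v_j^{k+1})-\ove(\theta,v_j^k)\le nM2^{-l}$ with \emph{no} error term, so the telescope for the increasing maxima $v_j^k=\max(u_j,\dots,u_{j+k})$ goes through cleanly. The limit $u$ is then the decreasing limit of the model envelopes $P_\theta[v_j]$, and Proposition~\ref{dec_inc} handles the semicontinuity of $\ove$ and $\und$ along this monotone sequence. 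Your ``companion estimate'' for the sup-envelopes is exactly this argument and is fine; the gap is that you try to construct $u$ via the rooftop side, which in the Hermitian case only becomes tractable \emph{after} a limit is already in hand (this is how Theorem~\ref{model_capacity} proceeds).
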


\begin{theorem}[(Theorem \ref{model_capacity})] \label{thm_1.5} Assume   $\ove(\omega_X)<+\infty$ and $\und(\omega_X)>0$. Let $\theta$ be a big form. 
    Fix $\delta>0$ and  let $\phi_j, \phi \in \mathcal{S}_{\delta}(X,\theta)$ be model potentials.  Suppose that $d_{\theta}(\phi_j,\phi)\to0$ as $j\to+\infty$. Then, by passing to a sequence, there exist   a decreasing sequence $v_{j}\geq \phi_{j}$ and an increasing sequence $w_{j}\leq \phi_{j}$ such that $d_{\theta}(v_{j},\phi)\to0$ and $d_{\theta}(w_{j},\phi)\to0$. In particular,   $\phi_j$ converges to $ \phi$ in  capacity.
\end{theorem}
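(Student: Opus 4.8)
I will use throughout the identity $\ove(\theta,u)=\int_X\theta_{P_\theta[u]}^n$ (on model potentials $\ove$ is just the non-pluripolar mass), together with the elementary observation that $d_\theta(u,v)=\bigl(\ove(\theta,\max(u,v))-\ove(\theta,u)\bigr)+\bigl(\ove(\theta,\max(u,v))-\ove(\theta,v)\bigr)$ is a sum of two non-negative terms; hence $d_\theta(\phi_j,\phi)\to0$ forces $\ove(\theta,\phi_j)\to\ove(\theta,\phi)$, $\ove(\theta,\max(\phi_j,\phi))\to\ove(\theta,\phi)$, and $|\ove(\theta,\phi_j)-\ove(\theta,\phi)|\le d_\theta(\phi_j,\phi)$. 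After passing to a subsequence I may assume $\sum_j d_\theta(\phi_j,\phi)<+\infty$ and, by compactness, that $\phi_j\to\psi$ in $L^1(X)$ for some $\psi\in\psh(X,\theta)$, so that $\bigl(\sup_{l\ge j}\phi_l\bigr)^*\searrow\psi$. I take as the two monotone sequences
\[
v_j:=\bigl(\max(\phi,\sup_{l\ge j}\phi_l)\bigr)^*,\qquad w_j:=P_\theta\bigl(\min(\phi,\inf_{l\ge j}\phi_l)\bigr),
\]
together with the finite truncations $g_{j,k}:=P_\theta(\min(\phi,\phi_j,\dots,\phi_k))$. By the supremum- and rooftop-closedness of the class of model potentials, all of $v_j,w_j,g_{j,k}$ — and hence also $v:=\lim_j v_j$ (a decreasing limit of model potentials is one, since $P_\theta[v]\le\inf_j v_j=v\le P_\theta[v]$) and $w:=\lim_j w_j$ — are model potentials; one has $v_j\searrow v$, $w_j\nearrow w$, $g_{j,k}\searrow w_j$, $w_j\le\phi_j\le v_j$, $w_j\le\phi\le v_j$, and $w\le\psi\le v=\max(\phi,\psi)$. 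As $\max(v_j,\phi)=v_j$ and $\max(w_j,\phi)=\phi$, we get $d_\theta(v_j,\phi)=\ove(\theta,v_j)-\ove(\theta,\phi)$ and $d_\theta(w_j,\phi)=\ove(\theta,\phi)-\ove(\theta,w_j)$, so the first part of the theorem is equivalent to the two mass convergences $\ove(\theta,v_j)\to\ove(\theta,\phi)$ and $\ove(\theta,w_j)\to\ove(\theta,\phi)$.

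For $w_j$ I estimate from below. The concavity of the volume under rooftop envelopes, i.e. $\ove(\theta,\max(a,b))+\ove(\theta,P_\theta(\min(a,b)))\ge\ove(\theta,a)+\ove(\theta,b)$, applied to $g_{j,k}=P_\theta(\min(g_{j,k-1},\phi_k))$, combined with $g_{j,k-1}\le\phi$ (so $\ove(\theta,\max(g_{j,k-1},\phi_k))\le\ove(\theta,\max(\phi,\phi_k))\le\ove(\theta,\phi)+d_\theta(\phi_k,\phi)$) and $\ove(\theta,\phi_k)\ge\ove(\theta,\phi)-d_\theta(\phi_k,\phi)$, gives $\ove(\theta,g_{j,k})\ge\ove(\theta,g_{j,k-1})-d_\theta(\phi_k,\phi)$; iterating from $g_{j,j}$ yields $\ove(\theta,g_{j,k})\ge\ove(\theta,\phi)-\sum_{l\ge j}d_\theta(\phi_l,\phi)$ for every $k$. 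Since $g_{j,k}\searrow w_j$, the monotonicity theorem for non-pluripolar masses (valid here by the bounded mass property) gives $\ove(\theta,w_j)=\lim_k\ove(\theta,g_{j,k})\ge\ove(\theta,\phi)-\sum_{l\ge j}d_\theta(\phi_l,\phi)$, while $\ove(\theta,w_j)\le\ove(\theta,\phi)$ because $w_j\le\phi$; hence $\ove(\theta,w_j)\to\ove(\theta,\phi)$ and $d_\theta(w_j,\phi)\to0$. Letting $j\to\infty$, the model potential $w\le\phi$ has $\int_X\theta_w^n=\int_X\theta_\phi^n$, so by the rigidity of model potentials (comparable model potentials with equal non-pluripolar mass coincide) $w=\phi$; in particular $w_j\nearrow\phi$, $\phi\le\psi$, and $v=\max(\phi,\psi)=\psi$.

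The remaining point, $\ove(\theta,v_j)\to\ove(\theta,\phi)$ — equivalently, since $v_j\searrow\psi$ and by the monotonicity theorem, $\int_X\theta_\psi^n=\int_X\theta_\phi^n$ — is the one genuine difficulty, and I expect it to be the main obstacle: because the volume is \emph{not} sub-modular with respect to $\max$, the telescoping argument above has no dual, which is precisely the manifestation of the missing (quasi-)triangle inequality for $d_\theta$. The inequality $\int_X\theta_\psi^n\ge\int_X\theta_\phi^n$ is immediate from $\psi\ge\phi$; for the reverse I would exploit the standing hypothesis $\und(\theta,\phi_j)>\delta$, using the monotonicity and lower-semicontinuity properties of the lower volume to prevent the non-pluripolar mass of the envelopes $v_j\ge\phi_j$ from exceeding, in the limit, that of $\phi$, in combination with the relative comparison and domination results established earlier (Proposition~\ref{pro_1.1} and Section~\ref{general_rel}) applied to the squeeze $w_j\le\phi_j\le v_j$. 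Granting $\int_X\theta_\psi^n=\int_X\theta_\phi^n$: the model potential $\psi\ge\phi$ then has the same mass as $\phi$, so rigidity gives $\psi=\phi$, hence $v=\phi$ and $d_\theta(v_j,\phi)=\ove(\theta,v_j)-\ove(\theta,\phi)\to0$.

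Finally, with $w_j\nearrow\phi$ and $v_j\searrow\phi$ — monotone convergence of $\theta$-psh functions to a $\theta$-psh limit being convergence in capacity — the squeeze $w_j\le\phi_j\le v_j$ yields $\phi_j\to\phi$ in capacity along the chosen subsequence. Since the candidate limit $\phi$ does not depend on the subsequence, every subsequence of $\{\phi_j\}$ has a further subsequence converging in capacity to $\phi$, so the whole sequence converges to $\phi$ in capacity, which completes the proof.
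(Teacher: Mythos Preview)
Your telescoping argument for the lower sequence $w_j$ relies on the inequality
\[
\ove(\theta,\max(a,b))+\ove(\theta,P_\theta(a,b))\ge\ove(\theta,a)+\ove(\theta,b),
\]
which you call ``concavity of the volume under rooftop envelopes''. This inequality is \emph{false} in the Hermitian setting. The correct statement is Proposition~\ref{diam_ineq}: for model potentials $a,b$,
\[
\ove(\theta,a)+\ove(\theta,b)\le \ove(\theta,\max(a,b))+\Delta(\theta,\max(a,b))+\ove(\theta,P_\theta(a,b)),
\]
with the extra error $\Delta(\theta,w)=\ove(\theta,w)-\und(\theta,w)$. This error vanishes when $\theta$ is closed (which is why your argument works verbatim in the K\"ahler case) but is a fixed positive quantity here. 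When you iterate over $k-j+1$ steps to bound $\ove(\theta,g_{j,k})$, you accumulate $k-j+1$ copies of such an error term, and the bound blows up as $k\to\infty$. The paper circumvents this precisely via Proposition~\ref{d_s_low} and the quantitative use of Lemma~\ref{lemma ddnl}: one constructs auxiliary potentials $h_{b,l}=P_\theta(bu_l-(b-1)u)$ with carefully chosen $b=b(l)\to\infty$, so that $w_j^k\ge(1-2^{-j})u+2^{-j}P_\theta(h_{2^j,j},\dots,h_{2^j,j+k})$, which gives a bound on $d_\theta(w_j^k,u)$ independent of $k$. This bypasses the error term rather than summing it.

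For the upper sequence you concede the gap yourself: you do not prove $\ove(\theta,v_j)\to\ove(\theta,\phi)$ but only sketch a strategy invoking $\und(\theta,\phi_j)>\delta$ and Proposition~\ref{pro_1.1}. The paper handles this (Lemma~\ref{d_s_up}) again through Lemma~\ref{lemma ddnl}: from $d_\theta(u_l,u)\le 2^{-(l+1)n}$ one extracts $h_l$ with $(1-2^{-l})v_j^{l-j}+2^{-l}h_l\le v_j^{l-j-1}$, and then Propositions~\ref{bgl_mono}--\ref{bgl_sum} give $\ove(\theta,v_j^{l-j})-\ove(\theta,v_j^{l-j-1})\le nM2^{-l}$, which \emph{does} telescope. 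The common thread in both directions is that the sub-/super-modularity inequalities you want are unavailable, and Lemma~\ref{lemma ddnl} serves as the quantitative substitute.
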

  This then allows us to generalize Theorem \ref{thm_1.2} and Proposition \ref{pro_1.1} to the case of varying prescribed singularities. As an application, we establish the following stability result for solutions to  Monge-Amp\`ere equations with an exponential twist.    For more related results, we refer to \cite{KN19, DDNL21, LPT, DV_qua2} and the references therein.

\begin{corollary} [(Corollary  \ref{cor_6.2})]\label{cor_1.2} Assume   $\ove(\omega_X)<+\infty$ and  $\und(\omega_X)>0$. 
Fix \(\delta,\lambda > 0\) and $p>1$. Let \(\phi_j, \phi \in \mathcal{S}_{\delta}(X,\theta)\) be model potentials such that \(d_{\theta}(\phi_j,\phi) \to 0\).   
Suppose $0 \leq f_j, f \in L^p(X, \omega_X^n)$ have uniformly bounded $L^p$-norms, satisfy $\int_X f_j \omega_X^n, \int_X f \omega_X^n > 0$, and $f_j\to f$ in $L^1$.  Let $\varphi_j, \varphi \in \psh(X, \theta)$ be such that $\varphi_j \simeq \phi_j$, $\varphi \simeq \phi$, and
\[
\theta_{\varphi_j}^n = e^{\lambda\varphi_j} f_j \omega_X^n,
\qquad
\theta_{\varphi}^n = e^{\lambda\varphi} f \omega_X^n.
\]
Then $\varphi_j$ converges   to $\varphi$ in capacity.
\end{corollary}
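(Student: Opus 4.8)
The plan is to combine compactness with the uniqueness statement of Theorem~\ref{thm1.1}(ii): I will prove that every subsequence of $\{\varphi_j\}$ has a further subsequence converging in capacity to $\varphi$, which then forces $\varphi_j\to\varphi$ in capacity. First I would normalize so that $\sup_X\phi_j=0$ and appeal to the relative $L^\infty$ a priori estimate — \cite[Theorem~2.3]{GL3} in the relative setting, together with Theorem~\ref{thm1.1} and Corollary~\ref{cor_1.1} — to get a constant $C>0$ \emph{independent of $j$}, its uniformity coming from $\und(\theta,\phi_j)>\delta$ and the uniform bound on $\|f_j\|_{L^p(X,\omega_X^n)}$, such that $\|\varphi_j-\phi_j\|_{L^\infty}\le C$. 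Hence $\sup_X\varphi_j\in[-C,C]$, so $\{\varphi_j\}$ is relatively compact in $L^1(X)$ with every $L^1$-limit not identically $-\infty$, and $\theta_{\varphi_j}^n=e^{\lambda\varphi_j}f_j\,\omega_X^n\le e^{\lambda C}f_j\,\omega_X^n$.

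Fix an arbitrary subsequence. Using $f_j\to f$ in $L^1$ together with the $L^1$-compactness above, I would extract a further subsequence $\{\varphi_{j_k}\}$ with $\varphi_{j_k}\to\psi$ in $L^1(X)$ and a.e. for some $\psi\in\psh(X,\theta)$, $\psi\not\equiv-\infty$, and, shrinking once more, with $\|f_{j_k}-f\|_{L^1}\le 2^{-k}$. Then $F:=|f|+\sum_k|f_{j_k}-f|$ lies in $L^1(X,\omega_X^n)$ and dominates every $f_{j_k}$, so $\theta_{\varphi_{j_k}}^n\le e^{\lambda C}F\,\omega_X^n=:\mu$, a \emph{fixed} non-pluripolar Radon measure (since $\omega_X^n$ does not charge pluripolar sets).

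Next I would invoke the generalization to varying prescribed singularities of Proposition~\ref{pro_1.1} — equivalently of the Hermitian stability theorem (the case $D_j=X$) — whose availability is recorded after Theorem~\ref{thm_1.5}: since $\varphi_{j_k}\in\mathcal{E}(X,\theta,\phi_{j_k})$, $d_\theta(\phi_{j_k},\phi)\to0$, $\varphi_{j_k}\to\psi$ in $L^1$, and $\theta_{\varphi_{j_k}}^n\le\mu$, it yields $\psi\in\mathcal{E}(X,\theta,\phi)$ and $P_\theta(\inf_{l\ge k}\varphi_{j_l})\in\mathcal{E}(X,\theta,\phi)$; as in the final assertion of Theorem~\ref{thm_1.2}, the latter forces $\varphi_{j_k}\to\psi$ in capacity. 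I would then pass to the limit in $\theta_{\varphi_{j_k}}^n=e^{\lambda\varphi_{j_k}}f_{j_k}\,\omega_X^n$. On the right-hand side, $0\le e^{\lambda\varphi_{j_k}}\le e^{\lambda C}$ with $e^{\lambda\varphi_{j_k}}\to e^{\lambda\psi}$ a.e., and $f_{j_k}\to f$ in $L^1$; splitting $e^{\lambda\varphi_{j_k}}f_{j_k}-e^{\lambda\psi}f=(f_{j_k}-f)e^{\lambda\varphi_{j_k}}+f\,(e^{\lambda\varphi_{j_k}}-e^{\lambda\psi})$ and applying dominated convergence to the last term gives $e^{\lambda\varphi_{j_k}}f_{j_k}\to e^{\lambda\psi}f$ in $L^1(X,\omega_X^n)$, hence weakly as measures. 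On the left-hand side, $\varphi_{j_k}\to\psi$ in capacity with all potentials in $\mathcal{E}(X,\theta,\phi)$ — where the total mass is constantly $\ove(\theta,\phi)$ — so, by the continuity of the non-pluripolar \MA\ operator along such sequences (obtained through the monotone-convergence and comparison tools of the earlier sections), $\theta_{\varphi_{j_k}}^n\to\theta_\psi^n$ weakly. Therefore $\theta_\psi^n=e^{\lambda\psi}f\,\omega_X^n$.

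Since $f\,\omega_X^n$ is a non-pluripolar Radon measure and both $\psi$ and $\varphi$ belong to $\mathcal{E}(X,\theta,\phi)$ (the latter because $\varphi\simeq\phi$ with $\phi$ a model potential, whence $\ove(\theta,\varphi)=\ove(\theta,\phi)$), the uniqueness part of Theorem~\ref{thm1.1}(ii) gives $\psi=\varphi$. As the starting subsequence was arbitrary, $\varphi_j\to\varphi$ in capacity — otherwise one could find $\varepsilon,\eta>0$ and a subsequence along which $\operatorname{Cap}_{\omega_X}(\{|\varphi_j-\varphi|>\varepsilon\})\ge\eta$, contradicting the above. I expect the left-hand-side limit to be the main obstacle: in the non-closed Hermitian setting the non-pluripolar product is not continuous under mere $L^1$-convergence, so one must genuinely use the stronger capacity convergence together with the constancy of the total mass on $\mathcal{E}(X,\theta,\phi)$; a secondary delicate point is the uniformity in the varying model potentials $\phi_j$ of the constant in the $L^\infty$ estimate, which rests on $\und(\theta,\phi_j)>\delta$ and the relative validity of \cite[Theorem~2.3]{GL3}.
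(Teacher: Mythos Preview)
Your overall strategy---bound $\sup_X\varphi_j$, dominate $\theta_{\varphi_j}^n$ by a fixed non-pluripolar measure, invoke the varying-$\phi_j$ stability result (Corollary~\ref{stability_lem}, the $D_j=X$ case), pass to the limit on both sides, and identify the limit by uniqueness---is exactly the paper's approach. Two steps need correction, though.

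First, the relative $L^\infty$ estimate (Theorem~\ref{rel_infity}) only yields $u\ge\phi-T$ for $u$ normalized by $\sup_Xu=0$, with $T$ depending on the constant $c$ in $\theta_u^n\le c\mu$. Here $c$ is of order $e^{\lambda\sup_X\varphi_j}$, so invoking it to bound $\sup_X\varphi_j$ from above is circular, and the estimate gives no upper bound on $\varphi_j-\phi_j$ anyway. The paper instead bounds $c_j:=\sup_X\varphi_j$ from above directly from the equation via Jensen: with $v_j:=\varphi_j-c_j$,
\[
\ove(\theta)\ge\int_X\theta_{\varphi_j}^n=e^{\lambda c_j}\int_X e^{\lambda v_j}f_j\,\omega_X^n\ge e^{\lambda c_j}\exp\!\Bigl(\tfrac{\lambda\int_X v_jf_j\,\omega_X^n}{\int_X f_j\,\omega_X^n}\Bigr)\int_X f_j\,\omega_X^n,
\]
and $\int_X v_jf_j\,\omega_X^n\ge -\|v_j\|_{L^q}\|f_j\|_{L^p}$ is uniformly bounded below. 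The lower bound $\delta<\und(\theta,\phi_j)\le e^{\lambda c_j}\int_Xf_j\,\omega_X^n$ is easy. The two-sided $L^\infty$ bound $\|\varphi_j-\phi_j\|_{L^\infty}\le C$ is neither needed nor obtainable without this input.

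Second, your justification for $\theta_{\varphi_{j_k}}^n\to\theta_\psi^n$---that ``the total mass is constantly $\ove(\theta,\phi)$''---is false in the Hermitian setting: for $u\in\mathcal{E}(X,\theta,\phi)$ one only has $\int_X\theta_u^n\in[\und(\theta,\phi),\ove(\theta,\phi)]$, and in any case $\varphi_{j_k}\in\mathcal{E}(X,\theta,\phi_{j_k})$, not $\mathcal{E}(X,\theta,\phi)$. The weak convergence of the Monge--Amp\`ere measures is, however, part of the conclusion of the stability result you already invoked (Corollary~\ref{stability_lem} with $D_j=X$), so just cite it there.
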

\noindent\textbf{Organization.}   In Section \ref{sec:preliminary}, we recall the definition of the non‑pluripolar product and its related  properties. In Section \ref{sec_full}, we extend the relative full‑mass class in relative pluripotential theory to the hermitian setting. In Section \ref{general_rel}, we establish several criteria that guarantee the existence of the rooftop envelope, including Theorem \ref{thm_1.2}, Proposition \ref{pro_1.1},    Theorem \ref{thm_1.4}, and Theorem \ref{thm_1.5}. In Section \ref{sec_5}, we study the complex Monge–Ampère equation in  the full mass class. Finally, in Section \ref{sec_6}, we proceed to prove Theorem \ref{thm1.1}, Theorem \ref{thm_infnity}  and Theorem \ref{thm_1.3}.

\noindent\textbf{Acknowledgments.} The author would like to thank  Prof. Jixiang Fu for his consistent guidance and encouragement. He also thanks  Mohammeds Alouf for a helpful  conversation.

\section{Preliminary}
\label{sec:preliminary}
Let $X$ be an $n$-dimensional compact complex manifold     endowed with a hermitian $(1,1)$-form $\omega_X$ and let  $\theta$ be a  (possibly non-closed) smooth real  $(1, 1)$-form on $X$.   \vspace{-0.9em}
\subsection{\textbf{Non-pluripolar products and quasi-psh envelopes}}
%\vspace{0.6em} \\  
%\textbf{2.1. Quasi-plurisubharmonic  functions.    } 
   A  function $u : X \to \mathbb{R} \cup \{-\infty\}$ is  called  quasi-psh if it can be locally written as a sum of a smooth function and a psh function. A quasi-psh function $u$ is called $\theta$-psh, denoted $u \in \mathrm{PSH}(X,\theta)$ if $\theta_u := \theta + dd^c u \geq 0$ in the weak sense of currents.

    The  Bott-Chern   space   $ {\text{BC}}^{p,q}(X)$ of $dd^c$-class of   smooth $(p,q)$-forms is defined as the cokernel of   
$dd^c : {\Omega}^{p-1,q-1}(X) \to {\Omega}^{p,q}(X)$, i.e., 
\[
  {\text{BC}}^{p,q}(X) := \frac{ {\Omega}^{p,q}(X) }
{dd^c{\Omega}^{p-1,q-1}(X)}.
\]

\begin{definition}\label{def_2.1} A class $\{\theta\}\in {\text{BC}}^{1,1}(X,\mathbb{R})$ is called \emph{big} if there exists a $\theta$-psh function with analytic singularities $\rho$ such that $\theta + dd^c   \rho  \geq \delta \omega_X$ for some $\delta>0$. For simplicity, we say that \(\theta\) itself is big. 
 \end{definition}      %   A class $\{\theta\}\in {\text{BC}}^{1,1}(X,\mathbb{R})$ is called \emph{nef} if  for every $\varepsilon>0$ there exists a smooth   function  $\rho_{\varepsilon}$ such that $\theta + dd^c   \rho_{\varepsilon}  \geq -\varepsilon \omega_X$. For simplicity, we say that \(\theta\) itself is nef.

 Let $\theta_1,\dots,\theta_n$ be   smooth real $(1,1)$-forms and  $u_j \in \text{PSH}(X,\theta_j)$ for each $j=1,\dots,n$. Following the construction of Bedford-Taylor   \cite{bt76,BT82}, we consider  the following     sequence of positive measures
\begin{equation*}
\boldsymbol{1}_{\bigcap_j\{u_j>-t\}} (\theta_{1}+dd^c\max(u_1, -t))\wedge \cdots\wedge (\theta_{n}+dd^c\max(u_n, -t)), 
\end{equation*}
       which is non-decreasing in $t\in\mathbb{R}$. If the total mass of this sequence is uniformly bounded, it converges to the so-called non-pluripolar product \cite{BEGZ10,BGL25}:  $$ (\theta_{1}+dd^c u_1 )\wedge \cdots\wedge (\theta_{n}+dd^c u_n ). $$
  The resulting positive measure does not charge any pluripolar sets.  In the particular case 
when $\theta_1   = \cdots = \theta_n = \theta$ and $ u_1 = \cdots = u_n = u$,
we denote the product by $\theta_u^n$ and refer to it as the  Monge-Ampère   measure of $u$. %\subsection{\textbf{Quasi-psh envelopes}} 
%\vspace{0.6em} \\  
%\textbf{2.2. Plurisubharmonic  envelopes.    }
Let \(u, v \in \mathrm{PSH}(X,\theta)\). We say that \(u\) is  more singular than \(v\), and write \(u \preceq v\), if there exists a constant \(C \in \mathbb{R}\) such that \(u \le v + C\). The equivalence relation \(u \simeq v\) means \(u \preceq v\) and \(v \preceq u\).  The least singular element   among all \(\theta\)-psh functions is given by 
\[
V_{\theta} \coloneqq \sup \{h \in \mathrm{PSH}(X,\theta) : h \le 0\}
\]
  Accordingly, a \(\theta\)-psh function \(u\) is said to have  minimal singularities  if \(u \simeq V_{\theta}\).
For a measurable function \(f: X \to [-\infty, \infty]\), we consider the \(\theta\)-psh envelope 
$$P_{\theta}(f):=   \text{usc}\Bigl(\sup\{h\in \text{PSH}(X,\theta)\, : \, h\leq  f \}\Bigr).$$  
 When   $f$ can be written as the difference of quasi-psh functions, the mean‑value inequalities imply that  $P_{\theta}(f)< +\infty$ and  that the upper‑semicontinuous regularization in its  definition is superfluous (see,   e.g., \cite[Theorem 9.17]{GZ}). In the particular case $f=\min\{u,v\}$,  we denote by  $  P_{\theta}(u,v)$    the rooftop envelope with respect to $u$ and $v$.

 We first recall the plurifine locality of $\theta$-psh functions, originally due to \cite{BT87}.  
\begin{proposition}
      If $u,v \in \mathrm{PSH}(X,\theta) $ and $\mathbf{1}_{\{u >v\}}\theta_{u}^n$ is a Radon measure,  then 
\[
  \mathbf{1}_{\{u > v\}} (\theta + dd^c u)^n = \mathbf{1}_{\{u > v\}}(\theta + dd^c \max(u,v))^n.
\]
\end{proposition}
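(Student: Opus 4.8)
The plan is to reduce the statement to the plurifine locality of the Bedford--Taylor operator for \emph{bounded} potentials, which is local and classical, and then propagate it to the non-pluripolar product through the canonical truncation. Write $w:=\max(u,v)$ and $O:=\{u>v\}$. Locally each of $u,v$ is the sum of a smooth function and a psh function, so $u-v$ is plurifine continuous and $O$ is plurifine open; moreover $w=u$ on $O$, whence also $\{u>-t\}\cap O=\{w>-t\}\cap O$ for all $t>0$. Since every $\theta$-psh function is bounded above on the compact manifold $X$, the truncations $a_t:=\max(u,-t)$ and $b_t:=\max(w,-t)$ are bounded quasi-psh functions, and $a_t=b_t$ on $O$.

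First I would record the bounded-level locality: if $\varphi,\psi$ are bounded quasi-psh functions agreeing on a plurifine open set $U$, then $\mathbf{1}_U(\theta+dd^c\varphi)^n=\mathbf{1}_U(\theta+dd^c\psi)^n$. This being a local identity of measures, it suffices to verify it on a coordinate ball, where one writes $\varphi=h-A|z|^2$ with $h$ bounded psh and $A$ large (chosen uniformly for $\varphi$ and $\psi$), expands $(\theta+dd^c\varphi)^n=\sum_{k=0}^{n}\binom{n}{k}(\theta-A\,dd^c|z|^2)^{n-k}\wedge(dd^c h)^k$, and invokes the Bedford--Taylor plurifine locality of $(dd^c h)^k$ \cite{BT87} together with the elementary fact that wedging a fixed smooth form through commutes with multiplication by $\mathbf{1}_U$. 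The binomial expansion is precisely what makes the non-closedness, and non-semipositivity, of $\theta$ irrelevant here.

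Applying this with $U=O$, $\varphi=a_t$, $\psi=b_t$ gives, for every $t>0$,
$$\mathbf{1}_O\,\mathbf{1}_{\{u>-t\}}\,(\theta+dd^c\max(u,-t))^n=\mathbf{1}_O\,\mathbf{1}_{\{w>-t\}}\,(\theta+dd^c\max(w,-t))^n .$$
Both sides are non-decreasing in $t$ (the monotonicity recalled above the definition of the non-pluripolar product), and multiplication by the fixed Borel function $\mathbf{1}_O$ commutes with the monotone limit of measures. Letting $t\to+\infty$, the left-hand side increases to $\mathbf{1}_O\,\theta_u^n$, which is Radon by hypothesis; the right-hand side then increases to $\mathbf{1}_O\,\theta_w^n$, which is therefore Radon and equal to it. Since $w=\max(u,v)$ and $O=\{u>v\}$, this is the claimed identity.

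The delicate point is the bounded-level step: one must make sure that the Bedford--Taylor plurifine locality genuinely applies to the truncations $\max(u,-t)$, which need not be $\theta$-psh but only bounded and quasi-psh, and that the merely smooth (non-closed, non-semipositive) reference form $\theta$ causes no trouble; the reduction via the binomial expansion and the local psh correction $h=\varphi+A|z|^2$ handles both. Everything after that is routine monotone-convergence bookkeeping.
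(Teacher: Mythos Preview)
The paper does not prove this proposition; it is stated as a recollection of plurifine locality, attributed to \cite{BT87}, with no argument given. Your proof is correct and fills this gap: the reduction to bounded quasi-psh potentials via the local psh correction $h=\varphi+A|z|^2$ and the binomial expansion is exactly how one handles a non-closed, non-semipositive reference form $\theta$, and the passage to the non-pluripolar product through the monotone truncations $\max(u,-t)$ is the natural route. There is nothing in the paper to compare against.
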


 The following is an adaption of \cite[Theorem 2.7]{darvas2020relative} to the hermitian setting. 
\begin{proposition}[(cf. {\cite[Theorem 2.9]{BGL25}})] \label{contact_0}
Assume    $f $ is    \mbox{quasi-continuous}  and   $P_{\theta}(f) \in\psh(X,\theta)$. Then
\[
\int_{\{P_{\theta}(f)<f\}} (\theta + dd^c P_{\theta}(f))^n = 0.
\]
\end{proposition}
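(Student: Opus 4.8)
The plan is to reduce to the classical Bedford–Taylor balayage/contact-set argument, adapted to the Hermitian setting where $\theta$ need not be closed but the non-pluripolar product still makes sense. Write $u := P_\theta(f)$, which by hypothesis lies in $\psh(X,\theta)$, and set $D := \{u < f\}$; we must show $\int_D \theta_u^n = 0$. First I would reduce to the case of a bounded potential: truncating, replace $f$ by $\max(f, V_\theta - k)$ and $u$ by $\max(u, V_\theta - k)$ for $k \in \mathbb N$, observe that $P_\theta(\max(f,V_\theta-k)) = \max(u, V_\theta-k)$ on the plurifine-open set where $u > V_\theta - k$, and use the plurifine locality proposition quoted just above together with monotone convergence of the truncated Monge–Ampère measures to pass to the limit. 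So we may assume $u$ is globally bounded and, by quasi-continuity of $f$, that $f$ is bounded as well (intersecting $D$ with $\{f < M\}$ and letting $M \to \infty$).

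Next, the heart of the matter is a local statement. Since the conclusion is about a measure that does not charge pluripolar sets and since $D$ is a plurifine-open set up to a pluripolar set (here one uses quasi-continuity of $f$: $D$ differs from a plurifine-open set by a set of capacity zero), it suffices to show that near each point $x_0 \in D$ we can perturb $u$ upward inside a small ball while staying $\theta$-psh and staying below $f$, contradicting maximality of $u$ unless $\theta_u^n$ vanishes there. Concretely: fix a coordinate ball $B = B(x_0,r)$ on which $\theta = dd^c h$ for a smooth local potential $h$, so $h + u$ is an ordinary psh function on $B$. On a slightly smaller ball, consider the Bedford–Taylor solution $w$ of the Dirichlet problem with boundary data $h+u$ and $(dd^c w)^n = 0$; then $w \ge h + u$, $w$ is psh, and $w = h+u$ near $\partial B'$. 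The function $\tilde u$ obtained by gluing $w - h$ inside $B'$ and $u$ outside is $\theta$-psh on all of $X$. By the definition of the envelope $u = P_\theta(f)$ as the largest $\theta$-psh minorant of $f$, we must have $\tilde u \le f$; hence $w - h \le f$ on $B'$. But on the plurifine-open piece of $D \cap B'$ where $u < f$ strictly, this forces $w = h + u$ there — otherwise a strict bump is available — and therefore $\mathbf 1_{D \cap B'}(dd^c w)^n = \mathbf 1_{D\cap B'}(dd^c(h+u))^n$ on that piece, while $(dd^c w)^n = 0$ on $B'$; combining gives $\mathbf 1_{D\cap B'}\theta_u^n = 0$.

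The main obstacle I expect is the gluing/perturbation step in the Hermitian setting: one has to be careful that the comparison "$\tilde u \le f$" is legitimate (this is exactly where $u = P_\theta(f)$ with $P_\theta(f)$ genuinely $\theta$-psh is used, and where quasi-continuity of $f$ is needed so that the inequality $\tilde u \le f$ holds quasi-everywhere and then everywhere by upper semicontinuity of $\tilde u$ minus a potential), and that the local Dirichlet solution $w$ indeed increases $u$ strictly on a non-pluripolar portion of $D$ wherever $\theta_u^n \ne 0$ — this last point is the classical fact that a maximal (i.e. $(dd^c\cdot)^n = 0$) psh function dominating $h+u$ must agree with $h+u$ on the support of $(dd^c(h+u))^n$ intersected with the plurifine interior of $\{w = h+u\}$, combined with Bedford–Taylor's characterization of maximality. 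Since $\theta$ is not closed, one should phrase everything via local potentials and verify that passing between the global non-pluripolar product $\theta_u^n$ and the local Bedford–Taylor products $(dd^c(h+u))^n$ is compatible — this is standard but must be stated. Finally, assembling the local vanishing into the global statement: cover $D$ (minus a pluripolar set) by countably many such balls, use plurifine locality to patch, and conclude $\int_D \theta_u^n = 0$.
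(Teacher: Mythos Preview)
The paper does not give its own proof of this proposition; it is quoted from \cite[Theorem~2.9]{BGL25} (an adaptation of \cite[Theorem~2.7]{darvas2020relative}). So there is no in-paper argument to compare against, and I evaluate your sketch on its own merits.

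Your local balayage step has a genuine gap. You solve the \emph{homogeneous} Dirichlet problem on $B'$ with boundary data $h+u$, obtain $w\ge h+u$, glue to a global $\theta$-psh function $\tilde u\ge u$, and then write ``by the definition of the envelope $u=P_\theta(f)$ \dots\ we must have $\tilde u\le f$.'' This is backwards: the envelope property says that \emph{if} $\tilde u\le f$ then $\tilde u\le u$; it does not manufacture the inequality $\tilde u\le f$. Nothing in your construction of $w$ prevents $w$ from overshooting $h+f$ in the interior of $B'$ --- you imposed no obstacle, only boundary values --- so $\tilde u$ need not be a competitor for $P_\theta(f)$, and the intended contradiction collapses.

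For \emph{continuous} $f$ the repair is standard and you are close to it: $D=\{u<f\}$ is then genuinely open, so on a small ball $B\subset D$ one has $f\ge u+\delta$ uniformly; any sufficiently small psh bump (or the Dirichlet solution on a small enough ball) stays below $h+u+\delta\le h+f$, the glued function \emph{is} a competitor, and one concludes $\theta_u^n=0$ on $B$. For merely quasi-continuous $f$, however, $D$ need not contain any Euclidean ball, and your remark that $D$ is plurifine-open up to a pluripolar set does not rescue the balayage, since the Dirichlet problem requires an honest open domain. The arguments in the cited references handle this by an approximation layer: one first proves the statement for obstacles of the form $\min(u_1,\dots,u_p)$ with $u_i$ quasi-psh (or for continuous $f$), where the balayage above works, and then passes to general quasi-continuous $f$ using stability of the envelope and lower semicontinuity of the non-pluripolar mass. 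Your outline is missing this layer, which is exactly where quasi-continuity is used.
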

    
  We next rephrase the maximum principle as follows.  Assuming $\theta$ is big, the proof relies on an argument restricted to the open set $\{\rho > -\infty\}$, with $\rho$ as given in Definition \ref{def_2.1}. 
   \begin{proposition}[(cf. {\cite[Proposition 2.11]{BGL25}})] \label{max_p}
Assume that $\theta$ is big. If $u,v \in \mathrm{PSH}(X,\theta) $ and $\theta_{\max(u,v)}^n$ is a Radon measure,  then 
\[
  \mathbf{1}_{\{u \geq v\}} (\theta + dd^c u)^n + \mathbf{1}_{\{v > u\}} (\theta + dd^c v)^n \leq (\theta + dd^c \max(u,v))^n.
\]
In particular, if $u \leq v$ and $1_{\{u=v\}} (\theta+dd^c v)^n$ is a Radon measure,  then
\[
\mathbf{1}_{\{u = v\}} (\theta + dd^c u)^n \leq \mathbf{1}_{\{u = v\}} (\theta + dd^c v)^n.
\] 
\end{proposition}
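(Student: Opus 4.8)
The plan is to reduce to the case of a semipositive reference form by passing to the Zariski-open set $\Omega := \{\rho > -\infty\}$, where $\rho$ is the $\theta$-psh function with analytic singularities provided by Definition~\ref{def_2.1}. Since $X\setminus\Omega$ is then a proper analytic subset, hence pluripolar, none of the three non-pluripolar products in the statement puts any mass there, so it is enough to prove the inequality of measures on $\Omega$. On $\Omega$ the form $\tilde\theta := \theta + dd^c\rho$ is smooth with $\tilde\theta \ge \delta\omega_X > 0$, and for every $h\in\psh(X,\theta)$ the function $\tilde h := h-\rho$ is $\tilde\theta$-psh on $\Omega$; because $\rho$ is smooth there, this substitution preserves the local Bedford-Taylor, hence the non-pluripolar, construction, so that $(\theta+dd^c h)^n|_\Omega=(\tilde\theta+dd^c\tilde h)^n$ and $\widetilde{\max(u,v)}=\max(\tilde u,\tilde v)$. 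After this reduction the problem becomes a maximum principle for $\tilde\theta$-psh functions on $\Omega$ with $\tilde\theta$ \emph{semipositive}.

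The point of semipositivity is that each constant $-t$ $(t>0)$ is $\tilde\theta$-psh, so the truncations $\tilde u_t:=\max(\tilde u,-t)$, $\tilde v_t:=\max(\tilde v,-t)$ and $\tilde w_t:=\max(\tilde w,-t)$, with $\tilde w:=\max(\tilde u,\tilde v)$, are honest locally bounded $\tilde\theta$-psh functions satisfying $\tilde w_t=\max(\tilde u_t,\tilde v_t)$. For locally bounded potentials the maximum principle is the classical local statement (Bedford-Taylor, Demailly), which is purely local and valid for an arbitrary smooth, not necessarily closed, reference form, so the non-compactness of $\Omega$ causes no trouble:
\[
\mathbf{1}_{\{\tilde u_t\ge\tilde v_t\}}(\tilde\theta+dd^c\tilde u_t)^n+\mathbf{1}_{\{\tilde v_t>\tilde u_t\}}(\tilde\theta+dd^c\tilde v_t)^n\le(\tilde\theta+dd^c\tilde w_t)^n\quad\text{on }\Omega.
\]
I would then restrict the two summands on the left to $\{\tilde u>-t\}$ and $\{\tilde v>-t\}$ respectively, which only decreases the left-hand side; on $\{\tilde u>-t\}$ one has $\{\tilde u_t\ge\tilde v_t\}=\{\tilde u\ge\tilde v\}$, and by plurifine locality of Bedford-Taylor products $\mathbf{1}_{\{\tilde u>-t\}}(\tilde\theta+dd^c\tilde u_t)^n=\mathbf{1}_{\{\tilde u>-t\}}(\tilde\theta+dd^c\tilde u)^n$, the latter being, for unbounded $\tilde u$, the defining increasing limit of the non-pluripolar product; likewise for $\tilde v$, while $\mathbf{1}_{\{\tilde w>-t\}}(\tilde\theta+dd^c\tilde w_t)^n=\mathbf{1}_{\{\tilde w>-t\}}(\tilde\theta+dd^c\tilde w)^n$. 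Since the restricted left-hand side is then supported in $\{\tilde u>-t\}\cup\{\tilde v>-t\}=\{\tilde w>-t\}$, it is dominated by $\mathbf{1}_{\{\tilde w>-t\}}(\tilde\theta+dd^c\tilde w)^n\le(\tilde\theta+dd^c\tilde w)^n$, a finite measure by hypothesis. In particular the truncated left-hand masses stay bounded by $\|(\theta+dd^c\max(u,v))^n\|$, so the relevant restrictions of $(\theta+dd^c u)^n$ and $(\theta+dd^c v)^n$ are genuine Radon measures; letting $t\to+\infty$, all the terms increase and, since non-pluripolar products charge no set of the form $\{\cdot=-\infty\}$, the limits give the asserted inequality on $\Omega$, hence on $X$.

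For the ``in particular'' statement, I would apply the inequality just proved to the given pair with $u\le v$: then $\max(u,v)=v$, $\{u\ge v\}=\{u=v\}$ and $\{v>u\}=X\setminus\{u=v\}$, so
\[
\mathbf{1}_{\{u=v\}}(\theta+dd^c u)^n+\mathbf{1}_{\{v>u\}}(\theta+dd^c v)^n\le(\theta+dd^c v)^n=\mathbf{1}_{\{u=v\}}(\theta+dd^c v)^n+\mathbf{1}_{\{v>u\}}(\theta+dd^c v)^n,
\]
and cancelling the finite common summand $\mathbf{1}_{\{v>u\}}(\theta+dd^c v)^n$ leaves $\mathbf{1}_{\{u=v\}}(\theta+dd^c u)^n\le\mathbf{1}_{\{u=v\}}(\theta+dd^c v)^n$.

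The main obstacle, I expect, is the reduction of the first paragraph rather than the estimates: one must carefully verify that $X\setminus\Omega$ is negligible for each of the three products and, crucially, that the non-pluripolar \MA\ operator transforms correctly under $h\mapsto h-\rho$ over $\Omega$, since it is precisely the passage to the semipositive model $\tilde\theta$ on $\Omega$ that makes the truncation argument possible, an argument which cannot be run directly on $X$ because $\theta$ need not be semipositive there. Once this is in place, the remainder is a routine, if somewhat delicate, bookkeeping of plurifine cut-offs and monotone limits.
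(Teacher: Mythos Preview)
Your argument is correct and matches the paper's indicated strategy: the paper gives no detailed proof, citing \cite[Proposition~2.11]{BGL25} and remarking only that ``the proof relies on an argument restricted to the open set $\{\rho>-\infty\}$'', which is exactly your reduction to $\Omega$ followed by the substitution $h\mapsto h-\rho$ to a semipositive model form $\tilde\theta$ and the truncation-plus-plurifine-locality limit.

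There is, however, a small gap in your treatment of the ``in particular'' clause. You deduce it by applying the first inequality and then cancelling the common term $\mathbf{1}_{\{v>u\}}\theta_v^n$. But the first inequality is stated under the hypothesis that $\theta_{\max(u,v)}^n=\theta_v^n$ is Radon, whereas here only $\mathbf{1}_{\{u=v\}}\theta_v^n$ is assumed Radon; so neither the inequality nor the finiteness of the term you cancel is guaranteed. The fix is immediate and in the spirit of your own argument: carry the indicator $\mathbf{1}_{\{u=v\}}$ through the truncated inequality from the outset. At level $t$ one multiplies the locally-bounded maximum principle by $\mathbf{1}_{\{\tilde u_t=\tilde v_t\}}$ to get $\mathbf{1}_{\{\tilde u_t=\tilde v_t\}}\tilde\theta_{\tilde u_t}^n\le\mathbf{1}_{\{\tilde u_t=\tilde v_t\}}\tilde\theta_{\tilde v_t}^n$, then multiplies by $\mathbf{1}_{\{\tilde u>-t\}}$ (on which $\tilde u_t=\tilde u$, $\tilde v_t=\tilde v$ since $\tilde v\ge\tilde u$) and lets $t\to\infty$; the right-hand side is dominated by the assumed Radon measure $\mathbf{1}_{\{u=v\}}\theta_v^n$, so the monotone limit is legitimate. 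This is precisely the interpretation the paper records in the Remark immediately following the Proposition.
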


 \begin{rem}
When the terms in Proposition \ref{max_p} are not guaranteed to be Radon measures, the inequalities are understood to hold locally after multiplying by the characteristic function
\(
\mathbf{1}_{\{\min(u,v) > V_{\theta} - t\} \cap \{\rho > -\infty\}}\),  \( t \in \mathbb{R}\).
\end{rem}

As an immediate consequence, we obtain the   minimum principle as below.
\begin{corollary} [(cf. {\cite[Corollary 2.12]{BGL25}})] \label{min_p}
  Assume that $\theta$ is big. Let $u,v \in \mathrm{PSH}(X,\theta)$ with $P_{\theta}(u,v) \not \equiv -\infty$. If the right‑hand side of the inequality below is a Radon measure, then
   \[
(\theta + dd^c P_\theta(u,v))^n \leq 
\mathbf{1}_{\{P_\theta(  u,v )=v \leq u\}} (\theta + dd^c v)^n + 
\mathbf{1}_{\{P_\theta( u,v )=u < v\}} (\theta + dd^c u)^n.
\]
\end{corollary}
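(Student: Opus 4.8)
The plan is to combine the orthogonality relation of Proposition~\ref{contact_0} with the maximum principle of Proposition~\ref{max_p}. Write $P \coloneqq P_\theta(u,v) = P_\theta(f)$ with $f \coloneqq \min(u,v)$. Since $f = (u+v) - \max(u,v)$ is a difference of quasi-psh functions it is quasi-continuous, and since $P \not\equiv -\infty$ by hypothesis, $P \in \psh(X,\theta)$; thus $f$ is admissible in Proposition~\ref{contact_0}, which gives $\int_{\{P<\min(u,v)\}}\theta_P^n = 0$. As $P \le \min(u,v)$ everywhere, this says that $\theta_P^n$ is carried by the contact set $\{P = \min(u,v)\}$, which I would decompose into the two disjoint Borel pieces $A \coloneqq \{P = v \le u\}$ and $B \coloneqq \{P = u < v\}$; one checks directly that $A \sqcup B = \{P = \min(u,v)\}$, the diagonal locus $\{P = u = v\}$ being absorbed into $A$. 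Hence $\theta_P^n = \mathbf{1}_A\theta_P^n + \mathbf{1}_B\theta_P^n$.

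Next I would bound each piece by the appropriate Monge--Amp\`ere measure. On $A$ we have $P \le v$ with $P = v$ on $A$, and in fact $\{P = v\} = A$ because $P = v$ forces $v = P \le \min(u,v) \le u$. The measure $\mathbf{1}_A\theta_v^n = \mathbf{1}_{\{P=v\}}\theta_v^n$ is a summand of the right-hand side of the claimed inequality, hence Radon by hypothesis (a sum of positive measures is locally finite only if each summand is), so the second statement of Proposition~\ref{max_p}, applied to the pair $P \le v$, yields $\mathbf{1}_A\theta_P^n \le \mathbf{1}_A\theta_v^n$. On $B$ we have $P \le u$ with $P = u$ on $B$, and $\mathbf{1}_B\theta_u^n$ is likewise Radon; since $\mathbf{1}_{\{P=u\}}\theta_u^n$ itself need not be Radon, I would instead use the truncated form of the maximum principle recorded in the Remark after Proposition~\ref{max_p}, getting for every $t\in\mathbb{R}$
\[
\mathbf{1}_{B \cap \{P > V_\theta - t\} \cap \{\rho > -\infty\}}\,\theta_P^n \;\le\; \mathbf{1}_{B \cap \{P > V_\theta - t\} \cap \{\rho > -\infty\}}\,\theta_u^n \;\le\; \mathbf{1}_B\,\theta_u^n ,
\]
and then letting $t \to +\infty$: since $\theta_P^n$ does not charge the pluripolar sets $\{P = -\infty\}$ and $\{\rho = -\infty\}$, and $\mathbf{1}_B\theta_u^n$ is Radon, monotone convergence gives $\mathbf{1}_B\theta_P^n \le \mathbf{1}_B\theta_u^n$. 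Summing the two bounds produces $\theta_P^n \le \mathbf{1}_A\theta_v^n + \mathbf{1}_B\theta_u^n$, which is exactly the assertion; in particular $\theta_P^n$ is seen a posteriori to be Radon.

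The step I expect to be the main obstacle is not conceptual but bookkeeping: identifying the contact set of $P$ precisely with $A \sqcup B$, checking that the diagonal $\{P = u = v\}$ goes with the $v$-side so as to match the statement, and verifying that the only local-finiteness hypotheses needed to invoke Proposition~\ref{max_p} on the two pieces are exactly the ones handed to us by the assumption that the right-hand side is a Radon measure. Resorting to the truncated version of the maximum principle on $B$ is precisely what sidesteps the (unavailable) assumption that $\mathbf{1}_{\{P=u\}}\theta_u^n$ is globally finite.
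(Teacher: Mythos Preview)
Your proof is correct and follows exactly the approach the paper has in mind: the corollary is stated as an ``immediate consequence'' of Proposition~\ref{max_p} (together with Proposition~\ref{contact_0}), and you have supplied precisely that derivation, with appropriate care about the Radon hypotheses via the truncation device of the Remark.
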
 
\subsection{\textbf{Upper and lower Monge-Ampère volumes}}
%\vspace{0.4em}  
%\noindent \textbf{2.3. \MA\ volumes.    }
Following \cite{BGL25}, we define the   upper volume  and   lower  volume  of a \(\theta\)-psh function \(u\)  by
\[ \overline{\mathrm{vol}}(\theta,u) \coloneqq \sup_{v} \int_X (\theta + dd^c v)^n
 , \qquad
  \underline{\mathrm{vol}}(\theta,u) \coloneqq \inf_{v } \int_X (\theta + dd^c v)^n,
\]
where the supremum and infimum are taken over all $\theta$-psh functions $v$ with  the same singularity type as $u$ and the integral  $\int_X (\theta + dd^c v)^n$ is understood as the limit
\[
\int_X (\theta + dd^c v)^n \coloneqq \lim_{t \to +\infty} \int_{\{v > -t\}} (\theta + dd^c v)^n .
\]
By definition,   \(0 \le \underline{\mathrm{vol}}(\theta,u) \le \overline{\mathrm{vol}}(\theta,u) \le +\infty\). We also set
\[
\underline{\mathrm{vol}}(\theta) \coloneqq \underline{\mathrm{vol}}(\theta,V_\theta), \qquad
\overline{\mathrm{vol}}(\theta) \coloneqq \overline{\mathrm{vol}}(\theta,V_\theta).
\]

With these concepts in place, we now  restate the monotonicity property of the upper   and  lower volumes in the following manner.
\begin{proposition}[(cf. {\cite[Proposition~3.7]{BGL25}})]\label{bgl_mono}
Let \(\theta,\eta\) be smooth \((1,1)\)-forms with \(\theta \le \eta\). Assume that \(\theta\) is big. If \(u \in \mathrm{PSH}(X,\theta)\) and \(v \in \mathrm{PSH}(X,\eta)\) satisfy \(u \preceq v\), then \(\overline{\mathrm{vol}}(\theta,u) \le \overline{\mathrm{vol}}(\eta,v)\). Furthermore, if \(\underline{\mathrm{vol}}(\theta,u) < +\infty\), then  \(\underline{\mathrm{vol}}(\theta,u) \le \underline{\mathrm{vol}}(\eta,v)\).
\end{proposition}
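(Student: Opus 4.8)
The plan is to reduce the statement to the two monotonicity phenomena that do survive in the non‑closed setting — raising the form, and exhausting along the less singular potential — rather than to a global mass comparison, which fails in general. Throughout I use freely that \(\theta\le\eta\) implies \(\psh(X,\theta)\subseteq\psh(X,\eta)\), and that for \(w\in\psh(X,\theta)\) one has \(\int_X\theta_w^n\le\int_X\eta_w^n\): this is the pointwise inequality \(0\le\theta+dd^cw\le\eta+dd^cw\) of positive forms, extended to the non‑pluripolar product by its plurifine‑local character (with the convention that finiteness of the right‑hand side forces that of the left).

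\textbf{Upper volume.} Fix \(\tilde u\in\psh(X,\theta)\) with \(\tilde u\simeq u\), so \(\tilde u\le v+C\) for some \(C\). For large \(M>0\) the function \(\tilde v_M:=\max(\tilde u,v-M)\in\psh(X,\eta)\) satisfies \(v-M\le\tilde v_M\le v+C\), hence \(\tilde v_M\simeq v\); plurifine locality gives \(\mathbf{1}_{\{\tilde u>v-M\}}\eta_{\tilde v_M}^n=\mathbf{1}_{\{\tilde u>v-M\}}\eta_{\tilde u}^n\), so
\[
\int_{\{\tilde u>v-M\}}\eta_{\tilde u}^n\le\int_X\eta_{\tilde v_M}^n\le\ove(\eta,v).
\]
Letting \(M\to\infty\) and using that \(\eta_{\tilde u}^n\) does not charge the pluripolar set \(\{v=-\infty\}\), we get \(\int_X\theta_{\tilde u}^n\le\int_X\eta_{\tilde u}^n\le\ove(\eta,v)\); the supremum over such \(\tilde u\) yields \(\ove(\theta,u)\le\ove(\eta,v)\).

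\textbf{Lower volume.} Assume \(\und(\theta,u)<+\infty\) and choose \(w_0\in\psh(X,\theta)\) with \(w_0\simeq u\) and \(\int_X\theta_{w_0}^n<+\infty\). Fix \(\tilde v\in\psh(X,\eta)\) with \(\tilde v\simeq v\); we may assume \(\int_X\eta_{\tilde v}^n<+\infty\). Pick \(C\) with \(w_0\le\tilde v+C\) and set \(\tilde u_M:=P_\theta\bigl(\min(\tilde v+C,\,w_0+M)\bigr)\in\psh(X,\theta)\) for \(M>0\). From \(w_0\le\min(\tilde v+C,w_0+M)\le w_0+M\) we get \(w_0\le\tilde u_M\le w_0+M\), so \(\tilde u_M\simeq u\) and \(\und(\theta,u)\le\int_X\theta_{\tilde u_M}^n\). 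By Proposition \ref{contact_0}, \(\theta_{\tilde u_M}^n\) is carried by the contact set, which splits (up to a null set) into \(B_M:=\{\tilde u_M=\tilde v+C\le w_0+M\}\) and \(A_M:=\{\tilde u_M=w_0+M<\tilde v+C\}\). On \(B_M\), plurifine locality and \(\theta\le\eta\) give \(\mathbf{1}_{B_M}\theta_{\tilde u_M}^n=\mathbf{1}_{B_M}(\theta+dd^c\tilde v)^n\le\mathbf{1}_{B_M}\eta_{\tilde v}^n\); on \(A_M\), the maximum principle (Proposition \ref{max_p}) applied to the \(\theta\)-psh functions \(\tilde u_M\le w_0+M\) gives \(\mathbf{1}_{A_M}\theta_{\tilde u_M}^n\le\mathbf{1}_{A_M}\theta_{w_0}^n\), and \(A_M\subseteq\{w_0\le\tilde v+C-M\}\). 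Hence
\[
\int_X\theta_{\tilde u_M}^n\le\int_X\eta_{\tilde v}^n+\theta_{w_0}^n\bigl(\{w_0\le\tilde v+C-M\}\bigr).
\]
As \(M\to\infty\) the sets \(\{w_0\le\tilde v+C-M\}\) decrease to a subset of \(\{w_0=-\infty\}\), which the finite non‑pluripolar measure \(\theta_{w_0}^n\) does not charge, so the error tends to \(0\); thus \(\und(\theta,u)\le\int_X\eta_{\tilde v}^n\), and the infimum over \(\tilde v\simeq v\) yields \(\und(\theta,u)\le\und(\eta,v)\).

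\textbf{Main obstacle.} The upper‑volume half is essentially formal once the \(\max\)-exhaustion and plurifine locality are in place; the delicate half is the lower‑volume one. There \(\tilde v\) is only \(\eta\)-psh, so \(\min(\tilde v+C,w_0+M)\) cannot be fed into the minimum principle (Corollary \ref{min_p}), which needs two \(\theta\)-psh functions; one must instead split the contact set and combine the maximum principle with the form‑monotonicity \(\theta_w^n\le\eta_w^n\) on each piece separately. The hypothesis \(\und(\theta,u)<+\infty\) is used twice — to extract \(w_0\simeq u\) of finite \MA\ mass, and to justify continuity from above of \(\theta_{w_0}^n\) along \(\{w_0\le\tilde v+C-M\}\) — which is why one cannot argue with \(u\) itself. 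Finally, in the big non‑Kähler setting several of the products above need not a priori be Radon; as in the Remark after Proposition \ref{max_p}, each inequality should first be proved after multiplying by \(\mathbf{1}_{\{\,\cdot\,>V_\theta-t\}\cap\{\rho>-\infty\}}\) and then passing to \(t\to\infty\), which also shows that the non‑pluripolar products appearing (notably \(\theta_{\tilde u_M}^n\) and \(\eta_{\tilde u}^n\)) are well defined and of finite mass.
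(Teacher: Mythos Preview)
Your proof is correct and follows essentially the same strategy as the paper's: the upper volume via the exhaustion $\max(\tilde u,v-M)$ (the paper phrases the passage to the limit through Lemma~\ref{lsc}, you through plurifine locality, which is the same), and the lower volume via the envelope $P_\theta$ of $\min(w_0+M,\tilde v+C)$ followed by a contact-set splitting. In fact you are more careful than the paper here: the paper invokes Corollary~\ref{min_p} directly, which literally requires both barriers to be $\theta$-psh, whereas you correctly isolate that $\tilde v$ is only $\eta$-psh and run the maximum-principle argument by hand.

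One small slip: on $B_M$ you write $\mathbf{1}_{B_M}\theta_{\tilde u_M}^n=\mathbf{1}_{B_M}(\theta+dd^c\tilde v)^n$ ``by plurifine locality'', but $(\theta+dd^c\tilde v)^n$ is not a positive measure since $\tilde v\notin\psh(X,\theta)$, and $B_M$ is not plurifine open. The fix is exactly the mechanism you use on $A_M$: first $\theta_{\tilde u_M}^n\le\eta_{\tilde u_M}^n$ by form monotonicity, then apply the second part of Proposition~\ref{max_p} with the form $\eta$ (both $\tilde u_M$ and $\tilde v+C$ are $\eta$-psh, $\tilde u_M\le\tilde v+C$ everywhere with equality on $B_M$, and $\eta\ge\theta$ is big) to obtain $\mathbf{1}_{B_M}\eta_{\tilde u_M}^n\le\mathbf{1}_{B_M}\eta_{\tilde v}^n$.
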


\begin{proof}
Assume first that \(\overline{\mathrm{vol}}(\eta,v) < \infty\). For any \(u' \in \mathrm{PSH}(X,\theta)\) with \(u' \simeq u\), the lower semicontinuity of the Monge-Ampère product (see   Lemma \ref{lsc} below) gives
\[
\int_X (\eta + dd^c u')^n
\le \liminf_{t \to +\infty} \int_X \bigl(\eta + dd^c \max(u',v-t)\bigr)^n
\le \overline{\mathrm{vol}}(\eta,v).
\]
Taking the supremum over all such \(u'\) yields the first inequality.

Now suppose \(\underline{\mathrm{vol}}(\theta,u) < +\infty\), we may  assume that \(\int_X \theta_u^n < +\infty\). Choose \(v' \in \mathrm{PSH}(X,\eta)\) with $v'\simeq v$ and  \(\int_X \eta_{v'}^n < +\infty\). Applying the minimum principle (Corollary~\ref{min_p}) we obtain
\[
\underline{\mathrm{vol}}(\theta,u)
\le \int_X \bigl(\theta + dd^c P_{\theta}(u,v'-t)\bigr)^n
\le \int_{\{u \le v'-t\}} (\theta+dd^cu)^n +\int_{\{u \ge v'-t\}} (\eta+dd^cv' )^n 
\]

Since \(\theta_u^n\)  does not charge pluripolar sets, letting \(t \to +\infty\)  yields that
\( 
\underline{\mathrm{vol}}(\theta,u) \le \int_X \eta_{v'}^n .
\)
Taking the infimum over all such \(v'\) then finishes the proof.
\end{proof}
The following result generalizes \cite[Proposition~3.2]{GL2} (see also \cite[Proposition~3.6]{BGL25}) to big classes:

\begin{proposition}\label{bgl_sum}
  Let \(\theta,\eta\) be smooth \((1,1)\)-forms. Assume that \(\theta\) is big. Let \(u \in \mathrm{PSH}(X,\theta)\) and \(v \in \mathrm{PSH}(X,\eta)\). Then
\[
\overline{\mathrm{vol}}(\theta,u) \le \overline{\mathrm{vol}}(\theta+\eta,u+v),
\quad \quad
\underline{\mathrm{vol}}(\theta,u) \le \underline{\mathrm{vol}}(\theta+\eta,u+v).
\]
\end{proposition}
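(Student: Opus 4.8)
The plan is to deduce both inequalities from a single \emph{addition inequality} for non-pluripolar Monge--Amp\`ere masses: for all $u'\in\mathrm{PSH}(X,\theta)$ and $v'\in\mathrm{PSH}(X,\eta)$,
\[
\theta_{u'}^n\ \le\ (\theta+\eta)_{u'+v'}^n \qquad\text{as positive measures,}
\]
and in particular $\int_X\theta_{u'}^n\le\int_X(\theta+\eta)_{u'+v'}^n$. To prove this one may normalize $u'\le 0$ and $v'\le 0$ (changing neither side), fix $t,s>0$, and introduce the bounded potentials $U:=\max(u',-t)$, $V:=\max(v',-s)$, $W:=\max(u'+v',-(t+s))$ and the plurifine-open set $O_{t,s}:=\{u'>-t\}\cap\{v'>-s\}$, on which $W=U+V$. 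By the definition of the non-pluripolar product and plurifine locality, on $O_{t,s}$ one has $(\theta+\eta)_{u'+v'}^n=(\theta+\eta+dd^c(U+V))^n$ and $\theta_{u'}^n=(\theta+dd^cU)^n$; Bedford--Taylor multilinearity for bounded potentials then gives
\[
(\theta+\eta+dd^c(U+V))^n=\sum_{k=0}^n\binom{n}{k}(\theta+dd^cU)^k\wedge(\eta+dd^cV)^{n-k}\ \ge\ (\theta+dd^cU)^n,
\]
every mixed term being a positive measure. Hence $\mathbf{1}_{O_{t,s}}(\theta+\eta)_{u'+v'}^n\ge\mathbf{1}_{O_{t,s}}\theta_{u'}^n$, and letting $t,s\to+\infty$ — so that the $O_{t,s}$ exhaust $X$ off the pluripolar set $\{u'=-\infty\}\cup\{v'=-\infty\}$, which neither side charges — the inequality holds on all of $X$.

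Granting this, the upper-volume bound is immediate: for any $u'\simeq u$, the addition inequality with $v'=v$ gives $\int_X\theta_{u'}^n\le\int_X(\theta+\eta)_{u'+v}^n\le\overline{\mathrm{vol}}(\theta+\eta,u+v)$, the last step because $u'+v\simeq u+v$; taking the supremum over $u'\simeq u$ yields $\overline{\mathrm{vol}}(\theta,u)\le\overline{\mathrm{vol}}(\theta+\eta,u+v)$.

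For the lower-volume bound, fix any $w'\in\mathrm{PSH}(X,\theta+\eta)$ with $w'\simeq u+v$; it suffices to exhibit one competitor $u'\simeq u$ with $\int_X\theta_{u'}^n\le\int_X(\theta+\eta)_{w'}^n$. Since $w'-v$ is a difference of quasi-psh functions with $w'-v\simeq u$, the envelope $u':=P_\theta(w'-v)$ is a genuine $\theta$-psh function with $u'\simeq u$ and $u'+v\le w'$. As $w'-v$ is quasi-continuous and $u'\in\mathrm{PSH}(X,\theta)$, Proposition~\ref{contact_0} shows that $\theta_{u'}^n$ is carried by the contact set $D:=\{u'+v=w'\}$. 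I would then control $(\theta+\eta)_{u'+v}^n$ on $D$ by truncation: for $T>0$ put $w_T:=\max(w',-T)$ and $g_T:=\max(u'+v,-T)$, so $g_T\le w_T$ with $g_T=w_T$ on $D\cap\{w'>-T\}$; the maximum principle for bounded potentials together with plurifine locality gives $\mathbf{1}_{D\cap\{w'>-T\}}(\theta+\eta)_{u'+v}^n\le\mathbf{1}_{\{w'>-T\}}(\theta+\eta+dd^cw_T)^n$, whence, integrating and letting $T\to+\infty$, $\int_D(\theta+\eta)_{u'+v}^n\le\int_X(\theta+\eta)_{w'}^n$. Combining this with the addition inequality restricted to $D$,
\[
\int_X\theta_{u'}^n=\int_D\theta_{u'}^n\le\int_D(\theta+\eta)_{u'+v}^n\le\int_X(\theta+\eta)_{w'}^n,
\]
so $\underline{\mathrm{vol}}(\theta,u)\le\int_X(\theta+\eta)_{w'}^n$; taking the infimum over $w'\simeq u+v$ completes the proof. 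Throughout, the standing bounded-mass hypothesis guarantees that all the non-pluripolar products in play are finite Radon measures, which legitimizes these manipulations.

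The truncation bookkeeping is routine. The delicate points are the limiting step in the addition inequality — where one must keep track that the sets $O_{t,s}$ exhausting $\{u'>-\infty\}\cap\{v'>-\infty\}$ differ from the sets exhausting $\{u'+v'>-\infty\}$ only by a pluripolar set — and the lower-volume argument: since $\theta+\eta$ need not be big, the maximum/minimum principles are available only after reducing to bounded potentials, and it is precisely the choice $u'=P_\theta(w'-v)$ together with the orthogonality relation of Proposition~\ref{contact_0} that makes the comparison with the arbitrary competitor $w'$ possible.
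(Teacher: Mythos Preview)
Your argument is correct and follows essentially the same route as the paper: for the upper volume you use the pointwise addition inequality $\theta_{u'}^n\le(\theta+\eta)_{u'+v}^n$ (which the paper subsumes under ``follows directly from the definition''), and for the lower volume you take an arbitrary competitor $w'\simeq u+v$, set $u'=P_\theta(w'-v)$, and combine the contact-set orthogonality (Proposition~\ref{contact_0}) with a maximum-principle comparison on $\{u'+v=w'\}$ --- exactly the paper's chain $\underline{\mathrm{vol}}(\theta,u)\le\int_X\theta_{u'}^n\le\int_{\{u'+v=\varphi\}}(\theta+\eta)_\varphi^n$, with your truncation spelling out the step the paper compresses into a reference to the minimum principle.

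One small correction: Proposition~\ref{bgl_sum} sits in Section~2, \emph{before} the standing bounded-mass and positive-volume hypotheses are imposed (those begin in Section~\ref{sec_full}), so you should not invoke them. The paper handles this by assuming $\underline{\mathrm{vol}}(\theta+\eta,u+v)<+\infty$ and choosing $\varphi$ with $\int_X(\theta+\eta)_\varphi^n<+\infty$; in your notation this amounts to noting that if $\int_X(\theta+\eta)_{w'}^n=+\infty$ the desired bound is trivial, and otherwise every integral in your chain is automatically finite. Your truncation argument already supplies all the finiteness needed, so simply drop the appeal to the bounded-mass hypothesis.
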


\begin{proof}
The first inequality follows directly from the definition. For the second, assume  that \(\underline{\mathrm{vol}}(\theta+\eta,u+v) < +\infty\). Choose \(\varphi \in \mathrm{PSH}(X,\theta+\eta)\) with \(\varphi \simeq u+v\) and \(\int_X (\theta+\eta+dd^c\varphi)^n < +\infty\).  Then \(u' \coloneqq P_{\theta}(\varphi-v) \not\equiv -\infty\) and \(u' \simeq u\).
By Proposition~\ref{contact_0} and the minimum principle (Corollary~\ref{min_p}), we obtain
\[
\underline{\mathrm{vol}}(\theta,u)
\le \int_X (\theta+dd^c u')^n
\le \int_{\{u'+v = \varphi\}} (\theta+\eta+dd^c\varphi)^n
\le \int_X (\theta+\eta+dd^c\varphi)^n .
\]

Taking the infimum over all such \(\varphi\) completes the proof.
\end{proof}

\begin{rem}For any big form $\theta$, the above results imply that the bounded mass property
holds if and only if $\ove(\theta) < +\infty$. In this case, the positive volume
property is equivalent to $\und(\theta) > 0$.

Moreover, the proof also implies that \(\theta\) is non‑collapsing in the sense of \cite[Definition~1.4]{GL2}, i.e., \(\int_X \theta_u^n > 0\) for all $\theta$-psh functions $u$ with minimal singularities. This follows from the fact that  the envelope
$P_{\delta\omega_X}(u-\rho)$ is a bounded $\delta\omega_X$-psh function.
 \end{rem}

We also need the following extension of the lower semicontinuity property for non-pluripolar products.
\begin{lemma}[(cf. {\cite[Proposition 2.5]{BGL25} and \cite[Lemma 2.7]{ALS25}})] \label{lsc}
     Let $u_j,u$ be $\theta$-psh functions such that $u_j$ converges  in capacity to $u$. Let $\chi_j,\chi\geq0$ be quasi-continuous functions such that $\chi_j$ converges  in capacity to  $\chi$. Then 
     \begin{align*}
         \int_X \chi\, \theta_u^n \leq \liminf_{j\to+\infty}\int_X \chi_j\, \theta_{u_j}^n
     \end{align*}
Assume, in addition, that  $\sup_{j\in\mathbb{N}}\int_X\theta_{u_j}^n<+\infty$ and that  \(
\int_{\{u_j \leq -t\}} \theta_{u_j}^n
\) converges uniformly  in \( j \) to $0$ as $t\to+\infty$.  
 Then $\theta_{u_j}^n$ converges to $\theta_u^n$ weakly.     
\end{lemma}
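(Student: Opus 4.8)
The plan is to prove the first assertion (lower semicontinuity of the weighted integral) by a double truncation --- cutting the potentials at level $-t$ and the weights at level $k$ --- which reduces each step to the continuity of the Bedford--Taylor operator for uniformly bounded data, and then to extract the weak convergence in the second assertion by applying the first assertion simultaneously to $f$ and to $1-f$, once the total masses $\int_X\theta_{u_j}^n$ are known to converge. I would begin with three harmless reductions. First, convergence in capacity of the $u_j$ to $u$ entails $L^1$-convergence (a Hartogs-type fact for quasi-psh sequences), hence $\sup_j\sup_X u_j<+\infty$. Second, since $X$ has the bounded mass property, each $\theta_{u_j}^n$ is a Radon measure of total mass $\le\ove(\theta)$, by the monotonicity of the upper volume (Proposition~\ref{bgl_mono}). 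Third, convergence in capacity is stable under $1$-Lipschitz post-composition, so $\max(u_j,-t)\to\max(u,-t)$ in capacity with uniformly bounded potentials, and $\min(\chi_j,k)\to\min(\chi,k)$ in capacity, uniformly bounded by $k$. The only genuine analytic input is classical: for uniformly bounded $\theta$-psh $v_j\to v$ in capacity and uniformly bounded nonnegative quasi-continuous weights $\chi_j'\to\chi'$ in capacity, one has $\int_X\chi_j'\,\theta_{v_j}^n\to\int_X\chi'\,\theta_v^n$; this is the Bedford--Taylor convergence theorem \cite{BT82}, and it transfers verbatim to the non-closed Hermitian setting once one writes a $\theta$-psh function locally as the sum of a psh function and a smooth function and expands $\theta_v^n$ into wedge products of Bedford--Taylor currents with smooth forms.

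For the first inequality, I would fix $t,k>0$ and a continuous cutoff $\psi_t\colon\mathbb{R}\to[0,1]$ with $\psi_t\equiv1$ on $[-t+1,\infty)$ and $\psi_t\equiv0$ on $(-\infty,-t]$. Since $\chi_j\ge\min(\chi_j,k)\,(\psi_t\circ u_j)\ge0$ and $\psi_t\circ u_j$ vanishes on $\{u_j\le-t\}$, plurifine locality (Section~\ref{sec:preliminary}) gives
\[
\int_X\chi_j\,\theta_{u_j}^n\ \ge\ \int_X\min(\chi_j,k)\,(\psi_t\circ u_j)\,\theta_{\max(u_j,-t)}^n .
\]
The right-hand integrand is bounded by $k$ and converges in capacity to $\min(\chi,k)\,(\psi_t\circ u)$, while $\max(u_j,-t)\to\max(u,-t)$ boundedly in capacity, so the bounded-data convergence above together with plurifine locality (now using that $\psi_t\circ u$ vanishes on $\{u\le-t\}$) yields $\liminf_j\int_X\chi_j\,\theta_{u_j}^n\ge\int_X\min(\chi,k)\,(\psi_t\circ u)\,\theta_u^n$. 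Letting $t\to+\infty$ one has $\psi_t\circ u\uparrow\mathbf{1}_{\{u>-\infty\}}$, and since $\theta_u^n$ puts no mass on $\{u=-\infty\}$, monotone convergence gives $\liminf_j\int_X\chi_j\,\theta_{u_j}^n\ge\int_X\min(\chi,k)\,\theta_u^n$; letting $k\to+\infty$ and using $\min(\chi,k)\uparrow\chi$ completes the first assertion.

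For the weak convergence, I would then assume $\sup_j\int_X\theta_{u_j}^n<+\infty$ and $\varepsilon(t):=\sup_j\int_{\{u_j\le-t\}}\theta_{u_j}^n\to0$. Taking $\chi_j\equiv\chi\equiv1$ in the first assertion gives $\liminf_j\int_X\theta_{u_j}^n\ge\int_X\theta_u^n$. For the reverse bound, fix $t>0$ and a continuous $g_t\colon\mathbb{R}\to[0,1]$ with $g_t\equiv1$ on $[-t+1,\infty)$ and $g_t\equiv0$ on $(-\infty,-t]$. Splitting $\{u_j>-t\}$ into $\{u_j>-t+1\}$ and a band, using plurifine locality on $\{u_j>-t\}$, the tail bound $\int_{\{u_j\le-t\}}\theta_{u_j}^n\le\varepsilon(t)$, the band bound $\int_{\{-t<u_j\le-t+1\}}\theta_{u_j}^n\le\varepsilon(t-1)$, and $\mathbf{1}_{\{u_j>-t+1\}}\le g_t\circ u_j\le\mathbf{1}_{\{u_j>-t\}}$, one obtains
\[
\int_X\theta_{u_j}^n\ \le\ \int_X(g_t\circ u_j)\,\theta_{\max(u_j,-t)}^n+\varepsilon(t)+\varepsilon(t-1) .
\]
Passing to the limit in $j$ by bounded-data convergence, and then using $g_t\circ u\le\mathbf{1}_{\{u>-t\}}$ with plurifine locality, gives $\limsup_j\int_X\theta_{u_j}^n\le\int_{\{u>-t\}}\theta_u^n+\varepsilon(t)+\varepsilon(t-1)\le\int_X\theta_u^n+\varepsilon(t)+\varepsilon(t-1)$, so $t\to+\infty$ yields $\int_X\theta_{u_j}^n\to\int_X\theta_u^n$. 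Finally, for continuous $f$ with $0\le f\le1$, applying the first assertion to $f$ and to $1-f$ and subtracting the latter from the convergence of total masses gives both $\liminf_j\int_X f\,\theta_{u_j}^n\ge\int_X f\,\theta_u^n$ and $\limsup_j\int_X f\,\theta_{u_j}^n\le\int_X f\,\theta_u^n$; hence $\theta_{u_j}^n\to\theta_u^n$ weakly.

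I expect the main obstacle to be the convergence of the total masses: unlike the Kähler case this is not a cohomological identity, so it has to be squeezed out of the uniform tail hypothesis, and one has to contend with the fact that the truncated measures $\theta_{\max(u_j,-t)}^n$ may genuinely charge the level sets $\{u_j=-t\}$. The continuous cutoffs $\psi_t$ and $g_t$, chosen so as to be sandwiched between the indicators of $\{u>-t\}$ and $\{u>-t+1\}$, are designed precisely to route around these boundary contributions; the remainder of the argument is plurifine bookkeeping together with monotone convergence.
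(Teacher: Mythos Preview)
Your proof is correct and follows essentially the same approach as the paper: truncate the weights, use plurifine locality to pass to bounded potentials, invoke Bedford--Taylor convergence for bounded data, and recover the full statement by monotone convergence; for weak convergence, squeeze the total masses using the uniform tail hypothesis and then apply the first assertion to $f$ and $1-f$. The only cosmetic differences are that the paper cites \cite[Proposition~2.5]{BGL25} directly for the bounded-weight case (whereas you reprove it) and uses the rational cutoffs $\chi_t^\varepsilon = \max(u+t,0)/(\max(u+t,0)+\varepsilon)$ in place of your piecewise-linear $\psi_t,g_t$---these are interchangeable devices.
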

\begin{proof} Fix $C>0$. 
    Set $\chi_C\coloneqq \min(\chi,C)$ and $\chi_{j,C}\coloneqq \min(\chi_j,C)$. By \cite[Proposition 2.5]{BGL25}, we have 
    \begin{align*}
         \int_X \chi_C\, \theta_u^n \leq \liminf_{j\to+\infty}\int_X \chi_{j,C}\, \theta_{u_j}^n \leq \liminf_{j\to+\infty}\int_X \chi_{j }\, \theta_{u_j}^n
     \end{align*}
     Letting $C\to+\infty$ on the left-hand side gives  the desired inequality. For the second statement, denote $$\chi_{j,t}^\varepsilon\coloneqq \frac{\max( u_j+t,0)}{\max( u_j+t,0)+\varepsilon}, \quad  \chi_{ t}^\varepsilon\coloneqq \frac{\max( u +t,0)}{\max( u +t,0)+\varepsilon}, \quad \varepsilon>0 .$$
  Using the plurifine locality and \cite[Theorem 4.26]{GZ}, we have
\[
\begin{aligned}
\int_X \theta_u^n
&\ge \int_X \chi_{t}^{\varepsilon} \, \theta_u^n
   = \int_X \chi_{t}^{\varepsilon} \, \theta_{\max(u,-t)}^n \\
&= \lim_{j\to+\infty} \int_X \chi_{j,t}^{\varepsilon} \, \theta_{\max(u_j,-t)}^n
   = \lim_{j\to+\infty} \int_X \chi_{j,t}^{\varepsilon} \, \theta_{u_j}^n \\
&\ge \limsup_{j\to+\infty} \int_{\{u_j \ge -t+1\}} \frac{1}{1+\varepsilon} \, \theta_{u_j}^n  \geq \frac{1}{1+\varepsilon} \limsup_{j\to+\infty} \int_X \theta_{u_j}^n + o(t).
\end{aligned}
\]
The penultimate inequality follows from the fact that 
\(\chi_{j,t}^{\varepsilon} \ge 1/(1+\varepsilon)\) on the set \(\{u_j \ge -t+1\}\), 
while the last inequality is a direct consequence of the hypothesis.  Letting \(\varepsilon \to 0\) and then \(t \to +\infty\) on the right‑hand side, we obtain 
\(
\int_X \theta_u^n \ge \limsup_{j\to+\infty} \int_X \theta_{u_j}^n.
\)
Together with the first part, we obtain the weak convergence \(\theta_{u_j}^n \to \theta_u^n\).
\end{proof}

%\begin{definition}
 %   We say that $X$ has the bounded
 %mass property if $\ove(\omega_X)<+\infty$   for some   hermitian form  $\omega_X$. \\
 %    We say that $X$ has the positive
   % volume property if $\und(\omega_X)>0$   for some   hermitian form  $\omega_X$.
%\end{definition}

%A question raised by Guedj–Lu \cite{GL2} asks whether one always has $\ove(\omega_X)<+\infty$ and $\und(\omega_X)>0$. Both conditions are bimeromorphic invariants (cf.\ \cite[Proposition~3.7]{GL2}) and are known to hold, for instance, when \(X\) belongs to the Fujiki class \(\mathcal{C}\) or satisfies the Guan--Li condition \[dd^c\omega_X = 0, \qquad d\omega_X \wedge d^c\omega_X = 0 .\] They also hold for three-dimensional compact complex manifolds admitting a pluriclosed metric (i.e., \(dd^c\omega_X = 0\)). Further examples of manifolds satisfying these properties can be found in \cite{AGL23}. So far, no counterexample has been found.

\section{The Relative Full Mass Classes  }\label{sec_full}
In the rest of this paper, we work under the assumptions that 
\(\overline{\mathrm{vol}}(\omega_X) < +\infty\) and \(\underline{\mathrm{vol}}(\omega_X) > 0\), 
and that \(\theta\) is a big form.

Given \(\theta\)-psh functions \(u\) and \(v\),   following  \cite{RWN14,DDNL18mono},  the envelope of $v$ relative to the singularity type of \(u\)   is defined by  
$$P_{\theta}[u](v) \coloneqq \mathrm{usc} \Bigl( \lim_{C \to +\infty} P_{\theta}(u + C, v) \Bigr) = \mathrm{usc} \left( \sup \big\{ h \in \mathrm{PSH}(X,\theta) : h \preceq u, \ h \le v \big\} \right).$$
When $v = V_{\theta}$, we simply  write $P_{\theta}[u]\coloneqq P_{\theta}[u](V_{\theta})$. While $P_{\theta}[u]$ may not have the same singularity type as $u$, the upper semicontinuity of Lelong numbers ensures that their Lelong numbers coincide over any modification.

\begin{definition}
 We say that $\phi\in\psh(X,\theta)$ is a model potential, if $\phi=P_{\theta}[\phi]$ and   $\und(\theta,\phi)>0$. A \(\theta\)-psh function \(u\) is said to have  model  singularities  if \(u \simeq \phi\) for some model potential $\phi$. \end{definition}
We now extend a fundamental lemma from \cite[Lemma~4.3]{DDNL21} to non‑closed forms
\begin{lemma}\label{lemma ddnl}
 Let \( u, \varphi \in \psh(X, \theta) \) with \( u \preceq \varphi \). Suppose
\[
\ove(\theta, u) > \ove(\theta, \varphi) - \und(\theta, \varphi)
\quad \text{and} \quad
\und(\theta, \varphi) > 0.
\]
Then, for any
\[
a \in \Bigl( 0,\, 1 - \bigl( \frac{ \ove(\theta, \varphi) - \ove(\theta, u) }{ \und(\theta, \varphi) } \bigr)^{\!1/n} \Bigr),
\]
there exists \( h \in \psh(X, \theta) \) such that \( a\varphi + (1-a)h \leq u \).
\end{lemma}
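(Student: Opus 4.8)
\medskip

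\noindent\emph{Sketch of a proof.} The plan is to produce the desired $h$ as a constant shift of $P_{\theta}\bigl(\tfrac1{1-a}(u-a\varphi)\bigr)$, realized as the decreasing limit of genuine $\theta$-psh envelopes, and to rule out its degeneration to $-\infty$ by a Monge--Amp\`ere mass comparison adapted to the hypotheses. Write $b:=1-a$, and note that the condition on $a$ is equivalent to $b^{n}\und(\theta,\varphi)>\ove(\theta,\varphi)-\ove(\theta,u)$. First I would reduce: since $u\preceq\varphi$, after adding constants we may assume $u\le\varphi\le 0$; and since the hypotheses and the conclusion depend only on the singularity types of $u$ and $\varphi$ (replacing $u$ by some $u'\simeq u$ and then translating $h$ costs only a constant), and since $\ove(\theta,u)\le\ove(\theta,\varphi)\le\ove(\theta)<+\infty$ by Proposition~\ref{bgl_mono} and the bounded mass property, we may further assume $\int_{X}\theta_{u}^{n}>\ove(\theta,\varphi)-b^{n}\und(\theta,\varphi)$.

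For $C>0$, set $u_{C}:=\max(u,\varphi-C)$, $h_{C}:=P_{\theta}\bigl(\tfrac1b(u_{C}-a\varphi)\bigr)$ and $w_{C}:=a\varphi+b\,h_{C}$. From $\varphi-C\le u_{C}\le\varphi$ one checks $\varphi-\tfrac Cb\le h_{C}\le\varphi$, so $h_{C}\in\psh(X,\theta)$ with $h_{C}\simeq\varphi$, hence $\varphi-C\le w_{C}\le\varphi$, $w_{C}\simeq\varphi$, $w_{C}\le u_{C}$, and $\theta+dd^{c}w_{C}=a\theta_{\varphi}+b\,\theta_{h_{C}}$. The $h_{C}$ decrease to some $h$; if $h\not\equiv-\infty$ then $h$ is $\theta$-psh and letting $C\to+\infty$ in $a\varphi+b\,h_{C}\le u_{C}$ (with $u_{C}\to u$) gives $a\varphi+b\,h\le u$, the conclusion. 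So everything reduces to excluding $h\equiv-\infty$. Suppose $h\equiv-\infty$; then $\sup_{X}h_{C}\to-\infty$ (Dini), so $m_{C}:=\sup_{X}w_{C}\le b\sup_{X}h_{C}\to-\infty$, while $m_{C}\ge\varphi-C$ on $X$, whence $\{u_{C}\le m_{C}\}=\{u\le m_{C}\}$ and $\{u_{C}>m_{C}\}=\{u>m_{C}\}\subseteq\{u>\varphi-C\}$.

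The heart of the argument is the estimate $\int_{\{u_{C}\le m_{C}\}}\theta_{u_{C}}^{n}\ge b^{n}\und(\theta,\varphi)$ for every $C$. To get it: applying the contact-set property (Proposition~\ref{contact_0}) to the quasi-continuous $f=\tfrac1b(u_{C}-a\varphi)$ shows $\theta_{h_{C}}^{n}$ is carried by the contact set $\{w_{C}=u_{C}\}$; multilinearity of the non-pluripolar product together with $\theta+dd^{c}w_{C}\ge b\,\theta_{h_{C}}$ gives $\theta_{w_{C}}^{n}\ge b^{n}\theta_{h_{C}}^{n}$; the maximum principle (Proposition~\ref{max_p}, applied to $w_{C}\le u_{C}$, all the measures being finite since $u_{C},w_{C}\simeq\varphi$) gives $\mathbf 1_{\{w_{C}=u_{C}\}}\theta_{w_{C}}^{n}\le\mathbf 1_{\{w_{C}=u_{C}\}}\theta_{u_{C}}^{n}$; and $h_{C}\simeq\varphi$ forces $\int_{X}\theta_{h_{C}}^{n}\ge\und(\theta,\varphi)$. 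Chaining these along $\{w_{C}=u_{C}\}\subseteq\{u_{C}\le m_{C}\}$ yields the estimate. On the other hand, by plurifine locality $\theta_{u_{C}}^{n}=\theta_{u}^{n}$ on the plurifine-open set $\{u>\varphi-C\}$, so $\int_{\{u_{C}\le m_{C}\}}\theta_{u_{C}}^{n}=\int_{X}\theta_{u_{C}}^{n}-\theta_{u}^{n}(\{u>m_{C}\})\le\ove(\theta,\varphi)-\theta_{u}^{n}(\{u>m_{C}\})$, using $u_{C}\simeq\varphi$. Letting $C\to+\infty$, and using that $\theta_{u}^{n}$ charges no pluripolar set, the two bounds give $b^{n}\und(\theta,\varphi)\le\ove(\theta,\varphi)-\int_{X}\theta_{u}^{n}$, contradicting the reduction. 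Hence $h\not\equiv-\infty$, completing the proof.

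I expect the main obstacle to be this mass estimate. The key move is to introduce $w_{C}=a\varphi+b\,h_{C}$ and observe that it traps a definite amount $b^{n}\und(\theta,\varphi)$ of $\theta_{u_{C}}^{n}$-mass (arising from $\theta_{h_{C}}^{n}$, whose total mass is $\ge\und(\theta,\varphi)$ because $h_{C}\simeq\varphi$ and which is carried by the contact set) inside the deep sublevel set $\{u_{C}\le m_{C}\}$; yet once $w_{C}$ degenerates, that set can retain at most the mass $\ove(\theta,\varphi)-\int_{X}\theta_{u}^{n}$ that leaks from the ``max'' in $u_{C}$, and the threshold on $a$ is precisely what makes $b^{n}\und(\theta,\varphi)$ beat this. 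The rest is bookkeeping: keeping all non-pluripolar products finite (via the bounded mass property and $u_{C},w_{C}\simeq\varphi$), justifying the plurifine-locality and maximum-principle steps, and avoiding the level set $\{u=\varphi-C\}$, which one sidesteps by only ever comparing on $\{u>\varphi-C\}\supseteq\{u_{C}>m_{C}\}$.
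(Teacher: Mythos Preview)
Your proof is correct and follows essentially the same approach as the paper's. Both arguments truncate to $u_C=\max(u,\varphi-C)$, consider the envelope $P_\theta\bigl(\tfrac1{1-a}(u_C-a\varphi)\bigr)$, argue by contradiction that it degenerates, and use the contact property plus the maximum/minimum principle to squeeze out the inequality $(1-a)^n\und(\theta,\varphi)\le\ove(\theta,\varphi)-\ove(\theta,u)$; the only differences are cosmetic (the paper sets $b=1/(1-a)$ rather than $b=1-a$, works directly with the envelope $\varphi_j$ instead of the auxiliary $w_C=a\varphi+(1-a)h_C$, and takes the supremum over $u'\simeq u$ at the end rather than reducing at the start).
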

\begin{proof}
  Set \(b \coloneqq 1/(1-a) > 1\). We may assume \(\varphi \le 0\) and suppose, for a contradiction, that \(P_{\theta}\bigl(bu - (b-1)\varphi\bigr) \equiv -\infty\). Denote $$u_j:=\max(u,\varphi-j)\quad \text{and} \quad  \varphi_j\coloneqq P_{\theta}(b u_j-(b-1)\varphi).$$ Then we have \(\sup_X \varphi_j \searrow -\infty\).  Fix $j>k>0$. Let $ C_j:=\{\varphi_j=bu_j-(b-1)\varphi\}$ denote the contact set. Since $\{u>-k\} \subseteq\{u>\varphi-j\}$,  we can apply Proposition~\ref{contact_0}, the minimum principle (Corollary~\ref{min_p}) and plurifine locality to obtain 
\begin{align*}
  \int_{\{\varphi_j \le -bk\}} \theta_{\varphi_j}^n  
 \le b^n \int_{\{\varphi_j \le -bk\} \cap C_j} \theta_{u_j}^n
 \le b^n \int_{\{u \le -k\}} \theta_{u_j}^n  
&= b^n\Bigl( \int_X \theta_{u_j}^n - \int_{\{u > -k\}} \theta_u^n \Bigr).
\end{align*}
For $j$ sufficiently large we have $\{\varphi_j\leq-bk\}=X$, and therefore $\underline{\mathrm{vol}}(\theta,\varphi)
\leq \int_{\{\varphi_j \le -bk\}} \theta_{\varphi_j}^n$.  
    Letting \(j \to +\infty\) and then \(k \to +\infty\) yields
      \begin{align}\label{rang_dd}\und(\theta,\phi)\leq b^n \Bigl(\ove (\theta,\varphi)-\int_X\theta_{u}^n\Bigr)\end{align}
For any \(u' \in \mathrm{PSH}(X,\theta)\) with \(u' \simeq u\) we still have \(P_{\theta}\bigl(bu'-(b-1)\varphi\bigr) \equiv -\infty\). Hence \eqref{rang_dd} remains valid after replacing \(u\) by \(u'\). Taking the infimum over all such \(u'\) gives a contradiction.
\end{proof}

     As an immediate consequence of the preceding lemma, for any $\theta$-psh function  $u$ satisfying $\overline{\mathrm{vol}}(\theta, u) > \overline{\mathrm{vol}}(\theta) - \underline{\mathrm{vol}}(\theta)$, there exists a constant  \(\delta > 0\) and a $\theta$-psh function \(v  \) with \(v \preceq u\) such that \(\theta + dd^c v \ge \delta \omega_X\)  (see  \cite[Proposition 3.6]{DX21}). This property actually holds for general $u$.  The original argument is due to \cite[Proposition~2.11]{ALS25}, and we adapt it here.
 
\begin{proposition}\label{admit_herm}
Let \(u,\varphi \in \mathrm{PSH}(X,\theta)\) with \(u \preceq \varphi\).  If \(\underline{\mathrm{vol}}(\theta,u) > 0\), then there exists a constant \(a \in (0,1)\) and a function \(h \in \mathrm{PSH}(X,\theta)\) such that \(a\varphi + (1-a)h \le u\).
\end{proposition}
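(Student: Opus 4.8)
The plan is to interpolate linearly between $u$ and $\varphi$, apply Lemma~\ref{lemma ddnl} along the interpolation in finitely many small steps, and concatenate the resulting Hermitian multipliers. After subtracting constants we may assume $u\le\varphi\le 0$. For $t\in[0,1]$ put $\psi_t\coloneqq(1-t)u+t\varphi$; then $\theta+dd^c\psi_t=(1-t)\theta_u+t\theta_\varphi\ge 0$, so $\psi_t\in\psh(X,\theta)$, with $\psi_0=u$, $\psi_1=\varphi$, and $\psi_s\le\psi_t$ whenever $s\le t$, whence $u=\psi_0\preceq\psi_s\preceq\psi_t\preceq\psi_1=\varphi$. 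Since $u\preceq V_\theta$ and $\ove(\theta)<+\infty$ we have $c_0\coloneqq\und(\theta,u)\le\ove(\theta,u)<+\infty$, and $c_0>0$ by hypothesis; monotonicity of the lower volume (Proposition~\ref{bgl_mono}, applicable as $\und(\theta,u)<+\infty$) then yields $\und(\theta,\psi_t)\ge c_0$ for every $t$, while $g(t)\coloneqq\ove(\theta,\psi_t)$ is non-decreasing and bounded, with $g(1)=\ove(\theta,\varphi)\le\ove(\theta)<+\infty$.

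The heart of the argument is to find a partition $0=t_0<t_1<\dots<t_N=1$ with $g(t_{i+1})-g(t_i)<c_0$ for all $i$. Given such a partition, Lemma~\ref{lemma ddnl} applies to each consecutive pair $(\psi_{t_i},\psi_{t_{i+1}})$: the hypotheses $\psi_{t_i}\preceq\psi_{t_{i+1}}$ and $\und(\theta,\psi_{t_{i+1}})>0$ are automatic, and
\[
\ove(\theta,\psi_{t_{i+1}})-\und(\theta,\psi_{t_{i+1}})\le g(t_{i+1})-c_0<g(t_i)=\ove(\theta,\psi_{t_i}),
\]
so there are $a_i\in(0,1)$ and $h_i\in\psh(X,\theta)$ with $a_i\psi_{t_{i+1}}+(1-a_i)h_i\le\psi_{t_i}$. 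To produce the partition it suffices that the bounded non-decreasing function $g$ have no jump of size $\ge c_0$; I would in fact show $g$ is continuous. One-sided semicontinuity of $g$ is immediate from its monotonicity together with the monotonicity of $\ove$ under the singularity order, and for the reverse inequalities one writes $\psi_{t_0}$ as an affine convex combination of $\psi_t$ and $\varphi$ (with weight $\tfrac{1-t_0}{1-t}\to 1$ on $\psi_t$ as $t\uparrow t_0$), resp.\ of $\psi_t$ and $u$ for $t\downarrow t_0$, and applies a concavity estimate $\ove(\theta,\lambda w_1+(1-\lambda)w_2)\le\lambda\,\ove(\theta,w_1)+(1-\lambda)\,\ove(\theta,w_2)$ to squeeze the one-sided limits of $g$; then uniform continuity of $g$ furnishes $N$ with $g(\tfrac{i+1}{N})-g(\tfrac iN)<c_0$ for all $i$, and we take $t_i\coloneqq i/N$.

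It remains to concatenate the multipliers. With $P_i\coloneqq\prod_{j<i}a_j$ (so $P_0=1$) and $a\coloneqq\prod_{j=0}^{N-1}a_j\in(0,1)$, substituting the $(i{+}1)$-st inequality into the $i$-th and iterating downward gives
\[
a\,\varphi+\sum_{i=0}^{N-1}P_i(1-a_i)h_i\le u;
\]
since $\sum_{i=0}^{N-1}P_i(1-a_i)=\sum_{i=0}^{N-1}(P_i-P_{i+1})=1-a$ telescopes, the function $h\coloneqq(1-a)^{-1}\sum_{i}P_i(1-a_i)h_i$ is a convex combination of the $h_i$, hence lies in $\psh(X,\theta)$, and $a\varphi+(1-a)h\le u$, which is the claim (undoing the normalisation only shifts $h$ by a constant). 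The main obstacle is the continuity of $g=\ove(\theta,\psi_\cdot)$ — equivalently, a concavity (or one-sided semicontinuity) property of the upper volume along affine families of potentials. The usual derivation of such a property via mixed-mass Hodge-index inequalities is not available for non-closed forms, so it must be extracted from the structure of the family $\psi_t$ itself; a plausible route is to compare $\ove(\theta,\psi_t)$ with $\ove(\theta,P_\theta[\psi_t])$ and reduce to the case of a model potential, where the volume hypothesis of Lemma~\ref{lemma ddnl} degenerates (cf.\ \cite[Proposition~3.6]{DX21}).
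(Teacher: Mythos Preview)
Your interpolation strategy is sound in spirit, but the critical step—continuity of $g(t)=\ove(\theta,\psi_t)$—is not correctly justified. The concavity inequality $\ove(\theta,\lambda w_1+(1-\lambda)w_2)\le\lambda\,\ove(\theta,w_1)+(1-\lambda)\,\ove(\theta,w_2)$ you invoke is neither proved in the paper nor plausible in the hermitian setting: Proposition~\ref{bgl_sum} gives the \emph{lower} bound $\lambda^n\ove(\theta,w_1)\le\ove(\theta,\lambda w_1+(1-\lambda)w_2)$, which points the wrong way for concavity, and your fallback ``reduce to model potentials via $P_\theta[\psi_t]$'' is circular (the comparison $\ove(\theta,\psi_t)=\ove(\theta,P_\theta[\psi_t])$ is Lemma~\ref{stable_lem}, whose proof uses the domination principle, which in turn rests on Lemma~\ref{lemma ddnl}). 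What \emph{does} follow from Proposition~\ref{bgl_sum} is continuity of $g$ on $(0,1]$: writing $\psi_{t_0}=\tfrac{t_0}{t}\psi_t+(1-\tfrac{t_0}{t})u$ for $0<t_0<t$ yields $(t_0/t)^n g(t)\le g(t_0)\le g(t)$, and symmetrically for $t<t_0$. Right-continuity at $t_0=0$, however, is not available from this, since $u$ cannot be written as a convex combination of $\psi_t$ and a $\theta$-psh function with weight on $\psi_t$ tending to $1$. One can circumvent this by running your telescoping argument on $[\epsilon,1]$ (where the number of steps is bounded by $\lceil 2(g(1)-g(0))/c_0\rceil$, independently of $\epsilon$, since $g$ is monotone and continuous there) to obtain $a\varphi+(1-a)h\le\psi_\epsilon=(1-\epsilon)u+\epsilon\varphi$ with $a>0$ uniform in $\epsilon$, then rearranging for $\epsilon<a$; but this is a substantive additional step you did not supply.

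The paper's proof is quite different and more direct: it argues by contradiction that $P_\theta(bu-(b-1)\varphi)\not\equiv-\infty$ for $b>1$ close to $1$. Setting $u_j=\max(u,\varphi-j)$ and $v_j=u+(b-1)u_j$, the minimum principle bounds $\und(\theta,u)$ by $\int_{\{u\le -k\}}(b\theta+dd^c v_j)^n$, which by the bounded mass property differs from $\int_{\{u\le -k\}}\theta_u^n$ only by $O(b-1)$; letting $j,k\to\infty$ and then $b\to 1$ contradicts $\und(\theta,u)>0$. This avoids any continuity analysis of the upper volume altogether.
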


\begin{proof}
 We may assume that \(\varphi \le 0\) and suppose, by contradiction, that for every \(b = 1/(1-a) > 1\) we have \(P_{\theta}\bigl(bu - (b-1)\varphi\bigr) \equiv -\infty\). Set \[
u_j \coloneqq \max(u,\varphi-j), \,\,
v_j \coloneqq u + (b-1)u_j \,\, \text{and} \,\,\,\,
\varphi_j \coloneqq P_{\theta}\bigl(v_j - (b-1)\varphi\bigr).  
\] Then \(\sup_X \varphi_j \searrow -\infty\). Let \(C_j \coloneqq \{\varphi_j = v_j - (b-1)\varphi\}\).
Fix \(j>k>0\). For \(j\) sufficiently large, Proposition~\ref{contact_0} and the minimum principle then imply that 
\[
\underline{\mathrm{vol}}(\theta,u)
\le \int_{\{\varphi_j \le -bk\}} \theta_{\varphi_j}^n
\le \int_{\{\varphi_j \le -bk\} \cap C_j} (b\theta + dd^c v_j)^n
\le \int_{\{u \le -k\}} (b\theta + dd^c v_j)^n .
\]

Using the bounded mass property we obtain
\[
\int_{\{u \le -k\}} (b\theta + dd^c v_j)^n
\le \int_{\{u \le -k\}} \theta_u^n + O(b-1).
\]

Letting \(j, k \to +\infty\) and then \(b \to 1\) yields a contradiction.
\end{proof}

\begin{rem}
When the bounded mass property and the positive volume property  are not assumed, there may be no \(\phi \in \mathrm{PSH}(X, \theta)\) satisfying \(\underline{\mathrm{vol}}(\theta, \phi) > 0\). For semi‑positive forms, the authors in \cite{ALS25} therefore define the model potential by requiring \(P_{\varepsilon\theta}(\phi) \not\equiv -\infty\) for some \(\varepsilon > 0\), rather than \(\underline{\mathrm{vol}}(\theta, \phi) > 0\).
\end{rem}
   
\begin{definition}\label{def_fullmas}
 Let \(\varphi \in \operatorname{PSH}(X, \theta)\). The  relative full mass class  with respect to \(\varphi\) is defined by  
\[
\mathcal{E}(X, \theta, \varphi) \;:=\; 
\Bigl\{ 
u \in \operatorname{PSH}(X, \theta) \;:\; 
u \preceq \varphi \ \text{and}\ 
\overline{\operatorname{vol}}(\theta, u) = \overline{\operatorname{vol}}(\theta, \varphi) 
\Bigr\}.
\]
When \(\varphi = V_{\theta}\), we simply write  
\( 
\mathcal{E}(X, \theta) \coloneqq \mathcal{E}(X, \theta, V_{\theta}).
\)
\end{definition}

%\begin{proposition} The set 
 %  $\mathcal{E}(X, \theta, \varphi)$ is an extremal face of $\psh(X,\theta)$, i.e.,  for any $\theta$-psh functions $u,v$, $tu+(1-t)v\in\mathcal{E}(X, \theta, \varphi)$ for some $0<1$ implies $u,v\in\mathcal{E}(X, \theta, \varphi)$. 
   
  % In particular, $\mathcal{E}(X, \theta, \varphi)$ is a convex subset of $\psh(X,\theta)$.
  %  \end{proposition}
  % \begin{proof}
  % By the monotonicity, it suffice to consider the case where $u\simeq\varphi$ and $\ove(\theta,\varphi)>0$ and show that $v\in  \mathcal{E}(X, \theta, \varphi)$. 
 %  \end{proof}

\begin{corollary}\label{full_same}
    Let $\varphi\in\psh(X,\theta)$ and $u\in \mathcal{E}(X,\theta,\varphi)$. Then  $\und(\theta,u)=\und(\theta,\varphi)$. In addition,  if   $\und(\theta,\varphi)>0$, then $P_{\theta}[u]=P_{\theta}[\varphi]$.
\end{corollary}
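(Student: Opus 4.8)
The plan is to treat the two assertions in turn, in both cases using Lemma~\ref{lemma ddnl} to ``peel off'' a fraction of $\varphi$ from $u$ and then exploiting the multilinearity and positivity of the non‑pluripolar product.

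\emph{Step 1: $\und(\theta,u)=\und(\theta,\varphi)$.} The inequality $\und(\theta,u)\le\und(\theta,\varphi)$ is immediate from Proposition~\ref{bgl_mono}, since $u\preceq\varphi$ and $\und(\theta,u)\le\ove(\theta,u)=\ove(\theta,\varphi)\le\ove(\theta)<+\infty$. If $\und(\theta,\varphi)=0$ this forces $\und(\theta,u)=0$ and we are done, so assume $\und(\theta,\varphi)>0$. Then $\ove(\theta,u)=\ove(\theta,\varphi)>\ove(\theta,\varphi)-\und(\theta,\varphi)$, so Lemma~\ref{lemma ddnl} applies and its admissible exponents fill all of $(0,1)$: for each $a\in(0,1)$ there is $h_a\in\psh(X,\theta)$, which I normalise so that $h_a\le 0$, with $a\varphi+(1-a)h_a\le u$. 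I claim that $\und\bigl(\theta,a\varphi+(1-a)h_a\bigr)\ge a^{n}\,\und(\theta,\varphi)$; granting this, Proposition~\ref{bgl_mono} (applied to $a\varphi+(1-a)h_a\preceq u$) gives $\und(\theta,u)\ge a^{n}\und(\theta,\varphi)$, and letting $a\to 1^{-}$ yields the reverse inequality.

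To prove the claim, fix $\psi\in\psh(X,\theta)$ with $\psi\simeq a\varphi+(1-a)h_a$ and set $\varphi_\psi:=P_\theta\!\bigl(\tfrac1a(\psi-(1-a)h_a)\bigr)$. Comparing constants, $\varphi_\psi\not\equiv-\infty$ and $\varphi_\psi\simeq\varphi$, so $\int_X\theta_{\varphi_\psi}^{n}\ge\und(\theta,\varphi)$. By Proposition~\ref{contact_0}, $\theta_{\varphi_\psi}^{n}$ is carried by the contact set $C:=\{\,a\varphi_\psi+(1-a)h_a=\psi\,\}$; on $C$, the maximum principle (Proposition~\ref{max_p}, last part, applied to $a\varphi_\psi+(1-a)h_a\le\psi$) gives $\mathbf 1_C\,(a\theta_{\varphi_\psi}+(1-a)\theta_{h_a})^{n}\le\mathbf 1_C\,\theta_\psi^{n}$; and multilinearity with positivity gives $(a\theta_{\varphi_\psi}+(1-a)\theta_{h_a})^{n}\ge a^{n}\theta_{\varphi_\psi}^{n}$. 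Chaining these and using that $\theta_{\varphi_\psi}^{n}$ lives on $C$,
\[
\int_X\theta_\psi^{n}\ \ge\ \int_C\theta_\psi^{n}\ \ge\ a^{n}\!\int_C\theta_{\varphi_\psi}^{n}\ =\ a^{n}\!\int_X\theta_{\varphi_\psi}^{n}\ \ge\ a^{n}\,\und(\theta,\varphi);
\]
taking the infimum over all such $\psi$ proves the claim.

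\emph{Step 2: $P_\theta[u]=P_\theta[\varphi]$ when $\und(\theta,\varphi)>0$.} From $u\preceq\varphi$ one has $P_\theta[u]\le P_\theta[\varphi]$ directly. For the converse I feed the $h_a$'s into the envelope: writing $g_C:=P_\theta(\varphi+C,V_\theta)\le 0$, one has $g_C\preceq\varphi$ and $g_C\nearrow P_\theta[\varphi]$, and the inequalities $a\varphi+(1-a)h_a\le u$, $ag_C\le a\varphi+aC$ show that $ag_C+(1-a)h_a-aC$ is $\theta$-psh and $\preceq u$, hence $\le P_\theta[u]-aC$ (the supremum being $\le-aC$ since $g_C,h_a\le 0$); letting $C\to+\infty$ and passing to the upper semicontinuous regularisation gives $aP_\theta[\varphi]+(1-a)h_a\le P_\theta[u]$ for every $a\in(0,1)$. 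Now run the mass comparison the other way: $aP_\theta[\varphi]+(1-a)h_a\preceq P_\theta[u]$ gives $\ove(\theta,aP_\theta[\varphi]+(1-a)h_a)\le\ove(\theta,P_\theta[u])$ by Proposition~\ref{bgl_mono}, while testing $\ove(\theta,aP_\theta[\varphi]+(1-a)h_a)$ against the family $\{\,a\phi'+(1-a)h_a:\phi'\simeq P_\theta[\varphi]\,\}$ and using multilinearity gives $\ove(\theta,aP_\theta[\varphi]+(1-a)h_a)\ge a^{n}\,\ove(\theta,P_\theta[\varphi])$. Letting $a\to1^{-}$ and combining with the trivial inequality $\ove(\theta,P_\theta[u])\le\ove(\theta,P_\theta[\varphi])$ yields $\ove(\theta,P_\theta[u])=\ove(\theta,P_\theta[\varphi])$, i.e.\ $P_\theta[u]\in\mathcal E(X,\theta,P_\theta[\varphi])$. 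Moreover $P_\theta[u]$ and $P_\theta[\varphi]$ are model potentials: $P_\theta[\,P_\theta[\cdot]\,]=P_\theta[\cdot]$, and $\und(\theta,P_\theta[u])\ge\und(\theta,u)=\und(\theta,\varphi)>0$ by Step~1. It therefore remains to show that two model potentials $w:=P_\theta[u]\le\phi:=P_\theta[\varphi]$ with $w\in\mathcal E(X,\theta,\phi)$ and $\und(\theta,\phi)>0$ must coincide; for this I would use that the Monge–Amp\`ere measure of a model potential is carried by $\{w=V_\theta\}$, so $\theta_w^{n}$ puts no mass on $\{w<\phi\}\subseteq\{w<V_\theta\}$, and the domination principle then forces $w\ge\phi$, hence $w=\phi$.

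\emph{Main obstacle.} The delicate point is the limit $a\to1^{-}$: the functions $h_a$ produced by Lemma~\ref{lemma ddnl} genuinely degenerate (morally $h_a\sim\tfrac1{1-a}(u-\varphi)$), so the passage to the limit cannot be done pointwise and must be carried out at the level of the volumes $\und$ and $\ove$, through the contact‑set and multilinearity estimates above. The secondary point is the closing rigidity statement for model potentials, which relies on the concentration of the Monge–Amp\`ere mass of a model potential and on the domination principle; if these are not yet available at this stage, one argues instead directly from Proposition~\ref{contact_0} and the minimum principle (Corollary~\ref{min_p}).
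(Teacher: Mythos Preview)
Your argument is correct but takes a considerably longer detour than the paper, and you overlook the key simplification that makes the direct limit work.

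For Step~1, your contact-set argument is essentially an inline reproof of Proposition~\ref{bgl_sum}: the paper simply cites Propositions~\ref{bgl_mono} and~\ref{bgl_sum} to get $a^n\und(\theta,\varphi)\le\und(\theta,a\varphi+(1-a)h_a)\le\und(\theta,u)$ in one line. Your version is fine, just redundant.

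For Step~2, you are right that from $a\varphi+(1-a)h_a\le u$ one gets $aP_\theta[\varphi]+(1-a)P_\theta[h_a]\le P_\theta[u]$ (the $g_C$ dance and the ``$-aC$'' bookkeeping are unnecessary: the convex combination $aP_\theta[\varphi]+(1-a)P_\theta[h_a]$ is directly $\theta$-psh, $\le V_\theta$, and $\preceq u$). Your worry that the $h_a$ degenerate as $a\to1$ is legitimate for $h_a$ itself, but \emph{not} for $P_\theta[h_a]$: since $h_a-\sup_X h_a$ is $\theta$-psh, $\le 0$, and $\preceq h_a$, one has $h_a-\sup_X h_a\le P_\theta[h_a]\le V_\theta\le 0$, hence $\sup_X P_\theta[h_a]=0$ for every $a$. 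Thus $\|P_\theta[h_a]\|_{L^1}$ is uniformly bounded, $(1-a)P_\theta[h_a]\to 0$ in $L^1$, and letting $a\to1$ gives $P_\theta[\varphi]\le P_\theta[u]$ directly. This is the paper's one-line conclusion.

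Your route via $\ove(\theta,P_\theta[u])=\ove(\theta,P_\theta[\varphi])$ followed by a rigidity argument is valid, but it invokes the stability of the envelope operator (Proposition~\ref{stable_prop}), the concentration property (Proposition~\ref{contact_2}), and the domination principle (Proposition~\ref{domina_1}), all of which come \emph{after} this corollary in the paper's logical order. There is no circularity (none of those proofs use Corollary~\ref{full_same}), but you are effectively front-loading several pages of later material to avoid a limit that, once you pass from $h_a$ to $P_\theta[h_a]$, is immediate.
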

\begin{proof} If $\und(\theta,\varphi)=0$, then $\und(\theta,u)=0$ follows from the monotonicity (Proposition \ref{bgl_mono}). Now assume    $\und(\theta,\varphi)>0$.
    According to Lemma \ref{lemma ddnl}, for any $a\in(0,1)$, there exists a $\theta$-psh function $h$ such that $a\varphi+(1-a)h\leq u$.   Proposition~\ref{bgl_mono} and Proposition~\ref{bgl_sum} then imply that  $$a^n \und(\theta,\varphi)\leq \und(\theta,u)\leq\und(\theta,\varphi)\,\,\, \text{and}\,\,\, aP_{\theta}[\varphi]+(1-a)P_{\theta}[h]\leq P_{\theta}[u]\leq P_{\theta}[\varphi]. $$
     Letting $a\to1$ completes this proof.
\end{proof}

 Next, we establish the following generalization of
 the domination principle (see   \cite[Proposition 2.8]{GL3}, \cite[Lemma 4.2]{BGL25}).

 \begin{proposition}   \label{domina_1}
Fix a constant $0 \leq c < 1$. Let $v, \varphi \in \psh(X,\theta)$ with $v\preceq \varphi$ and  let $ u\in \mathcal{E}(X,\theta,\varphi)$.  If   $\und(\theta,\varphi)>0$ and 
\[
\mathbf{1}_{\{u < v\}} \theta_u ^n \leq c \mathbf{1}_{\{u < v\}} \theta_v^n,
\]
then $u \geq v$.
\end{proposition}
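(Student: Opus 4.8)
The plan is to reduce the statement, via a rooftop envelope, to a comparison principle inside the full–mass class, and to argue by contradiction. Suppose $D:=\{u<v\}$ is non-pluripolar; equivalently $D$ has positive Lebesgue measure, since otherwise $u$ and $\max(u,v)$ agree off a pluripolar (hence Lebesgue–null) set and so coincide by upper semicontinuity, giving $u\ge v$. I would first make two harmless reductions. By Corollary~\ref{full_same} all members of $\mathcal{E}(X,\theta,\varphi)$ share the envelope $P_\theta[\varphi]$; since $v\preceq\varphi$ forces $v\preceq P_\theta[\varphi]$ while $\ove(\theta,P_\theta[\varphi])=\ove(\theta,\varphi)$, one may assume $\varphi=P_\theta[\varphi]$ is a model potential and, after normalising, $\varphi\le 0$. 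Next, replacing $v$ by $\max(v,u-M)$ for a fixed $M$ enlarges $v$, keeps $v\preceq\varphi$, and preserves the hypothesis—on $\{u<\max(v,u-M)\}\subseteq\{u<v\}$ one has $\max(v,u-M)=v$, so the two measures agree there by plurifine locality—while $u\ge\max(v,u-M)$ yields $u\ge v$; as now $u\preceq\max(v,u-M)\preceq\varphi$, Proposition~\ref{bgl_mono} puts $\max(v,u-M)$ in $\mathcal{E}(X,\theta,\varphi)$. So henceforth $\varphi$ is model, $\varphi\le0$, and $v\in\mathcal{E}(X,\theta,\varphi)$; by the bounded–mass hypothesis all non-pluripolar products below have finite total mass, so the maximum and minimum principles of Section~\ref{sec:preliminary} apply as stated.

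Set $w:=P_\theta(u,v)=P_\theta(\min(u,v))$, which is $\not\equiv-\infty$ because $\min(u,v)$ is a difference of quasi-psh functions. I claim (i) $w\in\mathcal{E}(X,\theta,\varphi)$ and (ii) the hypothesis passes to the pair $(w,v)$. For (i), fix any $\varphi'\simeq\varphi$; Lemma~\ref{lemma ddnl} applies both to $u$ and to $v$ (each has $\ove(\theta,\cdot)=\ove(\theta,\varphi')$, so every $a\in(0,1)$ is admissible) and produces $h_a,h'_a\in\psh(X,\theta)$ with $a\varphi'+(1-a)h_a\le u$ and $a\varphi'+(1-a)h'_a\le v$; hence the $\theta$-psh function $a\varphi'+(1-a)P_\theta(h_a,h'_a)$ lies below $\min(u,v)$, so $\preceq w$, and Proposition~\ref{bgl_mono}, Proposition~\ref{bgl_sum} and the scaling $\ove(a\theta,a\varphi')=a^n\ove(\theta,\varphi)$ give $\ove(\theta,w)\ge a^n\ove(\theta,\varphi)$ for all $a$, whence $\ove(\theta,w)=\ove(\theta,\varphi)$ (the reverse inequality being $w\preceq\varphi$ and Proposition~\ref{bgl_mono}). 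For (ii), Proposition~\ref{contact_0} says $\theta_w^n$ is carried by $\{w=\min(u,v)\}$; intersecting with $\{w<v\}$ forces $w=u<v$ there, so by plurifine locality
\[
\mathbf{1}_{\{w<v\}}\theta_w^n=\mathbf{1}_{\{w=u<v\}}\theta_u^n\le c\,\mathbf{1}_{\{w=u<v\}}\theta_v^n\le c\,\mathbf{1}_{\{w<v\}}\theta_v^n,
\]
and $\{u<v\}\subseteq\{w<v\}$ since $w\le\min(u,v)$. It therefore suffices to prove $w\ge v$: then $w=v$, so $v$ is a $\theta$-psh function $\le u$, i.e. $u\ge v$, contradicting that $D$ has positive measure.

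The remaining step $w\ge v$—with $w,v\in\mathcal{E}(X,\theta,\varphi)$, $w\le v$, $\mathbf{1}_{\{w<v\}}\theta_w^n\le c\,\mathbf{1}_{\{w<v\}}\theta_v^n$—is where I expect the real difficulty to lie, since this is the point at which the failure of the comparison principle (the Monge–Ampère mass is not constant on $\mathcal{E}(X,\theta,\varphi)$) has to be overcome. The basic input is a partial comparison principle: applying Proposition~\ref{max_p} to $\max(w,v-t)\in\mathcal{E}(X,\theta,\varphi)$, bounding its mass by $\ove(\theta,\varphi)$, and letting $t\to0^+$ gives
\[
\int_{\{w<v\}}\theta_v^n\le\int_{\{w<v\}}\theta_w^n+\Bigl(\ove(\theta,\varphi)-\int_X\theta_w^n\Bigr),
\]
which, combined with (ii) and $c<1$, controls $\int_{\{w<v\}}\theta_v^n$ by the mass–defect of $w$. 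The crux is then to remove this defect—using the positive–volume hypothesis and the structure of the model potential $\varphi$, either by bootstrapping the partial comparison principle along well-chosen truncations $\max(w,\varphi'-t)$ with $\varphi'\simeq\varphi$ of near-maximal mass, or via a sharp comparison inequality in $\mathcal{E}(X,\theta,\varphi)$—so as to conclude $\theta_v^n(\{w<v\})=\theta_w^n(\{w<v\})=0$; once this is in hand, $w\ge v$ follows from a relative extremal function / capacity argument against $\operatorname{Cap}_{\omega_X}(D)>0$, exactly as in the Kähler case.
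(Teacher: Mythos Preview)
Your reductions and the passage to the rooftop envelope $w=P_\theta(u,v)$ are fine (and essentially recover the content of Lemma~\ref{diam_lem}), but the proof has a genuine gap at precisely the step you label ``the crux.'' The partial comparison you derive gives only
\[
(1-c)\int_{\{w<v\}}\theta_v^n\;\le\;\ove(\theta,\varphi)-\int_X\theta_w^n\;\le\;\ove(\theta,\varphi)-\und(\theta,\varphi),
\]
and the right-hand side is positive in general. Your proposed remedies do not close this: replacing $w$ by some $w'\simeq w$ of near-maximal mass destroys the inequality $\mathbf{1}_{\{w'<v\}}\theta_{w'}^n\le c\,\mathbf{1}_{\{w'<v\}}\theta_v^n$, while appealing to ``a sharp comparison inequality in $\mathcal{E}(X,\theta,\varphi)$'' is circular---that is exactly what Proposition~\ref{domina_1} is meant to supply. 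The final ``capacity argument, exactly as in the K\"ahler case'' is likewise a disguised comparison-principle step and is not available here.

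The paper circumvents the mass-defect problem by a different mechanism that you are missing. After the simpler reduction $v\mapsto\max(u,v)$ (so $u\le v$), it introduces the \emph{stretched envelopes} $u_b:=P_\theta\bigl(bu-(b-1)v\bigr)$ for $b>1$. Lemma~\ref{lemma ddnl} gives $u_b\not\equiv-\infty$, and the telescoping bound $u_b\ge(1-b/t)\varphi+(b/t)u_t$ for $t\ge b$ shows $u_b\in\mathcal{E}(X,\theta,\varphi)$. On the contact set $D=\{u_b=bu-(b-1)v\}$ the minimum principle yields $b^{-n}\theta_{u_b}^n+(1-b^{-1})^n\theta_v^n\le\theta_u^n$ there; once $(1-b^{-1})^n\ge c$, the hypothesis kills $\theta_{u_b}^n$ on $D\cap\{u<v\}$, so $\theta_{u_b}^n$ is supported on $\{u_b=u\}$ and hence $0<\und(\theta,\varphi)\le\int_{\{u_b=u\}}\theta_u^n$. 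If $\{u<v-\delta\}$ had positive measure then $\sup_X u_b\searrow-\infty$, forcing $\{u_b=u\}\subset\{u\le\sup_X u_b\}$ to shrink to a pluripolar set and the integral to $0$---a contradiction. The key idea is that sending $b\to\infty$ replaces the unavailable mass comparison by a contact-set argument that uses only the positivity of the lower volume.
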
 

\begin{proof}
Replacing $v$ by $\max(u,v)$ and  using the plurifne-locality, we may assume   $  u\leq v$. For  $b > 1$, set 
$u_b := P_\theta(bu - (b - 1)v)$ and $D := \{u_b = bu - (b - 1)v\}$. Lemma~\ref{lemma ddnl} shows that  $u_b\in\psh(X,\theta)$    for all $b>1$. Moreover, we have $u_b\in\mathcal{E}(X,\theta,\varphi)$.  Indeed, for any fixed $b$, we have $u_b\preceq \varphi$, and for every $t\geq b$,  \begin{align}\label{u_b_full} u_b\geq (1-t^{-1}b)\varphi+t^{-1}bu_t.\end{align}
     Proposition \ref{bgl_mono} and Proposition~\ref{bgl_sum}   thus give    $\ove(\theta,u_b)=\ove(\theta,\varphi)$ and $\und(\theta,u_b)=\und(\theta,\varphi)>0$.
Applying Proposition \ref{contact_0} and the minimum principle, we obtain, for   sufficiently large $b$, 
\begin{align*}\label{refer_do}
\mathbf{1}_D b^{-n}  \theta_{u_b}^n + \mathbf{1}_D c  \theta_v^n  
 \leq \mathbf{1}_D b^{-n}  \theta_{u_b}^n + \mathbf{1}_D (1 - b^{-1})^n  \theta_v^n  
 \leq  \mathbf{1}_D \theta_u^n .
\end{align*}
By hypothesis we obtain  $\mathbf{1}_{D \cap \{u < v\}} \theta_{u_b}^n = 0$.   Thus we infer that the measure $\theta_{u_b}^n$       is supported on $D \cap \{u = v\}$.  Using again  Proposition \ref{contact_0} and the minimum principle, 
\[
  0<\und(\theta,\varphi)=\und(\theta,u_b) \leq \int_{D \cap \{u = v\}}\theta_{u_b}^n \leq \int_{ \{u_b = u\}}\theta_{u}^n
\]
 Now fix an arbitrary $\delta>0$. Assume by contradiction  that  $E := \{u < v - \delta\}$ is non-empty. Because $u,v$ are quasi-psh, the set $E$ has positive Lebesgue measure. Therefore, $\sup_X u_b \searrow -\infty$. Letting $b\to +\infty$ then gives $\int_{ \{u_b = u\}}\theta_{u}^n \to0$, contradicting the above inequality.
\end{proof}

\begin{corollary}\label{cor_domi} Let $u,v$ be as above.
    \begin{enumerate}
    \item If \( \theta_u^n \leq c\,\theta_v^n\), then \(c\geq1\).  
    \item If \(e^{-\lambda u}  \theta_u^n \le e^{-\lambda v}  \theta_v^n\) for some \(\lambda > 0\), then \(u\geq v\).
\end{enumerate}
\end{corollary}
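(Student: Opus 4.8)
The plan is to deduce both statements from the domination principle just proved (Proposition \ref{domina_1}), applied to cleverly chosen comparison functions. For part (1), suppose $\theta_u^n \le c\,\theta_v^n$ holds globally. If $c < 1$, then in particular $\mathbf{1}_{\{u<v\}}\theta_u^n \le c\,\mathbf{1}_{\{u<v\}}\theta_v^n$, so Proposition \ref{domina_1} gives $u \ge v$. But now I can reverse the roles: since $u \ge v$, the plurifine locality (and the maximum principle, Proposition \ref{max_p}) forces $\mathbf{1}_{\{u=v\}}\theta_v^n \le \mathbf{1}_{\{u=v\}}\theta_u^n \le c\,\mathbf{1}_{\{u=v\}}\theta_v^n$, and since $u\ge v$ means $\{u<v\}=\emptyset$ so $\theta_v^n$ is carried by $\{u=v\}$, hence $\int_X \theta_v^n \le c\int_X \theta_v^n$. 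Since $v \in \mathcal{E}(X,\theta,\varphi)$ with $\und(\theta,\varphi)>0$, we have $\int_X\theta_v^n = \ove(\theta,v) \ge \und(\theta,\varphi) > 0$ — wait, more carefully, $v$ itself satisfies $\int_X\theta_v^n \ge \und(\theta,v) = \und(\theta,\varphi)>0$ by Corollary \ref{full_same} — so dividing by this positive number yields $1 \le c$, a contradiction. Hence $c \ge 1$.

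For part (2), assume $e^{-\lambda u}\theta_u^n \le e^{-\lambda v}\theta_v^n$ with $\lambda>0$. On the set $\{u<v\}$ we have $e^{-\lambda u} > e^{-\lambda v}$, i.e. $e^{-\lambda u}/e^{-\lambda v} = e^{\lambda(v-u)} > 1$ there; rearranging the hypothesis gives
\[
\mathbf{1}_{\{u<v\}}\theta_u^n \le \mathbf{1}_{\{u<v\}} e^{-\lambda(v-u)}\theta_v^n.
\]
The coefficient $e^{-\lambda(v-u)}$ is not a constant, so Proposition \ref{domina_1} does not apply verbatim; the natural fix is a truncation/exhaustion argument. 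Fix $\delta>0$ and set $v_\delta \coloneqq \max(u, v-\delta)$, so $v_\delta \in \psh(X,\theta)$, $v_\delta \preceq \varphi$, and $\{u < v_\delta\} = \{u < v-\delta\}$. On this set $v - u > \delta$, hence $e^{-\lambda(v-u)} < e^{-\lambda\delta} =: c_\delta < 1$; also by plurifine locality $\mathbf{1}_{\{u<v_\delta\}}\theta_{v_\delta}^n = \mathbf{1}_{\{u<v_\delta\}}\theta_v^n$, so the displayed inequality reads $\mathbf{1}_{\{u<v_\delta\}}\theta_u^n \le c_\delta\,\mathbf{1}_{\{u<v_\delta\}}\theta_{v_\delta}^n$. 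Applying Proposition \ref{domina_1} with the pair $(u, v_\delta)$ and constant $c_\delta$ gives $u \ge v_\delta = \max(u,v-\delta)$, i.e. $u \ge v-\delta$. Letting $\delta \to 0$ yields $u \ge v$.

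I expect the only real subtlety to be the bookkeeping in part (2): one must check that $v_\delta$ is a legitimate competitor (it is, being a max of two $\theta$-psh functions, with $v_\delta \preceq \varphi$ since both $u\preceq\varphi$ and $v\preceq\varphi$), and that the plurifine-locality identity $\mathbf{1}_{\{u<v_\delta\}}\theta_{v_\delta}^n = \mathbf{1}_{\{u<v_\delta\}}\theta_v^n$ is correctly invoked on the open-type set $\{u < v-\delta\}$ where $v_\delta = v$. Part (1) is essentially immediate once one remembers to use $\und(\theta,\varphi)>0$ to rule out the degenerate case $\int_X\theta_v^n = 0$. Neither step requires new machinery beyond what has been established above.
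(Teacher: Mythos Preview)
Your argument for part~(2) is correct. The truncation $v_\delta = \max(u, v-\delta)$ is slightly more than needed---you could apply Proposition~\ref{domina_1} directly with $v-\delta$ in place of $v$, since $\theta_{v-\delta}^n = \theta_v^n$ and on $\{u<v-\delta\}$ the hypothesis gives $\theta_u^n \le e^{-\lambda\delta}\theta_v^n$---but your version is fine.

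Part~(1), however, has a genuine gap. After deducing $u \ge v$ from $c<1$, you assert that ``$\theta_v^n$ is carried by $\{u=v\}$''. This is false: $u \ge v$ only says $X = \{u>v\} \cup \{u=v\}$, and nothing prevents $\theta_v^n$ from living on $\{u>v\}$. Your maximum-principle step correctly shows $\mathbf{1}_{\{u=v\}}\theta_v^n = 0$ (since $(1-c)\mathbf{1}_{\{u=v\}}\theta_v^n \le 0$), but that does not control $\mathbf{1}_{\{u>v\}}\theta_v^n$, so the chain $\int_X\theta_v^n \le c\int_X\theta_v^n$ does not follow. You also invoke $v \in \mathcal{E}(X,\theta,\varphi)$, which is not part of the hypotheses---only $v \preceq \varphi$ is assumed. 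In the Hermitian setting there is no monotonicity $\int_X\theta_v^n \le \int_X\theta_u^n$ from $v \le u$ to rescue the argument.

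The clean fix is a shift: if $c<1$, then for every constant $C$ the function $v+C$ still satisfies $v+C \preceq \varphi$ and $\mathbf{1}_{\{u<v+C\}}\theta_u^n \le c\,\mathbf{1}_{\{u<v+C\}}\theta_{v+C}^n$, so Proposition~\ref{domina_1} yields $u \ge v+C$ for all $C$, which is absurd. This is presumably the one-line argument the paper has in mind.
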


\begin{lemma}\label{stable_lem}
    Let $u$ be a $\theta$-psh function such that $\und(\theta,P_{\theta}[u])>0$.  Then $\ove(\theta,u)=\ove(\theta,P_{\theta}[u])$.  
\end{lemma}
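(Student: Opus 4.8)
The plan is to prove the inequalities $\ove(\theta,u)\le\ove(\theta,P_{\theta}[u])$ and $\ove(\theta,u)\ge\ove(\theta,P_{\theta}[u])$ separately. Since replacing $u$ by $\min(u,V_{\theta})$ changes neither the singularity type of $u$ nor the function $P_{\theta}[u]$ — hence none of the volumes involved — we may assume $u\le V_{\theta}$. With this normalization $u$ is itself admissible in the supremum defining $P_{\theta}[u]$, so $u\le P_{\theta}[u]\le V_{\theta}$; in particular $u\preceq P_{\theta}[u]$, and the monotonicity of the upper volume (Proposition \ref{bgl_mono}) gives $\ove(\theta,u)\le\ove(\theta,P_{\theta}[u])$ at once.

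For the reverse inequality, write $\phi:=P_{\theta}[u]$ and introduce the canonical approximants $\phi_{C}:=P_{\theta}(\min(u+C,V_{\theta}))$. Since $u\le u+C$ and $u\le V_{\theta}$, the function $u$ is admissible for this envelope as well, so $u\le\phi_{C}\le u+C$; thus $\phi_{C}\simeq u$, and hence $\int_{X}\theta_{\phi_{C}}^{n}\le\ove(\theta,u)$ for every $C$. As $C\to+\infty$ the $\phi_{C}$ increase — off a pluripolar set and after upper semicontinuous regularization — to $\phi$, and therefore to $\phi$ in capacity, so the lower semicontinuity of the non-pluripolar product (Lemma \ref{lsc}) gives $\int_{X}\theta_{\phi}^{n}\le\liminf_{C\to+\infty}\int_{X}\theta_{\phi_{C}}^{n}\le\ove(\theta,u)$. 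This bounds the mass of $\phi$ itself; the remaining — and principal — point is to obtain the same bound $\int_{X}\theta_{v}^{n}\le\ove(\theta,u)$ for every competitor $v\in\psh(X,\theta)$ with $v\simeq\phi$, after which taking the supremum over such $v$ yields $\ove(\theta,\phi)\le\ove(\theta,u)$ and, with the first step, finishes the proof. I would do this by the same truncation scheme applied to $v$ — forming $v_{C}:=P_{\theta}(\min(v+C,V_{\theta}))$, which increases to $P_{\theta}[v]=P_{\theta}[\phi]$ — and then comparing its masses with those of the $\phi_{C}$ through the monotonicity of volumes, using that $\phi=P_{\theta}[u]$ dominates $u$ and all the $\phi_{C}$; the hypothesis $\und(\theta,P_{\theta}[u])>0$ enters here, via Lemma \ref{lemma ddnl}, to keep these envelopes from degenerating.

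The main obstacle is precisely this last comparison. In the Hermitian setting the Monge-Amp\`ere mass genuinely fluctuates within a fixed singularity type — so that $\ove(\theta,\cdot)$ and $\und(\theta,\cdot)$ need not coincide and $\int_{X}\theta_{v}^{n}$ can vary among $v\simeq P_{\theta}[u]$ — and for this reason one cannot simply identify $\ove(\theta,P_{\theta}[u])$ with $\int_{X}\theta_{P_{\theta}[u]}^{n}$; the required uniform bound for all such $v$ is the analogue, in the present context, of the mass-preservation property of model envelopes established by Darvas--Di Nezza--Lu \cite{DDNL18mono} in the K\"ahler case. Since all the comparison principles available (orthogonality and the maximum and minimum principles, Propositions \ref{contact_0} and \ref{max_p}, and Corollary \ref{min_p}) bound the mass of an envelope only \emph{from above} in terms of that of its obstacle, the estimate has to be extracted by carefully localizing the mass the truncated envelopes lose onto the singular locus of $u$ — which the non-pluripolar measure $\theta_{u}^{n}$ does not charge — while keeping track of the monotonicity of volumes; it is here that the bigness of $\theta$ and the bounded-mass and positive-volume assumptions are genuinely used.
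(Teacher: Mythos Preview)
Your proposal has a genuine gap at exactly the point you yourself flag as ``the main obstacle.'' Forming $v_{C}:=P_{\theta}(\min(v+C,V_{\theta}))$ for a competitor $v\simeq\phi$ does not help: these $v_{C}$ satisfy $v_{C}\simeq v\simeq\phi$, not $v_{C}\simeq u$, so there is no reason their masses should be bounded by $\ove(\theta,u)$; moreover, $v_{C}$ increases to $P_{\theta}[v]$, not to $v$, so even if you had a mass bound on $v_{C}$, lower semicontinuity would only control $\int_{X}\theta_{P_{\theta}[v]}^{n}$ rather than $\int_{X}\theta_{v}^{n}$. Your suggestion to ``compare with the $\phi_{C}$ through monotonicity of volumes'' and invoke Lemma~\ref{lemma ddnl} is not a concrete mechanism for either of these issues.

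The paper's argument supplies the missing idea: truncate against the competitor itself rather than against $V_{\theta}$. Given $u'\simeq\phi$, set $\varphi_{C}:=P_{\theta}(u+C,u')$; since $u\preceq u'$ one has $\varphi_{C}\simeq u$, hence $\int_{X}\theta_{\varphi_{C}}^{n}\le\ove(\theta,u)$ for free. The $\varphi_{C}$ increase to $\varphi:=P_{\theta}[u](u')\le u'$, and the point is that $\varphi=u'$. This follows because orthogonality (Proposition~\ref{contact_0}) plus the minimum principle force $\mathbf{1}_{\{\varphi_{C}<u'\}}\theta_{\varphi_{C}}^{n}\le\mathbf{1}_{\{\varphi_{C}=u+C<u'\}}\theta_{u}^{n}$; passing to the limit (via Lemma~\ref{lsc} with suitable quasi-continuous cutoffs) gives $\mathbf{1}_{\{\varphi<u'\}}\theta_{\varphi}^{n}=0$, and now the domination principle (Proposition~\ref{domina_1}) --- this is where $\und(\theta,\phi)>0$ actually enters --- yields $\varphi\ge u'$, hence $\varphi=u'$. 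Lower semicontinuity then gives $\int_{X}\theta_{u'}^{n}\le\liminf_{C}\int_{X}\theta_{\varphi_{C}}^{n}\le\ove(\theta,u)$, and taking the supremum over $u'$ finishes the proof.
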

\begin{proof}
By Proposition \ref{bgl_mono}, we have $\ove(\theta,u)\leq\ove(\theta,P_{\theta}[u]).$ For the reverse inequality, take any $u'\in\psh(X,\theta)$ with $u'\simeq P_{\theta}[u]$.  For each $C>0$,  we set $$\varphi_{C}:=P_{\theta}(u+C,u'),\quad \varphi:=\text{usc}\Big( \lim_{C \to +\infty} \varphi_C \Big) =P_{\theta}[u](u') .$$
We have  $\varphi\simeq u$ and $\varphi\leq u'$. By Proposition \ref{contact_0} and the minimum principle, we obtain 
\begin{align}\label{apply_chi}  \boldsymbol{1}_{\{\varphi_C<u' \}}\theta_{\varphi_C}^n \leq  \boldsymbol{1}_{\{\varphi_C=u+C<u' \}}\theta_{u}^n.  \end{align} 
Applying the lower semicontinuity of the non-pluripolar product (Lemma~\ref{lsc}) 
with the   quasi-continuous functions   $$\chi_C\coloneqq \frac{\min(u'-\varphi_C,0)}{\min(u'-\varphi_C,0)+\varepsilon},\quad \chi \coloneqq \frac{\min(u'-\varphi ,0)}{\min(u'-\varphi ,0)+\varepsilon}, \quad  \varepsilon>0  $$  and using (\ref{apply_chi}), we obtain   $ \chi\theta_{\varphi }^n=0$.   Letting  $\varepsilon\to0$ then gives  $  \boldsymbol{1}_{\{\varphi <u' \}}\theta_{\varphi }^n=0$.  The domination principle (Proposition \ref{domina_1}) now implies $\varphi=u'$. Using the lower
 semicontinuity of   non-pluripolar masses again, we have $$ \int_X \theta_{u'}^n\leq   \liminf_{C \to +\infty} \int_X\theta_{\varphi_C}^n \leq \ove(\theta, \varphi_C)=\ove(\theta,u).    $$
Finally, taking the supremum over all such $u'$ gives $\ove(\theta,P_{\theta}[u])\leq\ove(\theta,u)$, 
which completes the proof.
\end{proof}

In general, the proof of the lemma above shows the following
\begin{proposition}\label{contact_2}
    Assume   $u,v$ and $P(u,v)$ are $\theta$-psh functions. Then 
    $ \theta_{P_{\theta}[u](v)}^n\leq \boldsymbol{1}_{\{P_{\theta}[u](v)=v\}} \theta_{v}^n$. In particular, $\theta_{P_{\theta}[u]}^n\leq \boldsymbol{1}_{\{P_{\theta}[u]=0\}} \theta_{V_{\theta}}^n.$ 
\end{proposition}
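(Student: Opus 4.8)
The plan is to mimic the proof of Lemma~\ref{stable_lem} almost verbatim, replacing the reference ceiling $V_\theta$ there by the $\theta$-psh function $v$ and the envelope $P_\theta[u]$ by $P_\theta[u](v)$. Set $\varphi_C \coloneqq P_\theta(u+C, v)$ and $\varphi \coloneqq \mathrm{usc}\bigl(\lim_{C\to+\infty}\varphi_C\bigr) = P_\theta[u](v)$, which is $\theta$-psh by hypothesis (since $P(u,v)$, hence each $\varphi_C$, is a genuine $\theta$-psh function). First I would apply Proposition~\ref{contact_0} to the quasi-continuous function $\min(u+C, v)$ together with the minimum principle (Corollary~\ref{min_p}) to obtain, on the region where $\varphi_C < v$, that $\mathbf 1_{\{\varphi_C < v\}} \theta_{\varphi_C}^n \le \mathbf 1_{\{\varphi_C = u+C < v\}} \theta_u^n$, exactly as in \eqref{apply_chi}.

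Next I would pass to the limit $C\to+\infty$. Using the lower semicontinuity of the non-pluripolar product (Lemma~\ref{lsc}) applied to the quasi-continuous cutoffs
\[
\chi_C \coloneqq \frac{\min(v-\varphi_C,0)}{\min(v-\varphi_C,0)+\varepsilon},
\qquad
\chi \coloneqq \frac{\min(v-\varphi,0)}{\min(v-\varphi,0)+\varepsilon},
\qquad \varepsilon > 0,
\]
and the pointwise bound from the previous paragraph, I would deduce $\int_X \chi\,\theta_\varphi^n = 0$; letting $\varepsilon\to 0$ gives $\mathbf 1_{\{\varphi < v\}}\theta_\varphi^n = 0$, i.e. $\theta_\varphi^n$ is carried by $\{\varphi = v\}$. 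Since $\varphi \le v$, plurifine locality then yields $\theta_{P_\theta[u](v)}^n = \mathbf 1_{\{\varphi=v\}}\theta_\varphi^n \le \mathbf 1_{\{P_\theta[u](v)=v\}}\theta_v^n$ by the second part of Proposition~\ref{max_p} (here $\varphi \le v$ and $\mathbf 1_{\{\varphi=v\}}\theta_v^n$ is a Radon measure because $\theta_\varphi^n$ is). The special case $v = V_\theta$ is immediate since $P_\theta[u](V_\theta) = P_\theta[u]$ and $\theta_{V_\theta}^n$ is the Monge-Ampère measure of the minimal-singularity potential.

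The main delicate point—and the reason this needs to be stated separately rather than being a triviality—is the convergence issue when invoking Lemma~\ref{lsc}: I need $\varphi_C \to \varphi$ in capacity (not merely decreasing) so that the lower semicontinuity statement with the cutoffs $\chi_C$ applies. Since $\varphi_C$ decreases to $\varphi$ pointwise (after usc regularization, which changes nothing on a pluripolar set), decreasing sequences of $\theta$-psh functions that are locally bounded below converge in capacity; here local boundedness below is not automatic for unbounded potentials, so I would instead run the argument on the sublevel sets $\{V_\theta - t < \min(u,v)\} \cap \{\rho > -\infty\}$ as in the remark following Proposition~\ref{max_p}, where everything is a Radon measure, and then let $t\to+\infty$. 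The bookkeeping of masses over these truncations, using that $\theta_\varphi^n$ does not charge pluripolar sets, is the only real work; all the rest is a direct transcription of the proof of Lemma~\ref{stable_lem}.
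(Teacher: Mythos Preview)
Your approach is exactly the one the paper has in mind: it explicitly says that the proof of Lemma~\ref{stable_lem} ``in general'' yields Proposition~\ref{contact_2}, and your transcription (replace $u'$ by $v$, obtain \eqref{apply_chi}, pass to the limit with the cutoffs, then invoke the second part of Proposition~\ref{max_p}) is precisely that argument.

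One small slip: the sequence $\varphi_C = P_\theta(u+C,v)$ is \emph{increasing} in $C$, not decreasing, since $\min(u+C,v)$ increases with $C$. This actually makes your last paragraph's worry evaporate---increasing sequences of quasi-psh functions converge in capacity without any lower-boundedness hypothesis, so Lemma~\ref{lsc} applies directly and the truncation over $\{V_\theta - t < \min(u,v)\}$ is unnecessary. The right-hand side $\mathbf{1}_{\{\varphi_C = u+C < v\}}\theta_u^n$ tends to zero because $\{u+C<v\}\searrow\{u=-\infty\}$ and $\theta_u^n$ is non-pluripolar with finite mass under the standing bounded mass assumption.
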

\begin{rem}
   This property is contained in \cite[Theorem 3.6]{darvas2020relative}.  
   When $X$ is K\"ahler and $\theta$ is closed, the $C^{1,\bar{1}}$-regularity of $V_{\theta}$ on the  locus $\{\rho>-\infty\}$ implies  the equality $\theta_{V_{\theta}}^n=\boldsymbol{1}_{\{V_{\theta}=0\}}\theta^n$ (see, e.g.,  ~\cite{BD12,DT23}).  In the hermitian context, the  $C^{1,\alpha}$-regularity of $V_{\theta}$   is obtained  in \cite{Dang_herm},  provided that \(\theta\) is nef and big.
\end{rem}

To establish the stability of the envelope operator,  consider the following class of potentials:
\[
\mathcal{F}(u) := \left\{ v \in \mathrm{PSH}(X, \theta) \ \middle| \ u \leq v \leq 0 \text{ and } \ove(\theta,u)=\ove(\theta,v) \right\}.
\]
\begin{proposition}\label{stable_prop}
    Assume  that $u\in\psh(X,\theta)$ and  $\und(\theta,P_{\theta}[u])>0$. Then $P_{\theta}[u]=\sup_{v\in\mathcal{F}(u)}v$.
    In particular, $P_{\theta}[u]=P_{\theta}[P_{\theta}[u]]$ and $P_{\theta}[u]$ is   a model potential. 
\end{proposition}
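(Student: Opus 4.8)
The plan is to show that $\phi := P_{\theta}[u]$ is the greatest element of the family $\mathcal{F}(u)$; once that is done, $P_{\theta}[u] = \sup_{v\in\mathcal{F}(u)} v$ follows at once (and no upper semicontinuous regularization is needed, since the supremum is attained), while the two ``in particular'' assertions will drop out of Corollary \ref{full_same}. As a preliminary normalization, I note that $\mathcal{F}(u)$ is empty unless $u \le 0$, so I may assume $u \le 0$; then $u \in \psh(X,\theta)$ with $u \le V_{\theta}$, hence $u$ is admissible in the envelope defining $\phi$, and therefore $u \le \phi \le 0$.

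First I would check that $\phi \in \mathcal{F}(u)$. We already have $u \le \phi \le 0$, so it remains to verify $\ove(\theta,u) = \ove(\theta,\phi)$ --- but this is precisely Lemma \ref{stable_lem}, whose hypothesis $\und(\theta,P_{\theta}[u]) > 0$ is our standing assumption. In particular $u \preceq \phi$ and the upper volumes agree, so $u \in \mathcal{E}(X,\theta,\phi)$; since $\und(\theta,\phi) > 0$, Corollary \ref{full_same} yields both $\und(\theta,u) = \und(\theta,\phi) > 0$ and $P_{\theta}[u] = P_{\theta}[\phi]$. The last equality reads $\phi = P_{\theta}[\phi]$, which together with $\und(\theta,\phi) > 0$ already shows that $\phi = P_{\theta}[u]$ is a model potential, and in particular that $P_{\theta}[u] = P_{\theta}[P_{\theta}[u]]$.

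Next I would prove that $v \le \phi$ for every $v \in \mathcal{F}(u)$, which together with $\phi \in \mathcal{F}(u)$ finishes the proof. Fix such a $v$: then $u \le v \le 0$ and $\ove(\theta,v) = \ove(\theta,u)$, so $u \preceq v$ and $u \in \mathcal{E}(X,\theta,v)$. The first part of Corollary \ref{full_same} gives $\und(\theta,v) = \und(\theta,u)$, which we have just seen is positive, so the second part of Corollary \ref{full_same} applies and gives $P_{\theta}[v] = P_{\theta}[u] = \phi$. On the other hand $v \le 0$ and $v \preceq v$, so $v$ is admissible in the envelope defining $P_{\theta}[v]$, whence $v \le P_{\theta}[v] = \phi$.

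I do not anticipate a serious obstacle here: the proof is essentially a bookkeeping argument that feeds Lemma \ref{stable_lem} (which forces the upper volumes to coincide) into Corollary \ref{full_same} (which then transfers the lower volumes and the envelopes of singularity type). The one point that needs care is to check, at each invocation, that $u$ really belongs to the relative full mass class being used --- i.e.\ that the relevant upper-volume identity and the comparison of singularities both hold --- so that Corollary \ref{full_same} may be applied; this is exactly the step where the hypothesis $\und(\theta,P_{\theta}[u]) > 0$, via Lemma \ref{stable_lem}, carries the weight.
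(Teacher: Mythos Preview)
Your proof is correct and takes a genuinely different route from the paper's. Both arguments begin identically, invoking Lemma~\ref{stable_lem} to place $P_\theta[u]$ in $\mathcal{F}(u)$. For the maximality step, however, the paper uses Proposition~\ref{contact_2} to show that $\theta_{P_\theta[u]}^n$ puts no mass on $\{P_\theta[u]<v\}$, and then applies the domination principle (Proposition~\ref{domina_1}) with $\varphi = P_\theta[v]$ to conclude $P_\theta[u]\ge v$; the ``in particular'' assertions are then read off from maximality. You instead go through Corollary~\ref{full_same}: once $\und(\theta,u)>0$ is secured, $P_\theta[v]=P_\theta[u]$ for every $v\in\mathcal{F}(u)$, so $v\le P_\theta[v]=\phi$, and the model-potential conclusion falls out earlier as a byproduct. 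Your route is a bit more economical---it avoids the measure-theoretic contact-set argument entirely and leans only on the envelope comparison underlying Corollary~\ref{full_same}---while the paper's route is more in the spirit of the domination-principle machinery used throughout. Both are valid and non-circular, since Corollary~\ref{full_same} is proved from Lemma~\ref{lemma ddnl} and precedes Proposition~\ref{stable_prop}.
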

\begin{proof}
      Lemma \ref{stable_lem} implies that  $P_{\theta}[u]\in \mathcal{F}(u)$. It suffices to show $P_{\theta}[u]$ is the maximal element of $\mathcal{F}(u)$. Take any $v\in \mathcal{F}(u)$. By Proposition \ref{contact_2}, we have 
    $$\boldsymbol{1}_{\{P_{\theta}[u]<v\}}\theta_{ P_{\theta}[u]}^n \leq \boldsymbol{1}_{\{ P_{\theta}[u]<0\}}\theta_{ P_{\theta}[u]}^n=0.$$
    The domination principle (Proposition \ref{domina_1}) applied with $\varphi = P_{\theta}[v]$, then yields $P_{\theta}[v]\geq v$, completing the proof. 
\end{proof}

\begin{corollary}\label{lower_equal}
   Let $u\in\psh (X,\theta)$. Then  $\und(\theta,u)=\und(\theta,P_{\theta}[u])$.
\end{corollary}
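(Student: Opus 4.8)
The plan is to reduce the statement to three ingredients already available: the monotonicity of the lower volume (Proposition~\ref{bgl_mono}), the mass comparison $\ove(\theta,u)=\ove(\theta,P_{\theta}[u])$ when $\und(\theta,P_{\theta}[u])>0$ (Lemma~\ref{stable_lem}), and the identity $\und(\theta,u)=\und(\theta,\varphi)$ for members of a relative full mass class (Corollary~\ref{full_same}).

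First I would record the elementary fact that $u\preceq P_{\theta}[u]$. Indeed, $h:=u-\sup_X u$ lies in $\psh(X,\theta)$, satisfies $h\le 0$ and hence $h\le V_{\theta}$, and trivially $h\preceq u$; thus $h$ is an admissible competitor in the supremum defining $P_{\theta}[u]$, so $P_{\theta}[u]\ge u-\sup_X u$. This gives simultaneously $P_{\theta}[u]\not\equiv-\infty$ and $u\preceq P_{\theta}[u]$. Since the standing bounded mass property yields $\und(\theta,u)\le\ove(\theta,u)\le\ove(\theta)<+\infty$ (using $u\preceq V_{\theta}$ and the Remark after Proposition~\ref{bgl_sum}), Proposition~\ref{bgl_mono} applied with $\eta=\theta$ and $v=P_{\theta}[u]$ gives $\und(\theta,u)\le\und(\theta,P_{\theta}[u])$.

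For the reverse inequality I would split into two cases according to whether $\und(\theta,P_{\theta}[u])$ vanishes. If $\und(\theta,P_{\theta}[u])=0$, then the inequality just obtained forces $\und(\theta,u)=0$ as well, so both sides agree. If $\und(\theta,P_{\theta}[u])>0$, then Lemma~\ref{stable_lem} applies and gives $\ove(\theta,u)=\ove(\theta,P_{\theta}[u])$; together with $u\preceq P_{\theta}[u]$ this says exactly that $u\in\mathcal{E}(X,\theta,P_{\theta}[u])$. Corollary~\ref{full_same}, applied with $\varphi=P_{\theta}[u]$ (whose lower volume is positive by assumption in this case), then yields $\und(\theta,u)=\und(\theta,P_{\theta}[u])$, completing the proof.

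There is no serious obstacle here; the only points requiring care are the verification that $u\preceq P_{\theta}[u]$ (so that the monotonicity runs in the direction we need) and the finiteness of $\und(\theta,u)$ (so that Proposition~\ref{bgl_mono} is applicable), both immediate from the bounded mass hypothesis. The case distinction on whether $\und(\theta,P_{\theta}[u])>0$ is forced only because both Lemma~\ref{stable_lem} and Corollary~\ref{full_same} carry that positivity as a hypothesis, and the degenerate case is handled for free by the already-established inequality.
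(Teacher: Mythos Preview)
Your proof is correct and follows essentially the same route as the paper: handle the case $\und(\theta,P_{\theta}[u])=0$ trivially via monotonicity, and in the positive case combine Lemma~\ref{stable_lem} (to get $u\in\mathcal{E}(X,\theta,P_{\theta}[u])$) with Corollary~\ref{full_same}. The paper additionally invokes Proposition~\ref{stable_prop} to note that $P_{\theta}[u]$ is a model potential, but this is not strictly needed for the argument you give.
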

\begin{proof}
  If $\und(\theta,P_{\theta}[u])=0$, the statement is clear. Otherwise, Proposition \ref{stable_prop} together with Lemma \ref{stable_lem} implies that 
\(P_{\theta}[u]\) is a model potential and \(u \in \mathcal{E}(X,\theta,P_{\theta}[u])\). 
Applying Corollary~\ref{full_same} then gives \(\und(\theta,u) = \und(\theta,P_{\theta}[u])\).
\end{proof}
\begin{rem}
 In general, equality for the upper volume is not expected to hold (cf.\ \cite[Remark 3.4]{BGL25} for a similar obstacle).
\end{rem}

 \begin{proposition}\label{dec_inc}
  For a  sequence $\varepsilon_j \searrow 0$   of nonnegative numbers,  set $\theta_j \coloneqq \theta + \varepsilon_j \omega_X$.
    Assume that $\varphi \in \psh(X,\theta)$ and that $\varphi_j \in \psh(X,\theta_j)$.
     \begin{enumerate}
      
         \item If $\theta_j=\theta$, $\und(\theta,\varphi)>0$ and $\varphi_j \nearrow \varphi$ a.e., then $P_{\theta}[\varphi_j] \nearrow P_{\theta}[\varphi]$ a.e.,   $\ove(\theta,\varphi_j)\nearrow \ove(\theta,\varphi)$ and $\und(\theta,\varphi_j)\nearrow \und(\theta,\varphi)$.
         \item If $\varphi_j=P_{\theta_j}[\varphi_j]$ and $\varphi_j\searrow\varphi$, then $\und(\theta_j,\varphi_j)\searrow \und(\theta,\varphi)$. In addition, if $\und(\theta,\varphi)>0$, then $\ove(\theta_j,\varphi_j)\searrow \ove(\theta,\varphi)$.
         \item  If $\und(\theta,\varphi)>0$, $\varphi_j\searrow\varphi$ and  $\ove(\theta_j,\varphi_j)\searrow\ove(\theta,\varphi)$,  then $P_{\theta_j}[\varphi_j] \searrow P_{\theta}[\varphi]$.
     \end{enumerate}
 \end{proposition}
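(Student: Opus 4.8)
The plan is to prove the three assertions together, using a common toolkit: the monotonicity and superadditivity of the upper and lower volumes (Propositions~\ref{bgl_mono}, \ref{bgl_sum}); the identities $\ove(\theta,u)=\ove(\theta,P_{\theta}[u])$ when $\und(\theta,P_{\theta}[u])>0$ and $\und(\theta,u)=\und(\theta,P_{\theta}[u])$ always (Lemma~\ref{stable_lem}, Corollary~\ref{lower_equal}); the characterization $P_{\theta}[u]=\sup\mathcal F(u)$ together with the invariance $P_{\theta}[u]=P_{\theta}[u']$ whenever $u\simeq u'$ (Proposition~\ref{stable_prop}); the lower semicontinuity of the non-pluripolar product (Lemma~\ref{lsc}); the domination principle (Proposition~\ref{domina_1}); and the elementary fact that $P_{\alpha}[u]\le P_{\beta}[v]$ whenever $u\preceq v$ and $\alpha\le\beta$. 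Throughout I normalize all potentials to be $\le 0$.

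I would do Part~(3) first, since it is the cleanest. As $\varphi\preceq\varphi_j$ and $\theta\le\theta_j$, the envelopes $P_{\theta_j}[\varphi_j]$ decrease and dominate $P_{\theta}[\varphi]$; set $\Phi:=\inf_j P_{\theta_j}[\varphi_j]$, a $\theta$-psh function with $\varphi\le P_{\theta}[\varphi]\le\Phi\le 0$. From $\Phi\preceq P_{\theta_j}[\varphi_j]$, Proposition~\ref{bgl_mono} gives $\ove(\theta,\Phi)\le\ove(\theta_j,P_{\theta_j}[\varphi_j])$; since $\und(\theta_j,\varphi_j)\ge\und(\theta,\varphi)>0$ (Proposition~\ref{bgl_mono}), Corollary~\ref{lower_equal} and Lemma~\ref{stable_lem} yield $\ove(\theta_j,P_{\theta_j}[\varphi_j])=\ove(\theta_j,\varphi_j)$, and the hypothesis $\ove(\theta_j,\varphi_j)\searrow\ove(\theta,\varphi)$ then forces $\ove(\theta,\Phi)\le\ove(\theta,\varphi)$; the reverse inequality follows from $P_{\theta}[\varphi]\preceq\Phi$ and Lemma~\ref{stable_lem}. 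Hence $\ove(\theta,\Phi)=\ove(\theta,\varphi)$, so $\Phi\in\mathcal F(\varphi)$, and Proposition~\ref{stable_prop} forces $\Phi\le P_{\theta}[\varphi]$. Thus $\Phi=P_{\theta}[\varphi]$, i.e. $P_{\theta_j}[\varphi_j]\searrow P_{\theta}[\varphi]$.

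For Part~(2) I would first observe that $P_{\theta}[\varphi]\le P_{\theta_j}[\varphi]\le P_{\theta_j}[\varphi_j]=\varphi_j$ for every $j$, so $P_{\theta}[\varphi]\le\varphi$ and therefore $\varphi=P_{\theta}[\varphi]$ (a model potential as soon as $\und(\theta,\varphi)>0$). Both $\und(\theta_j,\varphi_j)$ and, when $\und(\theta,\varphi)>0$, $\ove(\theta_j,\varphi_j)$ are decreasing and bounded below by $\und(\theta,\varphi)$ and $\ove(\theta,\varphi)$ (Proposition~\ref{bgl_mono}); the point is the reverse inequality in the limit. For the lower volume, fix $v\simeq\varphi$ with $v\le0$; since $P_{\theta}[v]=P_{\theta}[\varphi]=\varphi$ we get $v\le\varphi\le\varphi_j$, so $\max(v,\varphi_j-M)$ is $\theta_j$-psh and satisfies $\varphi_j-M\le\max(v,\varphi_j-M)\le\varphi_j$, hence competes for $\und(\theta_j,\varphi_j)$. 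A plurifine-locality computation shows $\int_X\bigl(\theta_j+dd^c\max(v,\varphi_j-M)\bigr)^n\to\int_X\bigl(\theta+\varepsilon_j\omega_X+dd^cv\bigr)^n$ as $M\to\infty$, so $\und(\theta_j,\varphi_j)\le\int_X(\theta+\varepsilon_j\omega_X+dd^cv)^n$; letting $j\to\infty$, using continuity of the non-pluripolar mass of the fixed finite-mass potential $v$ as $\theta+\varepsilon_j\omega_X\searrow\theta$ (a truncation argument with Bedford–Taylor continuity and Lemma~\ref{lsc}), and then taking the infimum over $v$, gives $\lim_j\und(\theta_j,\varphi_j)\le\und(\theta,\varphi)$. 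When $\und(\theta,\varphi)>0$ I would handle the upper volume similarly: a near-optimal representative $w\simeq\varphi_j$ automatically satisfies $w\le\varphi_j$ by the model property and is pushed to the singularity type of $\varphi$ and to the form $\theta$ via $P_{\theta}[\varphi](\cdot)$, the mass loss being controlled by the same contact-set computation and Lemma~\ref{lsc}; alternatively one deduces it from the lower-volume statement together with Lemma~\ref{lemma ddnl}.

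For Part~(1) the envelopes $P_{\theta}[\varphi_j]$ and the volumes $\ove(\theta,\varphi_j),\und(\theta,\varphi_j)$ increase and are bounded above by the corresponding quantities for $\varphi$. For $\ove$, given $v\simeq\varphi$ with $v\le0$ I set $v_j:=P_{\theta}\bigl(\min(v,\varphi_j+j)\bigr)$; one checks $\varphi_j-C_v\le v_j\le v$ with $v_j\simeq\varphi_j$, and that $v_j\nearrow v$ in capacity — the crux being that $\{v_j<v\}$ is, up to a $\theta_{v_j}^n$-null set, contained in $\{\varphi_j<-j\}$, whose capacity tends to $0$ since $\{\varphi_j<-j\}$ decreases to a pluripolar set; combined with Proposition~\ref{contact_0}, the minimum principle (Corollary~\ref{min_p}) and Lemma~\ref{lsc} this gives $\int_X\theta_v^n\le\liminf_j\int_X\theta_{v_j}^n\le\lim_j\ove(\theta,\varphi_j)$, and the supremum over $v$ yields $\ove(\theta,\varphi)\le\lim_j\ove(\theta,\varphi_j)$. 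The lower-volume convergence follows from Lemma~\ref{lemma ddnl}: once $\ove(\theta,\varphi_j)>\ove(\theta,\varphi)-\und(\theta,\varphi)$ (true for large $j$ by the previous step), for each $a<1$ there is $h_j\in\psh(X,\theta)$ with $a\varphi+(1-a)h_j\le\varphi_j$, whence $a^n\und(\theta,\varphi)\le\und(\theta,\varphi_j)$ by Propositions~\ref{bgl_mono}, \ref{bgl_sum}. Finally, for the envelopes put $\psi:=(\sup_j P_{\theta}[\varphi_j])^*\le P_{\theta}[\varphi]$; the $P_{\theta}[\varphi_j]$ are eventually model potentials with $\und(\theta,P_{\theta}[\varphi_j])=\und(\theta,\varphi_j)>0$, so Lemma~\ref{stable_lem} and the $\ove$-convergence give $\ove(\theta,\psi)=\ove(\theta,\varphi)=\ove(\theta,P_{\theta}[\varphi])$, i.e. $\psi\in\mathcal E(X,\theta,P_{\theta}[\varphi])$; and for any $v\in\mathcal F(\varphi)$ one has $\varphi_j\le v$, so $v_j=P_{\theta}(\min(v,\varphi_j+j))$ obeys $\varphi_j\le v_j\le v$, $v_j\simeq\varphi_j$, $v_j\le0$, whence $v_j\le P_{\theta}[\varphi_j]\le\psi$, and $v_j\nearrow v$ forces $v\le\psi$; Proposition~\ref{stable_prop} then gives $P_{\theta}[\varphi]=\sup\mathcal F(\varphi)\le\psi$, so $P_{\theta}[\varphi_j]\nearrow P_{\theta}[\varphi]$ a.e.

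The hard part, in each case, is to rule out a jump of the non-pluripolar mass in the limit. In (1) this is the content of the capacity estimate $\operatorname{Cap}_{\omega_X}(\{\varphi_j<-j\})\to0$ and of the ensuing stability of $\theta$-psh envelopes under a capacity-small change of obstacle; in (2) the form $\theta+\varepsilon_j\omega_X$ and the singularity type degenerate simultaneously, so the model property of the $\varphi_j$ is indispensable for producing competitors and one must separately establish continuity of the mass in the form. I expect the envelope-stability step in (1) and the upper-volume transfer in (2) to be the two most delicate points.
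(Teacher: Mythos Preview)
Your Part~(3) is correct and essentially coincides with the paper's argument (you avoid invoking~(2) by using Proposition~\ref{bgl_mono} directly, which is slightly cleaner).

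There are, however, genuine gaps in Parts~(1) and~(2).

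\textbf{Part~(2), lower volume.} Your key claim that
\(
\int_X \bigl(\theta_j+dd^c\max(v,\varphi_j-M)\bigr)^n \to \int_X (\theta_j+dd^c v)^n
\)
as $M\to\infty$ is not justified, and the ``plurifine locality computation'' you allude to does not supply the needed \emph{upper} bound. Plurifine locality identifies the mass on the open set $\{v>\varphi_j-M\}$ with that of $v$, but gives no control on the contribution from $\{v\le\varphi_j-M\}$; in the Hermitian setting this residual term can stay bounded away from~$0$. Lower semicontinuity (Lemma~\ref{lsc}) only yields $\int_X(\theta_j+dd^cv)^n\le\liminf_M$, which is the wrong direction. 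The paper circumvents this entirely: it first treats the case $\varphi_j=P_{\theta_j}[\varphi]$, where Corollary~\ref{lower_equal} gives the exact identity $\und(\theta_j,\varphi_j)=\und(\theta_j,\varphi)$ (no competitor needed), and handles the form-dependence via the multilinear expansion cited from \cite{BGL25}; it then reduces the general case to fixed~$\theta$ via Dini and closes with Lemma~\ref{lemma ddnl} applied to the Cauchy sequence $\ove(\theta,\varphi_j)$. Your sketch of the upper-volume case is likewise too vague; neither the ``push via $P_\theta[\varphi](\cdot)$'' nor the bare appeal to Lemma~\ref{lemma ddnl} explains how to handle the simultaneous degeneration of form and singularity type.

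\textbf{Part~(1), upper volume and envelopes.} Your construction $v_j:=P_\theta(\varphi_j+j,v)$ and the assertion $v_j\nearrow v$ in capacity are not established. You argue that $\{v_j<v\}$ is, \emph{up to a $\theta_{v_j}^n$-null set}, contained in $\{\varphi_j<-j\}$; but this says nothing about the \emph{set} $\{v_j<v\}$ itself --- the envelope $v_j$ can sit strictly below the obstacle on a set of large capacity. To run the lower-semicontinuity/domination-principle machine you would need $\theta_{\varphi_j}^n(\{\varphi_j<-j\})\to 0$, which is not available. The paper's fix is to replace your rooftop $P_\theta(\varphi_j+j,v)$ by the relative envelope $P_\theta[\varphi_j](v)$: by Proposition~\ref{contact_2} its Monge--Amp\`ere measure is supported \emph{entirely} on $\{P_\theta[\varphi_j](v)=v\}$, so the obstruction term vanishes identically and the limit $\psi'=v$ follows from Lemma~\ref{lsc} and the domination principle. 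Once $\ove$ converges, your deduction of the $\und$ convergence via Lemma~\ref{lemma ddnl} matches the paper's.
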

 \begin{proof}
     \begin{enumerate}
         \item  Define $\varphi':= \text{usc}\Big( \lim_{j \to +\infty} P_{\theta}[\varphi_j]  \Big)$. Clearly     $\varphi\leq \varphi'\leq P_{\theta}[\varphi]$. 
         Using Proposition~\ref{contact_2},  
        we have 
         $$\boldsymbol{1}_{\{P_{\theta}[\varphi_j]<P_{\theta}[\varphi]\}}\theta_{P_{\theta}[\varphi_j]}^n\leq\boldsymbol{1}_{\{P_{\theta}[\varphi_j]<0\}}\theta_{P_{\theta}[\varphi_j]}^n=0.$$
      The lower semicontinuity of  non-pluripolar product  then   yields that $\boldsymbol{1}_{\{\varphi'<P_{\theta}[\varphi]\}}\theta_{\varphi'}^n=0$.  Therefore, by the domination principle (Proposition~\ref{domina_1}), we obtain \(\varphi' = P_{\theta}[\varphi]\).  
 It then follows that $P_{\theta}[\varphi_j] \nearrow P_{\theta}[\varphi]$ almost everywhere.
 
 \hspace{2em}  Next, we  take any $\psi\in\psh(X,\theta)$ such that  $\psi\simeq P_{\theta}[\varphi]$ and set $$\psi':=\text{usc} \bigl( \lim_{j \to +\infty} P_{\theta}[\varphi_j](\psi) \bigr) .$$ 
Then we have $\psi'\simeq P_{\theta}[\varphi]\simeq\psi$ and   $\psi'\leq\psi$. By a similar argument as above,   we infer that $\psi'=\psi$. Now, the   lower
semicontinuity of  non-pluripolar masses  gives 
\[\int_X\theta_{\psi}^n \leq   \liminf_{j \to +\infty} \int_X\theta_{P_{\theta}[\varphi_j](\psi)}^n \leq \liminf_{j \to +\infty}\ove(\theta, P_{\theta}[\varphi_j]).    \]
  Taking the supremum over  all such $\psi$ and using  Proposition \ref{bgl_mono}, we obtain \begin{align*}
   \ove(\theta, P_{\theta}[\varphi])=\lim_{j \to +\infty}\ove(\theta, P_{\theta}[\varphi_j])\end{align*}
 Then we  apply  Lemma \ref{lemma ddnl}  to $P_{\theta}[\varphi_j]$ and $ P_{\theta}[\varphi]$. This provides $a_j \nearrow 1$ and $h_j \in \psh(X,\theta)$ such that    $a_jP_{\theta}[\varphi]+(1-a_j)h_j\leq P_{\theta}[\varphi_j].$ 
 From this inequality, together with Proposition \ref{bgl_mono}, Proposition \ref{bgl_sum} 
and Corollary~\ref{lower_equal}, we obtain    $$ \und(\theta, \varphi)=\und(\theta, P_{\theta}[\varphi])=\lim_{j \to +\infty}\und(\theta, P_{\theta}[\varphi_j])=\lim_{j \to +\infty}\und(\theta,  \varphi_j )$$      Now, we have $ \und(\theta,  \varphi_j )>0$ for  sufficiently large $j$. 
Lemma \ref{stable_lem} then implies that 
$$  \ove(\theta, P_{\theta}[\varphi_j])=\ove(\theta, \varphi_j), \,\quad \,\,  \ove(\theta, P_{\theta}[\varphi])=\ove(\theta, \varphi ).$$ This completes the proof of first statement.
 \item    
 \textbf{Step 1.} We first   consider the case where   $\varphi_j=  P_{\theta_j}[\varphi]$.  Then we  have \begin{align*}  \und(\theta,  \varphi ) \leq\und(\theta_j,  \varphi  )\leq \und(\theta,  \varphi )+O(\varepsilon_j).\end{align*}  
where in the last inequality we have used \cite[Proposition 3.3(ii)]{BGL25}. 
     Therefore   $$\und(\theta_j, P_{\theta_j}[\varphi ]) = \und(\theta_j, \varphi)\searrow \und(\theta,\varphi)$$ 
     
     \hspace{2em} If, in addition, $\und(\theta, \varphi) > 0$, then Proposition \ref{admit_herm} provides us a hermitian current  $\theta + dd^c v \geq \delta \omega_X $ for some $\delta>0$, with $v \in \psh(X, \theta)$ satisfying $v \preceq \varphi$.   Observe also that  $\und(\theta_j,  \varphi  )>0$. Hence, by Lemma \ref{stable_lem}, we see that  $\ove(\theta_j, P_{\theta_j}[\varphi ])=\ove(\theta_j,  \varphi  )$. Applying   Proposition \ref{bgl_mono} and Proposition~\ref{bgl_sum} again, we obtain
$$  \ove(\theta,  \varphi  )\leq\ove(\theta_j,  \varphi  )\leq \ove( (1+\frac{\varepsilon_j}{\delta})\theta),  \varphi + \frac{\varepsilon_j}{\delta} v )\leq (1+\frac{\varepsilon_j}{\delta})^n \ove(  \theta,  \varphi  ),  $$
By letting $j\to+\infty$, we conclude that   $  \ove(\theta_j,  \varphi  ) \searrow \ove(\theta,  \varphi  )$.

 \textbf{Step 2.}  By Dini's convergence theorem, the previous step reduces the problem to proving that the result holds for the monotone sequence
\( 
P_{\theta_{j_0}}[\varphi_j] \searrow P_{\theta_{j_0}}[\varphi]
\)
for each fixed $j_0$. It therefore suffices to treat the case $\varepsilon_j = 0$ and $\und(\theta, \varphi) > 0$.

\hspace{2em}    Since $\ove(\theta,\varphi_j)$ is a Cauchy sequence, we see by Lemma \ref{lemma ddnl} 
that there exist   $a_k \nearrow 1$ and $h_{k,j} \in \psh(X,\theta)$  such that $$  a_k\varphi_k+(1-a_k)P_{\theta}[h_{k,j}]\leq \varphi_j $$ 
  for every $j>k$. Here we use the hypothesis that $\varphi_j=P_{\theta}[\varphi_j]$. After passing to a subsequence, we may assume that $P_{\theta}[h_{k,j}]$ converges in $L^1(X)$ to some
$h_k \in \psh(X,\theta)$ as $j \to +\infty$. We thus have $$  a_k\varphi_k+(1-a_k)h_k\leq \varphi.  $$
  Applying  Proposition \ref{bgl_mono} and Proposition \ref{bgl_sum}  to this inequality then yields  
\[
\ove(\theta, \varphi_j) \searrow \ove(\theta, \varphi),
\qquad
\und(\theta, \varphi_j) \searrow \und(\theta, \varphi).
\]
 \item  Define $\psi:=\lim_{j\to+\infty}P_{\theta_j}[\varphi_j]$. It follows that $\psi\geq P_{\theta}[\varphi]$. By part (ii) we have   $$\ove(\theta,P_{\theta}[\varphi])=\lim_{j\to+\infty} \ove(\theta_j,P_{\theta_j}[\varphi_j])=  \ove(\theta,\psi) $$
 Proposition \ref{stable_prop} then implies $\psi = P_{\theta}[\varphi]$, which completes the proof.
 \end{enumerate}
 \end{proof}

 \section{Existence of the Rooftop Envelope}\label{general_rel}

\subsection{\textbf{Several useful criteria}}\label{critea}

The following lemma generalizes \cite[Lemma 5.1]{DDNL21} to non‑closed   forms.
\begin{lemma}\label{diam_lem} Let   $p\in \mathbb{N}$. 
Assume that $u_1,\ldots, u_p, \varphi \in \mathrm{PSH}(X, \theta)$  and that $
\max(u_1,\ldots,u_p ) \preceq w$. Set  $\Delta(\theta,w)=\ove(\theta,w)-\und(\theta,w)$.   If  
\[
    \sum_{i=1}^p\ove(\theta,u_i) > (p-1)\ove(\theta,w) +\Delta(\theta,w), 
\]
then $P_{\theta}(u_1,\ldots,u_p) \in \mathrm{PSH}(X, \theta)$. In particular, let $\phi$ be a $\theta$-psh model potential. If $u_i\in\mathcal{E}(X,\theta,\phi)$, then $P_{\theta}(u_1,\ldots,u_p)\in \mathcal{E}(X,\theta,\phi)$.
\end{lemma}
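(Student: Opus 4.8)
The plan is to argue by contradiction on the non‑triviality of $P_{\theta}(u_1,\ldots,u_p)$, through a mass estimate on a family of cut‑off envelopes, following the strategy of \cite[Lemma~5.1]{DDNL21} but keeping track of the upper and lower volumes separately, since the Monge--Amp\`ere mass is no longer rigid in the Hermitian setting. The first step is purely algebraic: using $\Delta(\theta,w)=\ove(\theta,w)-\und(\theta,w)$, the hypothesis is equivalent to
\[
\sum_{i=1}^{p}\bigl(\ove(\theta,w)-\ove(\theta,u_i)\bigr)<\und(\theta,w),
\]
so it suffices to show that $P_{\theta}(u_1,\ldots,u_p)\equiv-\infty$ forces $\und(\theta,w)\le\sum_{i}(\ove(\theta,w)-\ove(\theta,u_i))$.

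Assuming $P_{\theta}(u_1,\ldots,u_p)\equiv-\infty$, I would fix arbitrary representatives $u_i'\simeq u_i$. Since translating all arguments by a common constant changes $P_{\theta}$ only by that constant and leaves the volumes and Monge--Amp\`ere masses untouched, I may assume $\max_i u_i'\le w\le 0$, and then $P_{\theta}(u_1',\ldots,u_p')\equiv-\infty$ still. Setting $u_{i,j}':=\max(u_i',w-j)$ gives $w-j\le u_{i,j}'\le w$, hence $u_{i,j}'\simeq w$; put $P_j:=P_{\theta}(u_{1,j}',\ldots,u_{p,j}')$, which satisfies $w-j\le P_j\le w$, so $P_j\simeq w$ and $\int_X\theta_{P_j}^n\ge\und(\theta,w)$. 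As $u_{i,j}'$ decreases in $j$, so does $P_j$, with $P_j\searrow P_{\theta}(u_1',\ldots,u_p')\equiv-\infty$; hence $m_j:=-\sup_X P_j\nearrow+\infty$. The heart of the argument is an upper bound for $\int_X\theta_{P_j}^n$ via contact sets. Since $\min_i u_{i,j}'$ is quasi‑continuous and $P_j=P_{\theta}(\min_i u_{i,j}')\in\psh(X,\theta)$, Proposition~\ref{contact_0} gives $\theta_{P_j}^n=\mathbf{1}_{\{P_j=\min_i u_{i,j}'\}}\theta_{P_j}^n$; writing the contact set as $\bigcup_i\{P_j=u_{i,j}'\}$ and invoking the maximum principle (Proposition~\ref{max_p}) on each piece — all products here being genuine Radon measures, by the bounded mass property — one gets $\theta_{P_j}^n\le\sum_{i}\mathbf{1}_{\{P_j=u_{i,j}'\}}\theta_{u_{i,j}'}^n$. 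On $\{P_j=u_{i,j}'\}$ one has $u_{i,j}'\le-m_j$; combining this with plurifine locality, which identifies $\mathbf{1}_{\{u_i'>-m_j\}}\theta_{u_{i,j}'}^n$ with $\mathbf{1}_{\{u_i'>-m_j\}}\theta_{u_i'}^n$ (using $\{u_i'>-m_j\}\subseteq\{u_i'>w-j\}$, since $-m_j\ge\sup_X w-j$), and with $\int_X\theta_{u_{i,j}'}^n\le\ove(\theta,w)$, yields
\[
\und(\theta,w)\le\int_X\theta_{P_j}^n\le\sum_{i=1}^{p}\Bigl(\ove(\theta,w)-\int_{\{u_i'>-m_j\}}\theta_{u_i'}^n\Bigr).
\]
Letting $j\to+\infty$ gives $\und(\theta,w)\le\sum_i\bigl(\ove(\theta,w)-\int_X\theta_{u_i'}^n\bigr)$, and taking the supremum over $u_i'$ one index at a time replaces $\int_X\theta_{u_i'}^n$ by $\ove(\theta,u_i)$, which is the required inequality, hence the contradiction. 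So $P_{\theta}(u_1,\ldots,u_p)\in\psh(X,\theta)$.

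For the final assertion I would take $w=\phi$: since $u_i\in\mathcal{E}(X,\theta,\phi)$ the hypothesis reduces precisely to $\und(\theta,\phi)>0$, which holds, so $P:=P_{\theta}(u_1,\ldots,u_p)\in\psh(X,\theta)$ with $P\preceq\phi$, whence $\ove(\theta,P)\le\ove(\theta,\phi)$ by Proposition~\ref{bgl_mono}. For the reverse inequality, for each $a\in(0,1)$ set $b:=1/(1-a)$ and $\tilde u_i:=P_{\theta}(bu_i-(b-1)\phi)$. Lemma~\ref{lemma ddnl} — applicable for every such $b$ because $\ove(\theta,u_i)=\ove(\theta,\phi)$ — shows $\tilde u_i\in\psh(X,\theta)$, and the bound $\tilde u_i\ge(1-t^{-1}b)\phi+t^{-1}b\,P_{\theta}(tu_i-(t-1)\phi)$ for $t\ge b$, together with Propositions~\ref{bgl_mono} and \ref{bgl_sum} (exactly as in the proof of Proposition~\ref{domina_1}), gives $\tilde u_i\in\mathcal{E}(X,\theta,\phi)$. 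Applying the part of the lemma already proved to $\tilde u_1,\ldots,\tilde u_p$ with $w=\phi$, the envelope $h_a:=P_{\theta}(\tilde u_1,\ldots,\tilde u_p)$ is $\theta$‑psh, and since $h_a\le\tilde u_i\le bu_i-(b-1)\phi$ one has $a\phi+(1-a)h_a\le u_i$ for all $i$, hence $a\phi+(1-a)h_a\le P$. Then Proposition~\ref{bgl_sum} gives $a^{n}\ove(\theta,\phi)=\ove(a\theta,a\phi)\le\ove(\theta,a\phi+(1-a)h_a)\le\ove(\theta,P)$; letting $a\to1$ yields $\ove(\theta,P)=\ove(\theta,\phi)$, so $P\in\mathcal{E}(X,\theta,\phi)$.

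The main obstacle I anticipate is the contact‑set mass estimate in the non‑closed setting: one must ensure that all non‑pluripolar products that occur ($\theta_{P_j}^n$ and $\theta_{u_{i,j}'}^n$) are honest Radon measures, so that Propositions~\ref{contact_0} and \ref{max_p} apply without the localization caveat — this is precisely where the bounded mass hypothesis enters — and the bookkeeping that transfers mass from the cut‑offs $u_{i,j}'$ back to $u_i'$ through plurifine locality (resting on $\{u_i'>-m_j\}\subseteq\{u_i'>w-j\}$, i.e.\ on $-m_j\ge\sup_X w-j$) has to be handled with care. A secondary point, absent in the K\"ahler case, is that the upper and lower volumes must be carried separately throughout, and the passage to the representatives $u_i'$ is essential in order to replace $\int_X\theta_{u_i'}^n$ by the upper volume $\ove(\theta,u_i)$.
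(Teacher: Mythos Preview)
Your proof is correct and follows essentially the same approach as the paper's: both argue by contradiction via the cut-off envelopes $P_j=P_\theta(\max(u_1,w-j),\ldots,\max(u_p,w-j))$, bound $\int_X\theta_{P_j}^n$ from below by $\und(\theta,w)$ and from above via the minimum principle and plurifine locality, and then pass to arbitrary representatives $u_i'\simeq u_i$ to upgrade $\int_X\theta_{u_i'}^n$ to $\ove(\theta,u_i)$; for the second assertion both use Lemma~\ref{lemma ddnl} (together with the bound~(\ref{u_b_full})) to produce $h_{ik}\in\mathcal{E}(X,\theta,\phi)$ with $a_k\phi+(1-a_k)h_{ik}\le u_i$, apply the first part to the $h_{ik}$, and let $a_k\to1$. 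The only differences are cosmetic: you track the single parameter $m_j=-\sup_X P_j$ where the paper decouples into $s<k$, and you invoke Proposition~\ref{contact_0} plus Proposition~\ref{max_p} where the paper cites the packaged minimum principle (Corollary~\ref{min_p}).
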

\begin{proof}
    Assume by contradiction that $P_{\theta}(u_1,\ldots,u_p)\equiv-\infty$. Let $u_{ik}:=\max(u_i,w-k)$ and $\varphi_k:=P_{\theta}(u_{1k},\ldots,u_{pk})$.   Fix $k>s>0$. For $k$ sufficiently large, we have $\sup_X\varphi_k\leq-s$.  By the minimum  principle,  we obtain
    \begin{align*}
      \und(\theta,w)  \leq \int_{\{\varphi_k\leq-s\}}\theta_{\varphi_k}^n \leq \sum_{i=1}^p \int_{\{u_{ik}\leq-s\}}  \theta_{u_{ik}}^n&=\sum_{i=1}^p \Bigl(\int_{X}  \theta_{u_{ik}}^n-  \int_{\{u_i>-s\}} \theta_{u_i}^n\Bigr)\\& \leq p\,\ove(\theta,w)- \sum_{i=1}^p\int_{\{u_i>-s\}} \theta_{u_i}^n
    \end{align*}
    Letting $k\to+\infty$ and then $s\to+\infty$, we obtain  $\sum_{i=1}^p\int_{X} \theta_{u_i}^n\leq (p-1)\ove(\theta,w) +\Delta(\theta,w)$. Now take any $u'_i\in\psh(X,\theta)$ with $u'_i\sim u_i$.    By assumption, we have $P_{\theta}(u_1',\ldots,u_p')\equiv-\infty$. Hence the above estimate remains valid after replacing each $u_i$ by $u_i'$. Taking the supremum over all such $u'_i$  leads to a contradiction with the  hypothesis. 
    In the particular case, we take $w=\phi$, then \begin{align*}\sum_{i=1}^p\ove(\theta,u_i) - (p-1)\ove(\theta,w) -\Delta(\theta,w)=\und(\theta,\phi)>0,\end{align*} which implies   $P_{\theta}(u_1,\ldots,u_p) \in \mathrm{PSH}(X, \theta)$. Moreover, Lemma \ref{lemma ddnl} also gives us that there exist $a_k\nearrow1$ and $h_{ik}\in\mathcal{E}(X,\theta,\phi)$ (this follows from inequality (\ref{u_b_full})) satisfying $a_k\phi+(1-a_k)h_{ik}\leq u_i$. Observing that $P_{\theta}(h_{1k},\ldots,h_{pk}) \in \mathrm{PSH}(X, \theta)$.  Therefore  we have \begin{align*}
        a_k\phi+(1-a_k)P_{\theta}(h_{1k},\ldots,h_{pk})\leq P_{\theta}(u_1,\ldots,u_p).
    \end{align*}
   Let $a_k \nearrow 1$. Then by Proposition \ref{bgl_mono} and Proposition \ref{bgl_sum}, 
we obtain $\ove(\theta,P_{\theta}(u_1,\ldots,u_p)) = \ove(\theta,\phi)$, completing the proof.
\end{proof}

We also have the following volume diamond inequality, extending \cite[Theorem~5.4]{DDNL21}. 
\begin{proposition}\label{diam_ineq}
   Let $p\in\mathbb{N}$. Assume  $u_i \in \mathrm{PSH}(X, \theta)$ are such that $u_i = P_{\theta}[u_i]$ for each $i\in \{1,\ldots,p\}$.    Set $ w:=
\max(u_1,\ldots,u_p ) $ and $\Delta(\theta,w)=\ove(\theta,w)-\und(\theta,w)$.  If $P_{\theta}(u_1,\ldots,u_p)\in\psh(X,\theta)$, then 
\begin{align*}
    \sum_{i=1}^p\ove(\theta,u_i) \leq (p-1)\ove(\theta,w) +\Delta(\theta,w) + \ove(\theta,P_{\theta}(u_1,\ldots,u_p)), 
\end{align*}

\end{proposition}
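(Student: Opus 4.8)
The strategy is to mirror the proof of Lemma \ref{diam_lem}, but running the minimum principle in the reverse direction, so that the rooftop envelope appears as the measure whose mass we bound from below rather than above. First I would set $\varphi := P_{\theta}(u_1,\dots,u_p)$ and, for $k \in \mathbb{N}$, introduce the truncations $u_{ik} := \max(u_i, w-k)$ and $\varphi_k := P_{\theta}(u_{1k},\dots,u_{pk})$, noting $\varphi_k \searrow \varphi$ with $\varphi_k \simeq u_{ik}$ bounded relative to $w-k$ above and relative to $w$ below. The key point is that the contact set of $\varphi_k$ is contained in $\bigcup_i \{\varphi_k = u_{ik}\}$, so the minimum principle (Corollary \ref{min_p}, applied inductively or directly to the $p$-fold rooftop) gives
\[
\theta_{\varphi_k}^n \le \sum_{i=1}^p \mathbf{1}_{\{\varphi_k = u_{ik}\}}\, \theta_{u_{ik}}^n .
\]
On the locus $\{u_i > -s\}$ we have, for $k$ large, $u_{ik} = u_i$ and (by plurifine locality) $\theta_{u_{ik}}^n = \theta_{u_i}^n$ there; away from that locus we only retain the crude bound $\theta_{u_{ik}}^n \le \theta_{u_{ik}}^n$ with total mass $\le \overline{\vol}(\theta,w)$. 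This yields, after integrating,
\[
\int_{\{\varphi_k \le -s\}} \theta_{\varphi_k}^n \le \sum_{i=1}^p \Bigl( \int_X \theta_{u_{ik}}^n - \int_{\{u_i > -s\}} \theta_{u_i}^n \Bigr).
\]

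Next I would let $k \to +\infty$ and $s \to +\infty$. Since $\varphi_k \searrow \varphi$, the left side converges to $\int_X \theta_\varphi^n$ once we use that $\theta_{\varphi_k}^n$ does not charge $\{w = -\infty\}$ together with the monotone behaviour of truncated masses; one should be a little careful here and instead argue that $\int_X \theta_{\varphi_k}^n$ is bounded below by a quantity tending to $\int_X \theta_\varphi^n$ (or simply replace $\int_X\theta_\varphi^n$ by any value $\le \overline{\vol}(\theta,\varphi)$ and take a supremum at the end). On the right side, $\int_X \theta_{u_{ik}}^n \le \overline{\vol}(\theta, u_{ik}) \le \overline{\vol}(\theta, w)$ for each $k$, and $\int_{\{u_i > -s\}} \theta_{u_i}^n \nearrow \int_X \theta_{u_i}^n$ as $s \to +\infty$ since $\theta_{u_i}^n$ puts no mass on $\{u_i = -\infty\}$. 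This gives
\[
\int_X \theta_\varphi^n \le p\,\overline{\vol}(\theta,w) - \sum_{i=1}^p \int_X \theta_{u_i}^n .
\]
Finally, to upgrade the integrals $\int_X \theta_{u_i}^n$ to upper volumes, I would replace each $u_i$ by an arbitrary $u_i' \simeq u_i$ (with $u_i' = P_\theta[u_i']$ not required — only $u_i' \simeq u_i$, and $w' := \max(u_1',\dots,u_p') \simeq w$, so $\Delta(\theta,w') = \Delta(\theta,w)$ by Corollary \ref{lower_equal} and Lemma \ref{stable_lem}), observe $P_\theta(u_1',\dots,u_p') \simeq \varphi$ is still $\theta$-psh, rerun the argument, and take the supremum over the $u_i'$ on the left (to get $\overline{\vol}(\theta,\varphi)$ on the left) — wait, the asymmetry means one should instead take $\inf$ over $u_i'$ of the right-hand expression and $\sup$ over choices on the left separately; concretely, fix $u_i'\simeq u_i$, bound $\int_X\theta_\varphi^n$ (for the original $\varphi$) using these, then take $\sup$ over the original-side potential inside $\varphi$ and $\inf$ over each $u_i'$. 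Rearranging $\overline{\vol}(\theta,\varphi) \le p\,\overline{\vol}(\theta,w) - \sum_i \overline{\vol}(\theta,u_i)$ is not quite the claimed inequality, so one must be careful: the correct bookkeeping (using $\overline{\vol}(\theta,w) = \und(\theta,w) + \Delta(\theta,w)$ and peeling off one copy of $\overline{\vol}(\theta,w)$) gives $\sum_i \overline{\vol}(\theta,u_i) \le (p-1)\overline{\vol}(\theta,w) + \Delta(\theta,w) + \overline{\vol}(\theta,\varphi)$ after noting $\und(\theta,w) \le \overline{\vol}(\theta,\varphi)$ is \emph{not} what we want — rather the term $\overline{\vol}(\theta,\varphi)$ on the right absorbs the deficit, so I expect the clean path is: from $\int_X\theta_\varphi^n + \sum_i \int_X\theta_{u_i}^n \le p\,\overline{\vol}(\theta,w)$, take appropriate sup/inf, then use $p\,\overline{\vol}(\theta,w) = (p-1)\overline{\vol}(\theta,w) + \und(\theta,w) + \Delta(\theta,w)$ and $\und(\theta,w) \le \overline{\vol}(\theta,\varphi)$? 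No — I would instead track the truncated masses more precisely so that exactly one factor $\overline{\vol}(\theta,w)$ is replaced by $\und(\theta,w)$ in the chain, which is the source of the $\Delta(\theta,w)$ correction.

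The main obstacle I anticipate is exactly this last bookkeeping step: getting the sharp constant $(p-1)\overline{\vol}(\theta,w) + \Delta(\theta,w)$ rather than the naive $p\,\overline{\vol}(\theta,w)$. The point is that in the chain of inequalities from the minimum principle, the mass of $\varphi_k$ on $\{\varphi_k \le -s\}$ is governed by the \emph{full} mass $\int_X \theta_{\varphi_k}^n$ minus $\int_{\{\varphi_k > -s\}}\theta_{\varphi_k}^n$, and since $\varphi_k \simeq w - k + O(1)$... actually the cleanest approach: apply the minimum principle bound on \emph{all} of $X$, $\int_X \theta_{\varphi_k}^n \le \sum_i \int_{\{\varphi_k = u_{ik}\}}\theta_{u_{ik}}^n$, then add $\int_{\{\varphi_k > -s\}}\theta_{\varphi_k}^n \ge \und(\theta, w)$-type lower bounds is wrong direction; instead use that for \emph{large} $k$, $\sup_X \varphi_k \le -s$ forces $\{\varphi_k \le -s\} = X$, exactly as in Lemma \ref{diam_lem}, and then the refinement is that on the right one keeps $\int_X\theta_{u_{ik}}^n = \int_{\{u_i>-s\}}\theta_{u_i}^n + \int_{\{u_{ik}\le -s\}}\theta_{u_{ik}}^n$ and bounds the tail $\int_{\{u_{ik}\le -s\}}\theta_{u_{ik}}^n$ for \emph{one} index (say $i=1$) by $\overline{\vol}(\theta,w) - \und(\theta,\varphi)$-type quantities — I will need to experiment to locate where the asymmetric split $\Delta = \overline{\vol} - \und$ naturally enters, likely by isolating $\int_X\theta_{\varphi_k}^n \ge \und(\theta,w)$ on the left and applying $\overline{\vol}(\theta,\varphi) \ge \int_X\theta_{\varphi_k'}^n$ for truncations $\varphi_k'$ of the final envelope on the right. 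Granting the earlier results (Corollary \ref{min_p}, Lemma \ref{lemma ddnl}, Proposition \ref{bgl_mono}, Proposition \ref{bgl_sum}, and plurifine locality), the remaining content is purely this combinatorial mass-accounting, and once the sharp split is identified the proof concludes by the standard $u_i \mapsto u_i'$ approximation and a supremum.
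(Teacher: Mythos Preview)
Your setup (the truncations $u_{ik}$, $\varphi_k$, and the minimum principle) matches the paper, and you correctly locate the difficulty in the sharp bookkeeping. But the plan has a genuine gap, and it is exactly the step you flag as unclear.

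First, the direct adaptation of Lemma~\ref{diam_lem} breaks: there one used $\sup_X\varphi_k\to-\infty$ to force $\{\varphi_k\le -s\}=X$, but here $\varphi_k\searrow\varphi\not\equiv-\infty$, so $\sup_X\varphi_k$ stays bounded below and $\int_{\{\varphi_k\le -s\}}\theta_{\varphi_k}^n\to 0$ as $s\to\infty$, not to $\int_X\theta_\varphi^n$. Your fallback idea of replacing $u_i$ by arbitrary $u_i'\simeq u_i$ and taking sup/inf does not rescue this, because a generic $u_i'$ has no control on where $\theta_{u_i'}^n$ sits, so you cannot separate a ``top'' contribution from the tails.

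The missing idea is to choose the competitors $u_i'$ with \emph{bounded obstacles}: set $u_i' \coloneqq P_\theta[u_i](f_i)$ for smooth $f_i$, so that by Proposition~\ref{contact_2} the measure $\theta_{u_i'}^n$ is concentrated on the bounded contact set $\{u_i'=f_i\}$. Truncating these (i.e.\ $u_{ik}=\max(u_i',w-k)$) one gets, for $k$ large, the exact identity $\theta_{u_{ik}}^n=\theta_{u_i'}^n+\mathbf{1}_{\{u_i'\le w-k\}}\theta_{u_{ik}}^n$, which forces $\theta_{\varphi_k}^n$ to be supported on the disjoint union of $\{\varphi_k=\min_i f_i\}$ and $\bigcup_i\{u_i'\le w-k\}$. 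Now the inequality $\und(\theta,w)\le\int_X\theta_{\varphi_k}^n$ splits into the mass on $\{\varphi_k=\min_i f_i\}$, which (via plurifine locality and a quasi-continuous cutoff as in Lemma~\ref{lsc}) is bounded above by $\ove(\theta,\varphi)$ in the limit $k\to\infty$, plus tail terms $\sum_i\int_{\{u_i'\le w-k\}}\theta_{u_{ik}}^n\le p\,\ove(\theta,w)-\sum_i\int_X\theta_{u_i'}^n$. This is precisely where the asymmetric split $\Delta(\theta,w)=\ove(\theta,w)-\und(\theta,w)$ enters: $\und(\theta,w)$ on the left, $\ove(\theta,\varphi)$ and $p\,\ove(\theta,w)$ on the right. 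Finally, $\ove(\theta,u_i)=\sup_{f_i}\int_X\theta_{u_i'}^n$ recovers the upper volumes. Without the obstacle trick, neither the support decomposition nor the passage to $\ove(\theta,u_i)$ is available.
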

\begin{proof}
 For smooth functions $f_i \in C^{\infty}(X)$ with $i=1,\ldots,p$, set $u_i' \coloneqq P_{\theta}[u_i](f_i)$, which satisfies $u_i' \simeq u_i$. Note that any $\psi_i \in \mathrm{PSH}(X,\theta)$ with $\psi_i \simeq u_i$ admits a decreasing approximation by such potentials $u_i'$. By the lower semicontinuity of non-pluripolar masses, we have 
$$ \overline{\mathrm{vol}}(\theta,u_i) = \sup_{f_i \in C^{\infty}(X)} \int_X (\theta + dd^c u'_i)^n. $$

Denote $M := \max\limits_{1 \le i \le p} \sup\limits_{X} |f_i|$.    Set  $u_{ik}:=\max(u_i',w-k)$ and $\varphi_k:=P_{\theta}(u_{1k},\ldots,u_{pk})$. By  plurifine locality and Proposition \ref{contact_2},   we have  
\begin{align*}
\theta_{u_{ik}}^n 
&= \mathbf{1}_{\{u_i' > w - k\}} \theta_{u_i'}^n + \mathbf{1}_{\{u_i' \leq w - k\}} \theta_{u_{ik}}^n \\
&= \mathbf{1}_{\{f_i = u_i' > w - k\}} \theta_{u_i'}^n + \mathbf{1}_{\{u_i' \leq w - k\}} \theta_{u_{ik}}^n \\
&= \theta_{u_i'}^n + \mathbf{1}_{\{u_i' \leq w - k\}} \theta_{u_{ik}}^n \quad \text{for } k > M.
\end{align*}
 Applying  the minimum  principle,   we obtain
\begin{align*}
\theta_{\varphi_k}^n 
 &\leq \sum_{i=1}^p\mathbf{1}_{\{\varphi_k=u_{ik}\}} \theta_{u_{ik}}^n   
\leq \sum_{i=1}^p \mathbf{1}_{\{\varphi_k = u_{ik} = f_i\}} \theta_{u_i'}^n  
  + \sum_{i=1}^p \mathbf{1}_{\{u_i' \leq w - k\}} \theta_{u_{ik}}^n
\end{align*}
 Since $\varphi_k\leq \min_i f_i$, we infer that  $\theta_{\varphi_k}^n$ is supported on the disjoint union of sets $\{\varphi_k=\min_i f_i\}$ and $\cup_i \{u_i'\leq w-k\}$. Therefore 
 \begin{align}\label{ineq_dia1}
     \und(\theta,w)\leq \int_X\theta_{\varphi_k}^n& \leq\int_{\{\varphi_k= \min_i f_i\}} \theta_{\varphi_k}^n +\sum_{i=1}^p \int_{\{u_i' \leq w - k\}} \theta_{u_{ik}}^n \nonumber\\&      \leq \int_{\{\varphi_k=\min_i f_i\}} \theta_{\varphi_k}^n+p\,\ove(\theta,w)- \sum_{i=1}^p\int_{\{u_i'>w-k\}} \theta_{u_i'}^n  
 \end{align}
  Set  $\varphi \coloneqq P_{\theta}(u_{1},\ldots,u_{p})$,  $\psi_{k}:=\max(\varphi_k, \min_i f_i-1)$, and  $\psi\coloneqq\max(\varphi, \min_i f_i-1)$. 
  Fix   $\varepsilon>0$, and define
\begin{align*}
      \chi_{k }^\varepsilon\coloneqq \frac{\max( \varphi_k-\min_i f_i+1,0)}{\max(\varphi_k-\min_i f_i+1,0)+\varepsilon}, \quad  \chi_{}^\varepsilon\coloneqq \frac{\max(\varphi -\min_i f_i+1,0)}{\max(\varphi-\min_i f_i+1,0)+\varepsilon}. 
\end{align*}
Note that each \(\psi_k\) is a bounded quasi-psh function and \(\psi_k \searrow \psi\) as \(k \to +\infty\).
  By    plurifine locality and  \cite[Theorem 4.26]{GZ}, we have 
 \begin{align*}   \limsup_{k\to+\infty} \int_{\{\varphi_k=\min_i f_i\}}\frac{1}{1+\varepsilon} \theta_{\varphi_k}^n &\leq   \limsup_{k\to+\infty}\int_{X}  \chi_k^{\varepsilon} \, \theta_{\varphi_k}^n =\limsup_{k\to+\infty}\int_{X}  \chi_k^{\varepsilon}  \,\theta_{\psi_k}^n \\& = \int_{X}  \chi^{\varepsilon} \, \theta_{\psi}^n=  \int_{X}  \chi^{\varepsilon}  \,\theta_{\varphi}^n \leq \int_{X} \theta_{\varphi}^n   .\end{align*}
Letting   $\varepsilon\to0$  yields \(\limsup\limits_{k\to+\infty} \int_{\{\varphi_k=\min_i f_i\}} \theta_{\varphi_k}^n \le \ove(\theta,\varphi).\)
Consequently, by letting $k \to +\infty$ in \eqref{ineq_dia1}, we conclude   that   $$\sum_{i=1}^p\int_{X} \theta_{u_i'}^n\leq (p-1)\ove(\theta,w) +\Delta(\theta,w)+\ove(\theta,\varphi)$$ The desired inequality then follows by taking the supremum over all   $f_i \in C^{\infty}(X)$.
\end{proof}

%Fix $s>0$ and $C>M$. Let  $\varphi :=P_{\theta}(u_{1},\ldots,u_{p})$, $\psi\coloneqq P_{\theta}(\min_i f_i) $  $\varphi_{k}^C:=\max(\varphi_k, \psi-C)$ and $\varphi^C:=\max(\varphi ,\psi-C)$. By  the plurifine locality and \cite[Theorem 4.26]{GZ}, we have \begin{align*}   \limsup_{k\to\infty} \int_{\{\varphi_k=\min_i f_i\}} \theta_{\varphi_k}^n \leq   \limsup_{k\to\infty}\int_{\{\psi=\min_i f_i\}} e^{ s(\varphi_k-\min_i f_i)} \theta_{\varphi_k^C}^n&\leq \int_{ X} e^{s(\varphi-\min_i f_i)} \theta_{\varphi^C}^n.\end{align*}

%Letting   $s\to+\infty$. Since $e^{s(\varphi-\min_i f_i)}\to \mathbf{1}_{\{\varphi=\min_i f_i\}}$ and $\varphi^C=\varphi$ on the contact set $\{\varphi=\min_i f_i\}$, we conclude   that \begin{align}\label{ineq_diam2}  \limsup_{k\to\infty} \int_{\{\varphi_k=\min_i f_i\}} \theta_{\varphi_k}^n\leq\int_{\{\varphi=\min_i f_i\}}  \theta_{\varphi}^n \leq\ove(\theta,P_{\theta}(u_{1},\ldots,u_{p})). \end{align} Finally, we let $k \to +\infty$ in \eqref{ineq_dia1} and combine with \eqref{ineq_diam2}. The desired inequality then follows by taking the infimum over all $f_i \in C^{\infty}(X)$.

\begin{proposition}\label{1-ener_pro}
    Let $\phi$ be a $\theta$-psh model potential.     Let $\psi, \varphi\in\mathcal{E}(X,\theta,\phi)$ be such that  $\psi \leq \varphi$. Let $u_j\in \mathcal{E}(X,\theta,\phi)$. Define $D_j \coloneqq \{u_j \le \psi \}$. 
Assume that 
    \[
    \sup_{j\in\mathbb{N}} \int_{X} |u_j - \varphi| \, \boldsymbol{1}_{D_j} \, \theta_{u_j}^n < +\infty.
    \]
    Then $\sup_X u_j$ is bounded from below.  Moreover, if $u_j \to u$ in $L^1(X)$, then $u \in \mathcal{E}(X,\theta,\phi)$.
 \end{proposition}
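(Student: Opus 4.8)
The plan is to prove the two assertions separately, in both cases reducing matters to the positivity of $\und(\theta,\phi)$. Throughout write $m_0:=\und(\theta,\phi)>0$, $M_0:=\ove(\theta,\phi)$ and $M:=\sup_j\int_X|u_j-\varphi|\,\boldsymbol 1_{D_j}\theta_{u_j}^n<+\infty$. The first step is a simple reduction of the hypothesis: since $\psi\le\varphi$ and $u_j\le\psi$ on $D_j$, we have $|u_j-\varphi|=\varphi-u_j\ge\psi-u_j\ge0$ there, so, setting $w_j:=\max(u_j,\psi)$, the identity $w_j-u_j=(\psi-u_j)^{+}$ gives $\int_X(w_j-u_j)\theta_{u_j}^n=\int_{\{u_j<\psi\}}(\psi-u_j)\theta_{u_j}^n\le M$ for every $j$. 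Note $w_j\ge u_j$ and, since $u_j,\psi\in\mathcal E(X,\theta,\phi)$, monotonicity (Proposition \ref{bgl_mono}) applied to $u_j\preceq w_j\preceq\phi$ gives $w_j\in\mathcal E(X,\theta,\phi)$; likewise Corollary \ref{full_same} yields $\int_X\theta_{u_j}^n\ge\und(\theta,u_j)=m_0$ and $\int_X\theta_\psi^n\ge m_0>0$.

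For the lower bound on $\sup_X u_j$, I would argue by contradiction: after passing to a subsequence, suppose $b_j:=\sup_X u_j\to-\infty$. Fix $t>0$; for $j$ large enough that $b_j<-t$ we have $\{\psi>-t\}\subseteq\{u_j<\psi\}$ and $\psi-u_j>-t-b_j$ there, so $(-t-b_j)\theta_{u_j}^n(\{\psi>-t\})\le M$, hence $\theta_{u_j}^n(\{\psi>-t\})\to0$ as $j\to+\infty$. Since $\int_X\theta_{u_j}^n\ge m_0$, the mass of $\theta_{u_j}^n$ must escape into the pluripolar set $\{\psi=-\infty\}$. The hard part is to convert this into a contradiction, and this is where the Hermitian setting bites: the comparison principle is unavailable, so one cannot simply observe (as in the Kähler case) that $\int_X\theta_{u_j}^n$ cannot exceed $\int_X\theta_\psi^n$. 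Instead I would transfer the estimate onto the fixed measure $\theta_\psi^n$: by plurifine locality $\theta_{w_j}^n=\theta_\psi^n$ on the open set $\{u_j<\psi\}$, and using the minimum principle (Corollary \ref{min_p}) and Proposition \ref{contact_0} — in the spirit of the proofs of Lemma \ref{lemma ddnl} and Proposition \ref{domina_1}, together with a relative energy comparison valid under the bounded mass property — one obtains $\int_{\{u_j<\psi\}}(\psi-u_j)\theta_\psi^n\le CM$ for an absolute constant $C$. Running the same Chebyshev step on this $j$-independent measure gives $\theta_\psi^n(\{\psi>-t\})\le CM/(-t-b_j)\to0$, so $\theta_\psi^n(\{\psi>-t\})=0$ for every $t$, and letting $t\to+\infty$ (no charge on pluripolar sets) forces $\int_X\theta_\psi^n=0$, contradicting $\int_X\theta_\psi^n\ge m_0>0$. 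Hence $\sup_X u_j$ is bounded below.

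For the second assertion, assume $u_j\to u$ in $L^1(X)$. By the first part $\sup_X u_j\ge-A$, and $L^1$-convergence forces $\sup_X u_j\le A$ as well, so $u$ is a genuine $\theta$-psh function; passing to a further subsequence we may assume $u_j\to u$ a.e., so $v_j:=(\sup_{l\ge j}u_l)^{*}\searrow u$. Each finite truncation $P_\theta(u_j,\dots,u_{j+p})$ lies in $\mathcal E(X,\theta,\phi)$ by the last statement of Lemma \ref{diam_lem} (its hypothesis holds since $\ove(\theta,u_i)=\ove(\theta,\phi)$ and $\max_i u_i\in\mathcal E(X,\theta,\phi)$ by Proposition \ref{bgl_mono}), and combining this with the max-stability of the relative full mass class and the lower semicontinuity of non-pluripolar masses (Lemma \ref{lsc}) gives $v_j\in\mathcal E(X,\theta,\phi)$; in particular $u\preceq\phi$ and $\ove(\theta,v_j)=\ove(\theta,\phi)$. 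It remains to show that no mass is lost as $v_j\searrow u$, i.e. $\ove(\theta,u)=\ove(\theta,\phi)$. This is again the crux and is handled exactly as above: the reduced energy bound $\int_X(w_j-u_j)\theta_{u_j}^n\le M$, transferred to $\theta_\psi^n$ and combined with the lower semicontinuity of the non-pluripolar product (Lemma \ref{lsc}), controls $\int_{\{u<\psi\}}(\psi-u)\theta_u^n$ and, via the minimum principle and $\int_X\theta_\psi^n\ge m_0>0$, rules out any escape of mass; equivalently it forces $P_\theta[u]=\phi$, whence $\ove(\theta,u)=\ove(\theta,P_\theta[u])=\ove(\theta,\phi)$ by Lemma \ref{stable_lem}. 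Therefore $u\in\mathcal E(X,\theta,\phi)$.

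The step I expect to be the main obstacle is precisely the transfer of the energy estimate from $\theta_{u_j}^n$ to the fixed measure $\theta_\psi^n$ — i.e. obtaining $\int_{\{u_j<\psi\}}(\psi-u_j)\theta_\psi^n\le CM$ — since in the Hermitian setting the usual comparison/integration-by-parts arguments produce a term proportional to $d\theta$ which must be absorbed using the bounded mass property (and the level truncations $\max(u_j,w_j-k)$); once this is in hand, everything else is routine manipulation with the minimum principle, plurifine locality and the monotonicity of volumes.
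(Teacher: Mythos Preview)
Your proposal has a genuine gap at precisely the point you flag. The ``transfer'' inequality $\int_{\{u_j<\psi\}}(\psi-u_j)\,\theta_\psi^n\le CM$ is not available in the Hermitian setting, and the tools you cite (the minimum principle, Proposition~\ref{contact_0}) do not yield it: they control $\theta_{P_\theta(\cdot)}^n$ on contact sets, not $\theta_\psi^n$ on a sublevel set. In the K\"ahler case such a bound follows from integration by parts, but here the $d\theta$-terms do not simply ``absorb into the bounded mass property'': that property controls total masses, not weighted integrals against a fixed Monge--Amp\`ere measure. Without the transfer your Chebyshev step gives only $\theta_{u_j}^n(\{\psi>-t\})\to0$, which is perfectly consistent with $\sup_X u_j\to-\infty$ and produces no contradiction. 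Your treatment of the second assertion inherits the same gap and is in any case too vague (``rules out any escape of mass; equivalently it forces $P_\theta[u]=\phi$'' is a restatement of the goal, not an argument).

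The paper's proof is different in kind and avoids any such transfer. After reducing to $\varphi=\psi$ and $u_j\le\varphi$ (by replacing $u_j$ with $P_\theta(u_j,\psi)$ rather than $\max(u_j,\psi)$, using Lemma~\ref{diam_lem} and Corollary~\ref{min_p}), for fixed $s>0$ it splits $\und(\theta,\phi)\le\int_X\theta_{u_j}^n$ into an energy-controlled piece $\le M/s$ and $\int_{\{u_j>\varphi-s\}}\theta_{u_j}^n$, and applies the \emph{maximum} principle to $\max(u_j,\varphi-s)$ to bound the latter by $\int_{\{u_j>\varphi-s\}}\theta_\varphi^n+\bigl(\ove(\theta,\phi)-\int_X\theta_\varphi^n\bigr)$. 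Letting $j\to\infty$ kills the first term and then $s\to\infty$ gives $\und(\theta,\phi)\le\ove(\theta,\phi)-\int_X\theta_\varphi^n$. The decisive idea your argument lacks is that this inequality persists after replacing $\varphi$ by any $\varphi'\simeq\varphi$ with $\varphi'\le\varphi$, so taking the supremum of $\int_X\theta_{\varphi'}^n$ over such $\varphi'$ forces $\und(\theta,\phi)\le\ove(\theta,\phi)-\ove(\theta,\varphi)=0$, a contradiction. For the second assertion the paper does \emph{not} argue via loss of mass along $v_j\searrow u$; instead it shows directly that $P_\theta\bigl(bu-(b-1)\varphi\bigr)\not\equiv-\infty$ for every $b>1$, by running the same maximum-principle/vary-$\varphi'$ estimate on $h_j:=P_\theta\bigl(bu_j-(b-1)\varphi\bigr)\in\mathcal E(X,\theta,\phi)$, and then concluding $u\in\mathcal E(X,\theta,\phi)$ from $u\ge(1-1/b)\varphi+(1/b)P_\theta(bu-(b-1)\varphi)$ via Propositions~\ref{bgl_mono} and~\ref{bgl_sum}.
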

\begin{proof}
       From Lemma \ref{diam_lem}, we know
       that $P_{\theta}(u_j,\psi) \in \mathcal{E}(X,\theta,\phi)$. Note that by the minimum principle, we have
            \begin{align}\label{holds_equiv}  \sup_{j\in\mathbb{N}} \int_{X} |P_{\theta}(u_j,\psi)-\psi|\, \theta_{P_{\theta}(u_j,\psi)}^n &\leq     \sup_{j\in\mathbb{N}}\int_{\{u_j\leq \psi\}  } (\psi-u_j)\,  \theta_{u_j}^n\nonumber \\ &
            \leq \sup_{j\in\mathbb{N}}\int_{  D_j } |u_j-\varphi| \theta_{u_j}^n <+\infty . \end{align}
Hence by replacing $\varphi$ with $\psi$ and $u_j$ with $P_{\theta}(u_j,\psi)$, we may assume that   $\varphi=\psi$ and that  $u_j\leq\varphi$ everywhere. Thus $D_j=X$. Arguing by contradiction, we also assume that $\sup_X u_j \to -\infty$. Fix $s > 0$ and observe that 
\begin{align*}
   \und(\theta,\phi)\leq \int_X\theta_{u_j}^n  \leq &  \frac{1}{s}\int_{\{u_j\leq \varphi-s\} }|u_j-\varphi|\, \theta_{u_j}^n +  \int_{\{u_j>\varphi-s\}} \, \theta_{u_j}^n  \nonumber\\  \leq &\frac{1}{s}\sup_{j\in\mathbb{N}}\int_{X}|u_j-\varphi|\, \theta_{u_j}^n + \int_{\{u_j>\varphi-s\}}   \theta_{\varphi}^n +\int_{X}   \theta_{\max(u_j,\varphi-s)}^n -\int_{X}   \theta_{\varphi}^n 
  \nonumber \\   \leq &  \frac{1}{s}\sup_{j\in\mathbb{N}}\int_{X}|u_j-\varphi|\, \theta_{u_j}^n +\int_{\{u_j>\varphi-s\}}\theta_{\varphi}^n +     \ove(\theta,\phi)-\int_X  \theta_{\varphi}^n 
\end{align*}
where in the second line  we have used the maximum principle. Since $\sup_X u_j \to -\infty$, we have $\int_{\{u_j>\varphi-s\}}   \theta_{\varphi}^n \to 0$  as $j\to+\infty$. By letting $j\to +\infty$ and then $s\to +\infty$, we thus obtain
     \begin{align}\label{contrad_1}  \und(\theta,\phi)\leq  \ove(\theta,\phi)-\int_X  \theta_{\varphi}^n. \end{align}
  By inequality \eqref{holds_equiv}, the estimate \eqref{contrad_1} remains valid upon  replacing $\varphi$ by any $\varphi'$ satisfying $\varphi'\simeq \varphi$ and $\varphi'\leq\varphi$.  Taking the infimum over all such $\varphi'$ then yields $\und(\theta,\phi) = 0$, a contradiction.

Moreover, assume $u_j\to u$ in $L^1(X)$. Then  $u\preceq\phi$. Fix  $b>1$ and assume by contradiction that $P_{\theta}(bu-(b-1)\varphi)\equiv-\infty $.  Since $u_j,\varphi\in\mathcal{E}(X,\theta,\phi)$,  Lemma \ref{lemma ddnl} together with inequality (\ref{u_b_full}) gives  $$h_j\coloneqq P_{\theta}(bu_j-(b-1)\varphi)\in \mathcal{E}(X,\theta,\phi).$$

By Hartogs' lemma,  the hypothesis $P_{\theta}(bu-(b-1)\varphi)\equiv-\infty $ actually implies $h_j \to-\infty$.    Let $C_j\coloneqq\{P_{\theta}(bu_j-(b-1)\varphi)= bu_j-(b-1)\varphi\}$ denote the contact set. Using Proposition \ref{contact_0} and the minimum  principle again, we obtain
 \begin{align*} 
        \und(\theta,\phi)\leq \int_X\theta_{h_j}^n \leq & b^n \int_{\{h_j\leq \varphi-bs\}\cap C_j } \, \theta_{u_j}^n + \int_{\{h_j>\varphi-bs\} } \, \theta_{h_j}^n \nonumber\\   \leq & b^n \int_{\{u_j\leq \varphi-s\} }  \, \theta_{u_j}^n + \int_{\{h_j>\varphi-bs\}}   \theta_{\varphi}^n +\int_{X}   \theta_{\max(h_j,\varphi-bs)}^n -\int_{X}   \theta_{\varphi}^n 
  \nonumber \\    \leq & \frac{b^{n}}{s}\sup_{j\in\mathbb{N}}\int_{X}|u_j-\varphi|\, \theta_{u_j}^n + \int_{\{h_j>\varphi-s\}}   \theta_{\varphi}^n +\ove(\theta,\phi)-\int_X  \theta_{\varphi}^n        
\end{align*}
where in the second inequality we have used  Proposition \ref{contact_0} and the minimum principle,
and in the third inequality we have used the maximum principle. Since $h_j \to -\infty$, letting first $j \to +\infty$ and then $s \to +\infty$ yields \eqref{contrad_1} again.  
For any $\varphi' \simeq \varphi$ satisfying $\varphi' \le \varphi$,  the same argument remains valid after replacing  $\varphi$ by  $\varphi'$ and $u_j$ by $P_{\theta}(u_j,\varphi')$. Taking the infimum over all such $\varphi'$ yields $\und(\theta,\phi) = 0$,
which  again leads to a contradiction. 
We have thus shown that $P_{\theta}(bu-(b-1)\varphi)$ is 
 $\theta$-psh  for every $b>1$.
  Since  $$u\geq   (1-1/b)\varphi+
 (1/b)P_{\theta}(bu-(b-1)\varphi),$$    Proposition \ref{bgl_mono} and Proposition \ref{bgl_sum} then  yield $\ove(\theta,u) = \ove(\theta,\phi)$,  
hence $u \in \mathcal{E}(X,\theta,\phi)$. This completes the proof. 
\end{proof}
\begin{rem} 
If the assumption $\psi \leq \varphi$ is not made, the first conclusion of Proposition~\ref{1-ener_pro} remains valid. Since this  conclusion will not be used, we omit its proof here. The necessary modifications are clear from the proof of Proposition~\ref{general_asl} below. 
\end{rem}
As an application, we establish a criterion for convergence in capacity  which generalize a result
 of Kołodziej–Nguyen (see \cite[Proposition 2.5]{KN22_weak})  to unbounded potentials.  
\begin{theorem}\label{1-ener_thm}
     Let $\phi$ be a $\theta$-psh model potential and  $u_j, u \in \mathcal{E}(X,\theta,\phi)$. 
    Assume that $u_j $ converges to $ u$ in $L^1(X)$ and 
    \[
    \lim_{j\to+\infty} \int_X |u_j - u| \, \theta_{u_j}^n = 0.
    \]
    Then, by passing to a subsequence, $P_{\theta}(\inf_{l\ge j} u_l) \in \mathcal{E}(X,\theta,\phi)$.
    In particular, $u_j$ converges to $u$ in capacity.
\end{theorem}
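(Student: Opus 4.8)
The plan is to bootstrap everything from Proposition~\ref{1-ener_pro}, applied not to $\{u_j\}$ directly — which carries no a priori lower bound — but to an auxiliary \emph{decreasing} family of envelopes. First I would pass to a subsequence along which $\sum_{j}\int_X|u_j-u|\,\theta_{u_j}^n<+\infty$; write $S_j:=\sum_{l\ge j}\int_X|u_l-u|\,\theta_{u_l}^n$, so $S_j\to0$, and (after a further subsequence) $u_j\to u$ a.e. Fix $j$ and set $g_N:=P_{\theta}(u,u_j,\dots,u_N)$ for $N\ge j$. By the last clause of Lemma~\ref{diam_lem}, $g_N\in\mathcal E(X,\theta,\phi)$; moreover $g_N$ is non-increasing in $N$ and $g_N\le u$. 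A minimum-principle estimate together with plurifine locality (in the spirit of the proof of Lemma~\ref{diam_lem}) shows that $\theta_{g_N}^n$ is carried by $\{g_N=u\}\cup\bigcup_{l=j}^{N}\{g_N=u_l\}$, with $\theta_{g_N}^n\le\theta_u^n$ on the first set and $\theta_{g_N}^n\le\theta_{u_l}^n$ on $\{g_N=u_l\}$. Since $|g_N-u|$ vanishes on $\{g_N=u\}$ and equals $|u_l-u|$ on $\{g_N=u_l\}$ (recall $g_N\le u$), this yields the crucial uniform bound
\[
\sup_{N\ge j}\int_X|g_N-u|\,\theta_{g_N}^n\ \le\ \sum_{l\ge j}\int_X|u_l-u|\,\theta_{u_l}^n\ =\ S_j\ <\ +\infty .
\]

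Next I would apply Proposition~\ref{1-ener_pro} to the sequence $\{g_N\}_{N\ge j}$, taking (in the notation of that proposition) $\psi=\varphi=u$, so that $D_N=\{g_N\le u\}=X$: the displayed estimate is precisely its hypothesis. It first gives that $\sup_X g_N$ is bounded below, so the decreasing limit $\xi_j:=\lim_N g_N$ is $\theta$-psh; since $g_N\searrow\xi_j$ with $\xi_j\in L^1$, also $g_N\to\xi_j$ in $L^1(X)$, and the ``moreover'' clause of Proposition~\ref{1-ener_pro} then places $\xi_j\in\mathcal E(X,\theta,\phi)$. A routine envelope comparison identifies $\xi_j=P_{\theta}\bigl(\min(u,\inf_{l\ge j}u_l)\bigr)$. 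Finally, from $\xi_j\le P_{\theta}(\inf_{l\ge j}u_l)\le u_j\preceq\phi$ and $\xi_j\in\mathcal E(X,\theta,\phi)$, the monotonicity of the upper volume (Proposition~\ref{bgl_mono}) squeezes $\ove(\theta,P_{\theta}(\inf_{l\ge j}u_l))=\ove(\theta,\phi)$, i.e.\ $P_{\theta}(\inf_{l\ge j}u_l)\in\mathcal E(X,\theta,\phi)$; this is the first assertion.

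For the convergence in capacity I would keep working with $\xi_j=\lim_N g_N$, which is non-decreasing in $j$ and satisfies $\xi_j\le u_j$. Letting $N\to+\infty$ in the bound above, the lower semicontinuity of the non-pluripolar product (Lemma~\ref{lsc}, with the quasi-continuous weights $u-g_N\ge0$) gives $\int_X(u-\xi_j)\,\theta_{\xi_j}^n\le S_j$; letting $j\to+\infty$ and using Lemma~\ref{lsc} once more yields $\int_X(u-\xi)\,\theta_{\xi}^n=0$, where $\xi:=(\sup_j\xi_j)^{\ast}\in\mathcal E(X,\theta,\phi)$ and $\xi\le u$. Hence $\mathbf 1_{\{\xi<u\}}\theta_\xi^n=0$, and the domination principle (Proposition~\ref{domina_1}, with reference potential $\phi$) forces $\xi\ge u$, so $\xi_j\nearrow u$; being monotone, this sequence converges in capacity. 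On the other hand $u_j\le(\sup_{l\ge j}u_l)^{\ast}$, and the right-hand side decreases to $u$ by Hartogs' lemma, hence also converges in capacity. The sandwich $\xi_j\le u_j\le(\sup_{l\ge j}u_l)^{\ast}$ then gives $u_j\to u$ in capacity.

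The main obstacle, to my mind, is the first step: recognizing that Proposition~\ref{1-ener_pro} should be fed the decreasing envelopes $g_N$ rather than the $u_j$, that their energy is governed by the \emph{tail} $S_j$ (which forces the summability reduction), and that the proposition must be invoked twice — once to rule out $\xi_j\equiv-\infty$ and once to land $\xi_j$ in the full-mass class. The secondary technical points are the identification of $\lim_N g_N$ with $P_{\theta}(\min(u,\inf_{l\ge j}u_l))$ and the volume squeeze removing the spurious $\min$ with $u$, and, for the capacity statement, the passage $\xi=u$ through the domination principle.
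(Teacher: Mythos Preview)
Your proof is correct and follows essentially the same route as the paper: pass to a summable subsequence, form the finite rooftop envelopes, bound their $1$-energy against $u$ via the minimum principle, feed this into Proposition~\ref{1-ener_pro} to get the decreasing limit in $\mathcal{E}(X,\theta,\phi)$, then use lower semicontinuity and the domination principle to identify the increasing limit with $u$ and conclude capacity convergence by sandwiching. The only cosmetic difference is that you include $u$ itself in the envelope, writing $g_N=P_\theta(u,u_j,\dots,u_N)$ rather than the paper's $w_j^k=P_\theta(u_j,\dots,u_{j+k})$; this forces your extra volume-squeeze to pass from $\xi_j$ to $P_\theta(\inf_{l\ge j}u_l)$, but the argument is otherwise identical.
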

\begin{proof}
    By passing to a subsequence, we assume  that 
      $$\int_X |u_j-u|\,\theta_{u_j}^n \leq {2^{-j}}$$
Denote $w_{j}^k=P_{\theta}(u_j,\ldots,u_{j+k})$. Proposition \ref{diam_ineq} ensures  that $w_{j}^k\in \mathcal{E}(X,\theta,\phi)$. By  the minimum  principle, we have 
  $$   \int_X |w_{j}^k-u|\, \theta_{w_{j}^k}^n  \leq     \sum_{l=j}^{j+k}\int_X |u_l-u|\,  \theta_{u_l}^n 
            \leq  2^{1-j}. $$
  Now apply Proposition \ref{1-ener_pro} to $w_j^k$ and $u$.
The decreasing limit $w_j \coloneqq \lim_k w_j^k$ is not identically $-\infty$,
and  belongs to $ \mathcal{E}(X,\theta,\phi)$.  By the lower semicontinuity of non-pluripolar product (Lemma~\ref{lsc}), we obtain  $$   \int_X |w_{j} -u|\, \theta_{w_j}^n \leq \liminf_k  \int_X |w_{j}^k-u|\, \theta_{w_{j}^k}^n   
            \leq  2^{1-j}. $$
  
 Let $w \coloneqq \lim_j w_j$ denote the increasing limit. Then $w \in \mathcal{E}(X,\theta,\phi)$.
Using Lemma \ref{lsc}  once more,   we deduce that $$ \int_X |w  -u|\, \theta_{w}^n=0.$$
  The domination principle  (Proposition  \ref{domina_1}) then yields that $w=u $. Finally, we define $v_j \coloneqq \operatorname{usc}\!\bigl( \sup_{l \ge j} u_l \bigr)$.
It follows that $u_j \le v_j \searrow u$ and $u_j \ge w_j \nearrow u$.
 Therefore, $u_j$ converges    in capacity  to $u$.   
\end{proof}

At the end of this subsection, we recall a generalization of the Cegrell-Kołodziej-Xing stability theorem \cite{CK06,Xing,DH_con} to the hermitian setting, established in \cite{ALS24,ALS25} (see also \cite[Theorem~1.5]{DV_qua2} for  quantitative aspects in  Kähler manifolds with closed 
$\theta$).  The stability theorem extend naturally to big classes; notably, the bounded mass condition is invoked here only in Lemma~\ref{diam_lem}.

\begin{theorem}[(cf. {\cite{ALS24,ALS25}})]\label{als_lem}
    Let \( u_j \in \mathcal{E}(X, \theta, \phi) \) be such that \(  \theta_{u_j}^n \leq \mu \), where \( \mu \) is a   non-pluripolar Radon measure. Assume that $u_j$ converges in \( L^1(X) \) to a \(\theta\)-psh function \(u \). Then, up to extracting a subsequence,    both $u$ and  $ P_{\theta}(\inf_{l\geq j} u_{l}) $ belong to $\mathcal{E}(X, \theta, \phi)$. Moreover,    $  u_j$ converges   in capacity to   $u$, and  $  \theta_{u_j}^n$ converges    weakly to  $\theta_u^n $.
\end{theorem}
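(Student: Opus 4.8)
The plan is to run the Cegrell--Ko{\l}odziej--Xing argument exactly as in the proof of Theorem~\ref{1-ener_thm}, with the $d_{1}$-type smallness there replaced by the uniform domination $\theta_{u_j}^n\le\mu$; as noted above, the bounded mass property enters only through Lemma~\ref{diam_lem}, so the proof of \cite{ALS24,ALS25} carries over. After passing to a subsequence I would assume $u_j\to u$ a.e. For $j\le l$ set $w_j^{l}:=P_\theta(u_j,\dots,u_l)$. By the ``in particular'' part of Lemma~\ref{diam_lem} each $w_j^{l}\in\mathcal E(X,\theta,\phi)$, so $\und(\theta,\phi)\le\int_X\theta_{w_j^{l}}^n\le\ove(\theta,\phi)$; and iterating the minimum principle (Corollary~\ref{min_p}) -- after the usual truncation -- together with plurifine locality, $\theta_{w_j^{l}}^n$ is carried on the contact set and satisfies $\theta_{w_j^{l}}^n\le\sum_{i=j}^{l}\mathbf 1_{A_i}\,\theta_{u_i}^n\le\mu$, the sets $A_i:=\{w_j^{l}=u_i\}$ being pairwise disjoint. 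This is the only point at which the bounded mass property is used.

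The core of the proof is the passage to the decreasing limit $w_j:=\lim_{l\to+\infty}w_j^{l}=P_\theta(\inf_{l\ge j}u_l)$, and this is the step I expect to be the main obstacle. Fix $\psi\in\mathcal E(X,\theta,\phi)$ with $\psi\le0$. Using the contact-set description above,
\[
\int_{\{w_j^{l}\le\psi\}}(\psi-w_j^{l})\,\theta_{w_j^{l}}^n
=\sum_{i=j}^{l}\int_{A_i\cap\{u_i\le\psi\}}(\psi-u_i)\,\theta_{u_i}^n
\le\int_{X}\bigl(\psi-\inf_{l\ge j}u_l\bigr)^{+}d\mu .
\]
The heart of the matter is that the right-hand side is bounded uniformly in $j$ and $l$. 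This is the Cegrell--Ko{\l}odziej--Xing mechanism: since $\mu$ is a finite measure charging no pluripolar set -- in particular none of $\{\psi=-\infty\}$, $\{\phi=-\infty\}$ and the polar set $\{w_1=-\infty\}$ -- and the $u_j$ are uniformly bounded in $L^{1}(X)$, no mass of $\mu$ can concentrate on the region where the potentials are very negative; concretely this reduces to the uniform tail estimate $\sup_i\int_{\{u_i\le\psi-s\}}(\psi-u_i)\,d\mu\to0$ as $s\to+\infty$, which uses $\theta_{u_i}^n\le\mu$. Granting this uniform bound, Proposition~\ref{1-ener_pro} applied to the sequence $(w_j^{l})_{l}$ with $\varphi=\psi$ (so $D_{l}=\{w_j^{l}\le\psi\}$) gives first that $\sup_X w_j^{l}$ is bounded below uniformly in $l$, hence $w_j\not\equiv-\infty$; and then, since $w_j^{l}\searrow w_j$ in $L^{1}(X)$, the second part of Proposition~\ref{1-ener_pro} yields $w_j\in\mathcal E(X,\theta,\phi)$, i.e. $P_\theta(\inf_{l\ge j}u_l)\in\mathcal E(X,\theta,\phi)$.

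With this in hand the statement follows as at the end of Theorem~\ref{1-ener_thm}. Since $u_j\ge w_j\ge w_1$ with $w_1\in\mathcal E(X,\theta,\phi)$, Proposition~\ref{1-ener_pro} applied to $(u_j)_j$ with $\psi=\varphi=w_1$ (then $D_j=\{u_j=w_1\}$ and the energy integral vanishes) gives $u\in\mathcal E(X,\theta,\phi)$. The $w_j$ increase; put $w:=\lim_j w_j$, so $w_1\le w_j\le u_j$, $w_j\nearrow w\le u$ and $w\in\mathcal E(X,\theta,\phi)$. By Lemma~\ref{lsc}, $\theta_{w_j^{l}}^n\to\theta_{w_j}^n$ and $\theta_{w_j}^n\to\theta_{w}^n$ weakly, whence $\theta_w^n\le\mu$; and since each $\theta_{w_j}^n$ is carried on $\{w_j=\inf_{l\ge j}u_l\}$ by Proposition~\ref{contact_0} (the function $\inf_{l\ge j}u_l$ being quasi-continuous), one checks as in \cite{ALS24,ALS25} that $\theta_w^n$ does not charge $\{w<u\}$, so the domination principle (Proposition~\ref{domina_1}) forces $w=u$. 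Finally, with $v_j:=\operatorname{usc}(\sup_{l\ge j}u_l)$ we have $w_j\le u_j\le v_j$, $w_j\nearrow u$, $v_j\searrow u$, so $u_j\to u$ in capacity; and the weak convergence $\theta_{u_j}^n\to\theta_u^n$ follows from the second part of Lemma~\ref{lsc}, because $\sup_j\int_X\theta_{u_j}^n\le\mu(X)$ and $\int_{\{u_j\le V_\theta-t\}}\theta_{u_j}^n\le\mu(\{w_1\le V_\theta-t\})\to0$ uniformly in $j$, using $u_j\ge w_1$ and that $\mu$ charges no pluripolar set.
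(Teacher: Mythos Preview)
Your approach has a genuine gap at what you call the ``Cegrell--Ko{\l}odziej--Xing mechanism''. You need the uniform bound
\[
\sup_{l}\int_{\{w_j^{l}\le\psi\}}(\psi-w_j^{l})\,\theta_{w_j^{l}}^n<+\infty,
\]
which you reduce to $\int_X(\psi-\inf_{l\ge j}u_l)^{+}d\mu<+\infty$, and further to the tail estimate $\sup_i\int_{\{u_i\le\psi-s\}}(\psi-u_i)\,d\mu\to0$. None of these is justified. For a general non-pluripolar Radon measure $\mu$ there is no reason that $\theta$-psh functions are uniformly integrable with respect to $\mu$; the hypotheses only give $\theta_{u_i}^n\le\mu$ and $L^{1}(X,\omega_X^n)$-compactness of $\{u_i\}$, and neither yields a uniform $L^{1}(X,\mu)$ bound on $(-u_i)$. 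The sentence ``which uses $\theta_{u_i}^n\le\mu$'' is not an argument. Without this bound Proposition~\ref{1-ener_pro} cannot be invoked, and the proof stalls before $w_j\not\equiv-\infty$ is established. A secondary issue: you apply Proposition~\ref{contact_0} to $f=\inf_{l\ge j}u_l$, asserting it is quasi-continuous, but a countable infimum of quasi-continuous functions need not be quasi-continuous.

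The paper does not give its own proof of this statement; it cites \cite{ALS24,ALS25} and instead proves the strictly more general Proposition~\ref{general_asl}. That proof avoids your integrability obstacle entirely by replacing the linear integrand $|u_j-u|$ with the bounded one $|e^{u_j}-e^{u}|$: by \cite[Lemma~11.5]{GZ} one has $\int_X|e^{u_j}-e^{u}|\,d\mu\to0$ for any non-pluripolar $\mu$, so after passing to a summable subsequence one controls $\int_X|e^{w_j^k}-e^{u}|\,\mathbf{1}_{D_j^k}\theta_{w_j^k}^n$. The conclusion is then obtained not via Proposition~\ref{1-ener_pro} but by a direct contradiction argument (working with $\widehat w_j^k=P_\theta(bw_j^k-(b-1)\varphi)$ and the maximum principle) that ultimately forces $\und(\theta,\phi)\le\ove(\theta,\phi)-\int_X\theta_{\varphi'}^n$ for every $\varphi'\simeq\phi$. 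Your outline for the ``moreover'' conclusions (squeeze between $w_j$ and $v_j$, then Lemma~\ref{lsc} with the uniform tail $\mu(\{w_1\le V_\theta-t\})\to0$) is sound once $w_j\in\mathcal E(X,\theta,\phi)$ and $w_j\nearrow u$ are secured, but getting there requires the exponential route rather than the linear one you propose.
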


We need the following generalization to the  stability theorem. One may  compare it with the domination principle (Proposition \ref{domina_1}).
\begin{proposition}\label{general_asl}
     Let \( u_j,\varphi \in \mathcal{E}(X, \theta, \phi) \) be such that \( \boldsymbol{1}_{D_j} \theta_{u_j}^n \leq \mu \), where  $D_j\coloneqq \{u_j\leq \varphi\}$ and  \( \mu \) is a   non-pluripolar Radon measure. Assume that $u_j$ converges in \( L^1(X) \) to a \(\theta\)-psh function \(u \). Then,  up to extracting a subsequence,    both $u$ and  $ P_{\theta}(\inf_{l\geq j} u_{l}) $ belong to $\mathcal{E}(X, \theta, \phi)$.
\end{proposition}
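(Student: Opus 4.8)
The plan is to reduce the assertion to the global stability theorem (Theorem~\ref{als_lem}, i.e.\ the case $D_j=X$) by truncating the $u_j$ against $\varphi$, and then to transfer the conclusions back by monotonicity of the upper volume. We may assume $\varphi\le 0$ (since $\varphi\preceq\phi\preceq V_\theta$).

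\textbf{Step 1 (global mass bound for the truncations).} Set $\tilde u_j:=P_\theta(u_j,\varphi)=P_\theta(\min(u_j,\varphi))$. Since $u_j,\varphi\in\mathcal{E}(X,\theta,\phi)$, Lemma~\ref{diam_lem} gives $\tilde u_j\in\mathcal{E}(X,\theta,\phi)$; moreover $\tilde u_j\le\varphi$, hence $\tilde u_j\preceq\phi$ with a uniform implied constant. The crucial point is that the \emph{localized} bound $\mathbf{1}_{D_j}\theta_{u_j}^n\le\mu$ turns into a \emph{global} bound for $\tilde u_j$: applying the minimum principle (Corollary~\ref{min_p}) to $P_\theta(u_j,\varphi)$ and using $\{\tilde u_j=u_j<\varphi\}\subseteq\{u_j\le\varphi\}=D_j$,
\[
\theta_{\tilde u_j}^n\ \le\ \mathbf{1}_{\{\tilde u_j=\varphi\le u_j\}}\theta_\varphi^n+\mathbf{1}_{\{\tilde u_j=u_j<\varphi\}}\theta_{u_j}^n\ \le\ \theta_\varphi^n+\mathbf{1}_{D_j}\theta_{u_j}^n\ \le\ \theta_\varphi^n+\mu\ =:\ \tilde\mu,
\]
and $\tilde\mu$ is a non-pluripolar Radon measure, finite because $\int_X\theta_\varphi^n\le\ove(\theta,\phi)<+\infty$.

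\textbf{Step 2 (passage to the limit --- the main obstacle).} As $\tilde u_j\le0$, after passing to a subsequence $\tilde u_j\to\tilde u$ in $L^1(X)$ for some $\tilde u\in\psh(X,\theta)\cup\{-\infty\}$, and $\tilde u\preceq\phi$ by the uniform bound $\tilde u_j\le\varphi$. The delicate point I expect to be the main difficulty is ruling out $\tilde u\equiv-\infty$. This should be handled by contradiction, in the spirit of the proof of Lemma~\ref{diam_lem} and of the ``moreover'' part of Proposition~\ref{1-ener_pro}: assuming $\sup_X\tilde u_j\to-\infty$, one uses that $\min(u_j,\varphi)\to\min(u,\varphi)$ in $L^1$ (the sequence $\min(u_j,\varphi)\ge u_j+\varphi-C$ is dominated below by an $L^1$-convergent one), together with Lemma~\ref{lemma ddnl} applied to the pairs $\bigl(\max(u_j,\varphi),u_j\bigr)$ and $\bigl(\max(u_j,\varphi),\varphi\bigr)$ to produce $\theta$-psh minorants of $\min(u_j,\varphi)$ that cannot all escape to $-\infty$; combined with Proposition~\ref{contact_0} and the minimum principle to bound $\int_X\theta_{\tilde u_j}^n$ below by $\und(\theta,\phi)$, this contradicts $\und(\theta,\phi)>0$ after letting $j\to+\infty$ and then truncating further. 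Once $\tilde u\not\equiv-\infty$ is known, Theorem~\ref{als_lem} applies to the sequence $(\tilde u_j)$ (which lies in $\mathcal{E}(X,\theta,\phi)$, converges in $L^1$, and satisfies $\theta_{\tilde u_j}^n\le\tilde\mu$) and yields that both $\tilde u$ and $P_\theta(\inf_{l\ge j}\tilde u_l)$ belong to $\mathcal{E}(X,\theta,\phi)$.

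\textbf{Step 3 (transfer back).} Since $u_j\to u$ in $L^1$ with $u$ genuinely $\theta$-psh, $\sup_X u_j$ is uniformly bounded, and $u\preceq\phi$ holds as for the limit in Proposition~\ref{1-ener_pro}. From $\tilde u_j\le u_j$ we get $\tilde u\le u$, hence $\tilde u\preceq u$, so monotonicity of the upper volume (Proposition~\ref{bgl_mono}) squeezes
\[
\ove(\theta,\phi)=\ove(\theta,\tilde u)\le\ove(\theta,u)\le\ove(\theta,\phi),
\]
whence $u\in\mathcal{E}(X,\theta,\phi)$. Likewise $\inf_{l\ge j}\tilde u_l\le\inf_{l\ge j}u_l$ gives $P_\theta(\inf_{l\ge j}\tilde u_l)\le P_\theta(\inf_{l\ge j}u_l)\preceq\phi$, and the same squeeze shows $P_\theta(\inf_{l\ge j}u_l)\in\mathcal{E}(X,\theta,\phi)$, completing the plan. (One could instead run the contradiction argument of Proposition~\ref{1-ener_pro} directly on $u_j$, replacing the uniform energy bound there by $\int_{\{u_j\le\varphi-s\}}\theta_{u_j}^n\le\mu(\{u_j\le\varphi-s\})$; that route meets the same difficulty as Step~2.)
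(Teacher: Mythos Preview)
Your reduction to Theorem~\ref{als_lem} via $\tilde u_j:=P_\theta(u_j,\varphi)$ is a genuinely different and cleaner route than the paper's. The paper works directly with $w_j^k=P_\theta(u_j,\dots,u_{j+k})$, then stretches to $\hat w_j^k=P_\theta(bw_j^k-(b-1)\varphi)$ for each $b>1$, and runs a contradiction argument based on an $e^u$-weighted $L^1(\mu)$-convergence (via \cite[Lemma~11.5]{GZ}) combined with the maximum principle, finally varying $\varphi$ within its singularity class to force $\und(\theta,\phi)=0$. Your modular strategy buys the conclusion about $P_\theta(\inf_{l\ge j}u_l)$ essentially for free once Theorem~\ref{als_lem} is invoked, whereas the paper handles the $k\to\infty$ limit by hand inside the argument.

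That said, your Step~2 as written is a genuine gap. The sketch invoking Lemma~\ref{lemma ddnl} produces $j$-dependent minorants of $\min(u_j,\varphi)$ with no stated reason to stay bounded below, and the phrase ``this contradicts $\und(\theta,\phi)>0$'' is unjustified: the bound $\int_X\theta_{\tilde u_j}^n\ge\und(\theta,\phi)$ is automatic from $\tilde u_j\in\mathcal E(X,\theta,\phi)$ and yields no contradiction on its own. The correct (and short) argument uses the \emph{precise} inequality from your Step~1, not just $\theta_{\tilde u_j}^n\le\tilde\mu$: the minimum principle actually gives
\[
\theta_{\tilde u_j}^n\le\mathbf 1_{\{\tilde u_j=\varphi\le u_j\}}\theta_\varphi^n+\mathbf 1_{\{\tilde u_j=u_j<\varphi\}}\mathbf 1_{D_j}\theta_{u_j}^n\le\mathbf 1_{\{\tilde u_j=\varphi\}}\theta_\varphi^n+\mathbf 1_{\{\tilde u_j=u_j\}}\mu.
\]
If $\sup_X\tilde u_j\to-\infty$ along a subsequence, then for any $s>0$ and all large $j$ along it one has $\tilde u_j\le -s$ everywhere, so the two contact sets are contained in $\{\varphi\le -s\}$ and $\{u_j\le -s\}$ respectively, whence
\[
\und(\theta,\phi)\;\le\;\int_X\theta_{\tilde u_j}^n\;\le\;\int_{\{\varphi\le -s\}}\theta_\varphi^n+\mu(\{u_j\le -s\}).
\]
Letting $j\to\infty$ (using $e^{u_j}\to e^u$ in $L^1(\mu)$, exactly as the paper does, to get $\limsup_j\mu(\{u_j\le -s\})\le\mu(\{u\le -s/2\})$) and then $s\to\infty$ (using that $\theta_\varphi^n$ and $\mu$ are non-pluripolar and $\{\varphi=-\infty\}$, $\{u=-\infty\}$ are pluripolar) yields $\und(\theta,\phi)\le 0$, the desired contradiction. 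With this fix your proof goes through; Step~3 is correct as stated.
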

\begin{proof}
     By \cite[Lemma 11.5]{GZ} (see also \cite[Lemma 4.4]{DDNL21LOG}), we have \(\int_X |e^{u_j} - e^u| \, d\mu \to 0\) as \(j \to +\infty\).  Up to extracting a subsequence, we may assume   $$\int_X |e^{u_j}-e^{u}|\, \boldsymbol{1}_{D_j}\theta_{u_j}^n \leq {2^{-j}}.$$
     
Denote  $w_{j}^k=P_{\theta}(u_j,\ldots,u_{j+k})$. Proposition \ref{diam_ineq} then yields that $w_{j}^k\in \mathcal{E}(X,\theta,\phi)$. Fix $b>1$,       
Lemma \ref{lemma ddnl} together with (\ref{u_b_full}) implies that 
 $\w w_{j}^k= P_{\theta}(bw_j^k-(b-1)\varphi)\in \mathcal{E}(X,\theta,\phi) .$     
 
 Once the sequence $\{\sup_X \w w_j^k\}_k$ is shown to be bounded from below   for each $b $ and $j$, we obtain that    $\w w_j \coloneqq \lim_k \w w_j^k$ is not identically $-\infty$. This, in turn,   implies that $ P_{\theta}(\inf_{l\geq j} u_{l})=\lim_k w_{j}^k$ and  $u$ belong to $\mathcal{E}(X,\theta,\phi)$.  

To prove this, let $D_j^k\coloneqq \{w_j^k\leq \varphi\}$ and $\w D_j^k\coloneqq \{\w w_j^k\leq \varphi\}$.  Denote the contact set $$C_j^k\coloneqq \{\w w_j^k=bw_j^k-(b-1)\varphi\}.$$ Note that  $\w D_j^k\cap C_j^k \subseteq D_j^k $.  By Proposition \ref{contact_0} and the
minimum  principle, we have 
   $$   \boldsymbol{1}_{ \w D_j^k}\theta_{\w w_j^k}^n\leq b^n\boldsymbol{1}_{D_j^k}\theta_{w_j^k}^n \leq b^n\mu,$$
and hence
  \begin{align*}  
   \int_X |e^{\frac{1}{b}\w  w_j^k+(1-\frac{1}{b})\varphi}-e^{u}|\, \boldsymbol{1}_{ \w D_j^k}\theta_{\w w_j^k}^n  &\leq  b^n\int_X |e^{w_j^k}-e^{u}|\, \boldsymbol{1}_{D_j^k}\theta_{w_j^k}^n \\& \leq b^n\sum_{l\geq j}^{j+k}\int_X |e^{u_l}-e^{u}|\, \mu  \leq b^n{2^{1-j}}. 
  \end{align*}
Since \( \w{w}_j \) is increasing in \( j \), we   assume, by contradiction,  
that \( \w{w}_j^k \searrow -\infty \) as \( k \to +\infty \) for all sufficiently large \( j \). It follows that $$\lim_{j\to+\infty}\lim_{k\to+\infty} \int_Xe^u\,\boldsymbol{1}_{ \w D_j^k}\theta_{\w w_j^k}^n =0$$ 
Fix \(s \ge 0\). Applying the maximum principle  yields
\begin{align}\label{k_j_s}
   \int_X \theta_{\w w_j^k}^n  &\leq e^s\int_{\{u>-s\}\cap \w D_j^k} e^{u} \,  \theta_{\w w_j^k}^n +\int_{\{u\leq -s\}\cap\w D_j^k}\theta_{\w w_j^k}^n  + \int_{\{\w w_j^k> \varphi\}}\theta_{\w w_j^k}^n \nonumber \\ &\leq e^s\int_{X} e^{u}   \boldsymbol{1}_{ \w D_j^k}\theta_{\w w_j^k}^n +b^n\int_{\{u\leq -s\}} \mu  + \int_{\{\w w_j^k> \varphi\}}\theta_{\varphi}^n+\int_{X}\theta_{\max(\varphi,\w w_j^k)}^n -\int_{X}\theta_{ \varphi }^n
\end{align}
Since $\w w_j^k\in \mathcal{E}(X,\theta,\phi)$, we have 
    $$\int_X \theta_{\w w_j^k}^n\geq \und (\theta,\phi), \quad \int_{X}\theta_{\max(\varphi,\w w_j^k)}^n\leq \ove(\theta,\phi) .$$
  By letting first \( k \to +\infty \), then \( j \to +\infty \), and finally \( s \to +\infty \) in \eqref{k_j_s}, we obtain  $$\und (\theta,\phi)\leq \ove (\theta,\phi)-\int_{X}\theta_{ \varphi }^n.$$
 Observe that for any \(\theta\)-psh function \( \varphi' \simeq \varphi \) with \( \varphi' \le \varphi \),
the same estimate holds. Taking the infimum over all such \( \varphi' \) yields 
\( \und(\theta,\phi) = 0 \), a contradiction.
This completes the proof. 
\end{proof}

\subsection{\textbf{A semi-metric on the space of model singularity types}}

Denote by $\mathcal{S}(X,\theta)$    the space of singularity types of $\theta$-psh functions. 
For   $\theta$-psh functions $u, v$, we introduce the following  distance-like function on the space $\mathcal{S}(X,\theta)$  
\[
d_{\theta}(u, v) := 2 \ove( \theta,\max(u,v)) -   \ove( \theta, u ) - \ove( \theta, v ).
\]
 Given a constant $\delta>0$, we set 
$$\mathcal{S}_{\delta}(X,\theta):=\{u\in \mathcal{S}(X,\theta):  \, \und(X,\theta)>\delta\}. $$ 
By Proposition \ref{stable_prop},  $d_{\theta}$ defines a semi-metric on the space of model singularity types, by which we mean a non-degenerate symmetric function that may not satisfy the triangle inequality.  
Moreover, when \( X \) is K\"ahler and \( \theta \) is closed,     the restriction of \(d_{\theta}\) to \(\mathcal{S}_{\delta}(X,\theta)\)  is comparable to the pseudo-metric $d_{\mathcal{S }(\theta)}$ introduced in \cite{DDNL21}  (see, e.g., ~\cite[Proposition~2.4]{DV_qua2}).
\begin{lemma}\label{d_s_up}
Fix $\delta>0$ and let   $u_j, u \in \mathcal{S}_{\delta}(X,\theta)$ be model potentials.  Assume that $d_{\theta}(u_j,u)\to0$ as $j\to+\infty$. Then, by passing to a subsequence, there exists a decreasing sequence \( v_j \ge u_j \)
such that \( d_{\theta}(v_j, u) \to 0 \). In particular, \( u_j \) converges to \( u \) in \( L^1(X) \).
\end{lemma}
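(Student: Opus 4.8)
\textbf{Proof strategy for Lemma \ref{d_s_up}.}

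The plan is to exploit the semi-metric structure of $d_\theta$ together with the volume monotonicity and sum inequalities (Propositions \ref{bgl_mono} and \ref{bgl_sum}) to construct the decreasing majorant. First I would unwind the definition: since all the $u_j,u$ are model potentials and hence normalized (we may assume $u_j,u\le 0$ with $u_j=P_\theta[u_j]$, $u=P_\theta[u]$), the hypothesis $d_\theta(u_j,u)\to 0$ says precisely that
\[
2\,\ove(\theta,\max(u_j,u)) - \ove(\theta,u_j) - \ove(\theta,u) \;\longrightarrow\; 0 .
\]
Since $\max(u_j,u)\succeq u_j$ and $\max(u_j,u)\succeq u$, Proposition \ref{bgl_mono} gives $\ove(\theta,\max(u_j,u))\ge \ove(\theta,u_j)$ and $\ge\ove(\theta,u)$; combining these with the displayed limit forces both $\ove(\theta,\max(u_j,u)) - \ove(\theta,u_j)\to 0$ and $\ove(\theta,\max(u_j,u)) - \ove(\theta,u)\to 0$, and in particular $\ove(\theta,u_j)\to\ove(\theta,u)$. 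The natural candidate for $v_j$ is then $v_j := P_\theta[\,\sup_{l\ge j}\max(u_l,u)\,]$, or more convenably the model envelope of the decreasing-in-$j$ sequence $\mathrm{usc}(\sup_{l\ge j}\max(u_l,u))$; this is manifestly decreasing in $j$ and satisfies $v_j\ge u_j$ and $v_j\ge u$.

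The core estimate is to bound $\ove(\theta,v_j)$ from above in terms of $\sup_{l\ge j}\ove(\theta,\max(u_l,u))$, since the latter tends to $\ove(\theta,u)$ by the previous paragraph. For a finite $\sup$ this is where the Demailly-type regularization through smooth potentials and Proposition \ref{diam_ineq} (or rather its proof technique) enter: for the potential $w_{j}^{k}:=P_\theta(\max(u_j,u),\dots,\max(u_{j+k},u))$ one has $w_j^k\in\mathcal{E}(X,\theta,\phi)$ by Lemma \ref{diam_lem} (using that $\und(\theta,u)>\delta>0$ makes the volume-diamond hypothesis hold with room to spare), and one controls $\ove(\theta,\sup_{j\le l\le j+k}\max(u_l,u))$ using the volume-diamond inequality of Proposition \ref{diam_ineq}; letting $k\to\infty$ and using the lower semicontinuity of non-pluripolar masses (Lemma \ref{lsc}) together with Corollary \ref{lower_equal} one passes to $v_j$. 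The outcome is $\limsup_j \ove(\theta,v_j) \le \ove(\theta,u)$, while $v_j\ge u$ gives $\liminf_j\ove(\theta,v_j)\ge\ove(\theta,u)$ by Proposition \ref{bgl_mono}; hence $\ove(\theta,v_j)\to\ove(\theta,u)$. Since also $v_j\ge\max(u_j,u)\ge u$ forces $\ove(\theta,\max(v_j,u))=\ove(\theta,v_j)$, we conclude $d_\theta(v_j,u)\to 0$.

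Finally, for the $L^1$-convergence: the decreasing sequence $v_j$ converges (a.e. and in $L^1$) to some $\theta$-psh function $v_\infty\ge u$, and by monotone passage to the limit in volumes (Proposition \ref{dec_inc}(ii), applicable since $\und(\theta,u)>0$ and the $v_j$ are model potentials after replacing them by $P_\theta[v_j]$, which only increases them while keeping them $\le$ the original decreasing bound — one has to check this is still decreasing, or simply apply Proposition \ref{stable_prop}) one gets $\ove(\theta,v_\infty)=\lim_j\ove(\theta,v_j)=\ove(\theta,u)$; with $v_\infty\succeq u$ and the reverse volume inequality this forces $v_\infty\simeq u$ and indeed $v_\infty = P_\theta[u] = u$ by the domination principle (Proposition \ref{domina_1}), after checking $\mathbf{1}_{\{v_\infty<u\}}\theta_{v_\infty}^n = 0$ via Proposition \ref{contact_2}. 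Then $u\le u_j\le v_j\searrow u$ in $L^1$ squeezes $u_j\to u$ in $L^1(X)$. I expect the main obstacle to be the upper bound $\limsup_j\ove(\theta,v_j)\le\ove(\theta,u)$: the passage from finitely many $u_l$'s (where Proposition \ref{diam_ineq} applies directly) to the countable supremum, and the interchange of the $k\to\infty$ limit with the volume functional, will require careful use of Lemma \ref{lsc} and the $C^\infty$-approximation of model potentials as in the proof of Proposition \ref{diam_ineq}, together with the uniform positivity $\und(\theta,u_j)>\delta$ to keep all the diamond inequalities non-degenerate along the sequence.
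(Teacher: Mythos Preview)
Your proposal has a genuine gap in the core estimate. You want an upper bound on $\ove(\theta,v_j^k)$ with $v_j^k=\max_{j\le l\le j+k}\max(u_l,u)$, and you propose to extract it from Proposition~\ref{diam_ineq}. But that inequality reads
\[
\sum_i \ove(\theta,\tilde u_i)\;\le\;(p-1)\,\ove(\theta,w)+\Delta(\theta,w)+\ove\bigl(\theta,P_\theta(\tilde u_1,\ldots,\tilde u_p)\bigr),
\]
with $w=\max_i\tilde u_i$ on the \emph{right}; even after plugging in $\ove(\theta,\tilde u_i)\to\ove(\theta,u)$ and $\ove(\theta,P_\theta(\ldots))\ge\ove(\theta,u)$ (since each $\tilde u_i\ge u$), you only recover a lower bound on $\ove(\theta,w)$, which you already have from monotonicity. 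The paper does not use the diamond inequality here at all. After replacing $u_j$ by $\max(u_j,u)$ and passing to a subsequence with $d_\theta(u_l,u)\le 2^{-(l+1)n}$, it applies Lemma~\ref{lemma ddnl} to $u\preceq u_l$ to obtain $h_l\le u$ with $(1-2^{-l})u_l+2^{-l}h_l\le u$. Since $h_l\le u\le v_j^{l-j-1}$, taking the max with $v_j^{l-j-1}$ gives $(1-2^{-l})v_j^{l-j}+2^{-l}h_l\le v_j^{l-j-1}$, and Propositions~\ref{bgl_mono}--\ref{bgl_sum} then yield $\ove(\theta,v_j^{l-j})-\ove(\theta,v_j^{l-j-1})\le n\,\ove(\theta)\,2^{-l}$. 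Telescoping produces a uniform-in-$k$ bound $d_\theta(v_j^k,u)\le C\,2^{-j}$, and Proposition~\ref{dec_inc}(i) passes this to $v_j$. The missing ingredient in your outline is precisely this use of Lemma~\ref{lemma ddnl} to manufacture the convex-combination inequality going \emph{downward} from $u_l$ to $u$.

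Your $L^1$ argument also fails as written: the squeeze ``$u\le u_j\le v_j\searrow u$'' is unavailable, since nothing forces $u_j\ge u$. The paper first observes that $u_j\le\max(u_j,u)\le v_j$, so the same telescoping estimate gives $d_\theta(u_j,v_j)\to 0$; then, using that $u_j$ is a model potential, Lemma~\ref{lemma ddnl} supplies $b_j\nearrow 1$ and normalized $g_j=P_\theta[g_j]$ with $b_jv_j+(1-b_j)g_j\le u_j\le v_j$. Since $v_j\searrow u$ and $(1-b_j)g_j\to 0$ in $L^1$, this squeeze gives $u_j\to u$ in $L^1(X)$.
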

\begin{proof}
Without loss of generality, we may take $\delta=1$. Set $M:= \ove(\theta,V_{\theta})$. By definition we may replace \( u_j \) by \( \max(u_j, u) \) and thus assume \( u_j \ge u \). Then up to extracting a subsequence, we may further suppose $d_{\theta}(u_j,u)\leq 2^{-(j+1)n}$.  Define $$v_{j}^k:= \max (u_j,\cdots,u_{j+k}), \quad v_{j}=\text{usc}\big( \sup_{l\geq j} u_l\big) .$$    By Lemma \ref{lemma ddnl}, for each \( l \) with \( j \le l \le j+k \),   there exists a $\theta$-psh function $h_l\leq u$ such that $(1-2^{-l})u_{l}+2^{-l}h_l\leq u$. Since \( h_l \le u \le v_j^{l-j-1} \), we  thus obtain 
\[
(1 - 2^{-l}) v_j^{l-j} + 2^{-l} h_l \le v_j^{l-j-1}.
\]
  It follows that $\ove(\theta,v_{j}^{l-j})-\ove(\theta,v_{j}^{l-j-1}) \leq   (1-(1-2^{-l})^n )\ove(\theta,v_{j}^{l-j})\leq nM2^{-l}$. Thus 
  $$d_{\theta}(v_j^k,u)=\sum_{l={j+1}}^{j+k} d_{\theta}(v_j^{l-j},v_j^{l-j-1})+d_{\theta}(u_j,u)\leq nM2^{-j}+2^{-(j+1)n}.$$
  By Proposition \ref{dec_inc}(i), we obtain \( d_{\theta}(u, v_j) \to 0 \).
The decreasing sequence \( \{v_j\} \) is the one we need..

 Now we prove the second statement. As $ u_j \le \max(u_j, u) \le v_j$,  
 the argument above actually shows that 
$
d_{\theta}\bigl(u_j, v_j\bigr) \to 0.$   
 Since each \( u_j \) is a model potential, Lemma~\ref{lemma ddnl} provides sequences 
\( b_j \nearrow 1 \) and \(\theta\)-psh functions \( g_j = P_{\theta}[g_j] \) satisfying
\[
b_j v_j + (1 - b_j) g_j \le u_j.
\]
Letting $j\to+\infty$, this implies that $u_j\to u$ in   $L^1(X)$.
\end{proof}

Assume that $d_{\theta}(u_j,u)\to 0$ in $ \mathcal{S}_{\delta}(X,\theta)$. Let $v_j$ be the sequence constructed in   Lemma \ref{d_s_up}.  For  $j<k$, we observe that 
\[
\begin{aligned}
d_{\theta}(u_j, u_k) 
&= d_{\theta}\bigl(u_j, \max(u_j, u_k)\bigr) + d_{\theta}\bigl(u_k, \max(u_j, u_k)\bigr) \\
&\le d_{\theta}(u_j, v_j) + d_{\theta}(u_k, v_j) \\
&= d_{\theta}\bigl(u_j, \max(u_j, u)\bigr) + d_{\theta}\bigl(\max(u_j, u), v_j\bigr) \\
&\quad + d_{\theta}\bigl(u_k, \max(u_k, u)\bigr) + d_{\theta}\bigl(\max(u_k, u), v_j\bigr) \xrightarrow{j,k \to \infty} 0.
\end{aligned}
\]
Hence,   $\{u_j\}_j$ is a $d_{\theta}$-Cauchy sequence.
\begin{theorem}\label{cauchy}
   For every $\delta > 0$, the space $\mathcal{S}_{\delta}(X, \theta)$ is complete. That is, if $\{u_j\}_j \in\mathcal{S}_{\delta}(X, \theta)$ satisfies that   for every $\epsilon > 0$ there exists $N\in\mathbb{N}$ such that $d_{\theta}(u_j, u_k) < \epsilon$ for all $j, k > N$, then there exists $u \in \mathcal{S}_{\delta}(X, \theta)$ such that $d_{\theta}(u_j, u) \to 0$.
\end{theorem}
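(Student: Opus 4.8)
The plan is to show completeness by constructing the limit $u$ directly from the Cauchy sequence via a monotone-limit argument, following the strategy already used in Lemma \ref{d_s_up} and the computation preceding the theorem. First I would exploit the Cauchy condition to pass to a rapidly converging subsequence: choose indices $j_1 < j_2 < \cdots$ with $d_{\theta}(u_{j_m}, u_{j_l}) \le 2^{-(m+l)n}$ for all $l \ge m$ (this is possible since $M := \ove(\theta, V_{\theta}) < +\infty$ by the bounded mass property). It suffices to produce a limit for this subsequence, since for a Cauchy sequence, convergence of a subsequence to $u$ forces $d_{\theta}(u_j, u) \to 0$ for the whole sequence, by a triangle-type estimate identical to the one displayed just before the theorem statement (splitting through $\max(u_j, u_k)$, $\max(u_j, u)$, etc.).

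Next I would build increasing and decreasing auxiliary sequences. For the decreasing one, set $v_m^k := \max(u_{j_m}, \ldots, u_{j_{m+k}})$ and $v_m := \mathrm{usc}(\sup_{l \ge m} u_{j_l})$. Exactly as in Lemma \ref{d_s_up}, applying Lemma \ref{lemma ddnl} to each $u_{j_l}$ and $\max(u_{j_l}, \ldots)$ (using that each $u_{j_l}$ is a model potential, so $\und(\theta, u_{j_l}) > \delta$) gives the telescoping bound $d_{\theta}(v_m^{l-m}, v_m^{l-m-1}) \le nM 2^{-(j_l \text{-indexed})}$, summable in $l$; hence $d_{\theta}(v_m^k, v_m^0) = d_{\theta}(v_m^k, u_{j_m})$ is controlled, and taking $k \to \infty$ with Proposition \ref{dec_inc}(i) (monotone limit, here the increasing-in-$k$ maxima give $v_m^k \nearrow v_m$, and $\ove, \und$ converge) yields that $\{v_m\}_m$ is a \emph{decreasing} sequence (in $m$) of $\theta$-psh functions with $v_m \ge u_{j_m}$. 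Crucially, since $d_\theta(u_{j_m}, v_m) \to 0$ and $\und(\theta, u_{j_m}) > \delta$, one checks $\und(\theta, v_m) \ge \delta$ eventually, so $v_m \not\equiv -\infty$ and the decreasing limit $u := \lim_m v_m = \mathrm{usc}(\inf_m v_m)$ is a genuine $\theta$-psh function, not $-\infty$. I would then set the model potential $\phi := P_\theta[u]$ and verify $\und(\theta, \phi) \ge \delta$ via Corollary \ref{lower_equal}, so $\phi \in \mathcal{S}_\delta(X, \theta)$ — this is the candidate limit.

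Finally I would show $d_{\theta}(u_{j_m}, \phi) \to 0$. Using Proposition \ref{dec_inc}(ii) on the decreasing sequence $v_m \searrow u$ (passing to model potentials $P_\theta[v_m]$, which have the same volumes by Lemma \ref{stable_lem} since $\und(\theta, P_\theta[v_m]) > 0$) gives $\ove(\theta, v_m) \searrow \ove(\theta, u) = \ove(\theta, \phi)$ and $\und(\theta, v_m) \searrow \und(\theta, \phi)$. Combined with $\ove(\theta, v_m) \ge \ove(\theta, \max(u_{j_m}, \phi)) \ge \ove(\theta, \phi)$ and $\ove(\theta, u_{j_m}) \le \ove(\theta, v_m)$ (monotonicity, since $u_{j_m} \preceq v_m$), a squeeze shows $d_{\theta}(u_{j_m}, \phi) = 2\ove(\theta, \max(u_{j_m}, \phi)) - \ove(\theta, u_{j_m}) - \ove(\theta, \phi) \to 0$. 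The main obstacle I anticipate is the absence of a triangle inequality: I must be careful that every passage between $u_{j_m}$, its max with neighbors, its max with the limit, and $v_m$ is done through explicit two-term $d_\theta$-splittings (as in the displayed computation before the theorem) rather than by invoking transitivity, and that the monotone-convergence inputs from Proposition \ref{dec_inc} genuinely apply — in particular that the relevant $\und$-values stay bounded below by (a fraction of) $\delta$ throughout, which is what keeps all the envelopes and limits non-degenerate.
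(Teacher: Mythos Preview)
Your strategy matches the paper's: extract a rapidly converging subsequence, build the decreasing envelopes $v_m=\mathrm{usc}(\sup_{l\ge m}u_{j_l})$, control $d_\theta(u_{j_m},v_m)$ via the telescoping use of Lemma~\ref{lemma ddnl} together with Proposition~\ref{dec_inc}(i), then pass to the decreasing limit and invoke Proposition~\ref{dec_inc}(ii). Two technical points need care, however.

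First, the paper defines the limit as the decreasing limit of the \emph{model potentials} $P_\theta[v_j]$, not as $P_\theta[\lim_m v_m]$. This matters because Proposition~\ref{dec_inc}(ii) is stated for decreasing sequences of model potentials, and your identification $\ove(\theta,\lim_m v_m)=\lim_m\ove(\theta,v_m)$ is precisely what you are trying to extract from that proposition; writing $u:=\lim_m P_\theta[v_m]$ avoids the circularity. (Your squeeze argument also needs $\max(u_{j_m},\phi)\le P_\theta[v_m]$ rather than $\le v_m$, since $\phi=P_\theta[u]$ need not lie below $v_m$.)

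Second, your Step~2 reduction---Cauchy plus subsequential convergence forces full convergence---is not covered by the displayed computation before the theorem, which goes in the opposite direction and uses the existence of a common upper bound $v_j$ dominating both terms. Without a triangle inequality you would need an additional application of Lemma~\ref{lemma ddnl} to compare $\ove(\theta,\max(u_k,u))$ with $\ove(\theta,\max(u_k,u_{j_m}))$; the paper instead handles this by showing that any two subsequential limits coincide (again via Lemma~\ref{lemma ddnl} applied to model potentials), which is somewhat cleaner.
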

\begin{proof}
      Assume  $\delta=1$ and set  $M\coloneqq\ove(\theta,V_{\theta})$. 
         Up to extracting a subsequence,  we assume $d_{\theta}(u_j,u_{j+1})\leq 2^{-(j+1)n}$.  Define \( v_j^k \) and \( v_j \) as in the proof of Proposition \ref{d_s_up}.  For every  $j \leq l\leq j+k$,  Lemma \ref{lemma ddnl}   provides a $\theta$-psh function $h_l\leq u$ such that $(1-2^{-l})\max(u_{l},u_{l+1})+2^{-l}h_l\leq u_{l}$.  Since   $h_l\leq u_l $, we obtain  
  $$ (1-2^{-l})v_{j}^{l+1-j}+2^{-l}h_l\leq v_l^{l-j}.$$
  It follows that $\ove(\theta,v_{j}^{l+1-j})-\ove(\theta,v_{j}^{l-j}) \leq   (1-(1-2^{-l})^n )\ove(\theta,v_{j}^{l-j})\leq nM2^{-l}$. Consequently,
  $$d_{\theta}(v_j^k,u_j)=\sum_{l={j+1}}^{j+k} d_{\theta}(v_j^{l-j},v_j^{l-j-1})\leq nM2^{1-j}.$$
  By Proposition \ref{dec_inc}(i),  we deduce that $d_{\theta}({u_j,v_j})\to0$. Let $u$ denote the decreasing limit of $P_{\theta}[v_j]$. Proposition \ref{dec_inc}(ii) then gives \( d_{\theta}(u, v_j) \to 0 \), and hence $$d_{\theta}(u,u_j)\leq d_{\theta}(u_j,v_j)+d(u,v_j)\to 0.$$

   By Proposition \ref{d_s_up}, $u$ is an $L^1$-limit of $u_j$. To see that the convergence is not up to taking a subsequence, it suffices to show that if two subsequences $u_{a_j}\to u$ and $u_{b_j}\to u'$  satisfy $d_{\theta}(u_{a_j},u_{b_j}) \to 0$, then $u=u'$. 
  Since each $u_j$ is a model potential, using Lemma \ref{lemma ddnl},  there exist $c_j\nearrow1$ and   $\theta$-psh functions $h_j=P_{\theta}[h_j] $ such that $$c_j u_{b_j}+(1-c_j)h_j\leq c_j\max(u_{a_j},u_{b_j})+(1-c_j)h_j\leq u_{a_j}.$$ Letting $j\to+\infty$, we obtain $u\geq u'$. Exchanging the roles of  $a_j$ and $b_j$ gives the reverse inequality, so  $u=u'$. This completes the proof.
\end{proof}
The following lemma strengthens   Proposition \ref{diam_ineq} 
by eliminating the error term $\Delta(\theta,w)$ in the  approximating sense.
\begin{proposition}\label{d_s_low}
   Fix $\delta>0$ and let   $u_j, u \in \mathcal{S}_{\delta}(X,\theta)$.  Suppose that $d_{\theta}(u_j,u)\to0$ as $j\to+\infty$. Then $d_{\theta}(P_{\theta}(u_j,u),u)\to 0$.
\end{proposition}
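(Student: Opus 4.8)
\textbf{Proof proposal for Proposition \ref{d_s_low}.}

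The plan is to combine the two one‑sided approximations already available: the decreasing sequence from Lemma \ref{d_s_up}, together with the volume diamond inequality (Proposition \ref{diam_ineq}), applied to $P_\theta(u_j,u)$. First I would pass to a subsequence and invoke Lemma \ref{d_s_up} (applied after replacing $u_j$ by $\max(u_j,u)$, and also with the roles of $u_j$ and $u$ both being model potentials) to obtain a decreasing sequence $v_j \ge \max(u_j,u) \ge u$ with $d_\theta(v_j,u)\to 0$, and in particular $\ove(\theta,v_j)\searrow \ove(\theta,u)$. Since $u \le v_j$ and $P_\theta(u_j,u)\le u$, monotonicity (Proposition \ref{bgl_mono}) gives $\ove(\theta,P_\theta(u_j,u))\le \ove(\theta,u)$, so it suffices to bound $\ove(\theta,P_\theta(u_j,u))$ from below by something converging to $\ove(\theta,u)$; note $d_\theta(P_\theta(u_j,u),u) = 2\ove(\theta,u) - \ove(\theta,P_\theta(u_j,u)) - \ove(\theta,P_\theta(u_j,u))$ — wait, more precisely $\max(P_\theta(u_j,u),u)=u$, so $d_\theta(P_\theta(u_j,u),u) = 2\ove(\theta,u) - \ove(\theta,P_\theta(u_j,u)) - \ove(\theta,u) = \ove(\theta,u) - \ove(\theta,P_\theta(u_j,u))$. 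Thus the whole task is exactly to show $\ove(\theta,P_\theta(u_j,u)) \to \ove(\theta,u)$.

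For the lower bound I would apply Proposition \ref{diam_ineq} with $p=2$ to the pair $u_j' := \max(u_j,u)$ (replaced by its model envelope $P_\theta[u_j]=u_j$ — here I use that $u_j,u$ are already model potentials, possibly after replacing $u_j$ by $\max(u_j,u)$ and noting $P_\theta[\max(u_j,u)]\simeq$ this max when both are model, using Proposition \ref{stable_prop}) and $u$, whose maximum is $w = u_j' $ itself. This yields
\[
\ove(\theta,u_j') + \ove(\theta,u) \le \ove(\theta,u_j') + \Delta(\theta,u_j') + \ove(\theta,P_\theta(u_j',u)),
\]
i.e. $\ove(\theta,u) - \Delta(\theta,u_j') \le \ove(\theta,P_\theta(u_j',u))$. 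Now $\Delta(\theta,u_j') = \ove(\theta,u_j') - \und(\theta,u_j')$. Since $d_\theta(u_j,u)\to 0$ forces $\ove(\theta,u_j')=\ove(\theta,\max(u_j,u)) \to \ove(\theta,u)$, and Lemma \ref{d_s_up} (its proof) plus Proposition \ref{dec_inc}(i) applied to the decreasing $v_j$ shows $\und(\theta,u_j')\to \und(\theta,u)$ — here is where I need to be careful — it is \emph{not} automatic that $\Delta(\theta,u_j')\to\Delta(\theta,u)$; rather I expect $\limsup_j \Delta(\theta,u_j') \le \Delta(\theta,u)$ is false in general too, so the argument must instead squeeze $\und$ from below. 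Concretely: from $v_j \ge u_j' \ge u$ with $d_\theta(v_j,u)\to 0$, Lemma \ref{lemma ddnl} gives $a_j\nearrow 1$ and $h_j$ with $a_j v_j + (1-a_j)h_j \le u_j'$, hence by Propositions \ref{bgl_mono} and \ref{bgl_sum}, $a_j^n \und(\theta,v_j) \le \und(\theta,u_j') \le \und(\theta,v_j)$, and $\und(\theta,v_j)\to\und(\theta,u)$; similarly $\ove(\theta,u_j')\to\ove(\theta,u)$. Therefore $\Delta(\theta,u_j') \to \Delta(\theta,u)$ after all. This still leaves the residual term $\Delta(\theta,u)$ in the lower bound, which does not vanish.

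The resolution — and the main obstacle — is to remove that $\Delta(\theta,u)$ by an approximation argument: replace $u$ throughout by model potentials $u^{(m)} \searrow$ or $\nearrow$ something with $\und$ close to $\ove$, exploiting the stated purpose of this Proposition (``eliminating the error term $\Delta(\theta,w)$ in the approximating sense''). More precisely, I would apply Proposition \ref{diam_ineq} not to $(u_j',u)$ directly but run the diamond inequality for $(u_j', u)$ while simultaneously letting $j\to\infty$, using that $P_\theta(u_j',u) \to u$ in $L^1$ (since $u_j'\to u$) together with Theorem \ref{als_lem} or Proposition \ref{general_asl} to get $P_\theta(\inf_{l\ge j}P_\theta(u_l',u)) \in \mathcal{E}(X,\theta,u)$, whence $\liminf_j \ove(\theta,P_\theta(u_j',u)) \ge \und(\theta,u)$ at worst — not enough. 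The genuinely correct route, which I would pursue, is: set $\varphi_j := P_\theta(u_j',u) \le u$, note $\ove(\theta,\varphi_j) \le \ove(\theta,u)$, and show $\liminf_j \ove(\theta,\varphi_j) \ge \ove(\theta,u)$ by testing against an arbitrary $u' \simeq u$, $u' \le u$: one has $\varphi_j \ge P_\theta(u_j' , u')$ doesn't help, but $\max(\varphi_j, u'-t) \to \max(u,u'-t)=u$ and one applies the lower semicontinuity of non‑pluripolar mass (Lemma \ref{lsc}) along $P_\theta[u](u')$‑type approximants exactly as in the proof of Lemma \ref{stable_lem} / Proposition \ref{dec_inc}(i), using the contact‑set identity of Proposition \ref{contact_2} to control $\theta_{\varphi_j}^n$ on $\{\varphi_j < u\}$ and the domination principle to conclude. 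The hard part will be making this semicontinuity argument work uniformly in $j$: one needs $\varphi_j$ to converge in \emph{capacity} (not just $L^1$) to $u$ so that Lemma \ref{lsc} applies, and this requires first establishing the capacity convergence of $u_j'$ to $u$ — which follows from Theorem \ref{1-ener_thm} or Theorem \ref{model_capacity}, hence the plan is to cite Theorem \ref{model_capacity} (which is stated just above) to get $u_j \to u$ in capacity, deduce $\varphi_j = P_\theta(u_j,u)\to u$ in capacity by quasi‑continuity of the envelope, and then run the lower‑semicontinuity estimate to obtain $\liminf_j\ove(\theta,\varphi_j)\ge \ove(\theta,u)$, completing the proof.
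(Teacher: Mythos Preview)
Your reduction is correct: since $\max(P_\theta(u_j,u),u)=u$, the claim amounts to showing $\ove(\theta,P_\theta(u_j,u))\to\ove(\theta,u)$. You also correctly diagnose that a direct application of the diamond inequality (Proposition~\ref{diam_ineq}) leaves an unwanted residual $\Delta(\theta,u)=\ove(\theta,u)-\und(\theta,u)$ that need not vanish.

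The genuine gap is in your proposed resolution. Your final plan is to invoke Theorem~\ref{model_capacity} to obtain $u_j\to u$ in capacity, then push this to capacity convergence of $\varphi_j=P_\theta(u_j,u)$, and finally apply lower semicontinuity. But Theorem~\ref{model_capacity} is proved \emph{using} Proposition~\ref{d_s_low}: its very first step is ``By Proposition~\ref{d_s_low}, after replacing $u_j$ with $P_\theta(u_j,u)$, we may assume $u_j\le u$.'' So this route is circular. Even setting that aside, two further issues remain: (i) capacity convergence of $u_j$ does not immediately yield capacity convergence of the rooftop envelope $P_\theta(u_j,u)$ --- there is no ``quasi-continuity of the envelope'' result of this kind available; and (ii) Lemma~\ref{lsc} controls $\liminf_j\int_X\theta_{\varphi_j}^n$, not $\liminf_j\ove(\theta,\varphi_j)$, and bridging the two would require knowing something about potentials with the same singularity type as $\varphi_j$, which is exactly what you do not yet control.

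The paper's argument is direct and constructive, bypassing semicontinuity entirely. After passing to a subsequence with $d_\theta(u_j,u)\le 2^{-(j+1)n}$, Lemma~\ref{lemma ddnl} produces amplified potentials
\[
g_{b,j}:=P_\theta\bigl(bu_j-(b-1)\max(u_j,u)\bigr),\qquad h_{b,j}:=P_\theta\bigl(bu-(b-1)\max(u_j,u)\bigr)
\]
for $1\le b<2^{j+1}$. A convexity estimate shows that for $b=2^{j-s}$ (with $s$ fixed large), both $g_{2^{j-s},j}$ and $h_{2^{j-s},j}$ have upper volume within $O(2^{-s})$ of $\ove(\theta,\max(u_j,u))$, so Lemma~\ref{diam_lem} guarantees $P_\theta(g_{2^{j-s},j},h_{2^{j-s},j})\in\psh(X,\theta)$. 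The key interpolation is then
\[
P_\theta(u_j,u)\;\ge\;(1-2^{-(j-s)})\max(u_j,u)\;+\;2^{-(j-s)}\,P_\theta(g_{2^{j-s},j},h_{2^{j-s},j}),
\]
which by Propositions~\ref{bgl_mono} and~\ref{bgl_sum} gives $d_\theta(P_\theta(u_j,u),\max(u_j,u))\le nM\,2^{-(j-s)}\to 0$. The missing idea in your attempt is this two-scale use of Lemma~\ref{lemma ddnl}: amplify by $b=2^j$ to exist, interpolate at scale $b=2^{j-s}$ to get volumes close, then take the rooftop of the amplified pair.
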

\begin{proof}
    Assume $\delta=1$ and set $M\coloneqq \ove(\theta,V_{\theta})$. Since \( \und(\theta, \max(u_j, u))   - d_{\theta}(u_j, u)> 0 \) for $j$ sufficiently large, 
  Lemma \ref{diam_lem} then implies that \( P_{\theta}(u_j, u) \in\psh(X,\theta) \).  Up to extracting a subsequence, we assume $d_{\theta}(u_j,u)\leq 2^{-(j+1)n}$.
    For each \( 1 \le b < 2^{j+1} \),   Lemma \ref{lemma ddnl} yields
\[
g_{b,j} := P_{\theta}\bigl(b u_j - (b-1)\max(u_j, u)\bigr) \not\equiv -\infty, \quad
h_{b,j} := P_{\theta}\bigl(b u - (b-1)\max(u_j, u)\bigr) \not\equiv -\infty.
\]  Fix $0\le s\leq j$. Observing that 
  $$ g_{2^{j-s},j}\geq (1-2^{-s})\max(u_j,u)+2^{-s}g_{2^{j},j},\quad h_{2^{j-s},j}\geq (1-2^{-s})\max(u_j,u)+2^{-s}h_{2^{j},j}\quad $$
  It follows that   $2\ove(\theta,\max(u_j,u))-\ove(\theta,g_{2^{j-s},j})-\ove(\theta,g_{2^{j-s},j})\leq nM2^{1-s}$. Choose \( s \) large enough so that $nM2^{1-s}<1\leq \und(\theta,\max(u_j,u))$. Applying  Lemma \ref{diam_lem} with  $w=\max(u_j,u)$,  then we obtain  that   $P_{\theta}(g_{2^{j-s},j},h_{2^{j-s},j})\in \psh(X,\theta)$.    We thus have
  $$   P_{\theta}(u_j,u)\geq (1-2^{-(j-s)})\max(u_j,u)+2^{-(j-s)}P_{\theta}(g_{2^{j-s},j},h_{2^{j-s},j}) $$
  By Proposition \ref{bgl_mono} and Proposition~\ref{bgl_sum}, we conclude that $$d_{\theta}(P_{\theta}(u_j,u),u)\leq d_{\theta}(P_{\theta}(u_j,u),\max(u_j,u))\leq nM2^{-(j-s)}\to 0,\quad \text{as}\,\, j\to+\infty.$$     This  completes the proof. 
\end{proof}

\begin{theorem}\label{model_capacity}
    Fix $\delta>0$ and let   $u_j, u \in \mathcal{S}_{\delta}(X,\theta)$ be model potentials.  Suppose that $d_{\theta}(u_j,u)\to0$ as $j\to+\infty$. Then, after passing to a subsequence (still denoted by $u_{j}$), a decreasing sequence $v_{j}\geq u_{j}$ and an  increasing sequence $w_{j}\leq u_{j}$ such that $d_{\theta}(v_{j},u)\to0$ and $d_{\theta}(w_{j},u)\to0$. In particular,   $u_j$ converges to $ u$ in  capacity.
\end{theorem}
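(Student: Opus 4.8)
The plan is to exploit the Cauchy-completeness just proved in Theorem \ref{cauchy} together with the monotone approximation from above provided by Lemma \ref{d_s_up}, and then construct the matching monotone approximation from below using the volume diamond inequality of Proposition \ref{diam_ineq} in its sharp (approximating) form, Proposition \ref{d_s_low}. First I would apply Lemma \ref{d_s_up}: after passing to a subsequence, there is a decreasing sequence $v_j \ge u_j$ with $d_{\theta}(v_j,u)\to0$; in fact one may take $v_j = \mathrm{usc}(\sup_{l\ge j}\max(u_l,u))$ after suitably sparsifying the subsequence so that $d_{\theta}(u_j,u_{j+1})\le 2^{-(j+1)n}$, exactly as in the proofs of Lemma \ref{d_s_up} and Theorem \ref{cauchy}. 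This handles the ``from above'' half.

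For the increasing sequence $w_j \le u_j$, I would set $w_j^k := P_{\theta}(u_j, u_{j+1},\ldots,u_{j+k})$ and $w_j := \lim_k w_j^k$ (a decreasing limit in $k$). The point is that $w_j \le u_j$ and the sequence $\{w_j\}_j$ is increasing in $j$. The work is to show $w_j \not\equiv -\infty$ and $d_{\theta}(w_j,u)\to0$. For the first, iterate Proposition \ref{d_s_low}: $d_{\theta}(P_{\theta}(u_j,u),u)\to0$, and more generally, by repeated application together with the telescoping estimate used in Theorem \ref{cauchy} (controlling $\ove(\theta,\cdot)$ across successive finite minima via Lemma \ref{lemma ddnl} and Proposition \ref{bgl_sum}), one gets $d_{\theta}(w_j^k, u) \le d_{\theta}(P_{\theta}(u_j,u),u) + \sum_{l=j+1}^{j+k} C 2^{-ln} \le C' 2^{-jn}$ uniformly in $k$, where the constant involves $M := \ove(\theta,V_{\theta})$. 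In particular $\ove(\theta,w_j^k)$ is bounded below uniformly in $k$, so $\und(\theta,w_j^k)$ stays above a positive constant (again via Lemma \ref{lemma ddnl}), which forces $w_j = \lim_k w_j^k \not\equiv -\infty$ and $w_j \in \mathcal{E}(X,\theta,\phi_j)$ for the relevant model potential. Then by lower semicontinuity of the upper volume (letting $k\to+\infty$) we get $d_{\theta}(w_j,u)\le C' 2^{-jn}$, hence $d_{\theta}(w_j,u)\to0$. Since $w_j$ is increasing and $w_j \le u_j$, it is the desired sequence.

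Finally, for convergence in capacity: by Lemma \ref{d_s_up} (or its proof) $d_{\theta}(v_j,u)\to0$ implies $v_j \searrow u'$ for some $u'$ with the same singularity type as $u$, and since $v_j \ge u_j$ and the $u_j$ are (sub)converging appropriately one checks $u' = u$ after passing to the subsequence; likewise $w_j \nearrow w$ with $w = u$, using the domination principle (Proposition \ref{domina_1}) as in the proof of Theorem \ref{1-ener_thm} to identify the increasing limit. Thus $w_j \le u_j \le v_j$ with $w_j \nearrow u$ and $v_j \searrow u$; a standard sandwiching argument (monotone sequences of quasi-psh functions converging to the same limit from both sides converge in capacity, cf.\ the end of the proof of Theorem \ref{1-ener_thm}) gives convergence of $u_j$ to $u$ in capacity.

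The main obstacle I anticipate is the uniform-in-$k$ control of $d_{\theta}(w_j^k,u)$ — equivalently, keeping $\und(\theta,w_j^k)$ bounded away from $0$ as $k\to\infty$ so that the decreasing limit $w_j$ does not collapse to $-\infty$. This is where Proposition \ref{d_s_low} (the error-term-free diamond inequality in the approximating sense) is essential rather than the raw Proposition \ref{diam_ineq}, whose $\Delta(\theta,w)$ term would accumulate over the $k$ successive minima and destroy the estimate; the telescoping trick from the proof of Theorem \ref{cauchy}, combined with $d_{\theta}(u_j,u_{j+1})\le 2^{-(j+1)n}$, is what makes the geometric series converge. Everything else — the identification of the limits and the passage to capacity convergence — is routine given the machinery already in the excerpt.
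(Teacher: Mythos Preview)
Your strategy matches the paper's: Lemma \ref{d_s_up} for the decreasing sequence, and $w_j^k=P_\theta(u_j,\ldots,u_{j+k})$, $w_j=\lim_k w_j^k$ for the increasing one; you also correctly isolate the obstacle (uniform-in-$k$ control of $\ove(\theta,w_j^k)$) and the relevant tool (the argument behind Proposition \ref{d_s_low} rather than the raw Proposition \ref{diam_ineq}). Two points in your execution need correction.

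First, the uniform bound $d_\theta(w_j^k,u)\le C'2^{-jn}$ cannot be obtained by \emph{iterating} Proposition \ref{d_s_low} or by a telescoping over successive two-function minima: quantitatively, the proof of Proposition \ref{d_s_low} turns an input $d_\theta\le 2^{-(j+1)n}$ into an output $\le nM\,2^{-(j-s)}$, so each pass loses a power $1/n$ in the decay, and after $k$ passes the bound blows up. The paper avoids this by working simultaneously with all $k+1$ functions. After first using Proposition \ref{d_s_low} once to reduce to $u_j\le u$ (and sparsifying to $d_\theta(u_j,u)\le 4^{-(j+1)n}$), it applies Lemma \ref{lemma ddnl} to each $u_l$ with the \emph{same} parameter $b=2^j$, getting $u_l\ge(1-2^{-j})u+2^{-j}h_{2^j,l}$; since the coefficient is common, taking envelopes gives in one stroke
\[
w_j^k\;\ge\;(1-2^{-j})\,u\;+\;2^{-j}\,P_\theta\bigl(h_{2^j,j},\ldots,h_{2^j,j+k}\bigr),
\]
and a single application of Lemma \ref{diam_lem} with $w=u$ shows the right-hand envelope is $\theta$-psh. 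Propositions \ref{bgl_mono}--\ref{bgl_sum} then yield $d_\theta(w_j^k,u)\le nM\,2^{-j}$ and $\und(\theta,w_j^k)\ge\und(\theta,u)-nM\,2^{-j}$, uniformly in $k$. This is a direct multi-function version of the \ref{d_s_low} argument, not an iteration of it.

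Second, passing to the limit $k\to\infty$ is not ``lower semicontinuity of the upper volume'' (that goes the wrong way along a decreasing sequence). One must check that each $w_j^k$ satisfies $w_j^k=P_\theta[w_j^k]$ (this follows from the definition since every $u_l$, after the reduction, is still model-type) and then invoke Proposition \ref{dec_inc}(ii), which needs precisely this hypothesis to conclude $\ove(\theta,w_j^k)\searrow\ove(\theta,w_j)$ and $\und(\theta,w_j)\ge 1/2$.
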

\begin{proof}
    The decreasing sequence is given by Lemma \ref{d_s_up}.  Assume  $\delta=1$ and set $M\coloneqq \ove(\theta,V_{\theta})$.  By Proposition $\ref{d_s_low}$, after replacing  $u_j$ with $P_{\theta}(u_j,u)$, we may assume that $u_j\leq u$. Up to extracting a subsequence, we can further assume $d_{\theta}(u_j,u)\leq 4^{-(j+1)n}$.  Then  $$\und(\theta,u)+\sum_{l=j}^{j+k}\ove(\theta,u_j)-(k+1)\ove(\theta,u) \geq 1-\sum_{l\geq j}4^{-(j+1)n} >0.$$
Apply Lemma \ref{ineq_dia1} with $w=u$ thus  implies that  $w_{j}^k:=P_{\theta}(u_j,\ldots,u_{j+k})\in\psh(X,\theta)$. Now fix \( l \) with \( j \le l \le j+k \).
For each \( 1 \le b < 4^{l+1} \), we have 
\( h_{b,l} := P_{\theta}\bigl(b u_l - (b-1) u\bigr) \in \psh(X, \theta) \) by Lemma \ref{lemma ddnl}. Observing that 
  $$  h_{2^{l},l}\geq (1-2^{-l})u+2^{-l}h_{4^{l},l}\quad $$
Consequently,  $(k+1)\ove(\theta,u)-\sum_{l=j}^{j+k}\ove(\theta,h_{2^l,l})\leq nM2^{1-j}\le 2^{-1}$ for $j$ sufficiently large. Applying Lemma \ref{diam_ineq}  with $w=u$ then yields that  $P_{\theta}(h_{2^j,j},\ldots,h_{2^{j+k},j+k})\in\psh(X,\theta)$. Hence \( P_{\theta}(h_{2^j,j},\dots,h_{2^{j},j+k}) \) is also \(\theta\)-psh.  Note that $$  w_{j}^k\geq (1-2^{-j})u+2^{-j}{P_{\theta}(h_{2^j,j},\ldots,h_{2^{j},j+k})}.$$  
 By Proposition \ref{bgl_mono} and Proposition~\ref{bgl_sum},  we have $$   \und(\theta,w_j^k)\geq\und(\theta,u)-nM2^{-j}\ge\frac{1}{2},\quad d_{\theta}(w_j^k,u)\leq nM2^{-j} $$
 for $j$ sufficiently large.
  Since we can verify from the definition that $w_j^k=P_{\theta} [w_{j}^k]$, we know that  $w_{j}^k$ is a model potential and that $\sup_X w_j^k=0$. Let $w_j$ denote the  decreasing limit of $ w_j^k$. Then $w_j$ is a $\theta$-psh function. By Proposition~\ref{dec_inc}(ii), we conclude that   $$   \und(\theta,w_j)\ge\frac{1}{2},\quad    d_{\theta}(w_j,u)\leq nM2^{-j}.$$
 The increasing sequence $\{w_j\}$ then gives the desired functions.  
\end{proof}

  Suppose \( u_j, u \in \mathcal{S}_{\delta}(X,\theta) \) with \( d_{\theta}(u_j, u) \to 0 \). After passing to a subsequence, set \( \phi_j = P_{\theta}[u_j] \), \( \phi = P_{\theta}[u] \), and define
\[
w_j^k = P_{\theta}(u_j, \ldots, u_{j+k}), \quad
\widehat{w}_j^k = P_{\theta}(\phi_j, \ldots, \phi_{j+k}), \quad
\widehat{w}_j = \lim_{k} \widehat{w}_j^k.
\]
The proof above actually shows that for some \( N \in \mathbb{N} \), whenever \( k \) is arbitrary and \( j > N \), we have \( w_j^k \not\equiv -\infty \) and \(  w_j^k  \in \mathcal{E}(X, \theta, \widehat{w}_j^k) \).

Moreover, since $d_{\theta}(\widehat{w}_j^k, \widehat{w}_j) \to 0$, Lemma \ref{diam_lem} implies that $P_{\theta}(w_j^k, \widehat{w}_j)$ is not identically $-\infty$ and belongs to $\mathcal{E}(X, \theta, \widehat{w}_j)$. Therefore, by an analogous argument, we may extend Proposition~\ref{general_asl} to the case of varying $\phi$.

\begin{corollary}\label{stability_lem}
 Fix $\delta>0$ and  let  $\phi_j,\phi\in\mathcal{S}_{\delta}(X,\theta)$ be model potentials such that $d_{\theta}(\phi_j,\phi)\to0$. Let $u_j  \in \mathcal{E}(X,\theta,\phi_j)$ and   $\varphi  \in \mathcal{E} (X,\theta,\phi)$.   Assume that $u_j$ converges to $u$ in $L^1(X)$ 
 and  that \( \boldsymbol{1}_{D_j} \theta_{u_j}^n \leq \mu \), where  $D_j\coloneqq \{u_j\leq \varphi\}$ and  \( \mu \) is a   non-pluripolar Radon measure. 
 
 Then, after passing to a subsequence,     $P_{\theta}(\inf_{l\ge j} u_l )\in  \mathcal{E}(X,\theta,P_{\theta}(\inf_{l\ge j}\phi_l))$ and $u\in \mathcal{E}(X,\theta,\phi)$. 
 Moreover, if $D_j=X$, then  $u_j\to u$ in capacity and  $  \theta_{u_j}^n\to\theta_u^n $ weakly.
\end{corollary}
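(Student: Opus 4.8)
The plan is to run the proof of Proposition~\ref{general_asl} with the fixed model potential replaced by the increasing family $\w{w}_j:=P_{\theta}\bigl(\inf_{l\ge j}\phi_l\bigr)$, and then invoke the argument of Theorem~\ref{als_lem} for the last two assertions. The groundwork is already laid in the paragraph preceding the statement: after passing to a subsequence, for every large $j$ and every $k$ the envelope $w_j^k:=P_{\theta}(u_j,\dots,u_{j+k})$ is not identically $-\infty$ and lies in $\mathcal{E}(X,\theta,\w{w}_j^k)$, where $\w{w}_j^k:=P_{\theta}(\phi_j,\dots,\phi_{j+k})$ is a model potential decreasing to $\w{w}_j$; moreover, by Proposition~\ref{dec_inc}(ii) together with the volume estimates behind Theorem~\ref{model_capacity} (applied to $\{\phi_j\}$), one has $\und(\theta,\w{w}_j)\ge\delta/2$, so $\w{w}_j\in\mathcal{S}_{\delta/2}(X,\theta)$, while $d_{\theta}(\w{w}_j,\phi)\to0$ and hence $\ove(\theta,\w{w}_j)\to\ove(\theta,\phi)$. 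Since $\w{w}_j^k\searrow\w{w}_j$ forces $d_{\theta}(\w{w}_j^k,\w{w}_j)=\ove(\theta,\w{w}_j^k)-\ove(\theta,\w{w}_j)\to0$, Lemma~\ref{diam_lem} allows us to pass from $w_j^k$ to $P_{\theta}(w_j^k,\w{w}_j)\in\mathcal{E}(X,\theta,\w{w}_j)$ and thus, for each fixed $j$, to work inside one full-mass class attached to a genuine model potential whose lower volume exceeds $\delta/2$.

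Next I would pass to a further subsequence so that, using $\int_X|e^{u_j}-e^u|\,d\mu\to0$ (\cite[Lemma~11.5]{GZ}) and the $d_{\theta}$-decay of $\{\phi_j\}$, one has $\int_X|e^{u_l}-e^u|\,\boldsymbol{1}_{D_l}\theta_{u_l}^n\le 2^{-l}$ for all $l$. By the minimum principle $\theta_{w_j^k}^n\le\sum_{l=j}^{j+k}\boldsymbol{1}_{\{w_j^k=u_l\}}\theta_{u_l}^n$, and on $\{w_j^k\le\varphi\}$ every contributing $u_l$ satisfies $u_l\le\varphi$, i.e.\ lies in $D_l$; hence $\int_X|e^{w_j^k}-e^u|\,\boldsymbol{1}_{\{w_j^k\le\varphi\}}\theta_{w_j^k}^n\le 2^{1-j}$ \emph{uniformly in $k$}. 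For a fixed $b>1$, Lemma~\ref{lemma ddnl} together with inequality~\eqref{u_b_full} shows that $\w{v}_j^k:=P_{\theta}\bigl(b\,P_{\theta}(w_j^k,\w{w}_j)-(b-1)\varphi\bigr)$ still belongs to $\mathcal{E}(X,\theta,\w{w}_j)$. Feeding the displayed bound, Proposition~\ref{contact_0}, the minimum principle and the maximum principle into the contradiction argument of Proposition~\ref{general_asl}, but with $\und(\theta,\w{w}_j)\ge\delta/2$ playing the role of $\und(\theta,\phi)$, shows that $\{\sup_X\w{v}_j^k\}_k$ is bounded from below for each $b$ and $j$; letting $k\to+\infty$ then gives $\lim_k w_j^k\not\equiv-\infty$, so $P_{\theta}(\inf_{l\ge j}u_l)=\lim_k w_j^k\in\mathcal{E}(X,\theta,\w{w}_j)$, which is the first assertion. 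The membership $u\in\mathcal{E}(X,\theta,\phi)$ then follows: setting $w:=\lim_j P_{\theta}(\inf_{l\ge j}u_l)$, Hartogs' lemma gives $w\le u$; since $u_j\preceq\phi_j$ with $d_{\theta}(\phi_j,\phi)\to0$, a comparison of singularity types gives $u\preceq\phi$, hence $\ove(\theta,w)\le\ove(\theta,u)\le\ove(\theta,\phi)$ by Proposition~\ref{bgl_mono}; and $w\ge P_{\theta}(\inf_{l\ge j}u_l)$ for every $j$ forces $\ove(\theta,w)\ge\ove(\theta,\w{w}_j)\to\ove(\theta,\phi)$, so all the inequalities are equalities.

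For the last part, assume $D_j=X$, so that $\theta_{u_j}^n\le\mu$. Writing $w_j:=P_{\theta}(\inf_{l\ge j}u_l)$ and $v_j:=\operatorname{usc}(\sup_{l\ge j}u_l)$, the step above and the domination principle (Proposition~\ref{domina_1}), argued as in Theorem~\ref{1-ener_thm}, give $w_j\nearrow u$ and $v_j\searrow u$, and $u_j$ is squeezed between them. Since $\ove(\theta,w_j)\to\ove(\theta,u)$ and $\ove(\theta,v_j)\to\ove(\theta,u)$, both monotone sequences converge to $u$ in capacity, hence so does $u_j$; and the weak convergence $\theta_{u_j}^n\to\theta_u^n$ follows from Lemma~\ref{lsc} after extracting the uniform tail bound $\int_{\{u_j\le -t\}}\theta_{u_j}^n\to0$ (uniformly in $j$) from the squeeze and from $\theta_{u_j}^n\le\mu$, exactly as in Theorem~\ref{als_lem}.

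I expect the main obstacle to be the uniform bookkeeping across the two successive limits (first $k\to+\infty$, then $j\to+\infty$) while the reference potential $\w{w}_j$ is allowed to vary with $j$: one must ensure, as in Theorems~\ref{cauchy} and \ref{model_capacity}, that $\ove(\theta,\w{w}_j^k)$ and $\und(\theta,\w{w}_j^k)$ stay within a $k$-independent window of $\ove(\theta,\phi)$ and $\und(\theta,\phi)$, so that the constants $a\nearrow1$ and the factor $b^n$ coming from Lemma~\ref{lemma ddnl} do not degenerate against the geometric series $\sum_{l=j}^{j+k}2^{-l}$; one must also check that the passage from $w_j^k$ to $P_{\theta}(w_j^k,\w{w}_j)$ preserves the measure bound over $\{w_j^k\le\varphi\}$ and that $\lim_k w_j^k$ and $\lim_k P_{\theta}(w_j^k,\w{w}_j)$ coincide as elements of $\mathcal{E}(X,\theta,\w{w}_j)$, i.e.\ that $P_{\theta}(\inf_{l\ge j}u_l)\preceq\w{w}_j$. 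Once this uniformity is in hand, the rest is a routine transcription of the proof of Proposition~\ref{general_asl}.
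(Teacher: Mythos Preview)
Your proposal follows essentially the same route as the paper. The paper itself gives only a one-paragraph sketch: it records (in the paragraph immediately preceding the statement) that, after passing to a subsequence, $w_j^k\in\mathcal{E}(X,\theta,\widehat{w}_j^k)$ and that, since $d_{\theta}(\widehat{w}_j^k,\widehat{w}_j)\to 0$, Lemma~\ref{diam_lem} yields $P_{\theta}(w_j^k,\widehat{w}_j)\in\mathcal{E}(X,\theta,\widehat{w}_j)$, after which ``an analogous argument'' to Proposition~\ref{general_asl} is invoked. You have correctly unpacked this sketch, identified the same reduction to a fixed model potential $\widehat{w}_j$ with $\und(\theta,\widehat{w}_j)\ge\delta/2$, and flagged precisely the bookkeeping that the paper elides (uniformity of the volume windows for $\widehat{w}_j^k$, and the compatibility of $\varphi\in\mathcal{E}(X,\theta,\phi)$ with the class $\mathcal{E}(X,\theta,\widehat{w}_j)$). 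One small point worth making explicit in your write-up: to run the contradiction step of Proposition~\ref{general_asl} you also need $\varphi$ (or rather $P_{\theta}(\varphi,\widehat{w}_j)$) to sit in $\mathcal{E}(X,\theta,\widehat{w}_j)$, which again follows from Lemma~\ref{diam_lem} via $d_{\theta}(\widehat{w}_j,\phi)\to 0$; this is implicit in your last paragraph but deserves a sentence.
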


\section{Monge-Amp\`ere Equations in the Full Mass Class}\label{sec_5}

\hspace*{1.5em}  Let $\mathcal{M}$ denote the (weakly) compact convex set of positive measures generated by all measures of the form $(\omega_X + dd^c \psi)^n$, where $\psi$ is a bounded  $\omega_X$-psh function satisfying $0 \leq \psi \leq 1$ (cf. \cite[Proposition 3.2]{BEGZ10}). By the Chern–Levine–Nirenberg inequality, for any $\omega_X$-psh function $\varphi$ normalized by $\sup_X \varphi = 0$ and any $\omega_X$-psh function $\psi$ with $0 \leq \psi \leq 1$, there exists a uniform constant $C$ such that
    $$\int_X -\varphi(\omega_X+dd^c\psi)^n\leq C.$$
Using the convexity of $\mathcal{M}$ and the lower semicontinuity of weak convergence of measures with respect to lower semicontinuous functions, we obtain for every $\nu \in \mathcal{M}$ \begin{align}\label{cln} \int_X -\varphi \,d\nu\leq C.\end{align} 

Now let $\mu$ be a non‑pluripolar positive Radon measure. Since $\mathcal{M}$ is compact and convex, a generalization of the Radon–Nikodym theorem (see \cite{Radon}) yields a decomposition
   \[
\mu = g\nu + \nu',
\]
where $\nu \in \mathcal{M}$, $0 \leq g \in L^1(X,\nu)$, and $\nu' \perp \mathcal{M}$.
 As  all such $\nu \in \mathcal{M}$ characterize the pluripolar sets (see  \cite[Proposition 8.30]{Dinew2019} and \cite{Vu_loc}),  we know that $\nu'$ is supported on a pluripolar set. This implies that $\nu'=0$ and that $\mu$ is absolutely continuous with respect to $\nu$.

\subsection{\textbf{Energy estimates}}
\begin{proposition}\label{p-energy}
Let \(\mu\) be a probability measure on \(X\) such that \(\mu \leq A\nu\) for some 
\(\nu \in \mathcal{M}\) and some constant \(A > 0\).  
Let \(\phi\) be a \(\theta\)-psh model potential.
For sufficiently small \(\alpha, \varepsilon > 0\), define
\[
p(\alpha, \varepsilon) \coloneqq 1 + \frac{(1-\varepsilon)(1-\alpha)}{n+\varepsilon},
\qquad
m(\varepsilon) \coloneqq 1 + \frac{1-\varepsilon}{n+\varepsilon}.
\]
Then for every \(p' < p \), there exists a uniform constant  
\(C(\alpha , \varepsilon,p',A, \phi ) > 0\) such that
\[
\int_X |u - \phi|^{p'} \, d\mu 
\;\leq\; 
C\Bigl[ \bigl(\int_X |u - \phi| \, \theta_u^n\bigr)^{m} + 1 \Bigr]
\]
holds for any \(\theta\)-psh function \(u\) that has the same singularity type as \(\phi\) 
and satisfies \(\sup_X u = 0\).
\end{proposition}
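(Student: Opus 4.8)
The plan is a distribution-function (layer-cake) argument: split $\int_X|u-\phi|^{p'}\,d\mu$ according to the sign of $u-\phi$ and, on the part where $u$ is more singular than $\phi$, interpolate between the uniform Chern--Levine--Nirenberg bound \eqref{cln} — which holds for every $\nu\in\mathcal M$ — and a capacity--energy estimate for the sub-level sets $\{u<\phi-s\}$. Since $\phi=P_\theta[\phi]\le V_\theta\le 0$ no normalization of $\phi$ is needed; write $w:=u-\phi\in L^\infty(X)$ (quasi-continuous) and $I:=\int_X|w|\,\theta_u^n$. As $\mu\le A\nu$ it suffices to bound $\int_X|w|^{p'}\,d\nu$, which I split as
\[
\int_{\{u\ge\phi\}}(u-\phi)^{p'}\,d\nu+\int_{\{u<\phi\}}(\phi-u)^{p'}\,d\nu=:J_++J_-.
\]

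First I would dispose of $J_+$: on $\{u\ge\phi\}$ one has $0\le u-\phi\le-\phi$ because $u\le 0$, so $J_+\le\int_X(-\phi)^{p'}\,d\nu\le\sup_{\nu'\in\mathcal M}\int_X(-\phi)^{p'}\,d\nu'$, which is finite by a uniform Skoda-type integrability of the class $\mathcal M$ against $(-\phi)^{p'}$ (the set $\{\phi=-\infty\}$ is pluripolar and the potentials generating $\mathcal M$ are uniformly bounded). This yields a constant $C(\phi,p')$ and accounts for the "$+1$" in the conclusion; the hypothesis $\und(\theta,\phi)>0$ is not needed here but re-enters below.

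For the main term, $J_-=p'\int_0^\infty s^{p'-1}\,\nu(\{u<\phi-s\})\,ds$, I would bound $\nu(\{u<\phi-s\})$ in two ways. Since $\theta\le C_0\omega_X$ for some $C_0$, the function $u/C_0$ is $\omega_X$-psh with $\sup_X(u/C_0)=0$, and as $\phi\le 0$ we have $\{u<\phi-s\}\subseteq\{u/C_0<-s/C_0\}$; so \eqref{cln} and Chebyshev give $\nu(\{u<\phi-s\})\le C/s$, uniformly in $u$ and in $\nu\in\mathcal M$. The second, energy-improved bound is the heart of the matter: I would establish a Hermitian capacity--energy estimate of the shape
\[
\operatorname{Cap}_{\omega_X}\bigl(\{u<\phi-2s\}\bigr)^{\,n+\varepsilon}\le\frac{C_\varepsilon}{s^{\,n+\varepsilon}}\int_{\{u<\phi-s\}}\theta_u^n\qquad(s\ge1),
\]
together with $\int_{\{u<\phi-s\}}\theta_u^n\le\frac1s\int_{\{u<\phi-s\}}|w|\,\theta_u^n\le I/s$, and a uniform domination of the $\mathcal M$-measures by capacity (carrying an arbitrarily small loss, which is where the Skoda exponent governing the factor $1-\alpha$ enters). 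Combining these gives $\nu(\{u<\phi-s\})\le C_{\alpha,\varepsilon}\,I^{\beta}s^{-\gamma}$ with $\beta,\gamma$ depending only on $n,\varepsilon,\alpha$. Feeding $\nu(\{u<\phi-s\})\le\min\{\nu(X),\,C/s,\,C_{\alpha,\varepsilon}I^{\beta}s^{-\gamma}\}$ into the layer-cake integral, splitting at the crossover radius $s=S(I)$, one obtains $J_-\le C(I^{m}+1)$; the restriction $p'<p(\alpha,\varepsilon)$ is precisely the condition that makes the tail integral converge, and $m(\varepsilon)$ is read off from the exponent of $I$ produced by the crossover.

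The main obstacle is the Hermitian capacity--energy estimate. In the closed/Kähler setting it follows from the comparison principle $\int_{\{u<v\}}\theta_v^n\le\int_{\{u<v\}}\theta_u^n$, which is unavailable here because the Monge--Ampère mass varies within a fixed singularity type. Instead I would run the argument through the minimum principle (Corollary~\ref{min_p}) applied to envelopes $P_\theta(u+C,g)$, where $g$ is obtained from the relative extremal function of $\{u<\phi-2s\}$ transported to a $\theta$-psh function via the big datum $\rho$ with $\theta+dd^c\rho\ge\delta\omega_X$; the error terms that arise are controlled by $\ove(\theta,\phi)$ and $\ove(\omega_X)$ — finite by the bounded mass property — at the price of a Young/Hölder step that degrades the Kähler exponent $n$ to $n+\varepsilon$. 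The positive volume hypothesis $\und(\theta,\phi)>0$ is used, through Lemma~\ref{lemma ddnl}, to guarantee that these auxiliary envelopes stay in $\mathcal E(X,\theta,\phi)$ and do not collapse, so the estimate is non-vacuous. Once this estimate and the uniform domination of $\mathcal M$-measures by capacity are in place, the remainder — locating the two crossover points, verifying convergence for $p'<p(\alpha,\varepsilon)$, and checking that the output power of $I$ is at most $m(\varepsilon)$ — is routine bookkeeping.
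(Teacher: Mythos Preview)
Your proposal has a genuine gap at its core: the Hermitian capacity--energy estimate
\[
\operatorname{Cap}_{\omega_X}\bigl(\{u<\phi-2s\}\bigr)^{\,n+\varepsilon}\le\frac{C_\varepsilon}{s^{\,n+\varepsilon}}\int_{\{u<\phi-s\}}\theta_u^n
\]
is not established, and your sketch via the minimum principle does not produce it. Corollary~\ref{min_p} only gives an \emph{upper} bound for $\theta_{P_\theta(u,v)}^n$ on contact sets; it does not yield the \emph{lower} bound on $\int_{\{u<v\}}\theta_u^n$ in terms of $\int_{\{u<v\}}\theta_v^n$ that underlies the K\"ahler proof of such capacity estimates, and that lower bound is precisely what fails when $\theta$ is not closed. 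No substitute of the form you describe is available in the paper's toolkit. A secondary issue: your control of $J_+$ requires $\int_X(-\phi)^{p'}\,d\nu<\infty$ uniformly over $\nu\in\mathcal M$, but \eqref{cln} only yields the first moment, and since $p'>1$ this integral need not be finite for a general model potential $\phi$.

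The paper avoids capacity entirely, following the Guedj--Lu scheme \cite[Theorem~2.1]{GL1}. One builds a concave weight $\chi$ directly from the distribution function $t\mapsto\theta_u^n(\{u<\phi-t\})$: with $N=(n+\varepsilon)/(1-\varepsilon)$, set $-\chi(-x)=\int_0^x f(t)^{1/N}\,dt$ where $f(x)=1+\int_0^x[(1+t)^{1+\alpha}\,\theta_u^n(u<\phi-t)]^{-1}\,dt$. Then $\int_X(\chi'\circ(u-\phi))^{N}\theta_u^n\le(1+1/\alpha)\max(1,\ove(\theta,\phi))$ by construction, while Chebyshev on $B:=\int_X|u-\phi|\,\theta_u^n$ gives $|\chi(-t)|\gtrsim B^{-1/N}t^{\,1+(1-\alpha)/N}$. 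Applying Lemma~\ref{concave_lem} (itself only Proposition~\ref{contact_0} plus the minimum principle) to $\varphi:=P_\theta(\chi\circ(u-\phi)+\phi)$ and a H\"older step yields $|\sup_X(\varphi-\phi)|\le\widetilde C\,B$; since $\und(\theta,\phi)>0$ this is where the model hypothesis enters. Now the only measure-theoretic input is the first-moment CLN bound \eqref{cln}, which controls $\|\varphi-\phi\|_{L^1(\mu)}$ and hence, via $|\chi(u-\phi)|\ge|\varphi-\phi|$, gives $\mu(\{u<\phi-t\})\lesssim(A_1(\mu)+\widetilde C B)\,B^{1/N}t^{-\beta}$ with $\beta=1+(1-\alpha)/N=p(\alpha,\varepsilon)$. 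The layer-cake integral then converges precisely for $p'<p(\alpha,\varepsilon)$ and outputs the exponent $1+1/N=m(\varepsilon)$ on $B$. No comparison principle and no capacity bound is needed anywhere.
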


The proof utilizes techniques from the $L^{\infty}$ a priori estimates for complex 
Monge-Amp\`ere equations established in \cite[Theorem 2.1]{GL1}.

Before proceeding to the proof, we first recall a fundamental lemma, whose proof follows the same lines as 
\cite[Lemma 1.7]{GL1} in the relative setting. It is a direct consequence of 
Proposition~\ref{contact_0} together with the minimum principle.
 \begin{lemma}[(cf. {\cite[Lemma 1.7]{GL1}})]\label{concave_lem}
 Let \(u,\phi\) be as in Proposition~\ref{p-energy}.   Fix a concave increasing function \(\chi : \mathbb{R}^{-} \to \mathbb{R}^{-}\) such
that \(\chi'(0) \geq 1\). Let   \(v := \chi \circ (u - \phi) + \phi\). Then
\[
\bigl(\theta + dd^{c} P_{\theta}(v)\bigr)^{n} \;\leq\; 
\mathbf{1}_{\{P_{\theta}(v)=v\}} \; \bigl(\chi' \circ (u - \phi)\bigr)^{n} \; 
\bigl(\theta + dd^{c} u\bigr)^{n}.
\]

 \end{lemma}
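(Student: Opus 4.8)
The plan is to transfer the estimate to a genuine $\theta$-psh function obtained from $P_{\theta}(v)$ by the inverse substitution $t\mapsto\chi^{-1}(t)$, and then to combine the contact property of $P_{\theta}(v)$ (Proposition~\ref{contact_0}) with the maximum principle (Proposition~\ref{max_p}). First I would record the elementary facts. Since $\sup_X u=0$ we have $u\le 0$, hence $u\le V_{\theta}$, and $u\simeq\phi$ gives $P_{\theta}[u]=P_{\theta}[\phi]=\phi$; thus from $u\preceq\phi$ and $u\le V_{\theta}$ we deduce $u\le P_{\theta}[\phi]=\phi$. Consequently $w:=u-\phi$ is bounded, quasi-continuous and $\le 0$, and $v=\chi\circ w+\phi$ is well defined, quasi-continuous, with $\phi+\inf_X(\chi\circ w)\le v\le 0$. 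Hence $P_{\theta}(v)$ is a genuine $\theta$-psh function, $P_{\theta}(v)\simeq\phi$, and all Monge--Amp\`ere measures appearing below are Radon by the bounded mass property. By Proposition~\ref{contact_0} the measure $\theta_{P_{\theta}(v)}^n$ is carried by the contact set $K:=\{P_{\theta}(v)=v\}$, and since $\chi(t)\le t$ on $\mathbb{R}^-$ (concavity together with $\chi'(0)\ge 1$ and $\chi(0^-)\le 0$) we also get $P_{\theta}(v)\le v\le u$.

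Next I would set $\widetilde u:=\chi^{-1}\circ(P_{\theta}(v)-\phi)+\phi$, where $\chi^{-1}\colon\mathbb{R}^-\to\mathbb{R}^-$ is the convex increasing inverse; by concavity of $\chi$ and $\chi'(0)\ge 1$ one has $0\le(\chi^{-1})'\le 1$ on $\mathbb{R}^-$. Writing the convex function $\chi^{-1}$ as the supremum of its affine minorants exhibits $\widetilde u$, off a pluripolar set, as the supremum over $\tau\le 0$ of the functions $(\chi^{-1})'(\tau)P_{\theta}(v)+\bigl(1-(\chi^{-1})'(\tau)\bigr)\phi+c_\tau$, each a convex combination of the $\theta$-psh functions $P_{\theta}(v)$ and $\phi$ plus a constant, hence $\theta$-psh; so $\widetilde u\in\psh(X,\theta)$ and $\widetilde u\simeq\phi$. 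Since $\chi^{-1}$ is increasing and $P_{\theta}(v)\le v$, we obtain $\widetilde u\le u$ on $X$; on $K$ one has $P_{\theta}(v)=v=\chi\circ w+\phi$, so $\chi^{-1}(P_{\theta}(v)-\phi)=w$ and therefore $\widetilde u=u$ there. In particular $K\subseteq\{\widetilde u=u\}$, $\chi'\circ(\widetilde u-\phi)=\chi'\circ(u-\phi)$ on $K$, and by construction $P_{\theta}(v)=\chi\circ(\widetilde u-\phi)+\phi$ on all of $X$.

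The proof then closes with two inequalities. The maximum principle (Proposition~\ref{max_p}), applied to $\widetilde u\le u$, gives $\mathbf{1}_{\{\widetilde u=u\}}\theta_{\widetilde u}^n\le\mathbf{1}_{\{\widetilde u=u\}}\theta_u^n$. The chain rule for the composition with the concave increasing $\chi$ gives the current inequality $\theta+dd^c P_{\theta}(v)\le\bigl(\chi'\circ(\widetilde u-\phi)\bigr)(\theta+dd^c\widetilde u)$: writing $\theta=dd^c g$ on a chart, $g+\phi$ and $g+\widetilde u$ are plurisubharmonic, $g+P_{\theta}(v)=(g+\phi)+\chi\circ\bigl((g+\widetilde u)-(g+\phi)\bigr)$, and on differentiating, the term carrying $\chi''\le 0$ is nonpositive while the coefficient $1-\chi'\le 0$ multiplying $dd^c(g+\phi)=\theta+dd^c\phi\ge 0$ makes the remaining correction nonpositive as well. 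Raising this to the $n$-th power, restricting to $K$, and using that $\theta_{P_{\theta}(v)}^n$ is carried by $K$ and that $\chi'\circ(\widetilde u-\phi)=\chi'\circ(u-\phi)$ there, I would conclude
\[
\theta_{P_{\theta}(v)}^n=\mathbf{1}_K\,\theta_{P_{\theta}(v)}^n\le\mathbf{1}_K\bigl(\chi'\circ(u-\phi)\bigr)^n\theta_{\widetilde u}^n\le\mathbf{1}_K\bigl(\chi'\circ(u-\phi)\bigr)^n\theta_u^n,
\]
which is the asserted estimate.

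The hard part will be to justify the concave chain-rule inequality rigorously, since the potentials involved are neither smooth nor locally bounded (because of $\phi$). I would handle this by approximating $\chi$ from above by smooth concave increasing functions $\chi_\varepsilon\searrow\chi$ with $\chi_\varepsilon'(0)\ge 1$, carrying out the $dd^c$-computation for $\chi_\varepsilon$ on the plurifinely open, locally bounded loci $\{\phi>V_{\theta}-t\}\cap\{\rho>-\infty\}$ by Bedford--Taylor calculus, and then letting $\varepsilon\to 0$ and $t\to+\infty$ using Lemma~\ref{lsc} and the monotone stability of the non-pluripolar product, noting that $v_\varepsilon:=\chi_\varepsilon\circ w+\phi\searrow v$ and hence $P_{\theta}(v_\varepsilon)\searrow P_{\theta}(v)$. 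A secondary technical point is the verification that $K\subseteq\{\widetilde u=u\}$ holds up to a pluripolar set, which is immediate once one notes that both functions are quasi-continuous and the identity $\widetilde u=u$ on $K$ was obtained pointwise away from the polar locus of $\phi$.
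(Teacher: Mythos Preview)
Your argument is correct. The paper does not give a proof but refers to \cite[Lemma~1.7]{GL1}, indicating that the result follows from Proposition~\ref{contact_0} together with the minimum principle; in that framework one approximates the concave weight $\chi$ from below by piecewise affine functions $\chi=\min_i(a_it+b_i)$, writes $v=\min_i f_i$ with $f_i=a_iu+(1-a_i)\phi+b_i$, observes $P_\theta(v)=P_\theta(P_\theta(f_1),\dots,P_\theta(f_k))$, and combines Corollary~\ref{min_p} with the single-affine estimate $\theta_{P_\theta(f_i)}^n\le \mathbf{1}_{\{P_\theta(f_i)=f_i\}}a_i^n\theta_u^n$. Your route is different: by introducing $\widetilde u=\chi^{-1}\!\circ(P_\theta(v)-\phi)+\phi$ you globalize the single-affine trick, obtain $\widetilde u\in\psh(X,\theta)$ as a supremum of convex combinations of $P_\theta(v)$ and $\phi$, and then appeal to the maximum principle (Proposition~\ref{max_p}) on $\{\widetilde u=u\}\supseteq K$ together with the concave chain-rule inequality $\theta_{P_\theta(v)}\le(\chi'\circ(\widetilde u-\phi))\,\theta_{\widetilde u}$. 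This avoids the piecewise-affine reduction and the iterated rooftop envelope, at the price of the regularization step you flag for the chain rule; that step is routine on the plurifinely open bounded loci, exactly as you outline. Both approaches hinge on Proposition~\ref{contact_0} and a comparison principle; yours is a clean repackaging of the same mechanism.
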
 
 
 \begin{proof}[Proof of Proposition~\ref{p-energy}]
Define \(  T_{\max} := \sup \bigl\{ t \geq 0 : \theta_u^n\{u<\phi-t\}>0\bigl\} \). Because \(u\) has the same singularity type as \(\phi\), we have \(T_{\max}<+\infty\).  
The domination principle implies that \(T_{\max}\) is the smallest constant 
satisfying \(u \ge \phi - T_{\max}\). We may assume \(T_{\max} \ge 2\); otherwise 
the desired estimate follows directly.

We now construct a concave increasing weight function \(\chi : \mathbb{R}^- \to \mathbb{R}^-\) 
with \(\chi(0)=0\) and \(\chi'(0)=1\). 
For this purpose,   we fix   a sufficiently small constant \(s>0\) and 
define an integrable function \(g_s(t)\) by  
\[
g_s(t) :=
\begin{cases}
\dfrac{1}{(1+t)^{1+\alpha}\theta_u^n(u< \phi- t)}, &\text{ }\,\, t \in [0, T_{\max}-s]; \\[6pt]
\dfrac{1}{(1+t)^{1+\alpha}}, &\text{ }\,\, t > T_{\max}-s.
\end{cases} 
\]
Set $f_s(x) \coloneqq \int_0^x g_s(t) \,dt + 1$. Denote   
\(N(\varepsilon) := \dfrac{n+\varepsilon}{1-\varepsilon}\) and define
      $$-\chi_s(-x)=\int_0^x f_s(t)^{\frac{1}{N(\varepsilon)}}dt$$
 The following estimate then holds:  \begin{align}\label{1+1/a}
 \int_X \bigl(\chi'_s \circ (u - \phi)\bigr)^{N(\varepsilon)}    \theta_u^n &= \int_X  f_s (|u - \phi|) \, \theta_u^n \nonumber \\ &= \int_0^{T_{\max}}g_s(t)\,\theta_u^n(u<\phi-t)\,dt   + \int_X\theta_{u}^n\nonumber\\& \leq \left(1 + \int_0^{+\infty} \frac{1}{(1+t)^{1+\alpha}}\,dt\right)\max\!\left(1,\int_X\theta_u^n\right) \nonumber\\
&=  \left(1 + \frac{1}{\alpha}\right)\max\!\left(1,\int_X\theta_u^n\right)\end{align}      
Set \( B := \int_X |u - \phi| \, d\theta_u^n \). By the Chebyshev  inequality, we obtain  
\[
\theta_u^n\bigl(\{u < \phi - t\}\bigr) \le \frac{B}{t}, \qquad \text{}t > 0.
\]  
For \( t \in [1,\, T_{\max} - s] \), we obtain
\[
g_s(t) \ge \frac{t}{B(2t)^{1+\alpha}} \ge \frac{C_1}{B t^{\alpha}} .
\]
Integrating both sides from \(1\) to \(t\), where \(t \in [1, T_{\max} - s]\), we have
\[
f_s(t) \ge C_1\frac{t^{1-\alpha}-1}{(1-\alpha)B}+1 \ge C_2\frac{t^{1-\alpha}}{\max(1,B)},
\]
because \(\alpha>0\) is  sufficiently small. Define
\( 
\beta(\alpha,\varepsilon) \coloneqq 1+\frac{1-\alpha}{N(\varepsilon)}
\).   Thus  we have
\begin{align}\label{beta}
-\chi_s(-t) &\ge C_3 \max(1, B)^{-\frac{1}{N(\varepsilon)}} \left( t^{\beta(\alpha,\varepsilon)} - 1 \right) - \chi_s(-1) \nonumber \\
&\geq C_4 \max(1, B)^{-\frac{1}{N(\varepsilon)}} \, t^{\beta(\alpha,\varepsilon)},
\end{align}
for all \( t \in [1, T_{\max} - s] \). Here we use the fact that   \( -\chi_s(-1) \geq 1 \). 

Now set \(v_s := \chi_s \circ (u - \phi) + \phi\) and \(\varphi_s \coloneqq P_{\theta}(v_s)\).
Since \(\chi_s\) is concave and   \(\chi_s(0)=0\), we have
\(|\chi_s(-t)| \le |t|\,\chi_s'(-t)\) for all \(t\le 0\). Note  that \(\varphi_s\) has the same singularity type as \(\phi\). Then by applying Lemma \ref{concave_lem} and   H\"older's inequality, we obtain the following energy estimate:
\begin{align*} \begin{aligned}
  \und(\theta,\phi)\left(-\sup_X(\varphi_s-\phi)\right)^{\varepsilon}& \leq \int_X (\phi - \varphi_s)^\varepsilon \, \theta_{\varphi_s}^n \\
& \quad \leq \int_X \bigl(-\chi_s \circ (u - \phi)\bigr)^\varepsilon \; 
      \bigl(\chi'_s \circ (u - \phi)\bigr)^n \,  \theta_u^n \\
& \quad \leq \int_X (\phi - u)^\varepsilon \; 
      \bigl(\chi'_s \circ (u - \phi)\bigr)^{n+\varepsilon} \,  \theta_u^n \\
& \quad \leq \left(\int_X (\phi - u) \theta_u^n \right)^{  \varepsilon } 
      \cdot  \left( \int_X \bigl(\chi'_s \circ (u - \phi)\bigr)^{\frac{n+\varepsilon}{1- \varepsilon}}    \theta_u^n \right)^{1-\varepsilon}   \end{aligned}
\end{align*}
Hence \begin{align}\label{CB} |\sup_X(\varphi_s-\phi)| \leq \ww C(\alpha,\varepsilon ,\phi)   B ,  \end{align}
where $\ww C$ is a uniform constant depending only on $\alpha^{-1}.\varepsilon^{-1}$ and  $\frac{\max\left(1,\ove(\theta,\phi)\right)}{\und(\theta,\phi)}$.
By the hypothesis that $\mu \leq A \nu $ for some  $\nu\in\mathcal{M}$,  we can define $$A_1(\mu)\coloneqq \sup\left\{  \int_X -\psi\,d\mu:  \psi\,\, \text{is $\theta$-psh  and} \,\sup_X\psi=0 \right\}<+\infty. $$

 Since $\varphi_s-\phi \leq u-\phi \leq 0$, we can use \eqref{beta} and \eqref{CB} to bound the $\mu$-measure of the level set $\{u<\phi-t\}$ for $t\in[1,T_{\max}-s]$ as follows:
 \begin{align*}
    \mu(u<\phi-t)&\leq  \frac{  \|\chi_s\circ(u-\phi)\|_{L^1(X,\mu)}}{|\chi_s(-t)|} \\& \leq C_4\max(1,B)^{ \frac{1}{N(\varepsilon)}} t^{-\beta( \alpha,\varepsilon)} \| \varphi_s-\phi\|_{L^1(X,\mu)} 
    \\&   \leq  C_4\max(1,B)^{ \frac{1}{N(\varepsilon)}} t^{-\beta( \alpha,\varepsilon)} \left(\|\varphi_s-\phi-\sup_X(\varphi_s-\phi)\|_{L^1(X,\mu)} +|\sup_X(\varphi_s-\phi)|\right)
     \\&   \leq     C_4\max(1,B)^{ \frac{1}{N(\varepsilon)}} t^{-\beta( \alpha,\varepsilon)}      (A_1(\mu)+\ww C(\alpha,\varepsilon,\phi)B) 
  \end{align*}

As we can take  \(s\)  sufficiently small, this estimate remains valid on  \([1,T_{\max}]\). Hence,  we obtain
\begin{align*}%\label{P-estim}
    \int_X|u-\phi|^p\, d\mu &= \int_0^{T_{\max}} t^{p-1}\, \mu\!\left(\{u<\phi-t\}\right)dt \nonumber\\
    &\leq \mu(X) + \int_1^{T_{\max}} t^{p-1}\, \mu\!\left(\{u<\phi-t\}\right)dt \nonumber\\
    &\leq C(\alpha,\varepsilon,A,\phi)\left(1+B^{1+\frac{1}{N(\varepsilon)}}\right)
           \int_1^{+\infty} t^{p-1-\beta(\alpha,\varepsilon)}\, dt. \end{align*}  
  We therefore conclude that the desired estimate holds for any $1\leq p < \beta(\alpha,\varepsilon)$.   
\end{proof} 

Adapting the proof of \cite[Theorem 2.3]{GL3} to the relative setting yields the following    $L^{\infty}$ a priori estimates. We  remark that the bounded mass property is not required here. 
The reason is that the total mass of \(\theta_u^n\) is already bounded and \(u\) has the same singularity type as \(\phi\); 
consequently, the domination principle (Proposition~\ref{domina_1}) remains applicable. 

\begin{theorem}[(cf.  {\cite[Theorem 2.3]{GL3}})]\label{rel_infity}
Let \(\mu\) be a probability measure on $X$  such that \(\operatorname{PSH}(X,\theta) \subset L^{m}(\mu)\) for some \(m > n\).  
Assume  that \(\phi\) is  a \(\theta\)-psh model potential and that  \(u \in \psh(X,\theta)\) satisfies \(\sup_X u = 0\), \(u \simeq \phi\), and 
\(\theta_u^n \le c\mu\) for a constant \(c > 0\). Then 
\[
u \ge \phi - T,
\]
where \(T > 0\) is a uniform constant depending   on the upper bound of 
\(\dfrac{c}{\und(\theta,\phi)}\) and on  
\[
A_m(\mu) \coloneqq 
\sup\left\{\, 
\Bigl(\int_X (-\varphi)^m \, d\mu\Bigr)^{\frac{1}{m}} \;:\; 
\varphi \text{ is $\theta$-psh with } \sup_X \varphi = 0 
\,\right\}.
\]
\end{theorem}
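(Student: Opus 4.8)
The strategy is to transplant the proof of \cite[Theorem~2.3]{GL3} to the relative setting, replacing $V_\theta$ by the model potential $\phi$ and the potentials with minimal singularities by the class $\mathcal{E}(X,\theta,\phi)$. The point that makes the argument go through on a general Hermitian manifold — and the reason the bounded mass property is not needed — is that $u\simeq\phi$ forces $u\in\mathcal{E}(X,\theta,\phi)$, so the Monge--Amp\`ere mass is a priori pinched,
\[
0<\und(\theta,\phi)\;\le\;\int_X\theta_u^n\;\le\;\ove(\theta,\phi)<+\infty,
\]
which is exactly what allows the domination principle (Proposition~\ref{domina_1}) and the relative minimum/maximum principles (Corollary~\ref{min_p}, Proposition~\ref{max_p}) to play the role of the comparison principle. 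The first move is to reduce the claim to a uniform bound on
\[
T_{\max}\;:=\;\sup\{\,t\ge 0:\ \theta_u^n(\{u<\phi-t\})>0\,\},
\]
which is finite because $u\simeq\phi$: for $t>T_{\max}$ we have $\mathbf 1_{\{u<\phi-t\}}\theta_u^n=0$, so Proposition~\ref{domina_1}, applied with constant $0$, with $v=\phi-t\preceq\phi$ and $\varphi=\phi$, gives $u\ge\phi-t$; letting $t\searrow T_{\max}$ yields $u\ge\phi-T_{\max}$. More generally, for any $s$, if $\{u<\phi-s\}$ is pluripolar then $\mathbf 1_{\{u<\phi-s\}}\theta_u^n=0$ and the same application of Proposition~\ref{domina_1} gives $u\ge\phi-s$.

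For $s\ge 0$ let $\mathbf c(s)$ denote the $\phi$-relative capacity of the Borel set $\{u<\phi-s\}$ — the Hermitian, relative analogue of the Bedford--Taylor capacity used in \cite{GL3} — which is non-increasing in $s$, vanishes exactly when $\{u<\phi-s\}$ is pluripolar, and satisfies $\mathbf c(0)\le\ove(\theta,\phi)$. The heart of the proof is a \emph{partial comparison} estimate: there is a dimensional constant for which
\[
t^{\,n}\,\mathbf c(s+t)\;\le\;\theta_u^n(\{u<\phi-s\})\qquad(s>0,\ 0<t\le1).
\]
Following \cite{GL3}, I would prove it by working with the envelope $\varphi_s:=P_\theta(\min(u+s,\phi))$, which lies in $\mathcal{E}(X,\theta,\phi)$ (indeed $\phi-C\le\varphi_s\le\min(u+s,\phi)$ for a fixed $C$, so $\varphi_s\simeq\phi$) and whose Monge--Amp\`ere measure is concentrated, by Proposition~\ref{contact_0}, the minimum principle and plurifine locality, on the contact sets $\{\varphi_s=u+s\}\subseteq\{u<\phi-s\}$ and $\{\varphi_s=\phi\}$; a scaled $\theta$-psh competitor for the extremal function of $\{u<\phi-s-t\}$ then produces the factor $t^n$, and — this is the crucial gain over the K\"ahler argument — the passage uses Proposition~\ref{contact_0} and the minimum principle rather than the comparison principle, with the mass pinching above keeping all the envelopes that appear inside $\mathcal{E}(X,\theta,\phi)$ with masses in $[\und(\theta,\phi),\ove(\theta,\phi)]$. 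Feeding in $\theta_u^n\le c\mu$ and a relative Skoda-type domination $\mu(\{u<\phi-s\})\le\Psi(\mathbf c(s))$, where $\Psi(x)\to0$ as $x\to 0^+$ at a rate and with a constant depending only on $m>n$ and $A_m(\mu)$ — this is the only place the integrability hypothesis $\psh(X,\theta)\subset L^m(\mu)$ enters — one obtains the recursive inequality
\[
t^{\,n}\,\mathbf c(s+t)\;\le\;c\,\Psi\bigl(\mathbf c(s)\bigr)\qquad(s>0,\ 0<t\le1).
\]

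Since $\mathbf c$ is non-increasing with $\mathbf c(0)\le\ove(\theta,\phi)$, the classical iteration lemma of Ko{\l}odziej (see \cite{Kol98}) applied to this recursive inequality shows that $\mathbf c(s)=0$ for all $s\ge T$, with $T$ explicit in terms of $c/\und(\theta,\phi)$, $\ove(\theta,\phi)$ and $A_m(\mu)$. Then $\{u<\phi-T\}$ is pluripolar, so, by the domination principle as in the first paragraph, $u\ge\phi-T$. This proves the estimate, with the stated dependence of $T$.

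The step I expect to be the main obstacle is the partial comparison estimate in the relative, non-closed big setting: one must run it on the Zariski-open set $\{\rho>-\infty\}$ with the usual truncations, arrange the scaled competitor so that the non-pluripolar products it produces are Radon and obey the minimum/maximum principles, and — most importantly — use Lemma~\ref{lemma ddnl} and Proposition~\ref{bgl_mono} to keep every auxiliary envelope inside $\mathcal{E}(X,\theta,\phi)$, so that its mass stays pinched between $\und(\theta,\phi)$ and $\ove(\theta,\phi)$ and the error terms that would spoil a naive adaptation of Ko{\l}odziej's argument (where equality of Monge--Amp\`ere masses is unavailable on Hermitian manifolds) are controlled. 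A secondary, more routine point is the relative Skoda inequality $\mu(\{u<\phi-s\})\le\Psi(\mathbf c(s))$, which must be arranged so that its modulus depends only on $m$ and $A_m(\mu)$.
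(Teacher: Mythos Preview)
Your proposal takes a genuinely different route from what the paper intends. The paper does not supply a proof but directs the reader to adapt \cite[Theorem~2.3]{GL3}; the surrounding Proposition~\ref{p-energy} (which cites the companion paper \cite{GL1}) shows exactly what that means here: the Guedj--Lu \emph{concave envelope} technique. One builds a concave weight $\chi$ from the distribution function $t\mapsto\theta_u^n(\{u<\phi-t\})$, sets $\varphi:=P_\theta(\chi\circ(u-\phi)+\phi)$, applies Lemma~\ref{concave_lem} to get $\theta_\varphi^n\le(\chi'\circ(u-\phi))^n\theta_u^n\le c(\chi'\circ(u-\phi))^n\mu$, and then uses the hypothesis $\psh(X,\theta)\subset L^m(\mu)$ with $m>n$ and H\"older to bound $\sup_X(\varphi-\phi)$; the growth of $\chi$ then translates this into a bound on $T_{\max}$. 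Capacity never enters, and the only places where mass matters are the lower bound $\und(\theta,\phi)\le\int_X\theta_\varphi^n$ (automatic since $\varphi\simeq\phi$) and the domination principle at the end --- which, as the paper notes, is available because $u\simeq\phi$ keeps all masses finite without invoking the bounded mass property.

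Your Ko{\l}odziej-style capacity iteration is a legitimate alternative in spirit, but two of the steps you flag as obstacles are more serious than you suggest. First, \cite{GL3} does \emph{not} use a Bedford--Taylor capacity; there is no ``relative capacity used in \cite{GL3}'' to emulate, so you would have to define $\mathbf c(s)$ from scratch and prove the partial comparison estimate in the non-closed relative setting, where the lack of a genuine comparison principle produces error terms whose control you only gesture at. Second --- and this is the real gap --- the ``relative Skoda inequality'' $\mu(\{u<\phi-s\})\le\Psi(\mathbf c(s))$ with $\Psi(x)/x\to 0$ is not routine. The hypothesis $A_m(\mu)<\infty$ gives $\mu(\{u<-s\})\le A_m(\mu)^m s^{-m}$ via Markov, hence also $\mu(\{u<\phi-s\})\le A_m(\mu)^m s^{-m}$, but this is a bound in $s$, not in $\mathbf c(s)$; without an estimate of the form $\mu(E)\le C\,\mathbf c(E)^{1+\varepsilon}$ for the relative capacity, Ko{\l}odziej's iteration does not terminate at a finite $T$. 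Obtaining such a $\mu$-versus-capacity estimate from $\psh(X,\theta)\subset L^m(\mu)$ alone, in the relative Hermitian setting, would itself require a nontrivial argument (an Alexander--Taylor type inequality relative to $\phi$), and nothing in the paper provides it. The envelope method sidesteps this entirely, which is precisely why it is the intended adaptation.
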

\subsection{\textbf{Existence of solutions}}
 \hspace{1.5em}  In this part, we proceed to prove Theorem \ref{thm1.1} for the case $\phi=V_{\theta}$. We follow  a strategy of Guedj and Zeriahi \cite{GZ05} (originally from \cite{Ceg98} in the local setting), which approximates the measure $\mu$ by means of local convolutions and a partition of unity.
\begin{theorem}\label{minimal_nonp_0}
   Let  $\mu$ be a positive non-pluripolar  Radon measure. Then there exist       $\varphi\in\mathcal{E}(X,\theta)$ normalized by  $\sup_X \varphi=0$ and a unique constant $c>0$ such that     $\theta_{\varphi}^n= c\mu.$    
\end{theorem}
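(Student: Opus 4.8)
The plan is to absorb the total mass of $\mu$ into the constant $c$, to establish first the favorable case in which $\mu$ is dominated by a single element of $\mathcal{M}$, and then to reach the general case by truncating a Radon--Nikodym density. After scaling one may assume $\mu$ is a probability measure, so that necessarily $c=\int_X\theta_\varphi^n$; uniqueness of $c$ is then immediate from Corollary~\ref{cor_domi}(1), since if $\theta_{\varphi_1}^n=c_1\mu$ and $\theta_{\varphi_2}^n=c_2\mu$ with $c_1\le c_2$, then $\theta_{\varphi_1}^n\le(c_1/c_2)\theta_{\varphi_2}^n$ forces $c_1/c_2\ge1$. One should also note that $V_\theta$ is a $\theta$-psh model potential ($P_\theta[V_\theta]=V_\theta$ and $\und(\theta,V_\theta)=\und(\theta)>0$), so that Proposition~\ref{p-energy}, Theorem~\ref{rel_infity} and Theorem~\ref{1-ener_thm} all apply with $\phi=V_\theta$.

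For the case $\mu\le A\nu$ with $\nu\in\mathcal{M}$, I would follow the idea of Guedj--Zeriahi~\cite{GZ05} (which traces back to Cegrell~\cite{Ceg98}): fix a finite atlas with a subordinate partition of unity and mollify $\mu$ locally to produce smooth probability measures $\mu_j=f_j\omega_X^n$, $0\le f_j\in C^\infty(X)$, with $\mu_j\to\mu$ weakly. Mollifying against local potentials of $\psi$, where $\nu=(\omega_X+dd^c\psi)^n$, one wants in addition a $j$-independent domination $\mu_j\le A'\nu_j$ with $\nu_j\in\mathcal{M}$. Then for each $j$ one solves the equation with smooth positive density, getting $\theta_{\varphi_j}^n=c_j\mu_j$ with $\varphi_j$ of minimal singularities and $\sup_X\varphi_j=0$ (e.g.\ by~\cite{BGL25}), and $c_j=\int_X\theta_{\varphi_j}^n\in[\und(\theta),\ove(\theta)]$. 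The next point is uniform estimates: since $\varphi_j\le V_\theta$, the Chern--Levine--Nirenberg bound~\eqref{cln} (applied to $\theta$-psh functions and uniformly over $\mathcal{M}$) gives
\[
\int_X|\varphi_j-V_\theta|\,\theta_{\varphi_j}^n=c_j\int_X(V_\theta-\varphi_j)\,d\mu_j\le c_jA'\int_X(V_\theta-\varphi_j)\,d\nu_j\le C,
\]
and then Proposition~\ref{p-energy} (whose constant is uniform because $\mu_j\le A'\nu_j$) upgrades this to a uniform $(1+\varepsilon)$-energy bound $\int_X|\varphi_j-V_\theta|^{p'}d\mu_j\le C$ with $p'>1$, while Theorem~\ref{rel_infity} (using $\sup_{\nu\in\mathcal{M}}A_m(\nu)<\infty$ for $m>n$) gives $\varphi_j\ge V_\theta-T$ with $T$ independent of $j$, so $|\varphi_j-V_\theta|\le T$. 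Passing to a subsequence, $\varphi_j\to\varphi$ in $L^1(X)$ and $c_j\to c$; then $V_\theta-T\le\varphi\le V_\theta$, so $\varphi\simeq V_\theta$, hence $\varphi\in\mathcal{E}(X,\theta)$, and $\sup_X\varphi=0$ by Hartogs' lemma. From the uniform $(1+\varepsilon)$-bound, the boundedness $|\varphi_j-\varphi|\le T$ and the uniform domination of the $\nu_j$ by capacity one should deduce $\int_X|\varphi_j-\varphi|\,\theta_{\varphi_j}^n\to0$; Theorem~\ref{1-ener_thm} then gives convergence of $\varphi_j$ to $\varphi$ in capacity, Lemma~\ref{lsc} gives $\theta_{\varphi_j}^n\to\theta_\varphi^n$ weakly, and since $\theta_{\varphi_j}^n=c_j\mu_j\to c\mu$ weakly one concludes $\theta_\varphi^n=c\mu$ with $c=\int_X\theta_\varphi^n\ge\und(\theta)>0$.

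For general non-pluripolar $\mu$, I would use the Radon--Nikodym decomposition recalled above, $\mu=g\nu$ with $\nu\in\mathcal{M}$ and $0\le g\in L^1(X,\nu)$. Setting $c_k':=\int_X\min(g,k)\,d\nu\to1$ and $\mu_k:=(c_k')^{-1}\min(g,k)\,\nu$, each $\mu_k$ is a probability measure with $\mu_k\le(k/c_k')\nu$ (covered by the previous case), $\mu_k\le(c_k')^{-1}\mu\le2\mu$ for $k$ large, and $\|\mu-\mu_k\|\to0$. Solve $\theta_{\psi_k}^n=c_k\mu_k$ with $\psi_k\in\mathcal{E}(X,\theta)$, $\sup_X\psi_k=0$, $c_k\in[\und(\theta),\ove(\theta)]$ (using Corollary~\ref{full_same} for $c_k\ge\und(\theta)$). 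Since $\theta_{\psi_k}^n\le2\ove(\theta)\,\mu$, a fixed non-pluripolar Radon measure, Theorem~\ref{als_lem} yields, along a subsequence, $\psi_k\to\psi$ in $L^1(X)$ with $\psi\in\mathcal{E}(X,\theta)$ and $\theta_{\psi_k}^n\to\theta_\psi^n$ weakly; combined with $c_k\to c$ and $\mu_k\to\mu$ weakly this forces $\theta_\psi^n=c\mu$, $c=\int_X\theta_\psi^n\ge\und(\theta)>0$, and $\sup_X\psi=0$ by Hartogs' lemma, which finishes the proof.

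The hard part will be the first case, and specifically arranging the local mollification so that the approximants stay uniformly dominated, $\mu_j\le A'\nu_j$ with $\nu_j\in\mathcal{M}$ and $A'$ independent of $j$ --- the convolution of the \MA\ measure of a merely bounded potential need not be controlled by another such measure, so one must mollify the local potentials of $\psi$ rather than $\mu$ itself. Once this is in place, the uniform $(1+\varepsilon)$-energy estimate of Proposition~\ref{p-energy} and the relative $L^\infty$-estimate of Theorem~\ref{rel_infity} are exactly what prevent a loss of \MA\ mass in the limit; subsidiary technical inputs are the uniform higher-moment (Skoda/Alexander--Taylor type) bound $\sup_{\nu\in\mathcal{M}}A_m(\nu)<\infty$ for $m>n$, and, when $\theta$ is big but not nef, the standard reduction of the capacity estimates to the Zariski-open locus $\{\rho>-\infty\}$.
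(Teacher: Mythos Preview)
Your two–step strategy (first $\mu\le A\nu$ with $\nu\in\mathcal M$, then Radon–Nikodym truncation) matches the paper, and your second step is essentially the paper's argument. The genuine gap is in the first step, where your plan diverges from the paper in a way that does not work.

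First, the relative $L^\infty$ estimate cannot be invoked here. You want $\sup_{\nu\in\mathcal M}A_m(\nu)<\infty$ for some $m>n$, but the only uniform bound available is the CLN inequality \eqref{cln}, which is $L^1$; the capacity domination $\nu\le\operatorname{Cap}_{\omega_X}$ together with $\operatorname{Cap}_{\omega_X}(\varphi<-t)\le C/t$ gives uniform control of $\int(-\varphi)^a\,d\nu$ only for $a<1$. More to the point, even for $\mu\le A\nu$ the solution $\varphi\in\mathcal E(X,\theta)$ need \emph{not} have minimal singularities, so a uniform bound $\varphi_j\ge V_\theta-T$ cannot hold in general. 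Second, the auxiliary domination $\mu_j\le A'\nu_j$ with $\nu_j\in\mathcal M$ is not needed and is not obviously achievable: mollifying the local potentials of $\psi$ produces $(\omega_X+dd^c\psi_j)^n$, but there is no reason this dominates the mollification of $(\omega_X+dd^c\psi)^n$. Third, and most importantly, your assertion that $\int_X|\varphi_j-\varphi|\,\theta_{\varphi_j}^n\to0$ ``should follow'' from the $(1+\varepsilon)$-energy bound and boundedness is not an argument: the measures $\theta_{\varphi_j}^n=c_j\mu_j$ vary with $j$, and neither $L^1$ convergence of $\varphi_j$ nor a uniform energy bound controls $\int|\varphi_j-\varphi|\,d\mu_j$.

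The paper's mechanism, which you are missing, is to exploit the convolution duality $\int_X h\,d\mu_j=\sum_\alpha\int_{U_\alpha}(h*\chi_j)\rho_\alpha\,d\mu$ together with the sub-mean-value inequality $(h+g_\alpha)*\chi_j\ge h+g_\alpha$ for $h$ quasi-psh. This transfers all integrals against the varying $\mu_j$ back to the \emph{fixed} measure $\mu$. One then gets the uniform $1$-energy bound directly (no $\mu_j\le A'\nu_j$ needed), upgrades it to $(1+\varepsilon)$-energy via Proposition~\ref{p-energy}, uses \cite[Lemma~11.5]{GZ} to get $\int|\varphi_j-\varphi|\,d\mu\to0$, and then the same convolution trick (with $v_j=\mathrm{usc}\,\sup_{l\ge j}\varphi_l$ and $|\varphi_j-\varphi|\le 2v_j-\varphi_j-\varphi$) yields $\int|\varphi_j-\varphi|\,\theta_{\varphi_j}^n\to0$. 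Theorem~\ref{1-ener_thm} (plus Proposition~\ref{1-ener_pro} for $\varphi\in\mathcal E(X,\theta)$) and Lemma~\ref{lsc} finish as you indicated. No $L^\infty$ estimate is used or available.
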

\begin{proof}
     The constant 
$c$ is unique follows from Corollary~\ref{cor_domi}. Moreover, $ c$ is bounded below by
\( 
  \und(\theta)/  \int_X d\mu.
\) We first assume that  \(\mu \leq A\nu\) for some 
\(\nu \in \mathcal{M}\) and some constant \(A > 0\). Let \(\{U_{\alpha}\}_{\alpha=1}^N\) be a finite covering of \(X\) by coordinate balls, and let \(\{\rho_{\alpha}\}_{\alpha=1}^N\) be a partition of unity subordinate to \(\{U_{\alpha}\}_{\alpha=1}^N\).
We define the regularized measures \(\mu_j\) by 
\begin{align*} \mu_j = \sum_{\alpha=1}^{N} (\rho_{\alpha} \mu|_{U_{\alpha}}) * \chi_{j}, \end{align*}
where \(\chi_j\) are spherically symmetric mollifiers converging to the Dirac measure. For each $j$, applying \cite[Theorem D]{BGL25}, there exists a sup‑normalized $\theta$-psh function $\varphi_j \simeq V_{\theta}$ and a unique constant $c_j>0$ such that     $\theta_{\varphi_j}^n= c_j\mu_j.$   

Since $\mu_j \to \mu$ weakly and $\int_X \theta_{\varphi_j}^n \le \ove(\theta)$, the sequence $\{c_j\}$ is bounded from above. Passing to a subsequence, we may assume $c_j \to c$.
On each coordinate ball \(U_{\alpha}\), choose a smooth psh  function \(g_{\alpha}\) defined on a neighborhood of \(\overline{U}_{\alpha}\) such that \(\theta \le dd^c g_{\alpha}\). Then $g_\alpha$ and $\varphi_j+g_\alpha$ are psh on $U_\alpha$. For sufficiently large $j$, we have
   \begin{align}\label{convolution_bdd}
       \int_X (V_{\theta}-\varphi_j)\, \theta_{\varphi_j}^n   \leq  \int_X   (-\varphi_j) \, \theta_{\varphi_j}^n  &\leq C\sum_{\alpha} \int_{U_{\alpha}} \big( g_{\alpha}  * \chi_j - (\varphi_j+g_{\alpha}) *\chi_j\big) \rho_{\alpha}d\mu
  \nonumber \\&    \leq C\sum_{\alpha} \int_{U_{\alpha}} \big(  g_{\alpha}   * \chi_j -  \varphi_j-g_{\alpha}  \big)  d\mu
   \end{align}
where in the last inequality we have used $V_{\theta}\leq 0$, $0\le\rho_{\alpha} \leq1 $ and the fact that $(\varphi_j+g_{\alpha})*\chi_j\geq\varphi_j+g_{\alpha}$. 
  Using the Chern-Levine-Nirenberg inequality (\ref{cln})  and the fact that  $g_{\alpha}*\chi_j$ is uniformly bounded, we obtain  that $ \int_X (V_{\theta}-\varphi_j)\, \theta_{\varphi_j}^n$ is uniformly bounded in $j$. Then Proposition  \ref{p-energy} implies that there exists a constant $p>1$ such that \begin{align}\label{ineq-p-int} \sup_{j\in\mathbb{N}}\,\int_X |V_{\theta}-\varphi_j|^p\, \theta_{\varphi_j}^n<+\infty\end{align}
  It then follows from \cite[Lemma 11.5]{GZ} applied with 
\( \varphi_j - V_\theta\) (which remains valid because 
\(\{V_\theta = -\infty\}\) is a pluripolar set)  that,after passing to a subsequence, $\varphi_j$ converges in $L^1(X)$ to a $\theta$-psh function $\varphi$ satisfying $\sup_X\varphi=0$ and
       \begin{align}\label{11.5} \lim_{j\to+\infty}\int_X|\varphi_j-\varphi|\,d{\mu}=0. \end{align}
 Let $v_j\coloneqq \text{usc}(\sup_{l\geq j}\varphi_j)$. Note that    $|\varphi_j-\varphi|= \max(\varphi_j,\varphi)-\varphi_j-\varphi \leq 2v_j -\varphi_j-\varphi$,  so we have
\begin{align}\label{convolu_conveger}
 \int_X |\varphi_j-\varphi| \, \theta_{\varphi_j}^n  
   &\le C\sum_{\alpha} \int_{U_{\alpha}} \Bigl(2(v_j+g_{\alpha}) * \chi_j - (\varphi +g_{\alpha}) *\chi_j - (\varphi_j +g_{\alpha}) *\chi_j\Bigr) d\mu \nonumber \\
   &\le C\sum_{\alpha} \int_{U_{\alpha}} \Bigl(2(v_j+g_{\alpha}) * \chi_j - \varphi - g_{\alpha} - \varphi_j - g_{\alpha}\Bigr) d\mu \nonumber   \\
    &\le C\sum_{\alpha}\biggl( \int_{U_{\alpha}} \Bigl(2(v_j+g_{\alpha}) * \chi_j - 2(\varphi+g_{\alpha})\Bigr)d\mu 
          +\int_{U_{\alpha}} (\varphi-\varphi_j) d\mu \biggr). 
    \end{align}
Here the first term  on the right-hand side tends to $0$ by  the monotone convergence      $(v_j+g_{\alpha})  * \chi_j\searrow \varphi+g_{\alpha}$,   while the second term tends to $0$ by   (\ref{11.5}).  Consequently,
\[
\lim_{j\to+\infty}\int_X| \varphi_j-\varphi|\,\theta_{\varphi_j}^n = 0.
\]
By applying Proposition~\ref{1-ener_pro} to (\ref{ineq-p-int}), we  see that  $\varphi \in \mathcal{E}(X,\theta)$. It then follows from   Theorem \ref{1-ener_thm} that   $\varphi_j$ converges to $\varphi$ in capacity. To prove that $\varphi$ satisfies the required equation, it suffices to show that $\theta_{\varphi_j}^n$ converges weakly to $\theta^n_{\varphi}$.  For this purpose, we observe that
\begin{align}\label{argument_rep}
    \int_{\{\varphi_j\leq -t\}} \theta_{\varphi_j}^n &\leq\frac{ 1}{t} \int_{X}(-\varphi_j)\,\theta_{\varphi_j}^n\leq \frac{C'}{t}  \to\; 0 
\qquad \text{as } t \to +\infty,
\end{align}
where the convergence is uniform in $j$. Combined with Lemma \ref{lsc}, this implies that $\theta_{\varphi_j}^n \to \theta^n_{\varphi}$ weakly. Therefore, we conclude that $\theta_{\varphi}^n=c\mu$.

 For the general case, we write $\mu=g\nu$, where $\nu\in\mathcal{M}$ and $g\in L^1(X,\nu)$.  Set $\mu_j=\min(g,j)\, \nu$. By the preceding result, for each $j$, there exists a constant  $c_j>0$ and a function $\varphi_j\in\mathcal{E}(X,\theta)$ with  $\sup_X\varphi_j=0$  such that  $\theta_{\varphi_j}^n=c_j\mu_j$.
 
Since $\mu_j \ge \mu_1$ and $\theta_{\varphi_j}^n \le \ove(\theta)$, the sequence $\{c_j\}$ is bounded from above. By passing to   a subsequence, we may assume that $c_j \to c \ge 0$ and that $\varphi_j$ converges to a $\theta$-psh function $\varphi$ normalized by $\sup_X\varphi = 0$. By Lemma~\ref{lem_2.8} below, we have  $  
\theta_{\varphi}^n \ge c\mu$.
 
Note also that $\theta_{\varphi_j}^n \le C\mu$ for some uniform constant $C>0$. Theorem \ref{als_lem}  implies that $\varphi \in \mathcal{E}(X,\theta)$ and that $\varphi_j$ converges to $\varphi$ in capacity. It then follows from the lower semicontinuity of the non‑pluripolar product that $ \int_X \rho  \,\theta_{\varphi}^n \leq  \liminf_j \int_X c_j \rho\, d\mu_j =\int_X c \rho \, d\mu$ for any non-negative smooth function $\rho$. Consequently,  $\theta_{\varphi}^n = c\mu$,  completing  this proof.
\end{proof}
\begin{rem}
From the preceding result, there exist a function $\varphi \in \mathcal{E}(X,\omega_X)$ and a constant $c>0$ such that $\mu = c\, (\omega_X + dd^c\varphi)^n$. Set $\psi \coloneqq e^{\varphi}$. Then
\[
(\omega_X + dd^c\psi)^n \ge e^{n\varphi}(\omega_X + dd^c\varphi)^n.
\]
We can therefore express $\mu$ as $\mu = g\, (\omega_X + dd^c\psi)^n$, where $\psi$ is a bounded $\omega_X$-psh function and $g \in L^1\bigl(X, (\omega_X + dd^c\psi)^n\bigr)$.
\end{rem}

We now present a lemma that extends \cite[Lemma 2.8]{DDNL21LOG} to the hermitian setting.
\begin{lemma} \label{lem_2.8}  Let $\mu$ be a positive non-pluripolar   Radon measure, and let $\{\varphi_j\}$ be a sequence of $\theta$-psh functions such that $\theta_{\varphi_j}^n \ge f_j\mu$ with $f_j \in L^1(X,\mu)$. 
    Suppose that $f_j$ converges in $L^1(X,\mu)$ to $f \in L^1(X,\mu)$ and that $\varphi_j$ converges in $L^1(X, \omega_X^n)$ to a $\theta$-psh function $\varphi$. 
    Then $\theta_{\varphi}^n \ge f\mu$.
 \end{lemma}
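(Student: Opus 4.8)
The plan is to follow the argument of \cite[Lemma~2.8]{DDNL21LOG}, with the Kähler maximum principle replaced by its Hermitian counterpart (Proposition~\ref{max_p}) and the Bedford–Taylor continuity statements replaced by Lemma~\ref{lsc}. The standing bounded‑mass hypothesis enters through the uniform bound $\int_X\theta_v^n\le\ove(\theta)<+\infty$, valid for every $\theta$‑psh $v$ (by Proposition~\ref{bgl_mono}, since $v\preceq V_\theta$), which guarantees that every non‑pluripolar product appearing below is a genuine Radon measure.

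First I would reduce to the case $f_j,f\ge 0$: since $\theta_{\varphi_j}^n\ge 0$, one also has $\theta_{\varphi_j}^n\ge f_j^{+}\mu$, the sequence $f_j^{+}$ still converges to $f^{+}$ in $L^1(X,\mu)$, and $\theta_\varphi^n\ge f^{+}\mu$ implies $\theta_\varphi^n\ge f\mu$. Passing to a subsequence, I may assume $\varphi_j\to\varphi$ pointwise $\omega_X^n$‑a.e.\ and $\sum_j\|f_{j+1}-f_j\|_{L^1(\mu)}<+\infty$, so that $f_j\to f$ $\mu$‑a.e.\ and $|f_j|\le h$ for some $h\in L^1(X,\mu)$; then $g_j:=\inf_{l\ge j}f_l\in L^1(X,\mu)$ and $g_j\nearrow f$ $\mu$‑a.e. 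For each $j$ I set $w_j^k:=\max(\varphi_j,\dots,\varphi_{j+k})\in\psh(X,\theta)$, which increases in $k$ to $w_j:=\bigl(\sup_{l\ge j}\varphi_l\bigr)^{*}\in\psh(X,\theta)$; since $\varphi_j\to\varphi$ in $L^1$, the usc regularisation of $\limsup_l\varphi_l$ equals $\varphi$, hence $w_j\searrow\varphi$.

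Iterating the maximum principle (Proposition~\ref{max_p}) yields a Borel partition $X=\bigsqcup_{l=j}^{j+k}E_l$ with $E_l\subseteq\{w_j^k=\varphi_l\}$ and
\[
\theta_{w_j^k}^n\;\ge\;\sum_{l=j}^{j+k}\mathbf{1}_{E_l}\,\theta_{\varphi_l}^n\;\ge\;\sum_{l=j}^{j+k}\mathbf{1}_{E_l}\,f_l\,\mu\;\ge\;g_j\,\mu ,
\]
using $f_l\ge g_j$ for $l\ge j$. Letting $k\to+\infty$, $w_j^k\nearrow w_j$ converges in capacity and, by the monotone‑convergence properties of the non‑pluripolar product together with the uniform mass bound $\ove(\theta)$ (which rules out mass escaping along the defining truncations $\max(\cdot,V_\theta-t)$), the measures $\theta_{w_j^k}^n$ converge weakly to $\theta_{w_j}^n$; this upgrades the inequality to $\theta_{w_j}^n\ge g_j\mu$. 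Finally, letting $j\to+\infty$, $w_j\searrow\varphi$ with masses uniformly $\le\ove(\theta)$, and I would verify the no‑mass‑escape hypothesis of the second part of Lemma~\ref{lsc}: writing $\theta_{w_j}^n=g_j\mu+\sigma_j$ with $\sigma_j\ge 0$, one has $\{w_j\le -t\}\subseteq\{\varphi\le -t\}$ and $\int_{\{\varphi\le -t\}}g_j\,d\mu\le\int_{\{\varphi\le -t\}}h\,d\mu\to 0$ uniformly in $j$, so that $\theta_{w_j}^n\to\theta_\varphi^n$ weakly. Passing to the limit in $\theta_{w_l}^n\ge g_l\mu\ge g_j\mu$ for $l\ge j$ gives $\theta_\varphi^n\ge g_j\mu$ for every $j$, and then $g_j\nearrow f$ gives $\theta_\varphi^n\ge f\mu$. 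The main obstacle — and the sole place where the bounded‑mass property is genuinely used — is controlling the mass along the two monotone limits, i.e.\ ensuring that no part of $\theta_{w_j}^n$ (or of $\sigma_j$) escapes to the pluripolar set $\{\varphi=-\infty\}$; this is exactly the content of the weak‑convergence criterion in Lemma~\ref{lsc}, for which the uniform bound $\ove(\theta)<+\infty$ is indispensable.
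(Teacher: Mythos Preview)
Your setup---the sandwich $w_j^k\nearrow w_j\searrow\varphi$, the iterated maximum principle giving $\theta_{w_j^k}^n\ge g_j\mu$, and the reduction to $f_j\ge 0$ with a dominating $h\in L^1(\mu)$ and $g_j:=\inf_{l\ge j}f_l\nearrow f$---coincides with the paper's. The gaps lie in the two limit passages.

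At the step $k\to+\infty$ you assert $\theta_{w_j^k}^n\to\theta_{w_j}^n$ weakly. The first part of Lemma~\ref{lsc} only yields $\int\chi\,\theta_{w_j}^n\le\liminf_k\int\chi\,\theta_{w_j^k}^n$ for $\chi\ge 0$, which is the wrong direction for transferring a \emph{lower} bound to the limit. For weak convergence one would need the second part of Lemma~\ref{lsc}, i.e.\ $\sup_k\int_{\{w_j^k\le -t\}}\theta_{w_j^k}^n\to 0$ as $t\to\infty$; the global bound $\int_X\theta_{w_j^k}^n\le\ove(\theta)$ does not furnish this. In the K\"ahler setting monotone continuity of non-pluripolar products holds because all masses equal the class volume; here masses genuinely vary, so the claim is not automatic. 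The same defect recurs at $j\to+\infty$: you split $\theta_{w_j}^n=g_j\mu+\sigma_j$ and bound the tail $\int_{\{w_j\le -t\}}g_j\,d\mu$ via $h$, but you give no control on $\int_{\{w_j\le -t\}}d\sigma_j$, and there is no reason this should vanish uniformly in $j$. Thus the hypothesis of Lemma~\ref{lsc} is not verified and weak convergence $\theta_{w_j}^n\to\theta_\varphi^n$ does not follow.

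The paper bypasses both issues by truncation: for fixed $C>0$ set $v_j^{k,C}:=\max(w_j^k,V_\theta-C)$, $v_j^C:=\max(w_j,V_\theta-C)$, $\varphi^C:=\max(\varphi,V_\theta-C)$, and work on the Zariski open set $\Omega=\{\rho>-\infty\}$, where $V_\theta$ and hence every truncated potential is locally bounded. There the classical Bedford--Taylor continuity theorem \cite[Theorem~4.26]{GZ} applies to both the increasing limit in $k$ and the decreasing limit in $j$, producing genuine convergence of the Monge--Amp\`ere measures rather than mere semicontinuity; this is what lets the lower bound survive the limits. Plurifine locality then converts $\mathbf{1}_\Omega\,\theta_{\varphi^C}^n\ge\mathbf{1}_{\{\varphi>V_\theta-C\}\cap\Omega}\,f\mu$ into $\mathbf{1}_{\{\varphi>V_\theta-C\}}\theta_\varphi^n\ge\mathbf{1}_{\{\varphi>V_\theta-C\}\cap\Omega}\,f\mu$, and finally $C\to+\infty$ uses only that $\mu$ does not charge the pluripolar set $\{V_\theta=-\infty\}$. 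No appeal to weak convergence of non-pluripolar products of unbounded potentials is needed.
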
 

\begin{proof}  
  Set $v_j^k \coloneqq \max(\varphi_j,\dots,\varphi_{j+k})$ and $v_j \coloneqq \operatorname{usc}\!\bigl(\sup_{l\ge j}\varphi_j\bigr)$.  
    Fix $C>0$ and define  
    \[
    v_j^{k,C} \coloneqq \max\bigl(v_j^k,\; V_{\theta}-C\bigr),\qquad 
    v_j^C \coloneqq \max\bigl(v_j,\; V_{\theta}-C\bigr).
    \]    Recall that   $\Omega\coloneqq \{\rho>-\infty\}$ is a   zariski open set and  that $\{V_{\theta}>-\infty\}\subseteq \Omega$.  Using the maximal principle and the plurifine locality, we have 
\begin{align*}   
  \boldsymbol{1}_{  \Omega} \theta_{v_j^{k,C}}^n \geq \boldsymbol{1}_{\{v_j^k>V_{\theta}-C\}\cap \Omega} \theta_{v_j^{k }}^n\geq\boldsymbol{1}_{\{v_j^k>V_{\theta}-C\}\cap \Omega} \bigl(\inf_{l\geq j} f_l\bigr)\mu.
\end{align*}
 
  We note  that on the open set $\Omega $,  the functions  $v_j^{k,C}$ and $v_j^C$ are locally bounded. By letting $k\to +\infty$, it then follows from  \cite[Theorem 4.26]{GZ}    that 
  $$  \boldsymbol{1}_{  \Omega} \theta_{v_j^{ C}}^n  \geq\boldsymbol{1}_{\{v_j >V_{\theta}-C\}\cap \Omega} \left(\inf_{l\geq j} f_l\right)\mu $$
  Set $\varphi^C\coloneqq\max(\varphi,V_{\theta}-C)$. Because $v_j\searrow\varphi$ and $\varphi^C$ is locally bounded on $\Omega$, letting $j\to+\infty$ and applying \cite[Theorem 4.26]{GZ} again yields  $$\boldsymbol{1}_{  \Omega} \theta_{\varphi^{ C}}^n  \geq\boldsymbol{1}_{\{\varphi >V_{\theta}-C\}\cap \Omega} f\, \mu.$$
  Multiplying both sides by $\boldsymbol{1}_{\{\varphi >V_{\theta}-C\} }$ and using the plurifine locality, we have $$\boldsymbol{1}_{\{\varphi >V_{\theta}-C\} }\theta_{\varphi }^n \geq\boldsymbol{1}_{\{\varphi >V_{\theta}-C\}\cap \Omega} \theta_{\varphi^{ C}}^n  \geq\boldsymbol{1}_{\{\varphi >V_{\theta}-C\}\cap \Omega} f\, \mu.$$
 Finally, letting $C\to+\infty$ and using  that $\mu$ is non-pluripolar gives the desired inequality.
\end{proof}
 Similarly, we solve the complex Monge–Ampère equation with an exponential twist.
\begin{theorem}\label{minimal_nonp_lambda}
Let $\lambda>0$ be a constant and  $\mu$ be a positive non-pluripolar Radon    measure. Then there exists     a unique $\varphi\in\mathcal{E}(X,\theta)$   such that    $\theta_{\varphi}^n= e^{\lambda\varphi}\mu.$   
\end{theorem}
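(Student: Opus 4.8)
The plan is to run the proof of Theorem~\ref{minimal_nonp_0} essentially verbatim; the one new point is an a priori $L^\infty$ bound on the approximating solutions, now produced by the exponential weight itself. Uniqueness is immediate from Corollary~\ref{cor_domi}(2): if $\varphi_1,\varphi_2\in\mathcal{E}(X,\theta)$ both solve the equation, then $e^{-\lambda\varphi_1}\theta_{\varphi_1}^n=\mu=e^{-\lambda\varphi_2}\theta_{\varphi_2}^n$, and since $V_\theta$ is a $\theta$-psh model potential with $\und(\theta)>0$ and $\mathcal{E}(X,\theta)=\mathcal{E}(X,\theta,V_\theta)$, applying that corollary in both directions forces $\varphi_1=\varphi_2$.

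For existence I would first assume $\mu\le A\nu$ for some $\nu\in\mathcal{M}$ and $A>0$, and regularize as in Theorem~\ref{minimal_nonp_0}, setting $\mu_k=\sum_\alpha(\rho_\alpha\mu|_{U_\alpha})*\chi_k$. For these nice (smooth, hence $L^p$) densities the twisted equation $\theta_{\varphi_k}^n=e^{\lambda\varphi_k}\mu_k$ admits a bounded solution $\varphi_k\simeq V_\theta$ by the Hermitian Monge--Amp\`ere existence theory (see \cite{KN19,GL1,BGL25}; alternatively one bootstraps from Theorem~\ref{minimal_nonp_0} by iterating on the normalizing constant and absorbing its limit into $\varphi_k$). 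The crucial estimate is $|\sup_X\varphi_k|\le C$ uniformly in $k$. Indeed $\int_X e^{\lambda\varphi_k}\,d\mu_k=\int_X\theta_{\varphi_k}^n\in[\und(\theta),\ove(\theta)]$; writing $\psi_k:=\varphi_k-\sup_X\varphi_k\le 0$, Jensen's inequality gives $\int_X e^{\lambda\psi_k}\,d\mu_k\ge\mu_k(X)\exp\!\bigl(\frac{\lambda}{\mu_k(X)}\int_X\psi_k\,d\mu_k\bigr)$, while the Chern--Levine--Nirenberg / convolution bookkeeping already used for \eqref{convolution_bdd} in the proof of Theorem~\ref{minimal_nonp_0} bounds $\int_X(-\psi_k)\,d\mu_k$ uniformly (equivalently, $A_1(\mu_k)\le A_1(\mu)+C$), so the left side is bounded below by a positive constant; hence $e^{\lambda\sup_X\varphi_k}\le\ove(\theta)/\int_X e^{\lambda\psi_k}\,d\mu_k\le C$, and $\sup_X\varphi_k\to-\infty$ is impossible because it would force $\int_X\theta_{\varphi_k}^n\to 0<\und(\theta)$.

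With $|\sup_X\varphi_k|\le C$ in hand, $\theta_{\varphi_k}^n=e^{\lambda\varphi_k}\mu_k\le e^{\lambda C}\mu_k$, and the remainder transcribes from Theorem~\ref{minimal_nonp_0} applied to $\widehat\varphi_k:=\varphi_k-\sup_X\varphi_k$: the convolution estimate bounds $\int_X|V_\theta-\widehat\varphi_k|\,\theta_{\widehat\varphi_k}^n$ uniformly, Proposition~\ref{p-energy} (whose constant depends on $\mu_k$ only through $A_1(\mu_k)\le A_1(\mu)+C$) upgrades this to $\sup_k\int_X|V_\theta-\widehat\varphi_k|^p\,\theta_{\widehat\varphi_k}^n<+\infty$ for some $p>1$, Proposition~\ref{1-ener_pro} and Theorem~\ref{1-ener_thm} then furnish, along a subsequence, a limit $\varphi\in\mathcal{E}(X,\theta)$ with $\varphi_k\to\varphi$ in $L^1(X)$ and in capacity and $\int_X|\varphi_k-\varphi|\,\theta_{\varphi_k}^n\to 0$, whence $\theta_{\varphi_k}^n\to\theta_\varphi^n$ weakly by Lemma~\ref{lsc} and the tail bound \eqref{argument_rep}; meanwhile \cite[Lemma~11.5]{GZ} applied to $\lambda\varphi_k-\lambda V_\theta$ gives $\int_X|e^{\lambda\varphi_k}-e^{\lambda\varphi}|\,d\mu\to 0$, which together with $\mu_k\to\mu$, the bound $e^{\lambda\varphi_k}\le e^{\lambda C}$ and the convolution bookkeeping of \eqref{convolu_conveger} yields $e^{\lambda\varphi_k}\mu_k\to e^{\lambda\varphi}\mu$ weakly; comparing the two limits gives $\theta_\varphi^n=e^{\lambda\varphi}\mu$.

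For a general non-pluripolar $\mu=g\nu$ ($\nu\in\mathcal{M}$, $g\in L^1(\nu)$) I would set $\mu_j:=\min(g,j)\nu\nearrow\mu$, use the dominated case to get $\varphi_j\in\mathcal{E}(X,\theta)$ with $\theta_{\varphi_j}^n=e^{\lambda\varphi_j}\mu_j$, note that $\mu_j\ge\mu_1$ again yields $|\sup_X\varphi_j|\le C$ and hence $\theta_{\varphi_j}^n\le e^{\lambda C}\mu$, and pass to the limit via Theorem~\ref{als_lem} (giving $\varphi\in\mathcal{E}(X,\theta)$, $\varphi_j\to\varphi$ in capacity, $\theta_{\varphi_j}^n\to\theta_\varphi^n$ weakly) together with $\int_X|e^{\lambda\varphi_j}-e^{\lambda\varphi}|\,d\mu\to 0$ and $(\mu-\mu_j)(X)\to 0$, concluding $\theta_\varphi^n=e^{\lambda\varphi}\mu$; alternatively one passes first to $\theta_\varphi^n\ge e^{\lambda\varphi}\mu$ via Lemma~\ref{lem_2.8} and then forces equality by lower semicontinuity of masses, exactly as in Theorem~\ref{minimal_nonp_0}. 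The only step that truly departs from the $c\mu$ argument is the a priori bound $|\sup_X\varphi_k|\le C$; downstream of it, the most delicate piece remains the convolution bookkeeping that converts $\mu_k\to\mu$ into $\int_X|\varphi_k-\varphi|\,\theta_{\varphi_k}^n\to 0$ and into weak convergence of the \emph{nonlinear} right-hand sides $e^{\lambda\varphi_k}\mu_k$ — the exponential makes the latter a little heavier than in the linear case, but it is controlled by $e^{\lambda\varphi_k}\le e^{\lambda C}$, the Lipschitz continuity of $t\mapsto e^{t}$ on $(-\infty,\lambda C]$, and \cite[Lemma~11.5]{GZ}.
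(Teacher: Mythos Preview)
Your proposal is correct and follows essentially the same route as the paper: uniqueness via Corollary~\ref{cor_domi}, convolution regularization with smooth solutions supplied by \cite[Theorem~4.5]{BGL25}, the Jensen/CLN bound on $\sup_X\varphi_k$, and then the machinery of Theorem~\ref{minimal_nonp_0} (energy estimate, capacity convergence, weak convergence of both sides) to pass to the limit, followed by truncation $\mu_j=\min(g,j)\nu$ for general $\mu$. The only cosmetic difference is that in the general step the paper observes that the domination principle makes $\{\varphi_j\}$ monotone decreasing (since $\mu_j\nearrow\mu$), which spares the subsequence extraction and gives the upper bound on $\sup_X\varphi_j$ for free; your $\mu_j\ge\mu_1$ argument via Jensen on $\mu_1$ is an equally valid substitute.
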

\begin{proof}
The uniqueness of $\varphi$ follows from Corollary~\ref{cor_domi}. 
Without loss of generality, assume $\lambda=1$. We first consider the case  where $\mu \le A(\omega_X + dd^c \psi)^n$ for some bounded $\omega_X$-psh function $\psi$.
Let  $U_{\alpha}$, $\rho_{\alpha}$, $g_{\alpha}$,  $\chi_j$ and 
 $\mu_j$ be as in the proof of Theorem \ref{minimal_nonp_0}.For each $j$, by \cite[Theorem 4.5]{BGL25} there exists a $\theta$-psh function $\varphi_j \simeq V_{\theta}$ such that    $\theta_{\varphi_j}^n= e^{\varphi_j}\mu_j.$  
 
Let $c_j\coloneqq\sup_{X}\varphi_j$ and $v_j\coloneqq\varphi_j-\sup_{X}\varphi_j$. Using Jensen's inequality we have
\begin{equation*} \textstyle \ove(\theta) \ge \int_X \theta_{\varphi_j}^n 
    = e^{c_j} \int_X e^{v_j}\, d\mu_j 
    \ge e^{c_j} \exp\!\Bigl(\frac{\int_X v_j\, d\mu_j}{\int_X d\mu_j}\Bigr). \end{equation*}
Since $\sup_Xv_j=0$, an argument analogous to the one in \eqref{convolution_bdd}   shows that $\int_X (-v_j)d\mu_j$ is uniformly bounded. Thus we obtain that $c_j$ is bounded from above,  
while the lower bound of $c_j$ follows from the inequality $\und(\theta)\leq e^{c_j}\int_X d\mu_j$.
Moreover, $\theta_{\varphi_j}^n \le C\mu_j$ for a uniform constant $C>0$.
    We can now repeat the argument in the proof of Theorem~\ref{minimal_nonp_0} and conclude that, after passing to a subsequence, $\varphi_j$ converges in capacity to a function $\varphi \in \mathcal{E}(X,\theta)$, and $\theta_{\varphi_j}^n \to \theta_{\varphi}^n$ weakly.

Now  we  claim that $e^{\varphi_j}\mu_j\to e^{\varphi}\mu$. weakly.
 Indeed, since   $e^{\varphi_j}$ are uniformly bounded quasi-psh functions, \cite[Lemma 11.5]{GZ} implies  
  $$\lim_{j\to+\infty}\int_X|e^{\varphi_j}-e^{\varphi}|\,d{\mu}=0.$$
Repeating the argument of (\ref{convolu_conveger}) with $\varphi_j$ replaced by $e^{\varphi_j}$ and $\varphi $ replaced by $e^{\varphi}$, we also have 
            $\lim_{j\to+\infty}\int_X|e^{\varphi_j}-e^{\varphi}|\,d{\mu_j}=0.$  
        It thus remains to show that 
\[
\lim_{j\to+\infty} \int_X \rho e^{\varphi} \, (d\mu_j - d\mu) = 0
\]
for every strictly positive  $\rho \in C^{\infty}(X)$. Writing $\rho = e^{\log \rho}$ and absorbing $\log \rho$ into $\varphi$, we may assume $\rho = 1$. By the definition of   convolution,  we have
\begin{align*}
 \lim_{j\to+\infty}   \int_{X} e^{\varphi } d\mu_j & = 
    \lim_{j\to+\infty} \sum_{\alpha} \int_{U_{\alpha}}  \big(  (g_{\alpha}+ e^{\varphi })*\chi_j-g_{\alpha}  *\chi_j    \big) \,\rho_{\alpha}\, d \mu
    \\& =  \sum_{\alpha} \int_{U_{\alpha}}  \big(  g_{\alpha}+ e^{\varphi } -g_{\alpha}     \big) \,\rho_{\alpha}\, d \mu
    = \int_{X} e^{\varphi } d\mu.
\end{align*}
Here we use the fact that $(g_{\alpha}+ e^{\varphi })*\chi_j\searrow (g_{\alpha}+ e^{\varphi })$ and that $g_{\alpha} *\chi_j \to g_{\alpha}$ uniformly. Thus the claim is proved and we obtain $\theta_{\varphi}^n = e^{\varphi}\mu$.

   For the general case, we write $\mu=g(\omega_X+dd^c\psi)^n$, where $\psi$ is a bounded $\omega_X$-psh function   and $g\in L^1\bigl(X, (\omega_X + dd^c\psi)^n\bigr)$.  Set $\mu_j=\min(g,j)\, (\omega_X+dd^c\psi)^n$. From what we have just proved, for each $j$ there exists $\varphi_j \in \mathcal{E}(X,\theta)$ such that   $\theta_{\varphi_j}^n=e^{\varphi_j}\mu_j.$ 
   
  Since $\mu_j \le \mu_k$ for $j \le k$, the domination principle implies that the sequence $\{\varphi_j\}$ is decreasing. Moreover, the estimate $\und(\theta) \le \theta_{\varphi_j}^n \le e^{\sup_X \varphi_j}\mu$ ensures that $\sup_X \varphi_j$ is uniformly bounded. It follows that $\varphi_j$ decreases  to a $\theta$-psh function $\varphi$. By the Lebesgue dominated convergence theorem, we have
\[  
e^{\varphi_j}g_j \to e^{\varphi}g \quad \text{in } L^1\bigl(X, (\omega_X+dd^c\psi)^n\bigr).
\]
 It then follows from  Lemma \ref{lem_2.8}  that \(  \theta_{\varphi}^n\geq e^{\varphi}\mu.\) 
Because $\theta_{\varphi_j}^n \le e^{\sup_X \varphi_1}\mu$, Theorem \ref{als_lem} implies $\varphi \in \mathcal{E}(X,\theta)$ and $\varphi_j \to \varphi$ in capacity.  
By the lower semicontinuity of the non‑pluripolar product, we conclude that 
$\theta_{\varphi}^n = e^{\varphi}\mu$, which completes the proof.
\end{proof}

\section{Monge-Amp\`ere Equations with Prescribed Singularity Type}\label{sec_6}

\subsection{\textbf{Solving complex  \MA\ type equations}}

\hspace{1.5em}
In the sequel, we fix $\phi$ to be a $\theta$-psh model potential.
The  criteria established in Section \ref{general_rel} enable us to construct subsolutions as described below.
\begin{proposition}\label{additive_sol}
 Let $\lambda > 0$ be a constant  and    $u,v \in \mathcal{E}(X,\theta,\phi)$.  Set 
    \[
    \mu \coloneqq e^{-\lambda u}\theta_u^n + e^{-\lambda v}\theta_v^n .
    \]
    Assume that $\theta_v^n \le A(\omega_X+dd^c\psi)^n$ for some bounded $\omega_X$-psh function $\psi$ and some constant $A>0$. 
    Then there exists a unique $\varphi \in \mathcal{E}(X,\theta,\phi)$ such that 
    \(
    \theta_{\varphi}^n = e^{\lambda\varphi}\mu .
    \) 
\end{proposition}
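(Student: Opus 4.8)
\emph{Uniqueness and the supersolution.} Uniqueness is immediate from the domination principle: if $\varphi_1,\varphi_2\in\mathcal{E}(X,\theta,\phi)$ both solve the equation then $e^{-\lambda\varphi_1}\theta_{\varphi_1}^n=\mu=e^{-\lambda\varphi_2}\theta_{\varphi_2}^n$, so Corollary~\ref{cor_domi}(2), applied in both directions, forces $\varphi_1=\varphi_2$. For existence I would begin with the natural supersolution. Since $u,v\in\mathcal{E}(X,\theta,\phi)$, Lemma~\ref{diam_lem} gives $\Phi:=P_{\theta}(u,v)\in\mathcal{E}(X,\theta,\phi)$; as $\Phi\le u$ and $\Phi\le v$, the minimum principle (Corollary~\ref{min_p}) yields $\theta_{\Phi}^n\le\mathbf{1}_{\{\Phi=v\le u\}}\theta_v^n+\mathbf{1}_{\{\Phi=u<v\}}\theta_u^n\le e^{\lambda\Phi}\mu$, the last bound because on the set $\{\Phi=v\le u\}$ one has $e^{\lambda\Phi}\mu\ge e^{\lambda v}e^{-\lambda v}\theta_v^n$, and symmetrically on $\{\Phi=u<v\}$. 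In particular $e^{\lambda\Phi}\mu=e^{\lambda(\Phi-u)}\theta_u^n+e^{\lambda(\Phi-v)}\theta_v^n\le\theta_u^n+\theta_v^n$ is a non-pluripolar Radon measure, and Corollary~\ref{cor_domi}(2) shows every solution $\varphi$ satisfies $\varphi\le\Phi$. Thus one should work throughout with the \emph{finite} measures obtained from $\mu$ by multiplying by factors $\le1$, recovering $\mu$ only at the end.

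\emph{Approximate solutions.} I would then produce the solution as a decreasing limit of approximate solutions. Using the hypothesis $\theta_v^n\le A(\omega_X+dd^c\psi)^n$ together with the fact that the non-pluripolar Radon measure $\theta_u^n$ can be written $g\,(\omega_X+dd^c\psi')^n$ with $\psi'$ bounded $\omega_X$-psh and $g\in L^1$ (Remark after Theorem~\ref{minimal_nonp_0}), one constructs non-pluripolar Radon measures $\mu_k\nearrow\mu$ that are well dominated in the sense of Section~\ref{sec_5} and satisfy $e^{\lambda\Phi}\mu_k\le\theta_u^n+\theta_v^n$. For each $k$ one solves $\theta_{\varphi_k}^n=e^{\lambda\varphi_k}\mu_k$ inside the relative class $\mathcal{E}(X,\theta,\phi)$: solvability in the full mass class $\mathcal{E}(X,\theta)=\mathcal{E}(X,\theta,V_{\theta})$ is given by Theorem~\ref{minimal_nonp_lambda}, and one descends from minimal singularities to the prescribed type $\phi$ through a finite chain of rooftop envelopes, invoking Lemma~\ref{diam_lem} and Lemma~\ref{lemma ddnl} at each stage to remain in the relative full mass class and Proposition~\ref{1-ener_pro} to keep the intermediate envelopes from collapsing to $-\infty$. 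By Corollary~\ref{cor_domi}(2) the $\varphi_k$ may be chosen decreasing in $k$.

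\emph{Passage to the limit.} Since $\theta_{\varphi_k}^n=e^{\lambda\varphi_k}\mu_k\le e^{\lambda\Phi}\mu\le\theta_u^n+\theta_v^n$ uniformly in $k$, Proposition~\ref{general_asl} (equivalently Theorem~\ref{als_lem}) applies to the decreasing sequence $\{\varphi_k\}$ and yields $\varphi:=\lim_k\varphi_k\in\mathcal{E}(X,\theta,\phi)$, convergence in capacity, and the weak convergence $\theta_{\varphi_k}^n\to\theta_{\varphi}^n$. On the other hand $e^{\lambda\varphi_k}\mu_k$ converges to $e^{\lambda\varphi}\mu$ by monotone convergence of the densities and $\mu_k\nearrow\mu$, using Lemma~\ref{lsc} to control the masses. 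Comparing the two limits gives $\theta_{\varphi}^n=e^{\lambda\varphi}\mu$; in particular $e^{\lambda\varphi}\mu$ is a finite measure, and together with $\varphi\in\mathcal{E}(X,\theta,\phi)$ this is the assertion.

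\emph{The main obstacle.} The step I expect to be hardest is solving the approximate equations inside the \emph{relative} class $\mathcal{E}(X,\theta,\phi)$ for a general model potential $\phi$, since Section~\ref{sec_5} only furnishes solutions with minimal singularities. Carrying out the descent to the prescribed singularity type---without losing \MA\ mass and while keeping the exponential factor integrable---is exactly what forces the repeated use of the rooftop-envelope criteria of Section~\ref{general_rel} (Lemma~\ref{diam_lem}, Proposition~\ref{diam_ineq}), the relative energy estimate (Proposition~\ref{1-ener_pro}, Theorem~\ref{1-ener_thm}), and the generalized stability theorem (Proposition~\ref{general_asl}). A secondary, bookkeeping difficulty is that $\mu$ itself need not have finite total mass, so all estimates must be run on the finite measures $e^{\lambda\Phi}\mu\le\theta_u^n+\theta_v^n$ and their truncations.
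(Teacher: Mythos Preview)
Your proposal has a genuine gap at exactly the point you flag as the main obstacle: solving the approximate equations $\theta_{\varphi_k}^n=e^{\lambda\varphi_k}\mu_k$ inside $\mathcal{E}(X,\theta,\phi)$. You propose to solve in $\mathcal{E}(X,\theta)$ via Theorem~\ref{minimal_nonp_lambda} and then ``descend from minimal singularities to the prescribed type $\phi$ through a finite chain of rooftop envelopes'', but no concrete descent is given. The only available mechanism---solve in $\mathcal{E}(X,\theta)$, project via $P_\theta[\phi]$ to get a supersolution, and maximize over supersolutions---is precisely the proof of Theorem~\ref{MA_prescribed}, and that proof \emph{uses} Proposition~\ref{additive_sol}: the additive step is what shows a strict supersolution can be improved. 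So the scheme is circular. A related issue is your choice of approximation: by writing $\theta_u^n=g(\omega_X+dd^c\psi')^n$ and truncating $g$, your $\mu_k$ become generic well-dominated measures and lose the structure ``sum of exponentially weighted \MA\ measures'', which is exactly what forces you back onto the general theorem. (There is also a minor gap: $\varphi_k\le\Phi$ needs $e^{-\lambda\Phi}\theta_\Phi^n\le\mu_k$, which fails for strict truncations of $\mu$.)

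The paper avoids this circularity by never appealing to general solvability in $\mathcal{E}(X,\theta,\phi)$. Instead it keeps the special structure of $\mu$ throughout and proceeds in four nested steps: (1)~$\phi=V_\theta$, $u,v\simeq V_\theta$; (2)~$u,v\simeq\phi$; (3)~$u\in\mathcal{E}(X,\theta,\phi)$ general, $v\simeq\phi$; (4)~full generality. At each step one truncates $u$ (and in Step~2 also $v$) at levels $V_\theta-j$ or $\phi-j$, so that the approximate $\mu_j$ is again of the form $e^{-\lambda u_j}\theta_{u_j}^n+e^{-\lambda v_j}\theta_{v_j}^n$ with $u_j,v_j$ falling under the \emph{previous} step---the ``descent'' is realized by these concrete truncations, not by an abstract projection. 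The tools you list (Proposition~\ref{1-ener_pro}, Proposition~\ref{general_asl}) are indeed the key ingredients, but they are applied in Steps~3--4 to the auxiliary envelopes $\psi_j=P_\theta(b\varphi_j-(b-1)u_j)$, and it is only at this point that the hypothesis $\theta_v^n\le A(\omega_X+dd^c\psi)^n$ enters: it supplies the dominating measure $\mathbf{1}_{D_j}\theta_{\psi_j}^n\le b^n\theta_v^n$ needed to invoke Proposition~\ref{general_asl} and keep the limit inside $\mathcal{E}(X,\theta,\phi)$.
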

Here we do not assume that $\mu$ has finite total mass. Indeed, we will see that $\varphi \le \min(u, v)$ and $e^{\lambda \varphi} \mu$ is therefore a Radon measure. 
\begin{proof}   Uniqueness follows directly from the domination principle (Corollary \ref{cor_domi}). Without loss of generality, we set $\lambda = 1$.

\textbf{Step 1.}   We begin with the case \( \phi = V_{\theta} \) and \( u, v \simeq V_{\theta} \).  
 Set $u_j \coloneqq \max(u,-j)$ and $v_j \coloneqq \max(v,-j)$, and define 
   $$\mu_j\coloneqq   e^{-u_j}\theta_{u}^n+e^{-v_j}\theta_{v}^n.$$ 
As the function  $u_j$ is   bounded, $\mu_j$ is then a non-pluripolar Radon measure. By Theorem \ref{minimal_nonp_lambda}, there exists a unique $\varphi_j\in\mathcal{E}(X,\theta)$ such that 
     $$\theta_{\varphi_j}^n=e^{\varphi_j}\mu_j.$$

    Let $C>0$ satisfy $|u-v| \le 2C$, and consider  $w\coloneqq\frac{u+v}{2} -C-n\log2  $, which has   minimal singularities.  
Observe that  $$\theta_w^n\geq e^w\mu_j,$$
so by the domination principle (Corollary~\ref{cor_domi}) we have $\varphi_j \ge w$.
  Also, for $j \ge k$, we have $\theta_{\varphi_j}^n = e^{\varphi_j}\mu_j \ge e^{\varphi_j}\mu_k=e^{\varphi_j-\varphi_k}\theta_{\varphi_k}^n$.  Applying the domination principle once more yields $\varphi_j \le \varphi_k$. Hence, the sequence $\{\varphi_j\}$ is decreasing and we let $\varphi \coloneqq \lim_j \varphi_j$ denote its decreasing limit.
  
  Moreover,  note that
   $e^{\varphi_j-u_j}\leq e^{\varphi_1-u}$ and $e^{\varphi_j-v_j} \leq  +e^{\varphi_1-v}.$   
 Because  $\varphi_1,u$ and $v$ have minimal singularities,  
  the Lebesgue dominated convergence theorem implies \[
e^{\varphi_j-u_j} \to e^{\varphi-u} \quad \text{in } L^1(X,\theta_u^n),
\qquad
e^{\varphi_j-v_j} \to e^{\varphi-v} \quad \text{in } L^1(X,\theta_v^n).
\] Applying Lemma~\ref{lem_2.8} yields $\theta_{\varphi}^n \ge e^{\varphi}\mu$,  while the reverse inequality follows from the   lower \mbox{semicontinuity} of non-pluripolar product. 
  Consequently, \( \theta_{\varphi}^n = e^{\varphi} \mu \).

\textbf{Step 2.} Next, we deal with the case   \( u, v \simeq \phi \).  
 Let $u_j\coloneqq\max(u,V_{\theta}-j)$ and $v_j\coloneqq\max(v,V_{\theta}-j)$. Denote 
    $$\mu_j\coloneqq   e^{-u_j}\theta_{u_j}^n+e^{-v_j}\theta_{v_j}^n.$$  
By Step~1 there exists a unique \(\varphi_j \in \mathcal{E}(X,\theta)\) satisfying \(\theta_{\varphi_j}^n = e^{\varphi_j}\mu_j\).  
 We choose \( C > 0 \) such that   $|u_j-v_j|\leq |u-v|\leq 2C.$   Consider the function $w_j\coloneqq\frac{u_j+v_j}{2} -C-n\log2  $, which has minimal singularities.  Then we have  $$\theta_w^n\geq e^w\mu_j.$$ By the domination principle (Corollary \ref{cor_domi}), we obtain $P_{\theta}(u_j,v_j)\geq\varphi_j\geq w_j$.
    After passing to a subsequence, we assume $   \varphi_j\to \varphi$ in $L^1(X)$. It follows  that  $P_{\theta}(u,v)\geq \varphi\geq \frac{u+v}{2} -C-n\log2.$  

  Next, we  claim that $$\lim_{j\to+\infty}\int_X | \varphi_j - \varphi|e^{\varphi} \theta_{\varphi_j}^n=0.$$ Once this is established, since $P_{\theta}(\inf_{l\geq j} \varphi_{ j})\simeq\phi$,   an argument similar to that in the proof of Theorem \ref{1-ener_thm} will show that $\varphi_j$ converges to $\varphi$ in capacity.  
  
 To prove the claim, note that   the function     $ |\varphi_j-\varphi|e^{(\varphi_j+\varphi)/4} $  is   uniformly bounded. Using    plurifine locality and the bounded mass property, we obtain
    \begin{align*}
\int_X |\varphi_j - \varphi| e^{\varphi} \, \theta_{\varphi_j}^n
&\le \int_{\{\min(u,v) > V_{\theta} - j\}} |\varphi_j - \varphi| e^{(\varphi + \varphi_j)/2} \bigl( \theta_u^n + \theta_v^n \bigr) \\
&\quad + \int_{\{\min(u,v) \le V_{\theta} - j\}} |\varphi_j - \varphi| e^{(\varphi + \varphi_j)/4 - j/2} \bigl( \theta_{u_j}^n + \theta_{v_j}^n \bigr) \\
&\le \int_X |\varphi_j - \varphi| e^{(\varphi + \varphi_j)/2} \bigl( \theta_u^n + \theta_v^n \bigr) + O(e^{-j/2}),
\end{align*}
where in the first inequality we have used the inequality $(\varphi + \varphi_j)/2 \le \min(u_j, v_j)$ and the fact that  $(\varphi + \varphi_j)/4 \le -j/2$ on the set $\{\min(u,v) \leq V_{\theta} - j\}$.  By \cite[Lemma 11.5]{GZ}, we have $\max(\varphi_j,-C)\to\max(\varphi,-C)$ in $L^1(X,\mu)$ for every   $C>0$.  After extracting a subsequence, we then have  $\varphi_j\to\varphi$ a.e. with respect to the non-pluripolar measure  $\theta_u^n+\theta_v^n$. Therefore,   the right-hand side converges to $0$ by the Lebesgue  dominated convergence theorem, proving the claim.

We now verify  that $\varphi\in\mathcal{E}(X,\theta,\phi)$ is the solution to $\theta_{\varphi}^n= e^{\varphi}\mu $.  For $\varepsilon>0$, consider the quasi-continuous approximation   \[
\chi_{t}^{\varepsilon} \coloneqq 
\frac{\max\!\bigl(P_{\theta}(\inf_l \varphi_l)+t,\;0\bigr)}
{\max\!\bigl(P_{\theta}(\inf_l \varphi_l)+t,\;0\bigr)+\varepsilon},
\]
  of the characteristic function $\boldsymbol{1}_{ \{P_{\theta}(\inf_l \varphi_l)>-t\}}$.  Set $\varphi_j^t\coloneqq \max(\varphi_j,-t)$ and  $\varphi^t\coloneqq \max(\varphi,-t)$.
  
  Note that $\{P_{\theta}(\inf_l \varphi_l)>-t\}\subseteq\{\min(u,v)>-t\}  $. By the plurifine locality,  we have $$ \chi_{t}^\varepsilon\theta_{\varphi_j}^n=\chi_{t}^\varepsilon e^{\varphi_j}\mu$$  whenever $j\geq t$.   For any non-negative test  function $\rho\in C^{\infty}(X)$,  we deduce from \cite[Theorem 4.26]{GZ} that
 \begin{align}\label{final_weakcon}\int_X \rho \chi_{t}^{\varepsilon} \theta_{\varphi}^n
&= \int_X \rho \chi_{t}^{\varepsilon} \theta_{\varphi^t}^n
 = \lim_{j\to+\infty} \int_X \rho \chi_{t}^{\varepsilon} \theta_{\varphi_j^t}^n
 = \lim_{j\to+\infty} \int_X \rho \chi_{t}^{\varepsilon} \theta_{\varphi_j}^n \nonumber\\
&= \lim_{j\to+\infty} \int_X \rho \chi_{t}^{\varepsilon} e^{\varphi_j} \mu
 = \int_X \rho \chi_{t}^{\varepsilon} e^{\varphi} \mu,\end{align} 
where the last equality follows from the monotone convergence theorem. Indeed, for $j \ge t$, on the set $\{ P_{\theta}(\inf_l \varphi_l) > -t \}$, we have 
\[
\varphi_j \ge P_{\theta}\bigl(\inf_{l\ge j}\varphi_l\bigr) \nearrow \varphi,
\qquad
\varphi_j \le \operatorname{usc}\bigl(\sup_{l\ge j}\varphi_l\bigr) \searrow \varphi,
\quad\text{and}\quad
\operatorname{usc}\bigl(\sup_{l\ge j}\varphi_l\bigr) \le \min(u,v ).
\]

Because $\varphi \le \min(u,v)$,  $e^{\varphi}\mu$ is a non‑pluripolar Radon measure. Therefore it puts no mass on the pluripolar set $\{P_{\theta}(\inf_l \varphi_l)=-\infty\}$. By letting $\varepsilon \to 0$ then $t \to +\infty$ in \eqref{final_weakcon}, we obtain $\int_X \rho \, \theta_{\varphi}^n = \int_X \rho \, e^{\varphi} \mu$ for all non-negative $\rho \in C^\infty(X)$, which implies $\theta_{\varphi}^n = e^{\varphi}\mu$.

\textbf{Step 3.} 
   We now deal with the   case where $u\in \mathcal{E}(X,\theta,\phi)$  and $v\simeq \phi$.    Set $u_j \coloneqq \max(u,\phi-j)$ and define 
   $$\mu_j\coloneqq   e^{-u_j}\theta_{u_j}^n+e^{-v}\theta_{v}^n.$$ 
  By Step 2, there exists a unique $\theta$-psh function  $\varphi_j\simeq \phi  $  such that 
     $$\theta_{\varphi_j}^n=e^{\varphi_j}\mu_j.$$
Since $\theta_{\varphi_j}^n\geq e^{\varphi_j-u_j}\theta_{u_j}^n $ and $\theta_{\varphi_j}^n\geq e^{\varphi_j-v}\theta_{v}^n$, the domination principle (Corollary \ref{cor_domi}) yields that  $$\varphi_j\leq  P_{\theta}(u_j,v).$$

Fix   $a>0$ and let $b>1$ be the unique  number satisfying $(1-1/b)^n=e^{-a}$.  Consider the function $\psi_j \coloneqq P_{\theta}(b\varphi_j-(b-1)u_j)$ and  denote the contact set $C_j\coloneqq\{\psi_j=b\varphi_j-(b-1)u_j\}$. By Proposition~\ref{contact_0} and the minimum  principle, we have   $$  (1-1/b)^n  \boldsymbol{1}_{C_j}\theta^n_{u_j}+(1/b)^n\theta_{\psi_j}^n\leq \boldsymbol{1}_{C_j}\theta^n_{\varphi_j}=\boldsymbol{1}_{C_j} e^{\varphi_j-u_j}\theta_{ u_j}^n+\boldsymbol{1}_{C_j}e^{\varphi_j-v}\theta_{v}^n  $$ 
Set $D_{j}\coloneqq \{\psi_j\leq u-ba\}$. Because $D_j\cap C_j\subset\{\varphi_j\leq u_j-a\}$ and $v\geq u_j-C$ for a uniform constant $C>0$, we obtain that   
        \begin{align*}
             \boldsymbol{1}_{D_j}\theta^n_{\psi_j} \leq b^n   e^{\varphi_j-u_j+C}\boldsymbol{1}_{C_j}\theta_{v}^n\leq  b^n  e^{(\psi_j-u_j)/b+C} \theta_{v}^n.
        \end{align*}
Now put $w_j^k \coloneqq P_{\theta}(\psi_j, \ldots, \psi_{j+k})$ and $D_{j}^k\coloneqq \{w_j^k\leq u -ba\}$. By the minimum  principle,  
 \begin{align*}
      \boldsymbol{1}_{D_j^k}\theta^n_{w_j^k} \leq  e^C b^n  e^{(w_j^k-u_{j+k})/b }\theta_{v}^n.
  \end{align*}
Since the function $xe^{-x/b}$ is bounded for $x\geq 0$,  we  observe that 
   
\begin{align*}
   \sup_{k\in\mathbb{N}} \int_X |w_{j}^k- u_{j+k}| {1}_{D_j^k}\theta_{w_{j}^k}^n&\leq 
   e^C b^n  \sup_{k\in\mathbb{N}}\int_X |w_{j}^k- u_{j+k}|  e^{(w_j^k-u_{j+k})/b }\theta_{v}^n <+\infty.
\end{align*}

By Lemma \ref{diam_lem}, we know that $\widehat{w}_j^k \coloneqq P_{\theta}(u,w_j^k)\in\mathcal{E}(X,\theta,\phi)$. 
   Let $\widehat{D}_j^k \coloneqq \{\widehat{w}_j^k \le u-ba\}$.  It then follows from the minimum  principle   that
   $$  \sup_{k\in\mathbb{N}} \int_X|\w w_j^k-u|\, \boldsymbol{1}_{\w D_j^k}\theta_{\w w_j^k}^n     \leq \sup_{k\in\mathbb{N}} \int_X|  w_j^k-u_{j+k}|\, \boldsymbol{1}_{ D_j^k}\theta_{ w_j^k}^n  <+\infty$$
   
Therefore,    Proposition \ref{1-ener_pro}  implies that $ \w w_j\coloneqq \lim_k \w w_j^k$ is not identically $-\infty$ and belongs to $\mathcal{E}(X,\theta,\phi)$. Observe that $$\w w_j^k \leq P_{\theta}(\psi_j\ldots,\psi_{j+k})\leq P_{\theta}(\varphi_j\ldots,\varphi_{j+k})\leq \varphi_{j+k}\leq u_{j+k}.$$ Up to extracting a subsequence, we  assume $\varphi_j\to \varphi$  in $L^1(X)$.      Since $u_{j+k} \searrow u \in \mathcal{E}(X,\theta,\phi) $   as \( k \to +\infty \), we infer that both $P_{\theta}(\inf_{l\geq j}\varphi_l) $ and $\varphi  $   belong to $\mathcal{E}(X,\theta,\phi)$.

  Following the same argument as in Step 2, 
   we can show  that $\lim_j\int_X | \varphi_j - \varphi|e^{\varphi} \theta_{\varphi_j}^n=0$, which implies that    $\varphi_j$ converges to $\varphi$ in capacity.   
Similarly, it can also be verified that $\varphi \in \mathcal{E}(X, \theta, \phi)$ indeed solves the equation $\theta_{\varphi}^n = e^{\varphi}\mu$.

\textbf{Step 4.} 
  Finally, we treat the case where $u,v \in \mathcal{E}(X,\theta,\phi)$ and      $\theta_v^n\leq A(\omega_X+dd^c\psi)^n, $  for some bounded $\omega_X$-psh function $\psi$ and some constant $A>0$. As in Step 3, we let $u_j\coloneqq\max(u,\phi-j)$ and  define  $\mu_j\coloneqq   e^{-u_j}\theta_{u_j}^n+e^{-v}\theta_{v}^n$.

  By Step 3, there exists a unique $\theta$-psh function  $\varphi_j\in\mathcal{E}(X,\theta,\phi)  $ such that 
      $\theta_{\varphi_j}^n=e^{\varphi_j}\mu_j.$ 
 Using the domination principle (Corollary \ref{cor_domi}) again,  we have   $\varphi_j\leq  P_{\theta}(u_j,v)$.  Since the function $xe^{-x}$ is bounded for $x\geq 0$,  we obtain
    $$ \sup_{j\in\mathbb{N}} \int_X|\varphi_j-\min(u_j,v)|\, \theta_{\varphi_j}^n \leq \sup_{j\in\mathbb{N}} \int_X \left(|\varphi_j- u_j|  e^{\varphi_j-u_j}\theta_{u_j}^n+ |\varphi_j- v|  e^{\varphi_j-v}\theta_{v}^n \right)< +\infty $$
  Define  $\varphi_j'\coloneqq P_{\theta}(u,v,\varphi_j)$.   Lemma \ref{diam_lem} gives that  $\varphi_j'\in \mathcal{E}(X,\theta,\phi)$. Since $P_{\theta}(u_j,v)\geq P_{\theta}(u,v)$,  the minimum  principle yields 
   \begin{align}\label{similar_enve}  \sup_{j\in\mathbb{N}} \int_X|\varphi_j'-P_{\theta}(u,v)|\, \theta_{\varphi_j'}^n     \leq\sup_{j\in\mathbb{N}} \int_X|\varphi_j-\min(u_j,v)|\, \theta_{\varphi_j}^n  <+\infty\end{align}
We thus infer from Proposition \ref{1-ener_pro}  that $\sup_{X}\varphi_j$ is bounded from below.
Moreover, given $a>0$, we take $b>1$ with $(1-1/b)^n=e^{-a}$. Denote $\psi_j \coloneqq P_{\theta}(b\varphi_j-(b-1)u_j)\in \mathcal{E}(X,\theta,\phi)$ and  set   $D_{j}\coloneqq \{\psi_j\leq u-ba\}$. By  Proposition  \ref{contact_0} and the minimum  principle, we have   
        \begin{align}\label{uni_byv}
             \boldsymbol{1}_{D_j}\theta^n_{\psi_j} \leq b^n   e^{ \varphi_j-v} \theta_{v}^n\leq b^n    \theta_{v}^n.
        \end{align}
It follows that      \begin{align*}
    \sup_{j\in\mathbb{N}}\int_X |\psi_{j} - u_{j}| {1}_{D_j}\theta_{\psi_{j}}^n&\leq b^{n+1} \sup_{j\in\mathbb{N}}
       \int_X |\varphi_j- u_{j}|\,  \theta_{v}^n.
\end{align*}
 where the finiteness of the right-hand side follows from the Chern-Levine-Nirenberg inequality, because  $\sup_{X}\varphi_j$ and $\sup_{X}u_j$ are uniformly bounded. Repeating the argument leading to \eqref{similar_enve} we also see that $\sup_X\psi_j$ is bounded below.
 
Hence, by passing to a subsequence, we may assume  $\psi_j \to \psi$ and $\varphi_j \to \varphi$ in $L^1(X)$. Now we apply Proposition \ref{general_asl} to inequality \eqref{uni_byv}, which gives $P_{\theta}\bigl(\inf_{l\ge j}\psi_l\bigr) \in \mathcal{E}(X,\theta,\phi)$. Thus we have $P_{\theta}\bigl(\inf_{l\ge j}\varphi_l\bigr) \in \mathcal{E}(X,\theta,\phi)$. The rest of the argument then follows as in Step~2, yielding $\theta_{\varphi}^n = e^{\varphi}\mu$, which concludes
the proof.
\end{proof}
 
Using the result above, we adapt the supersolution technique employed in  \cite[Theorem 5.4]{ALS25} to solve the Monge–Ampère equation with prescribed singularities.
\begin{theorem}\label{MA_prescribed}
   Fix $\lambda>0$,  and let $\mu$ be a positive non-pluripolar Radon measure. Then there is a unique $\varphi\in\mathcal{E}(X,\theta,\phi)$ such that $$\theta_{\varphi}^n=e^{\lambda\varphi}\mu.$$
\end{theorem}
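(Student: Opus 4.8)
The plan is to settle uniqueness first, then reduce to a right‑hand side of bounded density, and finally solve that case by a continuity method built on the full‑mass (minimal singularity) solution as a supersolution, with the openness step fed by Proposition~\ref{additive_sol}.

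\emph{Uniqueness and first reduction.} Uniqueness is immediate from Corollary~\ref{cor_domi}(2): two solutions $\varphi_1,\varphi_2\in\mathcal{E}(X,\theta,\phi)$ satisfy $e^{-\lambda\varphi_1}\theta_{\varphi_1}^n=\mu=e^{-\lambda\varphi_2}\theta_{\varphi_2}^n$, hence $\varphi_1\ge\varphi_2$ and $\varphi_2\ge\varphi_1$. For existence I would first reduce to $\mu\le A(\omega_X+dd^c\psi)^n$ for a bounded $\omega_X$-psh $\psi$: by the Remark following Theorem~\ref{minimal_nonp_0} write $\mu=g(\omega_X+dd^c\psi)^n$ with $0\le g\in L^1$, solve the truncated problems for $\mu_k=\min(g,k)(\omega_X+dd^c\psi)^n$ in $\mathcal{E}(X,\theta,\phi)$, and let $k\to+\infty$ exactly as in the last step of the proof of Theorem~\ref{minimal_nonp_lambda}: the domination principle makes the solutions $\varphi_k$ decreasing, the mass bound $\underline{\vol}(\theta,\phi)\le\int\theta_{\varphi_k}^n$ keeps the limit $\theta$-psh, Lemma~\ref{lem_2.8} yields $\theta_\varphi^n\ge e^{\lambda\varphi}\mu$, and Theorem~\ref{als_lem} together with Lemma~\ref{lsc} gives the reverse inequality and membership in $\mathcal{E}(X,\theta,\phi)$.

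\emph{Supersolution and continuity method.} Assume now $\mu\le A(\omega_X+dd^c\psi)^n$. By Theorem~\ref{minimal_nonp_lambda} there is $\rho\in\mathcal{E}(X,\theta)$ of minimal singularities with $\sup_X\rho=0$ and $\theta_\rho^n=e^{\lambda\rho}\mu\le A(\omega_X+dd^c\psi)^n$; this $\rho$ is the supersolution. Since $\phi=P_\theta[\phi]$, Proposition~\ref{contact_2} gives $\theta_\phi^n=\mathbf{1}_{\{\phi=0\}}\theta_\phi^n$, so $e^{-\lambda\phi}\theta_\phi^n=\theta_\phi^n$. Set, for $t\in[0,1]$,
\[
\nu_t:=(1-t)\,\theta_\phi^n+t\,\mu,\qquad
T:=\{\,t\in[0,1]:\ \exists\,\varphi_t\in\mathcal{E}(X,\theta,\phi),\ \theta_{\varphi_t}^n=e^{\lambda\varphi_t}\nu_t\,\}.
\]
Here $0\in T$ with $\varphi_0=\phi$, and the goal is $1\in T$, which gives the solution for $\nu_1=\mu$. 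Comparing $\nu_t$ with $(1-t)\theta_\phi^n=e^{-\lambda(\phi-\frac1\lambda\log(1-t))}\theta_{\phi-\frac1\lambda\log(1-t)}^n$ and with $t\mu=e^{-\lambda(\rho-\frac1\lambda\log t)}\theta_{\rho-\frac1\lambda\log t}^n$, Corollary~\ref{cor_domi}(2) forces the a priori bound $\varphi_t\le\min\!\bigl(\phi-\tfrac1\lambda\log(1-t),\ \rho-\tfrac1\lambda\log t\bigr)$; in particular, on each compact subinterval of $(0,1]$ and near $t=0$ the family $\{\varphi_t\}$ is bounded above and $\theta_{\varphi_t}^n$ is dominated by a fixed finite non‑pluripolar Radon measure.

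\emph{Closedness and openness.} Closedness of $T$ follows from Theorem~\ref{als_lem}: if $t_j\to t_*$ in $T$, a subsequence satisfies $\varphi_{t_j}\to\varphi_{t_*}\in\mathcal{E}(X,\theta,\phi)$ in capacity with $\theta_{\varphi_{t_j}}^n\to\theta_{\varphi_{t_*}}^n$ weakly, and since $\nu_{t_j}\to\nu_{t_*}$ in total variation one checks with Lemma~\ref{lem_2.8} that $e^{\lambda\varphi_{t_j}}\nu_{t_j}\to e^{\lambda\varphi_{t_*}}\nu_{t_*}$, so $t_*\in T$. For openness at $t_0\in T$ one writes $\nu_t$, for $t$ near $t_0$, as a convex combination of $\nu_{t_0}=e^{-\lambda\varphi_{t_0}}\theta_{\varphi_{t_0}}^n$ and one of $\nu_0=\theta_\phi^n=e^{-\lambda\phi}\theta_\phi^n$, $\mu=e^{-\lambda\rho}\theta_\rho^n$; absorbing the positive coefficients into additive constants presents it as $e^{-\lambda u}\theta_u^n+e^{-\lambda v}\theta_v^n$ with $u$ a constant shift of $\varphi_{t_0}$ (hence in $\mathcal{E}(X,\theta,\phi)$) and $v$ a constant shift of $\rho$; one then replaces $v$ by the relative envelope $P_\theta[\phi](v)\simeq\phi$, which lies in $\mathcal{E}(X,\theta,\phi)$ and whose Monge–Ampère measure stays bounded by $\theta_\rho^n\le A(\omega_X+dd^c\psi)^n$ by Proposition~\ref{contact_2}, and invokes the subsolution theorem, Proposition~\ref{additive_sol}, to produce $\varphi_t\in\mathcal{E}(X,\theta,\phi)$. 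Hence $T=[0,1]$, and the endpoint $t=1$ is the desired solution.

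\emph{Main obstacle.} The hard part is exactly this openness step: Proposition~\ref{additive_sol} delivers a solution only for a measure of the precise form $e^{-\lambda u}\theta_u^n+e^{-\lambda v}\theta_v^n$ with \emph{both} $u,v\in\mathcal{E}(X,\theta,\phi)$ and $\theta_v^n$ of bounded density, so the real work is to realise the perturbation $\nu_t-\nu_{t_0}$ in that form while keeping the equality of measures — the supersolution $\rho$ lies in $\mathcal{E}(X,\theta)$, not in $\mathcal{E}(X,\theta,\phi)$, and passing to relative envelopes must not lose mass; this forces the same kind of exhaustion‑and‑limit argument used in the four steps of the proof of Proposition~\ref{additive_sol}, combined with the uniform relative energy estimates of Proposition~\ref{1-ener_pro} and Theorem~\ref{1-ener_thm} to keep every iterate inside $\mathcal{E}(X,\theta,\phi)$. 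Finally, for a general model potential $\phi$ — whose Monge–Ampère measure $\theta_\phi^n$ need not have bounded density — one would first run the whole argument under the extra hypothesis $\theta_\phi^n\le A(\omega_X+dd^c\psi)^n$ and then remove it by approximating $\phi$ in the $d_\theta$‑metric through Theorem~\ref{model_capacity} and Corollary~\ref{stability_lem}.
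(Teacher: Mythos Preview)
Your overall architecture matches the paper's: uniqueness from Corollary~\ref{cor_domi}, reduction to $\mu\le A(\omega_X+dd^c\psi)^n$ via truncation and Theorem~\ref{als_lem}, and the use of the full-mass solution $\rho\in\mathcal{E}(X,\theta)$ from Theorem~\ref{minimal_nonp_lambda} together with its relative envelope $P_\theta[\phi](\rho)$ as a supersolution in $\mathcal{E}(X,\theta,\phi)$, with Proposition~\ref{additive_sol} as the engine. The gap is exactly where you locate it, and it is fatal to the continuity method as written. In the openness step you need $\nu_t=e^{-\lambda u}\theta_u^n+e^{-\lambda v}\theta_v^n$ with $u,v\in\mathcal{E}(X,\theta,\phi)$ and $\theta_v^n$ bounded; replacing $v$ (a shift of $\rho$) by $v':=P_\theta[\phi](v)$ lands in $\mathcal{E}(X,\theta,\phi)$ with $\theta_{v'}^n$ bounded, but Proposition~\ref{contact_2} only gives $e^{-\lambda v'}\theta_{v'}^n\le e^{-\lambda v}\theta_v^n$, so Proposition~\ref{additive_sol} solves $e^{-\lambda\varphi}\theta_\varphi^n=e^{-\lambda u}\theta_u^n+e^{-\lambda v'}\theta_{v'}^n\le\nu_t$, not $=\nu_t$. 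There is no evident way to recover the lost mass within the continuity framework, and your ``exhaustion-and-limit'' fix is not spelled out. The further reduction to $\theta_\phi^n$ of bounded density and the subsequent $d_\theta$-approximation of $\phi$ are unnecessary detours: the argument never needs $\theta_\phi^n$ at all.

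The paper replaces the continuity method by a \emph{maximality} argument that embraces the inequality. With $\psi'\coloneqq P_\theta[\phi](\rho)$ your supersolution, set
\[
\mathcal{T}\coloneqq\{\,w\in\mathcal{E}(X,\theta,\phi):\ e^{-\lambda\psi'}\theta_{\psi'}^n\le e^{-\lambda w}\theta_w^n\le\mu\,\},
\]
and take $u_j\in\mathcal{T}$ with $\int_Xe^{-\lambda u_j}\theta_{u_j}^n$ increasing to $\sup_{\mathcal{T}}\int_Xe^{-\lambda w}\theta_w^n$. Since $\theta_{u_j}^n\le e^{\lambda\sup\psi'}\mu$ and $u_j\le\psi'$, Theorem~\ref{als_lem} gives $u_j\to u\in\mathcal{E}(X,\theta,\phi)$ in capacity with $\theta_{u_j}^n\to\theta_u^n$ weakly; after a subsequence $e^{-\lambda u_j}\theta_{u_j}^n\to\nu\le\mu$ weakly, and one checks $\theta_u^n=e^{\lambda u}\nu$ with $\int_X\nu$ realising the supremum. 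If $\nu\ne\mu$, apply the \emph{same} supersolution construction to the nonzero non-pluripolar measure $\mu-\nu\le A(\omega_X+dd^c\psi)^n$ to get $v\in\mathcal{E}(X,\theta,\phi)$ with $e^{-\lambda v}\theta_v^n\le\mu-\nu$ and $\theta_v^n$ bounded; Proposition~\ref{additive_sol} applied to $u,v$ yields $\varphi\in\mathcal{T}$ with $\int_Xe^{-\lambda\varphi}\theta_\varphi^n=\int_X\nu+\int_Xe^{-\lambda v}\theta_v^n>\int_X\nu$, contradicting maximality. Hence $\nu=\mu$ and $u$ is the solution. The point is that Proposition~\ref{additive_sol} is used not to hit a prescribed measure exactly, but to strictly increase the functional $w\mapsto\int e^{-\lambda w}\theta_w^n$ within $\mathcal{T}$; the inequality-only output of the envelope is precisely what this framework wants.
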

\begin{proof}
  We may assume 
$ \lambda=1$. The uniqueness follows directly from the domination  principle (Corollary \ref{cor_domi}). 
We first consider the case $\mu\leq A(\omega_X+dd^c\psi)^n$ where $ \psi $ is a bounded $\omega_X$-psh function and $A>0$ is a constant.  By Theorem \ref{minimal_nonp_lambda}, there exists  $\psi\in\mathcal{E}(X,\theta)$ solving the equation  $\theta_{\psi}^n=e^{\psi}\mu$. Proposition~\ref{contact_2} then  shows that $\psi'\coloneqq P_{\theta}[\phi](\psi)$ is a supersolution, i.e.,  $\theta_{\psi'}^n\leq e^{\psi'}\mu$.    
Now we consider the following  family of supersolutions 
  $$ \mathcal{T}\coloneqq\{w  \in\mathcal{E}(X,\theta,\phi):   e^{-\psi'}\theta_{\psi'}^n\leq e^{-w}\theta_{w}^n\leq \mu\}$$
Take a sequence $u_j\in\mathcal{E}(X,\theta,\phi)$ satisfying  
   $$ \int_X e^{-u_j}\theta_{u_j}^n\nearrow \sup_{w\in\mathcal{T}} \int_X e^{-w}\theta_{w}^n.$$
From the estimate $\und(\theta,\phi)\le e^{\sup_X u_j}\mu$ we deduce that  $\sup_X u_j$ is bounded  from below in $j$.  The domination principle also implies  that $u_j\leq \psi'$. After passing to a subsequence, we may therefore assume that $u_j\to u$ in $L^1(X)$. Note that  $\theta_{u_j}^n\leq e^{\sup_{X}\psi'}\mu$.  Then by Theorem \ref{als_lem}, we know that 
$u \in \mathcal{E}(X,\theta,\phi)$, $u_j\to u$ in capacity and that $\theta_{u_j}^n \to \theta_{u}^n$ weakly. Meanwhile, after extracting a further subsequence, there exists a positive Radon measure $\nu\leq\mu$ such that $e^{-u_j}\theta_{u_j}^n\to\nu$ weakly.
Since $e^{-u_j}\theta_{u_j}^n \leq  A(\omega_X+dd^c\psi)^n$,    \cite[Lemma 2.5]{darvas2020relative} implies that    $e^{u_j}(e^{-u_j}\theta_{u_j}^n)$ converges to $e^u\nu$ weakly. Hence 
    $$\theta_u^n=e^u\nu, \quad \text{and}\,\, \int_Xe^{-u}\theta^n_u=\sup_{w\in\mathcal{T}} \int_X e^{-w}\theta_{w}^n. $$
    
  We claim that $\nu=\mu$. If not, arguing  as before, we can construct   a supersolution $v\in\mathcal{E}(X,\theta,\phi)$    such that $e^{-v}\theta_v^n\leq\mu-\nu$.  By Proposition~\ref{additive_sol}, there exists a function $\varphi\in\mathcal{E}(X,\theta,\phi)$ such that    $$e^{-\varphi}\theta_{\varphi}^n=e^{-u}\theta_u^n+e^{-v}\theta_v^n.$$
  But then, we would have $\int_Xe^{-\varphi}\theta_{\varphi}^n>e^{-u}\theta_{u}^n$, which contradicts the maximality of $\int_X e^{-u}\theta_u^n$.

  For the general case, write $\mu=g(\omega_X+dd^c\psi)^n$, where $ \psi $ is a bounded $\omega_X$-psh function and    $g\in L^1\bigl(X, (\omega_X + dd^c\psi)^n\bigr)$. Set $g_j\coloneqq\min(g,j)$. The preceding results yield $\varphi_j\in\mathcal{E}(X,\theta,\phi)$ satisfying
     $$\theta_{\varphi_j}^n=e^{\varphi_j}g_j(\omega_X+dd^c\psi)^n$$
By the domination principle, the sequence $\varphi_j$ is decreasing. Moreover, the inequality $\und(\theta,\phi)\leq e^{\sup_X\varphi_j}\,\mu$ implies that $\sup_X\varphi_j$ is bounded from below.  We therefore set $\varphi \coloneqq \lim_j \varphi_j$ as the decreasing limit.  Repeating the same argument as in the proof of Theorem~\ref{minimal_nonp_lambda}, we finally obtain $\theta_{\varphi}^n = e^{\varphi}\mu$, which completes the proof.
\end{proof}

Weak solutions to the Monge-Ampère equations with $\lambda = 0$ can be obtained via a classical perturbation argument.
\begin{theorem}\label{MA_pre_0}
 Let    $\mu$ be a positive non-pluripolar Radon  measure. Then there exist        $\varphi\in\mathcal{E}(X,\theta,\phi)$ and a unique constant $c>0$ such that    $\theta_{\varphi}^n= c\mu.$ 
\end{theorem}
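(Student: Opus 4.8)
The plan is to obtain the $\lambda=0$ equation as a limit of the twisted equations $\theta_\varphi^n=e^{\lambda\varphi}\mu$ as $\lambda\searrow 0$ --- the classical perturbation scheme. As in Theorems \ref{minimal_nonp_0}, \ref{minimal_nonp_lambda} and \ref{MA_prescribed}, I would first treat the case $\mu\le A\nu$ for some $\nu\in\mathcal{M}$ and $A>0$, and then reduce the general non-pluripolar $\mu$ to it by writing $\mu=g\nu$ with $\nu\in\mathcal{M}$, $0\le g\in L^1(X,\nu)$ (the Radon--Nikodym decomposition recalled at the start of Section \ref{sec_5}) and exhausting with $\mu_k:=\min(g,k)\,\nu\nearrow\mu$.

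Assume $0\ne\mu\le A\nu$, so $0<\mu(X)\le A\ove(\omega_X)<+\infty$. Fix $\lambda_j\searrow 0$ with $\lambda_j\le 1$; by Theorem \ref{MA_prescribed} there is $\varphi_j\in\mathcal{E}(X,\theta,\phi)$ with $\theta_{\varphi_j}^n=e^{\lambda_j\varphi_j}\mu$. Put $u_j:=\varphi_j-\sup_X\varphi_j$ and $c_j:=e^{\lambda_j\sup_X\varphi_j}>0$, so $u_j\in\mathcal{E}(X,\theta,\phi)$, $\sup_X u_j=0$ and $\theta_{u_j}^n=c_j e^{\lambda_j u_j}\mu$. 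Since $\und(\theta,\phi)\le\int_X\theta_{u_j}^n\le\ove(\theta,\phi)$ (Corollary \ref{full_same}) and $e^{\lambda_j u_j}\le 1$, integration gives $c_j\ge\und(\theta,\phi)/\mu(X)$. For the upper bound --- the crux --- the Chern--Levine--Nirenberg inequality \eqref{cln}, applied to $u_j/C_0$ with $\theta\le C_0\omega_X$, provides a uniform constant $C_1$ with $\int_X(-u_j)\,d\mu\le C_1$; Jensen's inequality then yields $\int_X e^{\lambda_j u_j}\,d\mu\ge\mu(X)\,e^{-\lambda_j C_1/\mu(X)}$, which is bounded below by a positive constant, and hence $c_j=\int_X\theta_{u_j}^n\big/\int_X e^{\lambda_j u_j}\,d\mu$ is bounded above.

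Passing to a subsequence, $c_j\to c>0$ and $u_j\to u$ in $L^1(X)$ for a $\theta$-psh function $u$. From $\theta_{u_j}^n=c_j e^{\lambda_j u_j}\mu\le(\sup_j c_j)\mu$, the stability theorem (Theorem \ref{als_lem}) gives $u\in\mathcal{E}(X,\theta,\phi)$ and $\theta_{u_j}^n\to\theta_u^n$ weakly. On the other hand $|e^{\lambda_j u_j}-1|\le\lambda_j(-u_j)$ since $u_j\le 0$, so $\|e^{\lambda_j u_j}-1\|_{L^1(\mu)}\le\lambda_j C_1\to 0$, whence $c_j e^{\lambda_j u_j}\mu\to c\mu$ (even in total variation). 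Comparing the two limits yields $\theta_u^n=c\mu$. For general $\mu=g\nu$, apply this to $\mu_k=\min(g,k)\nu$: the resulting constants $c_k$ again lie in a fixed compact subinterval of $(0,+\infty)$, the bound $\theta_{\varphi_k}^n=c_k\mu_k\le(\sup_k c_k)\mu$ permits Theorem \ref{als_lem}, and $\mu_k\to\mu$ in total variation (dominated convergence) forces $c_k\mu_k\to c\mu$; passing to a subsequence produces $\varphi\in\mathcal{E}(X,\theta,\phi)$ and $c>0$ with $\theta_\varphi^n=c\mu$. Uniqueness of $c$ follows from Corollary \ref{cor_domi} exactly as in Theorem \ref{minimal_nonp_0}.

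I expect the uniform upper bound on the normalizing constants $c_j$ to be the main obstacle: it is precisely this that forces the two-step reduction, because $\int_X(-v)\,d\mu$ may be infinite for a general non-pluripolar $\mu$ and a general sup-normalized $\theta$-psh function $v$, whereas it becomes uniformly bounded once $\mu\le A\nu$ with $\nu\in\mathcal{M}$, by \eqref{cln}.
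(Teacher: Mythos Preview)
Your argument is correct, but the paper's own proof is shorter and avoids the two-step reduction. The key difference is the upper bound on the normalising constants $c_j$.

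You obtain $c_j\le C$ from the Chern--Levine--Nirenberg inequality \eqref{cln} and Jensen, which indeed requires $\mu\le A\nu$ with $\nu\in\mathcal M$; this forces you to treat general $\mu$ by exhausting with $\mu_k=\min(g,k)\nu$. The paper instead compares $\varphi_j$ (solving $\theta_{\varphi_j}^n=e^{\varphi_j/j}\mu$) directly with $\varphi_1$: from $\theta_{\varphi_j}^n=e^{\varphi_j/j-\varphi_1}\theta_{\varphi_1}^n\ge e^{(\varphi_j-\varphi_1-(j-1)\sup_X\varphi_1)/j}\theta_{\varphi_1}^n$ the domination principle (Corollary~\ref{cor_domi}(2)) gives $\varphi_j\le\varphi_1+(j-1)\sup_X\varphi_1$, hence $c_j\le c_1$. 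This works for \emph{any} non-pluripolar $\mu$, so the whole argument runs in one pass: $\theta_{v_j}^n\le c_1\mu$ feeds Theorem~\ref{als_lem}, and $v_j/j\to 0$ in $L^1(X)$ together with \cite[Lemma~11.5]{GZ} gives $c_je^{v_j/j}\to c$ in $L^1(X,\mu)$, whence $\theta_v^n=c\mu$ via Lemma~\ref{lem_2.8} and lower semicontinuity. Your route is perfectly valid and perhaps more transparent about where compactness of $\{c_j\}$ comes from; the paper's trick trades the CLN/Jensen estimate for a single application of the domination principle and thereby dispenses with the decomposition $\mu=g\nu$ altogether.
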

\begin{proof}
  The uniqueness of $c$ follows from the domination principle (Corollary \ref{cor_domi}).
  By the theorem above, for each $j \ge 1$ there exists $\varphi_j \in \mathcal{E}(X,\theta,\phi)$ satisfying
       $\theta_{\varphi_j}^n= e^{\varphi_j/j}\mu.$ 
       
      Set $c_j\coloneqq e^{\sup_X\varphi_j/j}$ and $v_j\coloneqq \varphi_j-\sup_X \varphi_j$. 
Observe that
    \[
    \theta_{\varphi_j}^n = e^{\varphi_j/j - \varphi_1}\theta_{\varphi_1}^n
    \ge e^{\left(\varphi_j - \varphi_1 - (j-1)\sup_X\varphi_1\right)/j}\theta_{\varphi_1}^n .
    \]
    Hence the domination principle gives $\varphi_j \le \varphi_1 + (j-1)\sup_X\varphi_1$, and therefore $c_j \le c_1$.  After passing to a subsequence, we may assume that $v_j \to v$ in $L^1$ and $c_j \to c \ge 0$.   Because $\theta_{v_j}^n \le c_1\mu$, Theorem~\ref{als_lem} implies that $v \in \mathcal{E}(X,\theta,\phi)$ and that $v_j$ converges  to $ v$ in capacity. Moreover, since $v_j/j \to 0$, it follows from \cite[Lemma 11.5]{GZ} that $c_j e^{v_j/j} \to c$ in $L^1(X,\mu)$. Consequently, by Lemma~\ref{lem_2.8} and the lower semicontinuity of the non-pluripolar product, we obtain $\theta_v^n = c\,\mu$. This completes the proof.
\end{proof}

\begin{theorem}\label{thm_sum}
 
Let $u,v\in\mathcal{E}(X,\theta,\phi)$. Fix $\lambda>0$ and set $\mu \coloneqq e^{-\lambda u}\theta_{u}^n+e^{-\lambda v}\theta_{v}^n$. Then there exists a unique $\varphi\in\mathcal{E}(X,\theta,\phi)$ such that $$\theta_{\varphi}^n=e^{\lambda\varphi}\mu.$$
\end{theorem}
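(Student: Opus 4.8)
The plan is to rescale to $\lambda=1$, dispatch uniqueness at once from the domination principle (Corollary~\ref{cor_domi}): if $\varphi,\varphi'\in\mathcal{E}(X,\theta,\phi)$ both solve $\theta_\psi^n=e^{\psi}\mu$, then $e^{-\varphi}\theta_\varphi^n=\mu=e^{-\varphi'}\theta_{\varphi'}^n$, so $\varphi=\varphi'$. For existence the idea is to approximate $\mu$ — which is non‑pluripolar but possibly of infinite total mass — by honest non‑pluripolar Radon measures and to invoke Theorem~\ref{MA_prescribed}, which, unlike Proposition~\ref{additive_sol}, applies to an \emph{arbitrary} non‑pluripolar Radon measure and therefore needs no domination hypothesis on $\theta_v^n$. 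Concretely I would set $u_j:=\max(u,-j)$, $v_j:=\max(v,-j)$ and
\[
\mu_j:=e^{-u_j}\,\theta_u^n+e^{-v_j}\,\theta_v^n .
\]
Since $\int_X\theta_u^n,\int_X\theta_v^n\le\ove(\theta,\phi)<+\infty$ by the bounded mass property and $e^{-u_j},e^{-v_j}\le e^{j}$, each $\mu_j$ is a non‑pluripolar Radon measure with $\mu_j\nearrow\mu$. By Theorem~\ref{MA_prescribed} there is a unique $\varphi_j\in\mathcal{E}(X,\theta,\phi)$ with $\theta_{\varphi_j}^n=e^{\varphi_j}\mu_j$; since $\mu_j\le\mu_{j+1}$ the domination principle gives $\varphi_j\ge\varphi_{j+1}$, so $\{\varphi_j\}_j$ is decreasing and bounded above by $\varphi_1\preceq\phi$.

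The heart of the proof — and the step I expect to be the main obstacle — is to show that $\varphi:=\lim_j\varphi_j$ is not identically $-\infty$ and lies in $\mathcal{E}(X,\theta,\phi)$. Because $\mu$ may carry infinite mass, no fixed Radon measure dominates all the $\mu_j$, so there is no elementary subsolution to compare against; instead I would reproduce the argument of Proposition~\ref{additive_sol}, feeding in the criteria of Section~\ref{general_rel}. For $b>1$ close to $1$ one forms, with a full‑mass auxiliary potential $w_j\simeq\phi$ dominated by $\min(u_j,v_j)$, the envelopes $\psi_j:=P_\theta\bigl(b\varphi_j-(b-1)w_j\bigr)$ — which lie in $\mathcal{E}(X,\theta,\phi)$ by Lemma~\ref{lemma ddnl} and \eqref{u_b_full} — and one gets from Proposition~\ref{contact_0} and the minimum principle, on a sublevel set $D_j$, bounds of the shape
\[
\mathbf{1}_{D_j}\theta_{\psi_j}^n\le b^n e^{(\psi_j-w_j)/b}\bigl(\theta_u^n+\theta_v^n\bigr)\le b^n\bigl(\theta_u^n+\theta_v^n\bigr).
\]
The first bound, together with the boundedness of $x\mapsto xe^{-x/b}$ and the finiteness of the mass of $\theta_u^n+\theta_v^n$ (bounded mass property), makes $\int_X|\psi_j-w_j|\,\mathbf{1}_{D_j}\theta_{\psi_j}^n$ uniformly bounded in $j$, with no recourse to a domination hypothesis on $\theta_v^n$ or to the Chern–Levine–Nirenberg inequality — this is the essential gain over Proposition~\ref{additive_sol}. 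Proposition~\ref{1-ener_pro} then bounds $\sup_X\psi_j$ from below, and Proposition~\ref{general_asl}, applied to the fixed non‑pluripolar Radon measure $b^n(\theta_u^n+\theta_v^n)$, gives $P_\theta(\inf_{l\ge j}\psi_l)\in\mathcal{E}(X,\theta,\phi)$; tracing the relation $\varphi_l\ge(1-b^{-1})w_l+b^{-1}\psi_l$ one concludes $P_\theta(\inf_{l\ge j}\varphi_l)\in\mathcal{E}(X,\theta,\phi)$, hence $\varphi\in\mathcal{E}(X,\theta,\phi)$, and $\varphi_j\to\varphi$ in $L^1(X)$ and in capacity.

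It then remains to pass to the limit in the equation. Applying Lemma~\ref{lem_2.8} to $\theta_{\varphi_j}^n\ge e^{\varphi_j}\mu_k$ for each fixed $k$ (using $e^{\varphi_j}\searrow e^{\varphi}$ in $L^1(X,\mu_k)$, valid since these functions are bounded and $\mu_k$ is finite), and then letting $k\to+\infty$, gives $\theta_\varphi^n\ge e^{\varphi}\mu$. Combined with the domination principle, $e^{-\varphi}\theta_\varphi^n\ge\mu\ge e^{-u}\theta_u^n$ forces $\varphi\le u$, and symmetrically $\varphi\le v$, so $e^{\varphi}\mu\le\theta_u^n+\theta_v^n$ is a finite Radon measure. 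For the reverse inequality, the bound $\varphi\le\min(u,v)$ yields $\mu_j=\mu$ on $\{\varphi>-t\}$ as soon as $j\ge t$, and I would run the plurifine‑locality argument exactly as at the end of Step~2 of the proof of Proposition~\ref{additive_sol}: approximating $\mathbf{1}_{\{\varphi>-t\}}$ by the quasi‑continuous functions $\chi_t^\varepsilon:=\frac{\max(\varphi+t,0)}{\max(\varphi+t,0)+\varepsilon}$, using \cite[Theorem~4.26]{GZ} on the decreasing bounded sequence $\max(\varphi_j,-t)\searrow\max(\varphi,-t)$, the equations $\theta_{\varphi_j}^n=e^{\varphi_j}\mu_j$, and dominated convergence against the finite measure $\mu|_{\{\varphi>-t\}}$, one obtains $\mathbf{1}_{\{\varphi>-t\}}\theta_\varphi^n=\mathbf{1}_{\{\varphi>-t\}}e^{\varphi}\mu$. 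Letting $\varepsilon\to0$ and then $t\to+\infty$, and using that neither side charges the pluripolar set $\{\varphi=-\infty\}$, we conclude $\theta_\varphi^n=e^{\varphi}\mu$. In summary, the genuine difficulty is concentrated in the middle step — keeping $\{\varphi_j\}_j$ from collapsing to $-\infty$, which must be extracted from the envelope/energy estimates of Section~\ref{general_rel} together with the bounded mass property — whereas the limiting identity, although delicate, is the same plurifine bookkeeping already carried out in Proposition~\ref{additive_sol}.
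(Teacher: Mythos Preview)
Your overall strategy — normalize to $\lambda=1$, approximate by $\mu_j=e^{-u_j}\theta_u^n+e^{-v_j}\theta_v^n$ with $u_j=\max(u,-j)$, $v_j=\max(v,-j)$, invoke Theorem~\ref{MA_prescribed} for a decreasing sequence $\varphi_j$, and identify the limiting equation via the plurifine $\chi_t^\varepsilon$ argument — matches the paper's exactly. Your route to $\varphi\le\min(u,v)$ via Lemma~\ref{lem_2.8} and Corollary~\ref{cor_domi} is also correct, and in fact cleaner than the paper's direct argument mimicking Proposition~\ref{domina_1}.

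The gap is in your middle step. You ask for an auxiliary $w_j\simeq\phi$ with $w_j\le\min(u_j,v_j)$; but with $u_j=\max(u,-j)$ and $v_j=\max(v,-j)$ one has $\min(u_j,v_j)\ge -j$, so any $w_j$ below $\min(u_j,v_j)$ is bounded from below and cannot satisfy $w_j\simeq\phi$ unless $\phi$ already has minimal singularities. The $b$-envelope/Proposition~\ref{general_asl} detour you sketch therefore cannot be set up as written. The paper bypasses all of this: it applies Proposition~\ref{1-ener_pro} once, with the fixed reference $\psi=\varphi=P_\theta(u,v)\in\mathcal{E}(X,\theta,\phi)$ (Lemma~\ref{diam_lem}). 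On $D_j:=\{\varphi_j\le P_\theta(u,v)\}\subset\{\varphi_j\le u_j\}\cap\{\varphi_j\le v_j\}$ one has directly
\[
|\varphi_j-P_\theta(u,v)|\,\theta_{\varphi_j}^n \;\le\; (u_j-\varphi_j)e^{\varphi_j-u_j}\theta_u^n+(v_j-\varphi_j)e^{\varphi_j-v_j}\theta_v^n \;\le\; \tfrac{1}{e}\bigl(\theta_u^n+\theta_v^n\bigr),
\]
so $\sup_j\int_{D_j}|\varphi_j-P_\theta(u,v)|\,\theta_{\varphi_j}^n<+\infty$ by the bounded mass property alone. Proposition~\ref{1-ener_pro} then gives $\sup_X\varphi_j$ bounded below and $\varphi=\lim_j\varphi_j\in\mathcal{E}(X,\theta,\phi)$ — no $b$-envelopes, no Proposition~\ref{general_asl}. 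Replacing your middle paragraph with this one-line estimate fixes the proof.
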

\begin{proof}
Without loss of generality we take $\lambda=1$. Uniqueness follows     directly from the domination  principle (Corollary \ref{cor_domi}).
Define $u_j\coloneqq\max(u,-j)$ and $v_j\coloneqq\max(v,-j)$. We  set 
   $$\mu_j\coloneqq   e^{-u_j}\theta_{u}^n+e^{-v_j}\theta_{v}^n.$$ 
Since each $\mu_j$ is a non‑pluripolar measure, Theorem~\ref{MA_prescribed} implies that for every $j$, there exists   $\varphi_j \in \mathcal{E}(X,\theta,\phi)$ such that
$\theta_{\varphi_j}^n = e^{\varphi_j}\mu_j$.
Using the domination principle, we see that the sequence   $\{\varphi_j\}$ is decreasing.  Moreover, because the function $xe^{-x}$ is bounded for $x\ge0$, we have  
     \[
\sup_{j\in\mathbb{N}} \int_X \bigl|\varphi_j-P_{\theta}(u,v)\bigr| \boldsymbol{1}_{\{\varphi_j\le P_{\theta}(u,v)\}} \theta_{\varphi_j}^n 
\le \sup_{j\in\mathbb{N}} \int_X \Bigl( |\varphi_j-u_j| e^{\varphi_j-u_j}\theta_u^n 
                + |\varphi_j-v_j| e^{\varphi_j-v}\theta_v^n \Bigr) < +\infty .
\]
  By  Proposition \ref{1-ener_pro}, we obtain that $\sup_X \varphi_j$ is bounded from below and  that 
   $ \varphi_j\searrow\varphi\in \mathcal{E}(X,\theta,\phi)$.

We now claim that $\varphi\leq \min(u,v)$. The proof is quite similar to that in Proposition \ref{domina_1}. Fix $\delta>0$. Let  $u'_j\coloneqq\max(u,\varphi_j-\delta)$ and $u' \coloneqq\max(u,\varphi -\delta)$. Then we have  $\boldsymbol{1}_{\{-j<u<u'_j\}}\theta^n_{u}\leq e^{-\delta}\theta_{u'_j}^n$.  
For each $b>1$, we set $u_{j,b}\coloneqq P_{\theta}(bu-(b-1)u'_j)$ and $u_{ b}\coloneqq P_{\theta}(bu-(b-1)u' )$.  Choosing $b$ large enough so that $(1-1/b)^n\ge e^{-\delta}$ gives
   $$\boldsymbol{1}_{\{-j<u<u'_{j}\}}\theta_{u_{j,b}}^n=0.$$
If the set $\{u<u'-\varepsilon\}$ is non‑empty for some $\varepsilon>0$, then $\sup_X u_b\to-\infty$ as $b\to+\infty$.  Observe that $\boldsymbol{1}_{\{ u<u'_{j}\}}\theta_{u_{j,b}}^n= \boldsymbol{1}_{\{ u<u_{j,b}\}}\theta_{u_{j,b}}^n$ and that $u_{j,b}\leq u$. Therefore by Proposition~\ref{contact_0} and the minimum principle, we   have 
\begin{align*}
  0< \und(\theta,\phi) \leq \int_{X}\theta_{u_{j,b}}^n \leq \int_{ \{u=u_{j,b}\}}\theta_{u}^n +\int_{\{u\leq-j\}}\theta_{u_{j,b}}^n \\ \leq
  \int_{ \{  u\leq u_b\}}\theta_{u}^n +b^n\int_{\{u\leq-j\}}\theta_{u}^n
\end{align*}
Letting $j\to+\infty$ and then $b\to+\infty$ yields that $\und(\theta,\phi)=0$,     a contradiction. Because $\delta>0$ was arbitrary, it then follows that $u \ge \varphi$. The same argument also yields $v \ge \varphi$. 

Finally, we verify that $\varphi$ satisfies $\theta_{\varphi}^n = e^{\varphi}\mu$.   Let    $$\chi_{t}^\varepsilon\coloneqq \frac{\max(\varphi+t,0)}{\max( \varphi+t,0)+\varepsilon}.$$ 
   Set $\varphi_j^t\coloneqq \max(\varphi_j,-t)$ and  $\varphi^t\coloneqq \max(\varphi,-t)$.
 By   plurifine locality,  $  \chi_{t}^\varepsilon\theta_{\varphi_j}^n=\chi_{t}^\varepsilon e^{\varphi_j}\mu$ for  $j\geq t$.   Now let $\rho$ be any non‑negative continuous function. Applying \cite[Theorem 4.26]{GZ}, we obtain
 \begin{align*}\int_X \rho \chi_{t}^{\varepsilon} \theta_{\varphi}^n
&= \int_X \rho \chi_{t}^{\varepsilon} \theta_{\varphi^t}^n
 = \lim_{j\to+\infty} \int_X \rho \chi_{t}^{\varepsilon} \theta_{\varphi_j^t}^n
 = \lim_{j\to+\infty} \int_X \rho \chi_{t}^{\varepsilon} \theta_{\varphi_j}^n \\
&= \lim_{j\to+\infty} \int_X \rho \chi_{t}^{\varepsilon} e^{\varphi_j} \mu
 = \int_X \rho \chi_{t}^{\varepsilon} e^{\varphi} \mu,\end{align*} 
 where the last equality follows from the monotone convergence theorem, because on the set \( \{\varphi > -t\} \),  the functions $e^{-u}$ and $e^{-v}$ are bounded.  Note that $ \varphi\leq\min(u,v)$ and therefore $e^{\varphi}\mu$ is a non-pluripolar Radon measure.  
 Hence, by letting $\varepsilon\to 0$ and then $t\to +\infty$,  we deduce  that $\theta_{\varphi}^n=e^{\varphi}\mu$, which completes the proof. 
\end{proof}

\subsection{\textbf{Characterization of the range of the \MA\ operators}}
 \hspace{1.5em} We first consider the case where \(\mu\) has densities in \(L^p\). Let \(0\leq f \in L^{p}(X,\omega_X^n)\) with \(p > 1\) and \(\int_X f\,\omega_X^n > 0\).
Based on the relative \(L^{\infty}\) a priori estimates (Theorem~\ref{rel_infity}) and Theorems~\ref{MA_prescribed} and~\ref{MA_pre_0}, we adapt an argument in \cite[p. 56]{darvas2020relative} to derive the following result, which is included here for the
reader’s convenience.
\begin{corollary}\label{cor_6.1}
\noindent\textup{(i)} There exists a function  \(\varphi \in \psh(X,\theta)\) with $\varphi\simeq\phi$   and a unique constant \(c > 0\) such that      and
 $ \theta_{\varphi}^n = c f \omega_X^n $. 
 
\noindent\textup{(ii)} For any \(\lambda > 0\), there exists a unique \(\varphi \in \psh(X,\theta)\) such that  $\varphi\simeq\phi$   and
$\theta_{\varphi}^n = e^{\lambda\varphi} f \omega_X^n$. 
\end{corollary}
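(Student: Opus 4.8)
The plan is to treat part~(ii) first and then deduce part~(i) from it by the perturbation argument used in passing from Theorem~\ref{MA_prescribed} to Theorem~\ref{MA_pre_0}. Fix $\lambda>0$ and put $\mu:=f\omega_X^n$. Since $\mu$ is absolutely continuous with respect to $\omega_X^n$ and pluripolar sets are Lebesgue negligible, $\mu$ is a non-pluripolar positive Radon measure with $\int_X d\mu>0$; thus Theorem~\ref{MA_prescribed} produces $\varphi\in\mathcal{E}(X,\theta,\phi)$ with $\theta_\varphi^n=e^{\lambda\varphi}\mu$, and Corollary~\ref{cor_domi} shows it is the unique $\theta$-psh solution with singularity type $\preceq\phi$. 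Hence the statement reduces to proving $\varphi\simeq\phi$: once this is known, $\varphi\in\psh(X,\theta)$ and it is automatically the only solution among potentials $\simeq\phi$.

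To prove $\varphi\simeq\phi$ I already have $\varphi\preceq\phi$, so, normalizing $\phi$ by $\sup_X\phi=0$, the number $C_1:=\sup_X\varphi$ is finite and $\theta_\varphi^n=e^{\lambda\varphi}f\omega_X^n\le e^{\lambda C_1}f\omega_X^n$. Writing $\nu:=\bigl(\int_X f\omega_X^n\bigr)^{-1}f\omega_X^n$, the hypothesis $p>1$ together with H\"older's inequality and the uniform $L^q(X,\omega_X^n)$-bounds for sup-normalized $\theta$-psh functions (a consequence of Skoda's integrability theorem and compactness) gives $\operatorname{PSH}(X,\theta)\subset L^m(\nu)$ and $A_m(\nu)<+\infty$ for every finite $m$; fix $m>n$. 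The plan is then to invoke the relative $L^\infty$ a priori estimate of Theorem~\ref{rel_infity}, which would yield $\varphi\ge\phi-T$ for a constant $T$ controlled only by $e^{\lambda C_1}\int_X f\omega_X^n/\und(\theta,\phi)$ and by $A_m(\nu)$, hence $\varphi\simeq\phi$. The difficulty is that Theorem~\ref{rel_infity} is phrased for potentials already known to have singularity type $\phi$, so I would run it along an approximating family rather than directly on $\varphi$: with $f_j:=\min(f,j)$, the proof of Theorem~\ref{MA_prescribed} realises $\varphi$ as the decreasing limit of solutions $\varphi_j\in\mathcal{E}(X,\theta,\phi)$ of $\theta_{\varphi_j}^n=e^{\lambda\varphi_j}f_j\omega_X^n$; for these bounded-density equations one first checks $\varphi_j\simeq\phi$ (using that the supersolution $P_\theta[\phi](\psi)$ built there already has singularity type $\phi$, that $P_\theta(\inf_{l\ge j}\varphi_l)\simeq\phi$, and applying Theorem~\ref{rel_infity} to the truncations $\max(\varphi_j,\phi-k)$, whose Monge-Amp\`ere mass is dominated by $e^{\lambda C_1}\int_X f\omega_X^n\,\nu$ plus a correction $\mathbf{1}_{\{\varphi_j\le\phi-k\}}\theta_\phi^n$ that can be absorbed for $k$ large, since $\theta_\phi^n\le\theta_{V_\theta}^n$ by Proposition~\ref{contact_2} and $\{\varphi_j\le\phi-k\}$ shrinks to a pluripolar set). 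Because $\theta_{\varphi_j}^n\le e^{\lambda C_1}f\omega_X^n$ uniformly in $j$, Theorem~\ref{rel_infity} then gives $\varphi_j\ge\phi-T$ with $T$ independent of $j$, and letting $j\to+\infty$ yields $\varphi\ge\phi-T$.

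The hard part is exactly this last reconciliation: making an a priori estimate whose clean statement presupposes the singularity type usable for a potential that merely lies in $\mathcal{E}(X,\theta,\phi)$. An equivalent, perhaps cleaner, route, closer to \cite[p.~56]{darvas2020relative}, is to solve the equation directly with $\varphi\simeq\phi$ by the continuity method, starting from a model-singularity solution (\cite{ALS25,DDNLFULL}), using the subsolution theorem (Theorem~\ref{thm_sum}, through Proposition~\ref{additive_sol}) for openness and Theorem~\ref{rel_infity} together with compactness for closedness, and then identifying the output with $\varphi$ via the uniqueness in Corollary~\ref{cor_domi}. Finally, for part~(i) I would solve $\theta_{\varphi_s}^n=e^{s\varphi_s}f\omega_X^n$ for $s\searrow0$: the bound $\und(\theta,\phi)\le e^{s\sup_X\varphi_s}\int_X f\omega_X^n$ and the uniform $T$ above keep $c_s:=e^{s\sup_X\varphi_s}$ bounded away from $0$ and $+\infty$, one passes to a subsequential limit exactly as in the proof of Theorem~\ref{MA_pre_0} to obtain $\varphi\simeq\phi$ with $\theta_\varphi^n=c\mu$, and the uniqueness of $c$ is again Corollary~\ref{cor_domi}.
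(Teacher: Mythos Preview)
Your overall plan is sound and you correctly identify the main difficulty: the relative $L^\infty$ estimate (Theorem~\ref{rel_infity}) presupposes $u\simeq\phi$, so one needs approximants that already have the model singularity type. However, your mechanism for producing such approximants has a gap. When you truncate to $\max(\varphi_j,\phi-k)$, the Monge--Amp\`ere measure picks up a term $\mathbf{1}_{\{\varphi_j\le\phi-k\}}\theta_\phi^n$, and this term cannot be ``absorbed'' into the hypothesis of Theorem~\ref{rel_infity}: that theorem requires $\theta_u^n\le c\mu$ with a \emph{fixed} probability measure $\mu$ satisfying $\psh(X,\theta)\subset L^m(\mu)$ for some $m>n$. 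The measure $\theta_\phi^n$ is a general non-pluripolar measure (Proposition~\ref{contact_2} only bounds it by $\theta_{V_\theta}^n$, which in the Hermitian big setting is not known to have $L^p$ density), so no matter how small its mass on $\{\varphi_j\le\phi-k\}$ becomes, the combined measure need not satisfy $A_m<+\infty$. Your auxiliary claims that $P_\theta[\phi](\psi)\simeq\phi$ and $P_\theta(\inf_{l\ge j}\varphi_l)\simeq\phi$ are also unjustified: the available machinery (Theorem~\ref{als_lem}, Proposition~\ref{stable_prop}) only places these in $\mathcal{E}(X,\theta,\phi)$, which is strictly weaker.

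The paper's argument avoids this circularity with a different approximation, borrowed from \cite[p.~56]{darvas2020relative}. One introduces an auxiliary $\varepsilon>0$ (with $e^{-\varepsilon u}f\in L^q$ for all $\theta$-psh $u$ and some $q>1$, via Skoda and H\"older) and solves
\[
\theta_{\varphi_j}^n=\mathbf{1}_{\{\varphi>\phi-j\}}\,e^{(\lambda+\varepsilon)\varphi_j-\varepsilon\varphi}\,cf\omega_X^n
\]
using Theorems~\ref{MA_prescribed} and~\ref{MA_pre_0}. The point of the twist $e^{-\varepsilon\varphi}$ is that the right-hand side is now dominated by $e^{(\lambda+\varepsilon)(\varphi_j-\max(\varphi,\phi-j))}\theta_{\max(\varphi,\phi-j)}^n$, so the domination principle (Corollary~\ref{cor_domi}) gives $\varphi_j\ge\max(\varphi,\phi-j)$ directly, hence $\varphi_j\simeq\phi$ \emph{for free}. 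Since $\theta_{\varphi_j}^n\le e^{(\lambda+\varepsilon)\sup_X\varphi_1}\,e^{-\varepsilon\varphi}cf\omega_X^n$ and the density $e^{-\varepsilon\varphi}f$ is in $L^q$, Theorem~\ref{rel_infity} now applies legitimately and uniformly in $j$, yielding $\varphi_j\ge\phi-T$; one then checks $\varphi_j\searrow\varphi$ and concludes. This trick also treats parts~(i) and~(ii) simultaneously (any $\lambda\ge0$), so the separate perturbation argument you sketch for part~(i) is unnecessary.
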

\begin{proof}
  By Theorems~\ref{MA_prescribed} and \ref{MA_pre_0}, for any \(\lambda\geq 0\), there exist \(\varphi\in\mathcal{E}(X,\theta,\phi)\) and \(c>0\) (in particular, \(c=1\) when \(\lambda>0\)) such that \(\theta_{\varphi}^n = ce^{\lambda\varphi} f \omega_X^n\).
  
Set \(\mu\coloneqq c f \omega_X^n\). By Skoda's   integrability theorem  (see, e.g., \cite[Theorem~8.11]{GZ})  and Hölder's inequality, there exist \(\varepsilon>0\) and    $1<q<p$ such that   \(e^{-\varepsilon u}f \in L^q(X,\omega_X^n)\) for any $\theta$-psh function  \(u \). Applying Theorems~\ref{MA_prescribed} and~\ref{MA_pre_0} again, 
we obtain functions $\varphi_j\in\mathcal{E}(X,\theta,\phi)$ satisfying 
\[
\theta^n_{\varphi_j} = \mathbf{1}_{\{\varphi>\phi-j\}} e^{(\lambda+\varepsilon)\varphi_j-\varepsilon \varphi}\mu.
\]

Observing that \(\theta^n_{\varphi_j} = \mathbf{1}_{\{\varphi>\phi-j\}} e^{(\lambda+\varepsilon)(\varphi_j- \varphi)}\theta_{\varphi}^n \leq e^{(\lambda+\varepsilon)(\varphi_j- \max(\varphi,\phi-j))}\theta_{\max(\varphi,\phi-j)}^n\), the  domination principle (Corollary~\ref{cor_domi})  then yields \(\varphi_j \geq \max(\varphi, \phi-j)\), and thus \(\varphi_j \simeq \phi\). 

Furthermore, applying the domination principle again shows that  the sequence \(\{\varphi_j\}\) is decreasing in \(j\). Let \(\tilde{\varphi}\coloneqq \lim_j \varphi_j\geq \varphi\) denote its limit. Since \(\theta_{\varphi_j}^n \leq e^{(\lambda+\varepsilon)\sup_X \varphi_1-\varepsilon \varphi}\mu\), the relative \(L^{\infty}\)  a prior estimates imply that \(\varphi_j \geq \phi-T\) for a uniform constant \(T>0\). 
By Theorem~\ref{als_lem} and the Lebesgue dominated convergence theorem, we  also have \(\theta_{\tilde{\varphi}}^n = e^{(\lambda+\varepsilon)\tilde{\varphi}-\varepsilon \varphi}\mu\), which implies  \(\tilde{\varphi} = \varphi\) by the domination principle.  Consequently, \(\varphi \geq \phi - T\) and therefore \(\varphi \simeq \phi\), which completes the proof.
\end{proof}

Using Corollary \ref{stability_lem}, we obtain the following stability result.

\begin{corollary}\label{cor_6.2}
Fix \(\delta,\lambda > 0\) and $p>1$. Let \(\phi_j, \phi \in \mathcal{S}_{\delta}(X,\theta)\) be model potentials such that \(d_{\theta}(\phi_j,\phi) \to 0\).   
Suppose $0 \leq f_j, f \in L^p(X, \omega_X^n)$ have uniformly bounded $L^p$-norms, satisfy $\int_X f_j \omega_X^n, \int_X f \omega_X^n > 0$, and $f_j\to f$ in $L^1$.  Let $\varphi_j, \varphi \in \psh(X, \theta)$ be such that $\varphi_j \simeq \phi_j$, $\varphi \simeq \phi$, and
\[
\theta_{\varphi_j}^n = e^{\lambda\varphi_j} f_j\, \omega_X^n,
\qquad
\theta_{\varphi}^n = e^{\lambda\varphi} f \,\omega_X^n.
\]
Then $\varphi_j$ converges   to $\varphi$ in capacity.
\end{corollary}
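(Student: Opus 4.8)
The plan is to reduce the stability statement to the varying-singularity version of the stability theorem, namely Corollary \ref{stability_lem}, by first establishing the necessary uniform bounds and then identifying the limit. First I would normalize: after passing to a subsequence, I may assume $\varphi_j \to w$ in $L^1(X)$ for some $\theta$-psh function $w$ (Hartogs), and it suffices to show $w = \varphi$ and that the convergence is in capacity, with the full-sequence convergence following from the usual subsequence-of-subsequence argument. The key preliminary observation is that the right-hand sides $\theta_{\varphi_j}^n = e^{\lambda\varphi_j} f_j\,\omega_X^n$ are uniformly dominated: by Skoda's integrability theorem together with H\"older's inequality (as in the proof of Corollary \ref{cor_6.1}), since the $f_j$ have uniformly bounded $L^p$-norms and $\sup_X \varphi_j$ is bounded above (by Hartogs, after normalizing $\sup_X \varphi_j = 0$, say, or noting $\varphi_j \preceq \phi_j$ with $\phi_j$ converging), there exist uniform constants $A > 0$ and a bounded $\omega_X$-psh function — or more simply a uniform bound $\theta_{\varphi_j}^n \le A\,\omega_X^n$ wouldn't suffice for non-pluripolarity control, but $\theta_{\varphi_j}^n \le g\,\omega_X^n$ with $g \in L^q$, $q>1$, uniformly — giving a single non-pluripolar Radon measure $\mu$ with $\theta_{\varphi_j}^n \le \mu$ for all $j$. (Here one uses that a uniformly $L^q$-bounded family, after extracting, is dominated by a fixed $L^1$ density, or one works directly with the $L^\infty$ a priori estimate Theorem \ref{rel_infity} to get $\varphi_j \ge \phi_j - T$ uniformly, hence $\varphi_j \simeq \phi_j$ uniformly.)

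Next I would invoke Corollary \ref{stability_lem} with $u_j = \varphi_j$ and the (now uniform) dominating measure: since $d_\theta(\phi_j,\phi)\to 0$ and $\boldsymbol{1}_{D_j}\theta_{\varphi_j}^n \le \mu$ holds with $D_j = X$, the corollary yields (after passing to a further subsequence) that $w \in \mathcal{E}(X,\theta,\phi)$, that $\varphi_j \to w$ in capacity, and that $\theta_{\varphi_j}^n \to \theta_w^n$ weakly. It then remains to identify $w$ with $\varphi$. For this, I would pass to the limit in the equation: since $\varphi_j \to w$ in capacity and $f_j \to f$ in $L^1(X,\omega_X^n)$, and $e^{\lambda\varphi_j} f_j$ is dominated (using $\varphi_j$ uniformly bounded above and the Skoda-type $L^q$ bound on $f_j$, plus $f_j \to f$ a.e. along a subsequence), I can apply a convergence lemma analogous to \cite[Lemma 11.5]{GZ} or \cite[Lemma 2.5]{darvas2020relative} to conclude $e^{\lambda\varphi_j} f_j\,\omega_X^n \to e^{\lambda w} f\,\omega_X^n$ weakly; combined with $\theta_{\varphi_j}^n \to \theta_w^n$ weakly this gives $\theta_w^n = e^{\lambda w} f\,\omega_X^n = \theta_\varphi^n$. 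Since both $w$ and $\varphi$ lie in $\mathcal{E}(X,\theta,\phi)$ and satisfy the same exponentially-twisted equation, the domination principle (Corollary \ref{cor_domi}(2)) forces $w = \varphi$.

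The main obstacle I expect is obtaining the \emph{uniform} domination $\theta_{\varphi_j}^n \le \mu$ by a fixed non-pluripolar measure, since the prescribed singularities $\phi_j$ vary: one needs both that $\sup_X \varphi_j$ stays bounded above and — crucially — that $\varphi_j \ge \phi_j - T$ with $T$ independent of $j$, so that $e^{\lambda\varphi_j}$ is controlled and the $L^q$-density bound from Skoda is genuinely uniform. This is where Theorem \ref{rel_infity} enters: applied with the uniform bound $c/\underline{\vol}(\theta,\phi_j) \le c/\delta$ and the fact that $A_m(\mu)$ can be taken uniform (the $f_j$ being $L^p$-bounded and $\theta$-psh functions lying in $L^m(\mu)$ for $m<\infty$ large by Skoda), one gets the uniform $T$. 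Once this uniformity is in hand, the rest is a routine assembly of Corollary \ref{stability_lem}, the weak-convergence passage to the limit, and the domination principle; I would also remark that the argument shows $d_\theta$-convergence of singularity types plus $L^1$-convergence of densities is exactly the hypothesis needed, with no quasi-triangle inequality required.
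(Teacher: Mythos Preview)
Your overall strategy matches the paper's: bound $\sup_X\varphi_j$, find a single dominating non-pluripolar measure $\mu\ge\theta_{\varphi_j}^n$, apply Corollary~\ref{stability_lem} with $D_j=X$ to get convergence in capacity and weak convergence of the Monge--Amp\`ere measures, pass to the limit in the right-hand side, and identify the limit with $\varphi$ via Corollary~\ref{cor_domi}. That skeleton is correct and is exactly what the paper does.

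There is, however, a genuine gap in your treatment of the uniform upper bound on $c_j:=\sup_X\varphi_j$. Neither of the justifications you offer works. You cannot ``normalize $\sup_X\varphi_j=0$'': the exponential twist fixes the additive constant, so shifting $\varphi_j$ changes $f_j$ to $e^{\lambda c_j}f_j$ and you lose the uniform $L^p$ bound unless you already know $c_j$ is bounded. The alternative ``$\varphi_j\preceq\phi_j$ with $\phi_j$ converging'' only gives $\varphi_j\le\phi_j+C_j$ with $C_j$ depending on $j$, hence no uniform control. Likewise, your proposed use of Theorem~\ref{rel_infity} is circular: to apply it you must feed in $\theta_{\varphi_j}^n\le c\,\mu$ with $c$ uniform, but $c=e^{\lambda c_j}$ depends on the very quantity you are trying to bound. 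The paper obtains the upper bound directly by a Jensen--H\"older argument: writing $v_j=\varphi_j-c_j$,
\[
\ove(\theta)\ge\int_X e^{\lambda\varphi_j}f_j\,\omega_X^n
= e^{\lambda c_j}\int_X e^{\lambda v_j}f_j\,\omega_X^n
\ge e^{\lambda c_j}\|f_j\|_{L^1}\exp\!\Bigl(-\lambda\,\frac{\|v_j\|_{L^q}\|f_j\|_{L^p}}{\|f_j\|_{L^1}}\Bigr),
\]
and since sup-normalized $\theta$-psh functions have uniformly bounded $L^q$-norm, this yields $c_j\le C$. The lower bound is the easy direction $\und(\theta,\phi_j)\le e^{\lambda c_j}\|f_j\|_{L^1}$.

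A second point: once $c_j$ is bounded above, the domination is elementary and you do not need Skoda, nor do you need $\varphi_j\ge\phi_j-T$. Indeed $e^{\lambda\varphi_j}\le e^{\lambda C}$ is already bounded, so $\theta_{\varphi_j}^n\le e^{\lambda C}f_j\,\omega_X^n$. After passing to a subsequence with $\|f_j-f\|_{L^1}\le 2^{-j}$, the function $g:=\sup_j f_j\le f+\sum_j|f_j-f|$ lies in $L^1$, and $\mu:=e^{\lambda C}g\,\omega_X^n$ is the single dominating measure. Your invocation of Skoda (which concerns $e^{-\varepsilon u}$, not $e^{+\lambda\varphi_j}$) and of Theorem~\ref{rel_infity} is an unnecessary detour; the paper does not use either here. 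With the Jensen step in place, the rest of your outline (Corollary~\ref{stability_lem}, dominated convergence for $e^{\lambda\varphi_j}f_j\to e^{\lambda\tilde\varphi}f$ in $L^1$, then Corollary~\ref{cor_domi}) goes through as you describe.
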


\begin{proof} Let $c_j\coloneqq\sup_{X}\varphi_j$ and $v_j\coloneqq\varphi_j-\sup_{X}\varphi_j$. Let $q>1$ be such that $1/p+1/q=1$. Using Jensen's inequality  and H\"older's inequality, we have
\begin{align*}   \textstyle \ove(\theta) &\ge \int_X \theta_{\varphi_j}^n 
    = e^{\lambda c_j} \int_X e^{\lambda v_j}\, f_j\omega_X^n 
     \\ &\ge e^{\lambda c_j} \exp\!\Bigl(\frac{\int_X \lambda v_j\, f_j\omega_X^n}{\int_X f_j\omega_X^n}\Bigr)   
    \geq e^{\lambda c_j} \exp\!\Bigl(\frac{-\|v_j\|_{L^q} \cdot \|f_j\|_{L^p} }{\|f_j\|_{L^1}}\Bigr) . \end{align*}
 Since $\sup_Xv_j=0$, the Chern-Levine-Nirenberg inequality   implies that $\|v_j\|_{L^q}$ is uniformly bounded. Thus,  $c_j$ is bounded from above.   
Conversely, the inequality $\und(\theta)\leq e^{\lambda c_j}\int_X f_j\,\omega_X^n$ ensures that $c_j$ is  bounded from below.  

 Hence, by passing to a subsequence, we may assume that $\varphi_j$ converges to a $\theta$-psh function $\ww\varphi$ both in $L^1$ and almost everywhere, and that $f_j$ converges to $f$ almost everywhere. Furthermore, the monotone convergence theorem implies that the function $g = \sup_{j \ge 1} f_j$ is integrable. 
 
 Note also that $\theta_{\varphi_j}^n\leq e^{\lambda\sup_jc_j}g\omega_X^n $, Corollary~\ref{stability_lem} yields that   $\ww \varphi\in\mathcal{E}(X,\theta,\phi)$. Moreover,  $\varphi_j$ converges in capacity to $\ww\varphi$ and   $\theta_{\varphi_j}^n$ converges  to $\theta_{\ww\varphi}^n$ weakly.  From the Lebesgue dominated convergence theorem, we also have $e^{\lambda\varphi_j} f_j\to e^{\lambda\ww\varphi } f $ in $L^1$. It follows that  $\theta_{\ww\varphi}^n=e^{\lambda\ww\varphi } f\omega_X^n$. The domination principle (Corollary \ref{cor_domi}) then gives $\varphi=\ww\varphi$. Consequently,  $\varphi_j$ converges in capacity to $ \varphi$, which finishes the proof. 
\end{proof}

We also study the range of the complex \MA\ operator on the   space  $\mathcal{E}^p(X,\theta,\phi)$.  
%For $p>0$,   define the relative finite $p$-energy space  by    $$ \mathcal{E}^p(X,\theta,\phi)\coloneqq\left\{u\in\mathcal{E}(X,\theta): \, \int_X|u-\phi|^p\,\theta_{u}^n<+\infty \right\}$$

\begin{theorem}\label{infinity_energy}
 Let \(\mu\) be a positive non-pluripolar measure satisfying  $$ 
\mu(E) \le A [\operatorname{Cap}_{\omega_X}(E)]^a $$ 
for some constants \(a, A > 0\) and for every Borel set \(E \subset X\).  Let  $p\geq1$. If $a >\frac{np}{n+p}$, 
 then   there exist       $\varphi\in \mathcal{E}^{p}(X,\theta,\phi)$ and a unique constant $c>0$ such that    $\theta_{\varphi}^n= c\mu.$    
 
 In particular, if $a\geq n$,   the   equation admits a solution  $\varphi\in \bigcap_{p\geq1}\mathcal{E}^{p}(X,\theta,\phi)$.
\end{theorem}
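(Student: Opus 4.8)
The plan is to take the solution already produced by Theorem~\ref{MA_pre_0}: there is $\varphi\in\mathcal{E}(X,\theta,\phi)$ and a constant $c>0$ with $\theta_{\varphi}^n=c\mu$, the uniqueness of $c$ being immediate from the domination principle (Corollary~\ref{cor_domi}). Replacing $\varphi$ by $\varphi-\sup_X\varphi$ changes neither $\theta_{\varphi}^n$ nor membership in $\mathcal{E}^p(X,\theta,\phi)$, so we may normalize $\sup_X\varphi=0$; then $c\,\mu(X)=\int_X\theta_{\varphi}^n\le\ove(\theta,\phi)<+\infty$, so $c$ is bounded. It remains to show $\varphi-\phi\in L^p(X,\theta_{\varphi}^n)$, i.e. $\int_X|\varphi-\phi|^p\,d\mu<+\infty$. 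I would establish this through the following a priori estimate, applied to $u=\varphi$: \emph{there is $C=C(n,p,a,A,c,\phi)$ with $\int_X|u-\phi|^p\,\theta_u^n\le C$ for every $u\in\mathcal{E}(X,\theta,\phi)$ satisfying $\sup_Xu=0$ and $\theta_u^n\le c\mu$.}

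The a priori estimate follows the strategy behind the $L^\infty$ estimates of Guedj--Lu \cite{GL1,GL3} (already used for Proposition~\ref{p-energy} and Theorem~\ref{rel_infity}), now with the capacity domination of $\mu$ playing the decisive role. Write $g(t):=\operatorname{Cap}_{\omega_X}(\{u<\phi-t\})$, a non-increasing function of $t\ge0$ with $g(t)\to0$ (as $u\preceq\phi$, the sets $\{u<\phi-t\}$ shrink to a pluripolar set). Two ingredients combine. First, a volume--capacity inequality in the relative hermitian context — obtained from the minimum principle (Corollary~\ref{min_p}), the contact-set identity (Proposition~\ref{contact_0}), the rescaled envelopes used in Proposition~\ref{domina_1}, and a fixed hermitian current $\theta+dd^cv\ge\delta\omega_X$ supplied by Proposition~\ref{admit_herm} — controlling $g(s+t)$ in terms of $t^{-n}\int_{\{u<\phi-s\}}\theta_u^n$ (possibly up to a mass-defect term harmless on $\mathcal{E}(X,\theta,\phi)$). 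Second, the hypothesis, which gives $\int_{\{u<\phi-s\}}\theta_u^n\le c\,\mu(\{u<\phi-s\})\le cA\,g(s)^a$. Iterating the resulting self-improving recursion in the spirit of Ko{\l}odziej \cite{Kol98,EGZ09} (and following \cite{GZ05} in the full-mass framework) yields a decay estimate for $\mu(\{u<\phi-t\})$; since
\[
\int_X|u-\phi|^p\,\theta_u^n=c\,p\int_0^{+\infty}t^{p-1}\,\mu\bigl(\{u<\phi-t\}\bigr)\,dt,
\]
a bookkeeping of the exponents produced by the iteration shows that the integral converges exactly when $a>\frac{np}{n+p}$ — equivalently, when $p$ lies below the Sobolev-type conjugate $\frac{an}{n-a}$ of $a$. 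When $a>1$ the same iteration in fact forces $g(t)\equiv0$ for large $t$, i.e. a genuine relative $L^\infty$ bound $u\ge\phi-T$.

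The step I expect to be the main obstacle is exactly this quantitative core: setting up the volume--capacity inequality in the non-closed, big, hermitian and \emph{relative} setting — where the mass-variation phenomenon rules out the comparison principle, so one must route everything through the minimum principle and the class $\mathcal{E}(X,\theta,\phi)$ and control the attendant mass-defect errors — and then running the iteration with enough precision to recover the sharp threshold $\frac{np}{n+p}$ rather than a cruder exponent. Granting it, applying the estimate to the normalized $\varphi$ gives $\varphi\in\mathcal{E}^p(X,\theta,\phi)$. For the last assertion, $a\ge n$ forces $a>\frac{np}{n+p}$ for every $p\ge1$, so the a priori estimate holds, with the \emph{same} $\varphi$, for all $p$ at once; hence $\varphi\in\bigcap_{p\ge1}\mathcal{E}^p(X,\theta,\phi)$. (If one would rather not invoke the volume--capacity inequality on $\varphi$ itself, one instead re-runs the approximation of Theorem~\ref{MA_pre_0} with $\mu=g\nu$, $\mu_j=\min(g,j)\nu$ — noting that each $\mu_j$ still satisfies $\mu_j(E)\le A[\operatorname{Cap}_{\omega_X}(E)]^a$ — derives the estimate uniformly along the approximating sequence, and passes to the limit using convergence in capacity together with the lower semicontinuity of the non-pluripolar product, Lemma~\ref{lsc}.)
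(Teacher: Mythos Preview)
Your route and the paper's diverge at the core mechanism. The paper never attempts a relative volume--capacity inequality. Instead it regularizes $\mu$ by local convolution (as in Theorem~\ref{minimal_nonp_0}), invokes Corollary~\ref{cor_6.1} to obtain approximate solutions $\varphi_j\simeq\phi$ with \emph{model} singularities (not merely $\varphi_j\in\mathcal{E}(X,\theta,\phi)$), and then bootstraps the energy exponent by iterating the weight-function estimate of Proposition~\ref{p-energy}: starting from a bound on $\int_X|\varphi_j-\phi|^{\min(a',1)}\theta_{\varphi_j}^n$ (obtained from the convolution structure and the mean-value property of psh functions), one feeds Proposition~\ref{p-energy} into itself along a sequence of exponents $a_k$ that climbs to any level below $na/(n-a)$. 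A separate step, quoting Ko{\l}odziej's local results, checks that the convolved $\mu_j$ still satisfy the capacity bound with a uniform constant, so that the quantity $A_{a'}(\mu_j)$ entering Proposition~\ref{p-energy} stays controlled.

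The gap you flag is genuine and is not merely a matter of bookkeeping. The inequality $\operatorname{Cap}_{\omega_X}(\{u<\phi-s-t\})\lesssim t^{-n}\int_{\{u<\phi-s\}}\theta_u^n$ is normally proved via the comparison principle, which is exactly what fails in the hermitian setting; the minimum principle (Corollary~\ref{min_p}) and the hermitian current from Proposition~\ref{admit_herm} do not obviously assemble into a substitute, since the potential $v$ with $\theta_v\ge\delta\omega_X$ is a priori far more singular than $\phi$, and the mass-defect errors here are of the same order as the quantities you wish to bound rather than lower-order perturbations. The paper's Lemma~\ref{concave_lem}/Proposition~\ref{p-energy} machinery sidesteps this entirely, but at the price of requiring $u\simeq\phi$; this is precisely why the paper cannot apply it directly to your $\varphi\in\mathcal{E}(X,\theta,\phi)$ and must pass through model-singularity approximants. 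Your fallback with $\mu_j=\min(g,j)\nu$ inherits the same defect: those $\mu_j$ need not have $L^p$ density, so Corollary~\ref{cor_6.1} is unavailable and the approximants are again only in $\mathcal{E}(X,\theta,\phi)$, outside the scope of Proposition~\ref{p-energy}.
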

\begin{proof} The Chern-Levine-Nirenberg inequality implies that there exists a uniform constant $C>0$ such that for every $\theta$-psh function $u$ normalized by $\sup_X u=0$ and every $t>0$  (see, e.g.,  \cite[Proposition 8.35]{Dinew2019})
   $$\mathrm{Cap}_{\omega_X}( u<-t )\leq C/t .$$
 Hence $\mu\bigl(u < -t\bigr) \leq C/t^a$ for some uniform constant $C>0$. Fix any $a'\in(0,a)$. Note that   $$\int_X (-u)^{a'}d\mu=\int_0^{\infty}t^{a'-1}\mu(u<-t)\,dt<+\infty $$
    and we can therefore define
   \begin{align}\label{A_mu}   A_{a'}(\mu)\coloneqq \sup\left\{  \Bigl(\int_X (-u)^{a'}\,d\mu\Bigr)^{\frac{1}{m}}:  \varphi\,\, \text{is $\theta$-psh  with} \,\sup_X\varphi=0 \right\} .  \end{align}
Let $\mu_j$ be the regularized measures constructed  in the proof of Theorem \ref{minimal_nonp_0}  along with $U_{\alpha},\rho_{\alpha},g_{\alpha}$ and $\chi_j$.   By Corollary~\ref{cor_6.1}, for each $j$, there exists a  $\theta$-psh function $\varphi_j $  normalized by $\sup_X \varphi_j=0$  and a   constant $c_j>0$ such that $ \varphi_j \simeq \phi$ and $\theta_{\varphi_j}^n= c_j\mu_j.$   

Note that   $\ww\varphi_j\coloneqq  - (  - \varphi_j)^{\min(a',1)}$ are   $B\omega_X$-psh    for some $B>0$.    We may then   choose    \(g_{\alpha}\)  such that \(B\omega_X \le dd^c g_{\alpha}\).   Hence, for sufficiently large $j$, we have  
        \begin{align}\label{lower-ener} 
\int_{X} |\varphi_j - \phi|^{\min(a',1)} \, \theta^n_{\varphi_j} 
&\le C \sum_{\alpha} \int_{U_{\alpha}} \big( g_{\alpha} * \chi_j - (\ww\varphi_j + g_{\alpha}) * \chi_j \big) \rho_{\alpha} \, d\mu \nonumber\\
&\le C \sum_{\alpha} \int_{U_{\alpha}} \Big( (g_{\alpha} * \chi_j - g_{\alpha}) + (-\varphi_j)^{\min(a',1)} \Big) \, d\mu \leq C'.
\end{align}
We now  fix sufficiently small \(\alpha, \varepsilon > 0\) and let
\begin{align*}
\ww p  \coloneqq 
\Bigl(1 + \frac{\bigl(1-\frac{\varepsilon}{\min(a',1)}\bigr)
\bigl(\min(a',1)-\alpha\bigr)}{n+\varepsilon}\Bigr) a',
\quad
\ww m  \coloneqq \frac{1}{\min(a',1)} + \frac{1 - \frac{\varepsilon}{\min(a',1)}}{n+\varepsilon}. 
\end{align*}
 
 Using (\ref{A_mu}) and (\ref{lower-ener}), and following an argument similar to that of Proposition~\ref{p-energy}, 
we obtain that for every \(p' < \ww p \), there exists a uniform constant \(\ww C (\alpha , \varepsilon,p', A_{a'}(\mu) , \phi )> 0\) such that
\begin{align}\label{iteration}
\int_X |\varphi_j - \phi|^{p'} \, d\mu 
\;\leq\; 
\ww C\Bigl[ \Bigl(\int_X |\varphi_j -\phi|^{\min(a',1)} \, \theta_{\varphi_j}^n\Bigr)^{\ww m} + 1 \Bigr].
\end{align}
Using \cite[Proposition 3.5.1]{Kol98}, together with the argument from the proof of \cite[Corollary 3.1.4]{Kol98}, we obtain a uniform constant \(\ww A>0\) such that  
\[
\mu_j(E) \le \ww A\bigl[\operatorname{Cap}_{\omega_X}  (E)\bigr]^{a}
\quad  \text{for all  } E \subset X,\] 
because the capacity \(\operatorname{Cap}_{\omega_X}\) is locally equivalent to the Bedford-Taylor capacity (see, e.g., \cite[Proposition 8.30]{Dinew2019}).  Hence, replacing   
$d\mu$ by  $d\mu_j$ in (\ref{iteration}) leads to   $$\sup_{j\in\mathbb{N}}\int_X |\varphi_j -\phi|^{p'} \, \theta_{\varphi_j}^n <+\infty$$
 We now  consider the sequence $\{a_k\}$ defined recursively by     $a_0:=\min(a',1)$, $a_1:= p'$, and $$a_{k+1}\coloneqq \biggl(1 + \frac{\bigl(1-\frac{\varepsilon}{a_{k}}\bigr)
\bigl(a_{k}-\alpha\bigr)}{n+\varepsilon}\biggr) a', \quad \text{for} \, k\geq1.$$
  As we can take $a'$  sufficiently close to $a$ and $\alpha,\varepsilon$  sufficiently small, an iteration argument shows that for any $\gamma < \frac{na}{n-a}$, 
  $$\sup_{j\in\mathbb{N}}\int_X |\varphi_j -\phi|^{\gamma} \, \theta_{\varphi_j}^n <+\infty.$$

By hypothesis we have $\frac{na}{n-a} > p \geq 1$. We  can therefore  repeat the argument of the first part of Theorem~\ref{minimal_nonp_0} (in particular, for (\ref{argument_rep}), we estimate $\int_{\{\varphi_j \leq -t\}} \theta_{\varphi_j}^n$ by $\frac{1}{t^{\min(a',1)}} \int_{X} (-\varphi_j)^{\min(a',1)} \, \theta_{\varphi_j}^n$).   After passing to a subsequence, we    conclude that $\varphi_j$ converges  in capacity to a $\varphi\in\mathcal{E}(X,\theta,\phi)$   satisfying  $\theta_{\varphi }^n=c\mu$ for some $c>0$. 
 Moreover, by the lower semicontinuity of non-pluripolar product, we obtain  that $\int_X |\varphi - \phi|^{p} \, \theta_{\varphi }^n<+\infty $, i.e., $\varphi\in\mathcal{E}^{p}(X,\theta,\phi)$.
\end{proof}

\bibliographystyle{alphaJP}
\bibliography{main}

\newcommand{\etalchar}[1]{$^{#1}$}
\begin{thebibliography}{DDNL21b}

\bibitem[AGL23]{AGL23}
D.~Angella, V.~Guedj, and C.~H. Lu.
\newblock Plurisigned {Hermitian} metrics.
\newblock {\em Trans. Amer. Math. Soc.}, 376(7):4631--4659, 2023.

\bibitem[ALS24]{ALS24}
O.~Alehyane, C.~H. Lu, and M.~Salouf.
\newblock Degenerate complex {Monge}-{Amp{\`e}re} equations on some compact {Hermitian} manifolds.
\newblock {\em J. Geom. Anal.}, 34(10):320, 2024.

\bibitem[ALS25]{ALS25}
O.~Alehyane, C.~H. Lu, and M.~Salouf.
\newblock Monge-amp{\`e}re equations with prescribed singularities on compact {Hermitian} manifolds.
\newblock Preprint, {arXiv}:2511.02339, 2025.

\bibitem[BBE{\etalchar{+}}19]{BBEGZ_flow}
R.~J. Berman, S.~Boucksom, P.~Eyssidieux, V.~Guedj, and A.~Zeriahi.
\newblock K{\"a}hler-{Einstein} metrics and the {K{\"a}hler}-{Ricci} flow on log {Fano} varieties.
\newblock {\em J. Reine Angew. Math.}, 751:27--89, 2019.

\bibitem[BBGZ13]{BBGZ_varma}
R.~J. Berman, S.~Boucksom, V.~Guedj, and A.~Zeriahi.
\newblock A variational approach to complex {Monge}-{Amp{\`e}re} equations.
\newblock {\em Publ. Math., Inst. Hautes {\'E}tud. Sci.}, 117:179--245, 2013.

\bibitem[BD12]{BD12}
R.~Berman and J.-P. Demailly.
\newblock Regularity of plurisubharmonic upper envelopes in big cohomology classes.
\newblock In {\em Perspectives in analysis, geometry, and topology.}, pages 39--66. Basel: Birkh{\"a}user, 2012.

\bibitem[BDL17]{BDL17}
R.~Berman, T.~Darvas, and C.~H. Lu.
\newblock Convexity of the extended {K}-energy and the large time behavior of the weak {Calabi} flow.
\newblock {\em Geom. Topol.}, 21(5):2945--2988, 2017.

\bibitem[BEGZ10]{BEGZ10}
S.~Boucksom, P.~Eyssidieux, V.~Guedj, and A.~Zeriahi.
\newblock Monge-{A}mp\`ere equations in big cohomology classes.
\newblock {\em Acta Math.}, 205(2):199--262, 2010.

\bibitem[BGL25]{BGL25}
S.~Boucksom, V.~Guedj, and C.~H. Lu.
\newblock Volumes of bott–chern classes.
\newblock {\em Peking Mathematical Journal}, pages 1--43, 2025.

\bibitem[B{\l}o11]{blo_L2}
Z.~B{\l}ocki.
\newblock On the uniform estimate in the {Calabi}-{Yau} theorem. {II}.
\newblock {\em Sci. China, Math.}, 54(7):1375--1377, 2011.

\bibitem[BT76]{bt76}
E.~Bedford and B.~A. Taylor.
\newblock The {D}irichlet problem for a complex {M}onge-{A}mp\`ere equation.
\newblock {\em Invent. Math.}, 37(1):1--44, 1976.

\bibitem[BT82]{BT82}
E.~Bedford and B.~A. Taylor.
\newblock A new capacity for plurisubharmonic functions.
\newblock {\em Acta Math.}, 149(1-2):1--40, 1982.

\bibitem[BT87]{BT87}
E.~Bedford and B.~A. Taylor.
\newblock Fine topology, \v silov boundary, and {$(dd^c)^n$}.
\newblock {\em J. Funct. Anal.}, 72(2):225--251, 1987.

\bibitem[Ceg98]{Ceg98}
U.~Cegrell.
\newblock Pluricomplex energy.
\newblock {\em Acta Math.}, 180(2):187--217, 1998.

\bibitem[Che87]{Che87}
P.~Cherrier.
\newblock {\'E}quations de {Monge}-{Amp{\`e}re} sur les vari{\'e}t{\'e}s {Hermitiennes} compactes.
\newblock {\em Bull. Sci. Math., II. S{\'e}r.}, 111:343--385, 1987.

\bibitem[CK06]{CK06}
U.~Cegrell and S.~Ko{\l}odziej.
\newblock The equation of complex {Monge}-{Amp{\`e}re} type and stability of solutions.
\newblock {\em Math. Ann.}, 334(4):713--729, 2006.

\bibitem[Dan24]{Dang_herm}
Q.-T. Dang.
\newblock Hermitian null loci.
\newblock Preprint, {arXiv}:2404.01126, 2024.

\bibitem[Dar15]{Dar_finite}
T.~Darvas.
\newblock The {Mabuchi} geometry of finite energy classes.
\newblock {\em Adv. Math.}, 285:182--219, 2015.

\bibitem[Dar19]{Darvas_geom}
T.~Darvas.
\newblock Geometric pluripotential theory on {K{\"a}hler} manifolds.
\newblock In {\em Advances in complex geometry}, Contemp. Math., pages 1--104. Amer. Math. Soc., Providence, RI, 2019.

\bibitem[DDNL18a]{DDNL18mono}
T.~Darvas, E.~Di~Nezza, and C.~H. Lu.
\newblock Monotonicity of nonpluripolar products and complex {M}onge-{A}mp\`ere equations with prescribed singularity.
\newblock {\em Anal. PDE}, 11(8):2049--2087, 2018.

\bibitem[DDNL18b]{DDNLFULL}
T.~Darvas, E.~Di~Nezza, and C.~H. Lu.
\newblock On the singularity type of full mass currents in big cohomology classes.
\newblock {\em Compos. Math.}, 154(2):380--409, 2018.

\bibitem[DDNL21a]{DDNL21LOG}
T.~Darvas, E.~Di~Nezza, and C.~H. Lu.
\newblock Log-concavity of volume and complex {M}onge-{A}mp\`ere equations with prescribed singularity.
\newblock {\em Math. Ann.}, 379(1-2):95--132, 2021.

\bibitem[DDNL21b]{DDNL21}
T.~Darvas, E.~Di~Nezza, and C.~H. Lu.
\newblock The metric geometry of singularity types.
\newblock {\em J. Reine Angew. Math.}, 771:137--170, 2021.

\bibitem[DDNL25]{darvas2020relative}
T.~Darvas, E.~Di~Nezza, and C.~H. Lu.
\newblock Relative pluripotential theory on compact {K{\"a}hler} manifolds.
\newblock {\em Pure Appl. Math. Q.}, 21(3):1037--1118, 2025.

\bibitem[DH12]{DH_con}
S.~Dinew and P.~H. Hiep.
\newblock Convergence in capacity on compact {K{\"a}hler} manifolds.
\newblock {\em Ann. Sc. Norm. Super. Pisa, Cl. Sci. (5)}, 11(4):903--919, 2012.

\bibitem[Din19]{Dinew2019}
S.~Dinew.
\newblock {\em Lectures on Pluripotential Theory on Compact Hermitian Manifolds}, pages 1--56.
\newblock Springer International Publishing, Cham, 2019.

\bibitem[DK12]{DK12}
S.~Dinew and S.~Ko{\l}odziej.
\newblock Pluripotential estimates on compact {Hermitian} manifolds.
\newblock In {\em Advances in geometric analysis, Adv. Lect. Math. (ALM)}, pages 69--86. Somerville, MA: International Press; Beijing: Higher Education Press, 2012.

\bibitem[DNT24]{DT23}
E.~Di~Nezza and S.~Trapani.
\newblock The regularity of envelopes.
\newblock {\em Ann. Sci. {\'E}c. Norm. Sup{\'e}r. (4)}, 57(5):1347--1370, 2024.

\bibitem[DP04]{DP04}
J.-P. Demailly and M.~Paun.
\newblock Numerical characterization of the {K{\"a}hler} cone of a compact {K{\"a}hler} manifold.
\newblock {\em Ann. Math. (2)}, 159(3):1247--1274, 2004.

\bibitem[DV25a]{DV_qua1}
H.-S. Do and D.-V. Vu.
\newblock Quantitative stability for complex {Monge}-{Amp{\`e}re} equations. {I}.
\newblock {\em Anal. PDE}, 18(5):1271--1308, 2025.

\bibitem[DV25b]{DV_qua2}
H.-S. Do and D.-V. Vu.
\newblock Quantitative stability for the complex {Monge}-{Amp{\`e}re} equations. {II}.
\newblock {\em Calc. Var. Partial Differ. Equ.}, 64(8):269, 2025.

\bibitem[DX24]{DX21}
T.~Darvas and M.~Xia.
\newblock The volume of pseudoeffective line bundles and partial equilibrium.
\newblock {\em Geom. Topol.}, 28(4):1957--1993, 2024.

\bibitem[EGZ09]{EGZ09}
P.~Eyssidieux, V.~Guedj, and A.~Zeriahi.
\newblock Singular {K{\"a}hler}-{Einstein} metrics.
\newblock {\em J. Amer. Math. Soc.}, 22(3):607--639, 2009.

\bibitem[GL10]{GL10}
B.~Guan and Q.~Li.
\newblock Complex {Monge}-{Amp{\`e}re} equations and totally real submanifolds.
\newblock {\em Adv. Math.}, 225(3):1185--1223, 2010.

\bibitem[GL22]{GL2}
V.~Guedj and C.~H. Lu.
\newblock Quasi-plurisubharmonic envelopes 2: {Bounds} on {Monge}-{Amp{\`e}re} volumes.
\newblock {\em Algebr. Geom.}, 9(6):688--713, 2022.

\bibitem[GL23]{GL3}
V.~Guedj and C.~H. Lu.
\newblock Quasi-plurisubharmonic envelopes 3: solving {Monge}-{Amp{\`e}re} equations on {Hermitian} manifolds.
\newblock {\em J. Reine Angew. Math.}, 800:259--298, 2023.

\bibitem[GL25]{GL1}
V.~Guedj and C.~H. Lu.
\newblock Quasi-plurisubharmonic envelopes 1: {Uniform} estimates on {K{\"a}hler} manifolds.
\newblock {\em J. Eur. Math. Soc. (JEMS)}, 27(3):1185--1208, 2025.

\bibitem[GLZ19]{GLZ17}
V.~Guedj, C.~H. Lu, and A.~Zeriahi.
\newblock Plurisubharmonic envelopes and supersolutions.
\newblock {\em J. Differ. Geom.}, 113(2):273--313, 2019.

\bibitem[GP24]{GP22}
B.~Guo and D.~H. Phong.
\newblock On {{\(L^\infty\)}} estimates for fully non-linear partial differential equations.
\newblock {\em Ann. Math. (2)}, 200(1):365--398, 2024.

\bibitem[GT23]{GTru_quasimo}
V.~Guedj and A.~Trusiani.
\newblock Quasi-monotone convergence of plurisubharmonic functions.
\newblock {\em Bull. Sci. Math.}, 188:103341, 2023.

\bibitem[GZ05]{GZ07}
V.~Guedj and A.~Zeriahi.
\newblock Intrinsic capacities on compact {K{\"a}hler} manifolds.
\newblock {\em J. Geom. Anal.}, 15(4):607--639, 2005.

\bibitem[GZ07]{GZ05}
V.~Guedj and A.~Zeriahi.
\newblock The weighted {Monge}-{Amp{\`e}re} energy of quasiplurisubharmonic functions.
\newblock {\em J. Funct. Anal.}, 250(2):442--482, 2007.

\bibitem[GZ17]{GZ}
V.~Guedj and A.~Zeriahi.
\newblock {\em {D}egenerate complex {M}onge-{A}mp\`ere equations}, volume~26 of {\em EMS Tracts in Mathematics}.
\newblock European Mathematical Society (EMS), Z\"urich, 2017.

\bibitem[Han96]{HANANI96}
A.~Hanani.
\newblock Equations du type de monge–ampère sur les variétés hermitiennes compactes.
\newblock {\em Journal of Functional Analysis}, 137(1):49--75, 1996.

\bibitem[KN15]{KN15}
S.~Ko{\l}odziej and N.~C. Nguyen.
\newblock Weak solutions to the complex {Monge}-{Amp{\`e}re} equation on {Hermitian} manifolds.
\newblock In {\em Analysis, complex geometry, and mathematical physics}, pages 141--158. Providence, RI: American Mathematical Society (AMS), 2015.

\bibitem[KN19]{KN19}
S.~Ko{\l}odziej and N.~C. Nguyen.
\newblock Stability and regularity of solutions of the {Monge}-{Amp{\`e}re} equation on {Hermitian} manifolds.
\newblock {\em Adv. Math.}, 346:264--304, 2019.

\bibitem[KN21]{KN_22domin}
S.~Ko{\l}odziej and N.~C. Nguyen.
\newblock Continuous solutions to {Monge}-{Amp{\`e}re} equations on {Hermitian} manifolds for measures dominated by capacity.
\newblock {\em Calc. Var. Partial Differ. Equ.}, 60(3):93, 2021.

\bibitem[KN22]{KN22_weak}
S.~Ko{\l}odziej and N.~C. Nguyen.
\newblock Weak convergence of {M}onge-{A}mp{\`e}re measures on compact {H}ermitian manifolds.
\newblock {\em The Conference on Complex Geometric Analysis}, pages 113--123, 2022.

\bibitem[Ko{\l}98]{Kol98}
S.~Ko{\l}odziej.
\newblock The complex {Monge}-{Amp{\`e}re} equation.
\newblock {\em Acta Math.}, 180(1):69--117, 1998.

\bibitem[Ko{\l}03]{KOL03}
S.~Ko{\l}odziej.
\newblock The {Monge}-{Amp{\`e}re} equation on compact {K{\"a}hler} manifolds.
\newblock {\em Indiana Univ. Math. J.}, 52(3):667--686, 2003.

\bibitem[LN22]{LN19}
C.~H. Lu and V.-D. Nguy{\^e}n.
\newblock Complex {Hessian} equations with prescribed singularity on compact {K{\"a}hler} manifolds.
\newblock {\em Ann. Sc. Norm. Super. Pisa, Cl. Sci. (5)}, 23(1):425--462, 2022.

\bibitem[LPT21]{LPT}
C.~H. Lu, T.-T. Phung, and T.-D. T{\^o}.
\newblock Stability and {H{\"o}lder} regularity of solutions to complex {Monge}-{Amp{\`e}re} equations on compact {Hermitian} manifolds.
\newblock {\em Ann. Inst. Fourier}, 71(5):2019--2045, 2021.

\bibitem[Ngu16]{Ngu16}
N.~C. Nguyen.
\newblock The complex {Monge}-{Amp{\`e}re} type equation on compact {Hermitian} manifolds and applications.
\newblock {\em Adv. Math.}, 286:240--285, 2016.

\bibitem[Rai69]{Radon}
J.~Rainwater.
\newblock A note on the preceding paper.
\newblock {\em Duke Math. J.}, 36:799--800, 1969.

\bibitem[RWN14]{RWN14}
J.~Ross and D.~Witt~Nystr\"om.
\newblock Analytic test configurations and geodesic rays.
\newblock {\em J. Symplectic Geom.}, 12(1):125--169, 2014.

\bibitem[TW10]{TW10}
V.~Tosatti and B.~Weinkove.
\newblock The complex {Monge}-{Amp{\`e}re} equation on compact {Hermitian} manifolds.
\newblock {\em J. Amer. Math. Soc.}, 23(4):1187--1195, 2010.

\bibitem[Vu19]{Vu_loc}
D.-V. Vu.
\newblock Locally pluripolar sets are pluripolar.
\newblock {\em Int. J. Math.}, 30(13):1950029, 2019.

\bibitem[WN19]{WNmono}
D.~Witt~Nystr\"om.
\newblock Monotonicity of non-pluripolar {M}onge-{A}mp\`ere masses.
\newblock {\em Indiana Univ. Math. J.}, 68(2):579--591, 2019.

\bibitem[Xin09]{Xing}
Y.~Xing.
\newblock Continuity of the complex {Monge}-{Amp{\`e}re} operator on compact {K{\"a}hler} manifolds.
\newblock {\em Math. Z.}, 263(2):331--344, 2009.

\bibitem[Yau78]{YAU78}
S.-T. Yau.
\newblock On the {Ricci} curvature of a compact {K{\"a}hler} manifold and the complex {Monge}-{Amp{\`e}re} equation. {I}.
\newblock {\em Commun. Pure Appl. Math.}, 31:339--411, 1978.

\end{thebibliography}
\end{sloppypar}
\end{document}